\documentclass[11pt,reqno]{amsart}

\usepackage[left=1in,right=1in,top=1in,bottom=0.8in]{geometry}
\usepackage[T1]{fontenc}
\usepackage[utf8]{inputenc}
\usepackage[tt=false]{libertine}
\usepackage{microtype}
\usepackage{amsmath,amssymb,mathrsfs,mathtools}
\usepackage{graphicx}
\usepackage{aliascnt}
\usepackage{enumitem}
\usepackage{xcolor}
\usepackage[colorlinks=true]{hyperref}
\usepackage{cleveref}

\definecolor{arxivblue}{RGB}{0,55,110}
\hypersetup{
  citecolor=violet,
  linkcolor=violet,
  urlcolor=arxivblue,
  pdftitle={Large Deviations for the Two-Dimensional Dean--Kawasaki Equation with Coulomb Interactions},
  pdfauthor={Xiaohao Ji},
  pdfsubject={A large deviation principle for a mass-subcritical two-dimensional exact-square-root Dean--Kawasaki equation with Coulomb interactions},
  pdfkeywords={Dean--Kawasaki equation, Coulomb interaction, Keller--Segel equation, Biot--Savart equation, stochastic kinetic solution, large deviations}
}

\numberwithin{equation}{section}
\allowdisplaybreaks[2]

\newtheorem{theorem}{Theorem}[section]
\newaliascnt{proposition}{theorem}
\newtheorem{proposition}[proposition]{Proposition}
\aliascntresetthe{proposition}
\newaliascnt{lemma}{theorem}
\newtheorem{lemma}[lemma]{Lemma}
\aliascntresetthe{lemma}
\newaliascnt{corollary}{theorem}
\newtheorem{corollary}[corollary]{Corollary}
\aliascntresetthe{corollary}
\theoremstyle{definition}
\newaliascnt{definition}{theorem}
\newtheorem{definition}[definition]{Definition}
\aliascntresetthe{definition}
\theoremstyle{remark}
\newaliascnt{remark}{theorem}
\newtheorem{remark}[remark]{Remark}
\aliascntresetthe{remark}
\newenvironment{nouppercase}{\renewcommand{\uppercasenonmath}[1]{}}{}

\crefname{theorem}{Theorem}{Theorems}
\Crefname{theorem}{Theorem}{Theorems}
\crefname{proposition}{Proposition}{Propositions}
\Crefname{proposition}{Proposition}{Propositions}
\crefname{lemma}{Lemma}{Lemmas}
\Crefname{lemma}{Lemma}{Lemmas}
\crefname{corollary}{Corollary}{Corollaries}
\Crefname{corollary}{Corollary}{Corollaries}
\crefname{definition}{Definition}{Definitions}
\Crefname{definition}{Definition}{Definitions}
\crefname{remark}{Remark}{Remarks}
\Crefname{remark}{Remark}{Remarks}

\newcommand{\T}{\mathbb T}
\newcommand{\R}{\mathbb R}
\newcommand{\E}{\mathbb E}
\newcommand{\Pp}{\mathbb P}
\newcommand{\cG}{\mathcal G}
\newcommand{\Green}{G}
\newcommand{\cX}{\mathcal X}
\newcommand{\J}{J}
\newcommand{\dd}{\,\mathrm d}
\newcommand{\weakto}{\rightharpoonup}
\newcommand{\QT}{Q_T}
\DeclareMathOperator{\diver}{div}

\title[Large Deviations for the Two-Dimensional Dean--Kawasaki Equation]
{\LARGE Large Deviations for the Two-Dimensional\\
Dean--Kawasaki Equation with Coulomb Interactions}

\author{Xiaohao Ji}
\date{\textnormal{\today\qquad
Email: \href{mailto:jixh1020@gmail.com}{\texttt{jixh1020@gmail.com}}}}

\begin{document}

\subjclass[2020]{60H15, 60F10, 35K55, 35Q92}
\keywords{Dean--Kawasaki equation, Coulomb interaction, Biot--Savart kernel, Keller--Segel equation, stochastic kinetic solution, large deviations}

\begin{nouppercase}
\maketitle
\end{nouppercase}
\vspace{-2em}
\begin{center}
\small Institut f\"ur Mathematik, Freie Universit\"at Berlin,
14195 Berlin, Germany
\end{center}
\vspace{-0.5em}

\begin{abstract}
We establish global well-posedness and a small-noise large deviation
principle for the two-dimensional Dean--Kawasaki equation with Coulomb
interaction
\[
 \partial_t\rho^\varepsilon
 =\Delta\rho^\varepsilon
 -\nabla\!\cdot\!\bigl(\rho^\varepsilon(V*\rho^\varepsilon)\bigr)
 -\sqrt\varepsilon\,
 \nabla\!\cdot\!\bigl(
 \sqrt{\rho^\varepsilon}\circ\xi^{K(\varepsilon)}
 \bigr),
\]
where
\[
 V
 =\lambda_V\bigl(
 \cos(\alpha_V)\nabla G+\sin(\alpha_V)J\nabla G
 \bigr).
\]
Here $\lambda_V\geq0$ is the interaction strength, $\alpha_V$ is the interaction
angle, $G$ is the mean-zero Green function of $-\Delta$ on $\T^2$, $J$ is
rotation by $\pi/2$, and $\xi^K$ is a finite-mode approximation of
space--time white noise.  For finite-entropy initial data of mass $M$
satisfying 
\[
\lambda_V\max\bigl\{\cos(\alpha_V),0\bigr\}M<8\pi, 
\]
the equation with the exact square-root coefficient is globally pathwise
well posed at every finite Fourier cutoff in
the class of stochastic kinetic solutions.  Following Fehrman and Gess
\cite{FG23,FG24}, we prove a large deviation principle on
$L^1((0,T)\times\T^2)$ under the scaling $K(\varepsilon)\to\infty$ and
$\varepsilon K(\varepsilon)^4\to0$, with good rate function given by the
quadratic control problem for the skeleton equation.
\end{abstract}

\setcounter{tocdepth}{1}
\tableofcontents
\enlargethispage{5pt}

\section{Introduction and main results}

The Dean--Kawasaki equation is a fluctuating hydrodynamic model for
empirical particle densities \cite{D96,K98}.  We consider the
two-dimensional equation with Coulomb interactions.  We prove global
well-posedness throughout the mass-subcritical regime and a large deviation
principle as the noise intensity and spatial correlation scale vanish
jointly.  The conservative square-root noise preserves mass and reflects
the density-dependent mobility of the underlying particle system.

We consider a general isotropic Coulomb interaction in two dimensions,
decomposed into a radial Keller--Segel component and a rotational
Biot--Savart component.
More precisely, on the unit flat torus $\T^2$, endowed with Lebesgue
measure of total mass one, the equation is
\begin{equation}\label{eq:dk-exact}
 \dd\rho^{\varepsilon}
 =\Big[\Delta\rho^{\varepsilon}
 -\diver\bigl(\rho^{\varepsilon}(V*\rho^{\varepsilon})\bigr)
 \Big]\dd t
 -\sqrt\varepsilon\,
 \diver\big(\sqrt{\rho^{\varepsilon}}\circ\dd W_{K(\varepsilon)}\big).
\end{equation}
Here $W_K$ is the real Fourier truncation of a cylindrical Wiener process
to frequencies at most $K$.  Well-posedness is proved for each fixed
$K$; the dependence $K=K(\varepsilon)$ is needed only for the large
deviation limit.  Writing $\Green$ for the mean-zero Green function of
$-\Delta$ on $\T^2$ and $\J$ for rotation through $\pi/2$, we use the
periodic kernel
\begin{equation}\label{eq:coulomb-decomposition}
 V
 =\lambda_V\cos(\alpha_V)\nabla\Green
 +\lambda_V\sin(\alpha_V)\J\nabla\Green,
\end{equation}
where $\lambda_V\geq0$ is the interaction strength and
$\alpha_V\in\R/(2\pi\mathbb Z)$ is the interaction angle.  This decomposition
describes the rotation-covariant Coulomb kernels in the plane; arbitrary
anisotropic kernels of the same singular order are not included.
Thus
$\lambda_V\cos(\alpha_V)>0$ is attractive,
$\lambda_V\cos(\alpha_V)\leq0$ is repulsive or has no radial component,
$\cos(\alpha_V)=0$ is pure Biot--Savart, and
$\sin(\alpha_V)=0$ is pure Keller--Segel.  The Green-function and noise
normalizations are specified in Subsection~\ref{subsec:organization-notation}.
The Stratonovich notation in \eqref{eq:dk-exact} is interpreted through
Definition~\ref{def:stochastic-kinetic-solution}.

For a nonnegative initial density $\rho_0$, write
$H(r)=r\log r-r+1$ for $r\geq0$, with $H(0)=1$, and
$M=\int_{\T^2}\rho_0$.  The Coulomb singularity has the mass-critical
scaling of the corresponding planar equation.  We assume the subcritical
mass condition
\begin{equation}\label{eq:subcritical}
 \lambda_V\max\{\cos(\alpha_V),0\}M<8\pi.
\end{equation}
This imposes no mass restriction when $\cos(\alpha_V)\leq0$.
The critical case is not covered by our results.
Define the free energy and its physical dissipation by
\begin{equation}\label{eq:free-energy-dissipation}
 \begin{aligned}
 \mathcal F_V(\rho)
 &=\int_{\T^2}H(\rho)
 -\frac{\lambda_V\cos(\alpha_V)}2
 \int_{\T^2}\rho(\Green*\rho),\\
 \mathcal D_V(\rho)
 &=\left\|
 2\nabla\sqrt\rho
 -\lambda_V\cos(\alpha_V)\sqrt\rho\,
  (\nabla\Green*\rho)
 \right\|_{L^2(\T^2)}^2.
 \end{aligned}
\end{equation}
Here \(\mathcal D_V(\rho)=+\infty\) unless
\(\sqrt\rho\in H^1(\T^2)\) and the displayed weighted flux belongs to
\(L^2(\T^2;\R^2)\).
The rotational component is absent from
\eqref{eq:free-energy-dissipation} because it is orthogonal to the radial
potential gradient and divergence free.  For a nonnegative density $\rho$,
we define its Fisher information by
\begin{equation}\label{eq:fisher-information}
 \mathcal I(\rho)=4\int_{\T^2}|\nabla\sqrt\rho|^2,
 \qquad \sqrt\rho\in H^1(\T^2),
\end{equation}
with $\mathcal I(\rho)=+\infty$ otherwise.

\subsection{Main results}

\begin{theorem}[Mass-subcritical stochastic well-posedness]
\label{thm:main-spde}
Let $\rho_0\in L^1(\T^2)$ be deterministic and nonnegative, with
$\int_{\T^2}H(\rho_0)<\infty$, and assume \eqref{eq:subcritical}.
\begin{enumerate}[label=\textup{(\roman*)},leftmargin=2.2em]
\item For every $\varepsilon>0$ and $K\in\mathbb N$, equation
\eqref{eq:dk-exact} is globally pathwise well posed in the class of stochastic
kinetic solutions.  The solution is nonnegative,
conserves mass, attains the initial datum, and
satisfies
\begin{equation*}
 \sup_{t\leq T}\int_{\T^2}H(\rho(t))
 +\int_0^T\mathcal D_V(\rho(t))\,\dd t
 +\int_0^T\mathcal I(\rho(t))\,\dd t<\infty
 \qquad\text{almost surely}.
\end{equation*}
\item For every predictable control $g$ satisfying, for some deterministic
$N<\infty$,
\begin{equation*}
 \int_0^T\!\int_{\T^2}|g|^2\leq N
 \qquad\text{almost surely},
\end{equation*}
the equation obtained by adding
$-\diver(\sqrt\rho\,P_Kg)$ is globally well posed in the same class, with
the same pathwise bounds.  For fixed $\rho_0$, a Borel
function of the finite-dimensional driving path represents the uncontrolled
solution, and its Cameron--Martin shifts represent the solutions of the
controlled equations.
\item If $K=K(\varepsilon)\to\infty$ and
$\varepsilon K(\varepsilon)^4\to0$, the entropy maximum, physical
dissipation, and Fisher information in \textup{(i)} are tight in probability,
uniformly over controls with a fixed almost-sure $L^2$ bound.
\end{enumerate}
\end{theorem}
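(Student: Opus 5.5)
The proof follows the Fehrman--Gess scheme for stochastic kinetic solutions of conservative SPDEs with square-root noise. The Coulomb drift is treated as a perturbation controlled by the free energy $\mathcal F_V$. The argument has three stages: a regularized, truncated problem with uniform a priori estimates, passage to the limit by stochastic compactness, and pathwise uniqueness via kinetic contraction.

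\textbf{A priori estimates.} We replace $\sqrt{\rho}$ by a smooth bounded Lipschitz approximation $\sigma_\delta(\rho)$, mollify $V$ to $V_\delta$, and truncate the density in the drift. This regularized equation has finite-dimensional noise (frequencies $\le K$), so it is well posed by standard results for nondegenerate quasilinear SPDEs.

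Applying Itô's formula to $\mathcal F_V(\rho)$ gives
\[
\dd\mathcal F_V(\rho)=-\mathcal D_V(\rho)\,\dd t+\sqrt\varepsilon\,\dd\mathcal M_t+\varepsilon\,\mathcal R_K(\rho)\,\dd t .
\]
Three features of this identity matter.

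1. The rotational part $\J\nabla\Green$ drops out, since $\int \rho\,\J\nabla\Green*\rho\cdot\nabla(\log\rho-\lambda\cos\alpha\,\Green*\rho)=0$ by divergence-freeness and antisymmetry.
2. The Stratonovich–Itô correction is explicit. With the exact square root and noise basis $e_k$, $|k|\le K$, it contributes terms of order $\varepsilon\sum_{|k|\le K}|\nabla e_k|^2\int\rho^{-1}\cdots$. The Stratonovich form is chosen so that the $\log\rho$ singularity cancels in the entropy part, leaving a term bounded by $C\varepsilon K^{4}$ (for the entropy) plus $\varepsilon K^{2}\lambda M$ (for the Green part).
3. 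The martingale term is controlled by the BDG inequality.

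Coercivity comes from the logarithmic Hardy–Littlewood–Sobolev (Moser–Trudinger) inequality on $\T^2$. Under \eqref{eq:subcritical} it gives
\[
\int H(\rho)\le C_M\bigl(\mathcal F_V(\rho)+1\bigr).
\]
Similarly, the Gagliardo–Nirenberg inequality $\|\rho\|_{L^2}^2\le C\|\rho\|_{L^1}\mathcal I(\rho)$, combined with subcriticality, gives
\[
\mathcal I(\rho)\le C\bigl(\mathcal D_V(\rho)+\mathcal F_V(\rho)+1\bigr).
\]
The key step is to write $\mathcal D_V=\mathcal I-2\lambda\cos\alpha\int\rho^2+\dots$ (using $-\Delta\Green=\delta-1$), and to control $\int\rho^2$ by $\eta\,\mathcal I$ plus entropy via an $\varepsilon$-regularity/level splitting $\rho=\min(\rho,L)+(\rho-L)_+$. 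Subcriticality enters precisely here.

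For controls $g$, the additional term $\int\sqrt\rho P_Kg\cdot\nabla(\log\rho-\dots)$ is bounded by Cauchy–Schwarz as
\[
\tfrac12\mathcal D_V+C\|g\|_{L^2}^2,
\]
and $\|g\|_{L^2}^2$ integrates to at most $N$. Stopping-time and Gronwall arguments then yield bounds on
\[
\sup_t\int H(\rho)+\int_0^T(\mathcal D_V+\mathcal I)
\]
in probability. These bounds are uniform in $\delta$, in $g$, and — when $\varepsilon K^4$ is bounded — in $\varepsilon$, which gives (iii).

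\textbf{Existence and uniqueness.} The Fisher-information bound and an $H^{-s}$ time-regularity estimate give tightness in $L^1(\QT)$. By Jakubowski–Skorokhod we pass to the limit in the kinetic formulation (Definition~\ref{def:stochastic-kinetic-solution}). The drift $\rho\,V*\rho$ converges because $V*\rho$ is bounded in $W^{1,p}$ for all $p<\infty$ when $\rho\in L\log L$, with $\rho\in L^2_tH^1$-type integrability coming from $\mathcal I$.

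Pathwise uniqueness is the step I expect to be the main obstacle. I would follow \cite{FG24}: double the kinetic variables, cut off near $\rho=0$ and $\rho=\infty$ using the entropy dissipation (the Fisher information controls the defect measure near zero), and estimate the interaction commutator
\[
\int|\rho_1-\rho_2|\,|V*(\rho_1-\rho_2)|\,\nabla\rho_2 .
\]
For this commutator I would use the bound $\|V*f\|_{L^\infty}\lesssim\|f\|_{L^1}^{1/2}\|f\|_{L^{p}}^{1/2}$, or better a log-Lipschitz estimate. This gives an Osgood-type inequality for $\|\rho_1-\rho_2\|_{L^1}$, with a coefficient integrable in time thanks to $\int_0^T\mathcal I<\infty$. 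Uniqueness plus existence of martingale solutions then yields pathwise strong well-posedness via Gyöngy–Krylov, together with the Borel representation $\rho=\Phi(W_K)$. Since $P_Kg$ is finite-dimensional with $L^2$-bounded norm, Girsanov makes the controlled equations equivalent to Cameron–Martin shifts $\Phi(W_K+\int P_Kg)$, which proves (ii).
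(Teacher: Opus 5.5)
\paragraph{Main gap: the It\^o correction in the free-energy estimate.}
Your It\^o computation for $\mathcal F_V$ drops a term. The It\^o drift $\frac{\varepsilon F_{1,K}}8\Delta\log\rho$ must be paired with the whole first variation $\log\rho-\lambda_V\cos(\alpha_V)\Green*\rho$. The entropy pairing does cancel against the entropy Hessian. The Coulomb pairing, however, leaves
\[
 \frac{\varepsilon\lambda_V\cos(\alpha_V)F_{1,K}}8\int_{\T^2}(\rho-M)\log\rho
 =\frac{\varepsilon\lambda_V\cos(\alpha_V)F_{1,K}}{16}\iint_{\T^2\times\T^2}
 (\rho(x)-\rho(y))(\log\rho(x)-\log\rho(y))\dd x\dd y .
\]
Only the Hessian of the Green quadratic form is bounded by $\varepsilon F_{1,K}\lambda_VM$, and it is in fact nonpositive.

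In the attractive case the displayed term is nonnegative, so it enters $\dd\mathcal F_V$ with the wrong sign. It contains $M\int\log_-\rho$. Neither the entropy, nor $\mathcal D_V$, nor $\mathcal I$ controls this quantity: finite-Fisher densities can vanish on open sets, and the bound is not uniform along positive approximations. So the free-energy estimate does not close. Falling back on the entropy alone does not help either. Absorbing $\lambda_V\cos(\alpha_V)\|\rho\|_2^2$ then costs $e^{C\int H(\rho)}$, which gives only local-in-time bounds.

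\paragraph{How the paper handles it.}
The paper uses a localized energy $\mathcal F_S$. The interaction acts only on $h_S(\rho)=\int_0^\rho\theta_S$, which is switched off below $e^{-S}$, and the potential is shifted by $\Lambda_*$ so that $\Green*h_S(\rho)+\Lambda_*\geq1$. With this choice:
\begin{itemize}
 \item the $\theta_S'$ term from the It\^o drift cancels exactly against the local interaction Hessian;
 \item the residual logarithmic pairing is at most $MS+\int H(\rho)+C$;
 \item the remaining Hessian terms are nonpositive.
\end{itemize}
The price is a mismatch $\mu_S-\mu$, plus a nonvanishing rotational term, that is small only on an entropy sublevel with $S\geq L(1+R)$. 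This forces dyadic entropy-level stopping with $S_n$ frozen on each stage, summable switching costs, and a Gr\"onwall argument in the running entropy maximum with coefficient $\varepsilon F_{1,K}L$. The resulting exponential tail is also what yields (iii). Without such a device, you have not proved (i) in the attractive regime, nor (iii).

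\paragraph{Secondary problems.}
\begin{itemize}
 \item An $H^{-s}$ time-regularity bound is not uniform in the regularization, because of the singular correction near vacuum. The paper instead proves tightness for the density cutoffs $p_\delta(\rho)$.
 \item Your level splitting gives $\mathcal I\leq2\mathcal D_V+Ce^{C\int H(\rho)}$, not a linear bound. At fixed $\varepsilon$ there is therefore no bound on $\E\int_0^T\mathcal I$. This is why the velocity-tail conditions of the Fehrman--Gess definition must be imposed on energy stops rather than globally.
 \item In the uniqueness step, an $L^\infty$ bound of the form $\|V*f\|_\infty\lesssim\|f\|_1^{a}\|f\|_p^{1-a}$ with $a\le 1/2$ only gives a sublinear power of $\delta$, for which Osgood fails. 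Log-Lipschitz estimates would need bounded densities. What works is endpoint HLS and Sobolev estimates with explicit constants $\theta^{-1/2}$ and $\sqrt q$, giving
\[
 \int|\nabla\rho^2|\,|V*(\rho^1-\rho^2)|+\lambda_V|\cos(\alpha_V)|\int\rho^2|\rho^1-\rho^2|
 \lesssim\bigl(1+\mathcal I(\rho^1)+\mathcal I(\rho^2)\bigr)\,\delta\log\bigl(e(1+2M)/\delta\bigr).
\]
 \item $L\log L$ gives only $V*\rho\in W^{1,1}$, not $W^{1,p}$ for all $p<\infty$.
\end{itemize}
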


The solution class in Definition~\ref{def:stochastic-kinetic-solution}
uses a global kinetic identity and a high-velocity condition localized by
physical energy; its relation to \cite[Definition~3.4]{FG24} is explained
in Remark~\ref{rem:comparison-fg-definition}.  No function-valued limit at
fixed $\varepsilon$ as $K\to\infty$ is asserted.

We prove the large deviation principle by the weak-convergence method
\cite{BDM08,DE}, following Fehrman and Gess
\cite[Sections~6.2--6.3]{FG23}.  The Brownian variational representation
\cite{MP} introduces predictable controls $g$ with quadratic cost
$\frac12\|g\|_{L^2((0,T)\times\T^2)}^2$.  A Cameron--Martin shift adds
the drift $-\diver(\sqrt\rho\,P_Kg)$.  As the noise vanishes and
the cutoff is removed, the limiting controlled equation is the
deterministic skeleton
\begin{equation}\label{eq:skeleton-intro}
 \partial_t\rho
 =\Delta\rho-\diver\bigl(\rho(V*\rho)\bigr)
 -\diver(\sqrt\rho\,g),
 \qquad \rho|_{t=0}=\rho_0.
\end{equation}
The next proposition provides the well-posedness and compactness needed
to identify these limits.  We denote the solution associated with $g$ by
$\rho[g]$.

\begin{proposition}[Skeleton well-posedness and stability]
\label{prop:main-skeleton}
Let $\rho_0\in L^1(\T^2)$ be nonnegative with finite entropy, and assume
\eqref{eq:subcritical}.  For every
$g\in L^2((0,T)\times\T^2;\R^2)$,
equation \eqref{eq:skeleton-intro} has a unique global
entropy solution $\rho[g]$ in the sense of
Definition~\ref{def:entropy-skeleton}.  Equivalently, it is the unique
renormalized kinetic solution with the exact parabolic defect in this
class; see Remark~\ref{rem:weak-kinetic-equivalence}.  For the fixed
initial datum $\rho_0$,
\begin{equation*}
 \begin{aligned}
 g_n\weakto g\quad\text{in }L^2((0,T)\times\T^2;\R^2)
 \quad\Longrightarrow\quad
 \rho[g_n]\longrightarrow\rho[g]
 \quad\text{in }L^1((0,T)\times\T^2).
 \end{aligned}
\end{equation*}
Consequently, for every $N<\infty$,
\begin{equation}\label{eq:main-skeleton-compactness}
 \left\{\rho[g]:
 \|g\|_{L^2((0,T)\times\T^2)}^2\leq N\right\}
 \quad\text{is compact in }L^1((0,T)\times\T^2).
\end{equation}
\end{proposition}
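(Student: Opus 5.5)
We will follow the Fehrman--Gess scheme for the skeleton equation, adapted to the Coulomb drift. The first step is an \emph{a priori} estimate, uniform over $\|g\|_{L^2}^2\le N$. Formally, test \eqref{eq:skeleton-intro} with $\log\rho-\lambda_V\cos(\alpha_V)\Green*\rho$, the variational derivative of $\mathcal F_V$. The rotational part of $V$ does not contribute, because $\J\nabla\Green*\rho$ is divergence free and orthogonal to $\nabla(\Green*\rho)$. The control term gives
\[
 \int\sqrt\rho\,g\cdot\bigl(2\nabla\sqrt\rho-\lambda_V\cos(\alpha_V)\sqrt\rho\,\nabla\Green*\rho\bigr),
\]
which Young's inequality bounds by $\tfrac12\mathcal D_V(\rho)+\tfrac12\|g\|_{L^2}^2$. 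This yields
\[
 \sup_t\mathcal F_V(\rho(t))+\tfrac12\int_0^T\mathcal D_V\le \mathcal F_V(\rho_0)+\tfrac12 N.
\]
Under \eqref{eq:subcritical}, the periodic logarithmic Hardy--Littlewood--Sobolev (Moser--Trudinger) inequality shows that $\mathcal F_V$ controls $\int H(\rho)$ up to a constant depending on $M$. When $\cos(\alpha_V)\le0$ this is trivial, since the interaction term then has a good sign. So the entropy is bounded uniformly. It remains to recover $\int_0^T\mathcal I(\rho)$ from $\int_0^T\mathcal D_V$. The cross term $\int\nabla\sqrt\rho\cdot\sqrt\rho\,\nabla\Green*\rho$ equals $\tfrac12\int\rho^2$ after integrating by parts. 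By Gagliardo--Nirenberg, $\int\rho^2\le C\|\sqrt\rho\|_{L^2}^2\|\nabla\sqrt\rho\|_{L^2}^2+CM^2$, and using entropy splitting (high/low level sets) together with bounded entropy we obtain $\int\rho^2\le \delta\,\mathcal I(\rho)+C_\delta$. Hence $\int_0^T\mathcal I\le C(N,\rho_0)$, which gives $\sqrt\rho\in L^2H^1$ and $\rho\in L^2L^2$, so $\rho\nabla\Green*\rho\in L^1$.

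The second step is existence, by regularization. We mollify the kernel and the initial datum, replace $\sqrt\rho$ by a smooth, Lipschitz approximation, and solve the resulting classical problem. The estimates above are stable under these regularizations, since the entropy identity holds with mollified $\Green$ and the HLS constant is uniform. For compactness we use Aubin--Lions-type arguments in the form used by Fehrman--Gess. The uniform bound $\sqrt\rho\in L^2H^1$ gives spatial regularity. Time regularity comes from the equation in $W^{-1,1}$-type spaces: the flux $\sqrt\rho\,g\in L^1$, $\nabla\rho=2\sqrt\rho\nabla\sqrt\rho\in L^1$, and $\rho V*\rho\in L^1$. Together these give strong $L^1$ compactness. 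Passing to the limit in the weak/entropy formulation of Definition~\ref{def:entropy-skeleton} then works term by term. The product $\sqrt\rho\,g$ converges weakly because $\sqrt{\rho_n}\to\sqrt\rho$ strongly in $L^2$. The nonlinear term $\rho(V*\rho)$ converges because $\rho_n\to\rho$ in $L^{2-}$ and the Calder\'on--Zygmund/HLS bounds give compactness of $V*\rho_n$. The entropy inequality passes to the limit by lower semicontinuity.

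The third step is uniqueness, which I expect to be the main obstacle: the control $g$ is merely $L^2$, so the noise-type term $\diver(\sqrt\rho\,g)$ is non-Lipschitz. We will follow the kinetic doubling-of-variables argument of Fehrman--Gess. Take two renormalized kinetic solutions with the exact parabolic defect, write their kinetic functions $\chi^1,\chi^2$, and estimate $\int\chi^1(1-\chi^2)+\chi^2(1-\chi^1)=\|\rho^1-\rho^2\|_{L^1}$ after convolution. The control term produces commutators involving $\sqrt\xi-\sqrt\eta$ on the diagonal. As in FG23, these are handled through the $\tfrac12$-Hölder continuity of the square root together with cutoffs near $\xi=0$ and at high velocity, using the $L^2$ bound on $g$ and the exact defect measure. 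The interaction term contributes $\int|\rho^1-\rho^2|\,|V*\rho|$-type differences plus $\int\rho^1|V*(\rho^1-\rho^2)|$. The second is controlled by the $L^1$ norm, but only with a logarithmic loss, since $V*$ is not bounded on $L^1\to L^\infty$. We therefore plan to use the integrability $\rho\in L^2L^2$ (indeed $L^1_tL^p$ for all $p$ via $\sqrt\rho\in L^2H^1$) and an Osgood/Yudovich-type Gronwall argument on $\|\rho^1-\rho^2\|_{L^1}$, with time localization to absorb the non-integrable singular piece.

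The final step is stability and compactness. Given $g_n\weakto g$, step one gives uniform bounds, and the compactness argument of step two yields $\rho[g_n]\to\bar\rho$ in $L^1$ along a subsequence. The limit passage identifies $\bar\rho$ as an entropy solution with control $g$; weak convergence suffices because $\sqrt{\rho_n}$ converges strongly. Uniqueness then gives $\bar\rho=\rho[g]$, so the whole sequence converges. Compactness \eqref{eq:main-skeleton-compactness} follows because the $L^2$ ball is weakly sequentially compact and the map $g\mapsto\rho[g]$ is sequentially continuous on it.
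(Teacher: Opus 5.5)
Your energy estimate, regularized construction, and stability/compactness argument follow the paper. That covers testing with $\log\rho-\lambda_V\cos(\alpha_V)\Green*\rho$, Moser--Trudinger coercivity, recovery of $\mathcal I$ from $\mathcal D_V$ by entropy-sublevel interpolation, Aubin--Lions--Simon with a $W^{-1,1}$ time bound, and identification of the limit plus uniqueness. Passing $\rho_n(V*\rho_n)$ to the limit by strong convergence below the critical exponent plus HLS, instead of the paper's odd-kernel symmetrization, is a valid alternative given the uniform Fisher bounds. The pairing must be done in mixed norms (e.g.\ $L^\infty_tL^1_x\cap L^2_{t,x}$), since $L^{2-}_{t,x}$ convergence against an $L^2_{t,x}$ bound does not balance the time exponents.

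The gap is in uniqueness, on which the stability claim also rests. First, you compare only exact-defect kinetic solutions, whereas the statement asserts uniqueness among entropy solutions of Definition~\ref{def:entropy-skeleton}. You need that every entropy solution is such a kinetic solution. This follows from the Fehrman--Gess weak--kinetic equivalence once you note that $g+\sqrt\rho\,(V*\rho)\in L^2(Q_T)$ and $\nabla\rho\cdot(V*\rho),\ \rho\diver(V*\rho)\in L^1(Q_T)$.

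Second, and more seriously, you misidentify the Coulomb error. Treat $\diver(\rho^i(V*\rho^i))\in L^1(Q_T)$ as a source in the doubled identity; its limit is $-\int\operatorname{sgn}(\rho^1-\rho^2)\diver\bigl(\rho^1(V*\rho^1)-\rho^2(V*\rho^2)\bigr)$. Split the flux as $(\rho^1-\rho^2)(V*\rho^1)+\rho^2\,V*(\rho^1-\rho^2)$. The first piece contributes exactly zero by the Kato chain rule $\operatorname{sgn}(u)\diver(uA)=\diver(|u|A)$; it is not a $|\rho^1-\rho^2|\,|V*\rho|$ error. The second piece leaves
\[
 \int\operatorname{sgn}(\rho^2-\rho^1)\,\nabla\rho^2\cdot V*(\rho^1-\rho^2)
 +\lambda_V\cos(\alpha_V)\int\rho^2|\rho^1-\rho^2| .
\]
The factor is $\nabla\rho^2$, not $\rho^1$, and this changes the analysis. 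Your term satisfies $\int\rho^1|V*(\rho^1-\rho^2)|\le\||V|*\rho^1\|_{L^\infty}\delta\lesssim\|\rho^1\|_{L^p}\delta$ for any $p>2$, with $\|\rho^1\|_{L^p}\lesssim_{p,M}1+\mathcal I(\rho^1)^{1-1/p}\in L^1(0,T)$. So it needs no logarithm at all. The logarithmic loss actually comes from two sources: $\nabla\rho^2=2\sqrt{\rho^2}\nabla\sqrt{\rho^2}$ lies only in $L^{2-}_x$, and $\rho^2$ need not be bounded.

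What is missing is the quantitative estimate that closes Osgood, namely Lemma~\ref{lem:log-interaction}. It combines three bounds:
\begin{itemize}
\item the endpoint HLS bound $\|V*f\|_{L^{2/(1-2\theta)}}\lesssim\theta^{-1/2}\|f\|_{L^{1/(1-\theta)}}$;
\item the Sobolev bound $\|\sqrt{\rho^2}\|_{L^{1/\theta}}\lesssim_M\theta^{-1/2}\bigl(1+\mathcal I(\rho^2)^{(1-2\theta)/2}\bigr)$, with its $\sqrt q$ constant;
\item the interpolation $\|\rho^1-\rho^2\|_{L^{1/(1-\theta)}}\le\delta^{1-2\theta}\|\rho^1-\rho^2\|_{L^2}^{2\theta}$.
\end{itemize}
H\"older's inequality with exponents $2$, $1/\theta$, $2/(1-2\theta)$ then bounds both terms by $\theta^{-1}\bigl(1+\sum_i\mathcal I(\rho^i)\bigr)\delta^{1-2\theta}$. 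The powers of the Fisher information add up to exactly one. Choosing $\theta=1/(4\log(e(1+2M)/\delta))$ converts this into $\bigl(1+\sum_i\mathcal I(\rho^i)\bigr)\,\delta\log(e(1+2M)/\delta)$.

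Since $\int_0^T\mathcal I<\infty$ is the only time integrability available, this exact calibration is what puts the Osgood weight in $L^1(0,T)$. No time localization is needed or helpful. Invoking $\rho\in L^1_tL^p_x$ for all $p$ without tracking the $p$-dependence of the constants, plus an unspecified ``time localization'' of a ``non-integrable piece'', does not yet produce an inequality $\delta(t)\le\delta(0)+C\int_0^t a\,\delta\log(C/\delta)$ with $a\in L^1(0,T)$. So uniqueness, and with it the identification of the limit in your stability step, is not established as proposed.
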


The compactness in \eqref{eq:main-skeleton-compactness} verifies one
of the two conditions in the weak-convergence method.  The other is
convergence of the controlled stochastic solutions to the skeleton
when the controls converge in distribution in the weak $L^2$ topology.
Theorem~\ref{thm:controlled-convergence} proves this under
the scaling
\begin{equation}\label{eq:ldp-scaling-intro}
 K(\varepsilon)\longrightarrow\infty,
 \qquad
 \varepsilon K(\varepsilon)^4\longrightarrow0.
\end{equation}
The first limit recovers the unprojected control; the second removes the
finite-mode It\^o corrections.  Together, skeleton compactness and
controlled convergence yield the following large deviation principle by
\cite[Theorem~6]{BDM08}, as in \cite[Theorems~6.6 and~6.8]{FG23}.

\begin{theorem}[Large deviation principle]\label{thm:ldp-intro}
Let $\rho_0\geq0$ be deterministic with finite entropy and mass $M$.
Assume \eqref{eq:subcritical} and \eqref{eq:ldp-scaling-intro}.  The laws of
$\rho^{\varepsilon}$ satisfy a large deviation principle on
$L^1((0,T)\times\T^2)$ with speed $\varepsilon^{-1}$ and good rate function
\begin{equation}\label{eq:rate-intro}
 I_{\rho_0}(\rho)
 =\frac12\inf\left\{
 \|g\|_{L^2((0,T)\times\T^2)}^2:\rho=\rho[g]
 \right\},
\end{equation}
where the infimum of the empty set is $+\infty$.  The large deviation
principle is understood in the usual open- and closed-set sense.  The rate
function also admits the dual dynamical representation of
Proposition~\ref{prop:dual-rate}.
\end{theorem}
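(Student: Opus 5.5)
The plan is to apply the Budhiraja--Dupuis--Maroulas criterion \cite[Theorem~6]{BDM08}, in the form used in \cite[Theorems~6.6 and~6.8]{FG23}. By Theorem~\ref{thm:main-spde}(ii), for fixed $\rho_0$ there is a Borel map $\mathcal G^\varepsilon$ of the finite-dimensional driving path such that $\rho^\varepsilon=\mathcal G^\varepsilon(\sqrt\varepsilon W_{K(\varepsilon)})$. Moreover, for any predictable control $g$ with $\|g\|_{L^2}^2\le N$ almost surely, the Cameron--Martin shift $\mathcal G^\varepsilon(\sqrt\varepsilon W_{K(\varepsilon)}+\int_0^\cdot P_{K(\varepsilon)}g)$ is the stochastic kinetic solution $\rho^{\varepsilon,g}$ of the controlled equation. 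We also set $\mathcal G^0(\int_0^\cdot g):=\rho[g]$, which is well defined by Proposition~\ref{prop:main-skeleton}. Since $P_K$ acts on the cylindrical Wiener process, the projection is absorbed into the map. We may therefore regard everything as a measurable functional of a cylindrical Wiener process on $L^2(\T^2;\R^2)$ with Cameron--Martin space $L^2((0,T)\times\T^2;\R^2)$, which is exactly the setting of the variational representation \cite{MP}.

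The criterion has two conditions.

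\textbf{Compactness.} For each $N$, the set $\{\mathcal G^0(\int_0^\cdot g):\|g\|_{L^2}^2\le N\}$ must be compact in $L^1((0,T)\times\T^2)$. This is \eqref{eq:main-skeleton-compactness}. Combined with the weak-to-strong continuity $g_n\weakto g\Rightarrow\rho[g_n]\to\rho[g]$ from Proposition~\ref{prop:main-skeleton}, it also shows that $I_{\rho_0}$ has compact sublevel sets. Indeed, if $I_{\rho_0}(\rho_n)\le N/2$, pick near-optimal $g_n$ with $\|g_n\|^2\le N+1/n$. Extract a weakly convergent subsequence $g_n\weakto g$. Then $\rho_n\to\rho[g]$, and weak lower semicontinuity of the norm gives $I_{\rho_0}(\rho[g])\le N/2$. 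So $I_{\rho_0}$ is a good rate function.

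\textbf{Controlled convergence.} If $g^\varepsilon$ are predictable with $\|g^\varepsilon\|^2\le N$ almost surely and $g^\varepsilon\to g$ in distribution in the weak $L^2$ topology, then $\rho^{\varepsilon,g^\varepsilon}\to\rho[g]$ in distribution in $L^1$. This is Theorem~\ref{thm:controlled-convergence} under \eqref{eq:ldp-scaling-intro}, which I take as given.

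With both conditions verified, \cite[Theorem~6]{BDM08} yields the Laplace principle with rate $I_{\rho_0}$ and speed $\varepsilon^{-1}$. Because $I_{\rho_0}$ is good and $L^1((0,T)\times\T^2)$ is Polish, the Laplace principle is equivalent to the LDP in the open- and closed-set sense \cite{DE}. The dual representation is a separate statement, Proposition~\ref{prop:dual-rate}, and is simply cited here.

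The main obstacle is bookkeeping rather than analysis. One must check that the hypotheses of \cite{BDM08} match the finite-mode setting: the controls enter only through $P_{K(\varepsilon)}g$, and the maps $\mathcal G^\varepsilon$ are defined on the full cylindrical path space. This follows from Theorem~\ref{thm:main-spde}(ii), because the solution depends only on the projected path. A second check is that the bound $\|g^\varepsilon\|^2\le N$, in the form used in the representation, coincides with the class of controls in Theorem~\ref{thm:main-spde}(ii), so that uniqueness identifies the shifted map with the controlled solution. The genuinely hard analysis, namely tightness from Theorem~\ref{thm:main-spde}(iii) and identification of the limit via uniqueness of entropy solutions, is already contained in Theorem~\ref{thm:controlled-convergence} and Proposition~\ref{prop:main-skeleton}.
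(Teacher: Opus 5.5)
Your proposal is correct and follows essentially the same route as the paper: skeleton compactness from Proposition~\ref{prop:main-skeleton} and controlled convergence from Theorem~\ref{thm:controlled-convergence} feed the criterion of \cite{BDM08} as in \cite{FG23}, and on the Polish space $L^1(Q_T)$ goodness of the rate turns the Laplace principle into the LDP. The only cosmetic difference is in the goodness step: the paper shows the infimum in $I_{\rho_0}$ is attained and identifies $\{I_{\rho_0}\le a\}$ with the image of $S_{2a}$, while you verify sequential compactness of the sublevel sets directly via near-optimal controls. The two arguments are equivalent.
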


\subsection{Related Literature}

Dean \cite{D96} derived an evolution equation for the empirical density
of interacting diffusions, while Kawasaki \cite{K98} developed a
dynamic density-functional description of fluctuating densities.  In
the context of chemotaxis, Chavanis \cite{Cha10} introduced a stochastic
Keller--Segel model accounting for fluctuations due to a finite number
of particles.  For a review of these formulations and their connections
with stochastic density functional theory and fluctuating hydrodynamics,
we refer to Illien \cite{Ill25}.

The mathematical analysis depends on the spatial structure of the noise.
For spatially white conservative noise, Konarovskyi, Lehmann and von
Renesse \cite{KLvR19,KLvR20} characterized the admissible particle-valued
solutions of the Dean--Kawasaki martingale problem.  Regularized models
were derived and analyzed by Cornalba, Shardlow and Zimmer
\cite{CSZ19,CSZ20}.  A related question is how accurately stochastic
density equations approximate particle-density fluctuations.
Quantitative results were obtained by Cornalba and Fischer
\cite{CF23arma} and, for weakly interacting particles, by Cornalba,
Fischer, Ingmanns and Raithel \cite{CFIR26}.  Weak-error estimates for
nonlinear SPDE approximations and smooth mean-field interactions
\cite{DKP24,DJP25} provide further results in this direction.
Damnjanovi\'c, Djurdjevac and Perkowski \cite{DDP26} studied how the
failure of positivity preservation affects weak approximation by a
spectral regularization of the Dean--Kawasaki equation, and supported
their error analysis with numerical experiments.

Our analysis builds on the kinetic theory developed by Fehrman and Gess
for nonlinear diffusion with conservative noise \cite{FG19}.  Their
work \cite{FG24} treats the
exact square-root coefficient with spatially correlated noise, including
kinetic well-posedness and common-noise comparison.  In \cite{FG23}, they
established well-posedness and weak-to-strong stability of the controlled
skeleton, and used these properties to prove small-noise large deviations.
For nonlocal interactions, \cite{WWZ22,WZ24} obtain well-posedness and
large deviations under Ladyzhenskaya--Prodi--Serrin conditions on the
kernel and additional integrability assumptions on its divergence.
Nonzero two-dimensional Coulomb kernels lie in
$L^{2,\infty}\setminus L^2$; when the radial component is present, their
divergence also contains a Dirac mass.  They therefore fall outside those
assumptions.

The approximation and compactness argument of Sun, Wu and the author
\cite[Theorem~1.2(2) and Section~3]{JSW26} provides a probabilistically
weak construction for Coulomb interactions.  In the version cited here,
\cite[Theorem~1.3]{JSW26} proves a large deviation upper bound and a lower
bound on regular trajectories for a family with a regularized square-root
coefficient.  The present paper establishes the a priori estimates and
comparison needed for pathwise well-posedness and a full large deviation
principle with the exact square-root coefficient throughout the
mass-subcritical regime.

For the planar parabolic--elliptic Keller--Segel equation, the competition
between entropy and logarithmic attraction leads to the critical mass
$8\pi$; see \cite{BDP06,DP04,JL92,Wei18}.  Its variational
structure is closely related to the sharp logarithmic
Hardy--Littlewood--Sobolev inequality of Carlen and Loss \cite{CL92}; on
the torus, we use the compact-manifold exponential inequality of Fontana
\cite{Fon93}.  The critical-mass dynamics were studied in \cite{BCM08}.
Uniqueness in a free-energy class was established by Ega\~na Fern\'andez
and Mischler \cite{EFM16}, while Bedrossian and Masmoudi \cite{BM14}
developed local well-posedness and Lipschitz dependence for
Keller--Segel and two-dimensional Navier--Stokes with measure-valued
initial data, subject in the Keller--Segel case to subcritical atomic
masses.  For a planar Keller--Segel equation with rotational flux,
Espejo and Wu \cite{EW20} identified the threshold
$8\pi/(\lambda_V\cos\alpha_V)$ when $\lambda_V\cos\alpha_V>0$.
Here we establish well-posedness in this classical subcritical range
for the stochastic equation and its skeleton with $L^2$ controls.

Stochastic Keller--Segel equations have also been studied with other
noise structures.  Huang and Qiu \cite{HQ21} proved well-posedness and a
microscopic derivation for a common-noise model.  Misiats, Stanzhytskyi
and Topaloglu \cite{MST22} studied global existence and blow-up for
stochastic perturbations of Keller--Segel, while Mayorcas and
Toma\v{s}evi\'c \cite{MT23} considered conservative transport noise.
Flandoli, Galeati and Luo \cite{FGL24} obtained quantitative scaling
limits for transport-noise SPDEs, with an application to Keller--Segel.
The transport-noise models among these works have coefficients linear in
the density, unlike the conservative square-root noise considered here.
For related models, Martini and Mayorcas \cite{AA25,AA24} established local
well-posedness and small-noise results for additive-noise approximations
of Keller--Segel--Dean--Kawasaki dynamics.

Entropy and Fisher-information estimates also play an important role in
the analysis of singular interacting particle systems.
Fournier, Hauray and Mischler \cite{FHM14} proved propagation of chaos
for the viscous vortex model using entropy and Fisher-information
estimates.  For Keller--Segel particles, Fournier and Jourdain
\cite{FJ17} studied the particle approximation and collisions, and Tardy
\cite{Tar24} proved subsequential convergence of empirical measures in
the subcritical and critical cases.  Quantitative mean-field estimates
for singular interactions were developed in \cite{BJW19,JW18,Ser20}.
Wang, Zhao and Zhu \cite{WZZ23} proved Gaussian fluctuation limits for
singular-kernel systems, including the viscous vortex model, and Chen and
Ge \cite{CG25} established a sample-path large deviation principle for
stochastic vortex dynamics.  These particle-number limits differ from
the joint small-noise and Fourier-cutoff limit studied here.  The latter
has the quadratic action with mobility $\rho$ that appears in
macroscopic fluctuation theory \cite{BDGJL}.

\subsection{Organization and notation}
\label{subsec:organization-notation}

The proof relies on an entropy estimate and an $L^1$ comparison.
For nonattractive interactions, the deterministic drift dissipates the
ordinary entropy;
for attractive interactions, the Keller--Segel free energy is coercive
under \eqref{eq:subcritical} and controls the entropy and, through its
dissipation, the Fisher information.  In the stochastic equation, the
entropy Hessian cancels the singular It\^o drift as in Fehrman and Gess.
The Coulomb energy introduces an additional logarithmic term that the
entropy does not control near vacuum.  Localizing and shifting the
interaction energy produces a second Hessian cancellation, and stopping
at successive entropy levels yields global estimates.

The second estimate is a logarithmic $L^1$ comparison.  The near-endpoint
Coulomb bound gives an Osgood inequality for both the stochastic equation
and the skeleton, using only mass conservation and integrated Fisher
information.  These estimates yield the compactness, uniqueness, and
control stability needed to verify the weak-convergence criterion for
large deviations, following \cite{FG23}.

The paper is organized as follows.
Section~\ref{sec:analytic} collects the Coulomb estimates, consequences
of mass conservation and Fisher-information bounds, and the Osgood lemma
used for both equations.  Section~\ref{sec:spde}
develops the free-energy estimates and stochastic cancellations, then
constructs stochastic kinetic solutions.  Section~\ref{sec:skeleton}
gives the corresponding skeleton construction.
Section~\ref{sec:comparison} proves the Coulomb comparison estimate for
both equations and
deduces pathwise uniqueness, the stochastic solution map, and skeleton
stability.  Section~\ref{sec:full-ldp} establishes controlled convergence
and the large deviation principle.  Appendix~\ref{app:approximation}
justifies the regularizations, nonlocal It\^o formula, and compactness
arguments; Appendix~\ref{app:osgood} proves the Osgood lemma.

We use the following conventions throughout.  For the fixed time horizon
$T>0$, set
\[
 Q_T=(0,T)\times\T^2,
 \qquad r_+=\max\{r,0\}.
\]
For $r>0$, let
$\log_+r=\max\{\log r,0\}$ and
$\log_-r=\max\{-\log r,0\}$, with
$\log_+0=0$ and $\log_-0=+\infty$.  We write $A\lesssim B$ if
$A\leq CB$ for a constant independent of the approximation, cutoff, and
solution under consideration; subscripts indicate the permitted dependence
of $C$.  The calligraphic $\mathcal I$ always denotes the Fisher
information in \eqref{eq:fisher-information}, whereas $I_{\rho_0}$ denotes
the large-deviation rate function in \eqref{eq:rate-intro}.

The torus Green function and the rotation matrix in
\eqref{eq:coulomb-decomposition} are normalized by
\begin{equation}\label{eq:green-normalization}
 -\Delta\Green=\delta_0-1,
 \qquad \int_{\T^2}\Green=0,
 \qquad
 \J=\begin{pmatrix}0&-1\\1&0\end{pmatrix}.
\end{equation}

We now specify the Fourier truncation used in
\eqref{eq:dk-exact}.  Choose
$\mathbb Z^2_+\subset\mathbb Z^2\setminus\{0\}$ containing exactly one
element of each pair $\{m,-m\}$.  For $K\in\mathbb N$, set
\begin{equation}\label{eq:fourier-cutoff}
 \begin{gathered}
 \mathcal K_K=\{m\in\mathbb Z^2_+:|m|\leq K\},
 \qquad
 \mathcal E_K=\{0\}\cup(\mathcal K_K\times\{\mathrm c,\mathrm s\}),\\
 e_0=1,\qquad
 e_{m,\mathrm c}(x)=\sqrt2\cos(2\pi m\cdot x),\qquad
 e_{m,\mathrm s}(x)=\sqrt2\sin(2\pi m\cdot x).
 \end{gathered}
\end{equation}
Let $P_K$ be the componentwise $L^2$-orthogonal projection onto the span of
these modes.  On a stochastic basis
$(\Omega,\mathcal F,(\mathcal F_t)_{t\geq0},\Pp)$ satisfying the usual
conditions, let $\{\beta^{k,j}:k\in\mathcal E_K,\ j=1,2\}$ be independent
standard Brownian motions.  With $\mathbf e_1,\mathbf e_2$ denoting the
canonical basis of $\R^2$, define
\begin{equation}\label{eq:finite-mode-noise}
 W_K(t)=\sum_{k\in\mathcal E_K}\sum_{j=1}^2
 \beta^{k,j}(t)e_k\mathbf e_j,
 \qquad
 F_{1,K}=\sum_{k\in\mathcal E_K}e_k^2,
 \qquad
 N_K=\sum_{k\in\mathcal E_K}|\nabla e_k|^2.
\end{equation}
The sine--cosine pairing gives
\[
 F_{1,K}=1+2|\mathcal K_K|,\qquad
 N_K=8\pi^2\sum_{m\in\mathcal K_K}|m|^2,\qquad
 \sum_{k\in\mathcal E_K}e_k\nabla e_k=0.
\]
In particular, $F_{1,K}\simeq K^2$ and $N_K\simeq K^4$; thus
\eqref{eq:ldp-scaling-intro} is equivalent to
$\varepsilon N_{K(\varepsilon)}\to0$ and implies
$\varepsilon F_{1,K(\varepsilon)}\to0$.

\section{Coulomb estimates and the Osgood principle}\label{sec:analytic}

We derive integrability bounds for the Coulomb flux from mass and Fisher
information, then prove the logarithmic interaction estimate.  Osgood's
lemma will be used in the stochastic and deterministic comparisons.

\subsection{Coulomb identities and bounds from mass and Fisher information}

The normalization \eqref{eq:green-normalization} and the decomposition
\eqref{eq:coulomb-decomposition} give, for every $f\in L^1(\T^2)$,
the distributional identities
\begin{equation}\label{eq:coulomb-identities}
 \begin{aligned}
 -\Delta(\Green*f)
 &=f-\int_{\T^2}f,\\
 \diver(V*f)
 &=-\lambda_V\cos(\alpha_V)
 \left(f-\int_{\T^2}f\right),
 \qquad
 \diver(\J\nabla\Green*f)=0.
 \end{aligned}
\end{equation}
Whenever $f$ has finite Green energy, one also has
\begin{equation}\label{eq:green-quadratic-identity}
 \int_{\T^2}f(\Green*f)
 =\int_{\T^2}|\nabla\Green*f|^2.
\end{equation}
In particular, this applies to the nonnegative finite-entropy densities used
below, by approximation and Lemma~\ref{lem:green-potential-coercivity}.

\begin{lemma}[Consequences of mass and Fisher information]
\label{lem:mass-fisher-consequences}
Let $\rho\geq0$ satisfy
\begin{equation*}
 \rho\in L^\infty(0,T;L^1(\T^2)),\qquad
 \int_{\T^2}\rho(t)=M\quad\text{for almost every }t, \qquad
 \int_0^T\mathcal I(\rho(t))\,\dd t<\infty.
\end{equation*}
Then
\begin{align}
 \|\rho(t)\|_{L^2}^2
 &\lesssim M\mathcal I(\rho(t))+M^2,
 &\rho&\in L^2(Q_T),\label{eq:mass-fisher-L2}\\
 \sqrt\rho&\in L^4(0,T;L^4(\T^2)),
 &\rho&\in L^4(0,T;L^{4/3}(\T^2)),\label{eq:mass-fisher-L4}\\
 V*\rho&\in L^4(0,T;L^4(\T^2)).&&
 \label{eq:mass-fisher-velocity}
\end{align}
For any two such densities,
\begin{equation}\label{eq:coulomb-interaction-integrability}
 \sqrt{\rho^i}|\nabla\sqrt{\rho^i}|\,|V*\rho^j|,
 \quad
 \nabla\rho^i\cdot(V*\rho^j),
 \quad
 \rho^i\diver(V*\rho^j)
 \in L^1(Q_T).
\end{equation}
Moreover,
\begin{equation}\label{eq:coulomb-flux-integrability}
 \sqrt\rho\,(V*\rho)\in L^2(Q_T),
 \qquad
 \nabla\rho,\ \rho(V*\rho)
 \in L^2(0,T;L^1(\T^2)).
\end{equation}
\end{lemma}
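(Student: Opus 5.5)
Set $u=\sqrt{\rho(t)}$, so that $\|u\|_{L^2}^2=M$ and $\|\nabla u\|_{L^2}^2=\mathcal I(\rho(t))/4$. Everything will come from two-dimensional Gagliardo--Nirenberg (Ladyzhenskaya) inequalities on $\T^2$ applied to $u$, together with elementary mapping properties of $V$. Ladyzhenskaya on the torus gives
\[
\|u\|_{L^4}^4\lesssim \|u\|_{L^2}^2\|\nabla u\|_{L^2}^2+\|u\|_{L^2}^4 .
\]
Since $\|\rho\|_{L^2}^2=\|u\|_{L^4}^4$, this gives \eqref{eq:mass-fisher-L2} pointwise in $t$. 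Integrating in time and using $\int_0^T\mathcal I<\infty$ yields $\rho\in L^2(Q_T)$. This is the same statement as $\sqrt\rho\in L^4(Q_T)$, which is the first half of \eqref{eq:mass-fisher-L4}. For the second half I interpolate $L^{4/3}$ between $L^1$ and $L^2$: $\|\rho\|_{L^{4/3}}\le \|\rho\|_{L^1}^{1/2}\|\rho\|_{L^2}^{1/2}$, so $\|\rho\|_{L^{4/3}}^4\le M^2\|\rho\|_{L^2}^2\in L^1(0,T)$.

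For \eqref{eq:mass-fisher-velocity}, note that $\nabla\Green$ and $\J\nabla\Green$ behave like $|x|^{-1}$ near the origin and are smooth elsewhere. Hence $V\in L^{2,\infty}(\T^2)$, and $f\mapsto V*f$ is bounded $L^{4/3}\to L^4$ by Hardy--Littlewood--Sobolev on the torus ($1+1/4=3/4+1/2$). Equivalently, $\nabla\Green*f=\nabla(-\Delta)^{-1}(f-\bar f)$ and one can use Sobolev embedding. Thus $\|V*\rho\|_{L^4}\lesssim\|\rho\|_{L^{4/3}}$, which lies in $L^4(0,T)$.

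The flux bounds \eqref{eq:coulomb-flux-integrability} then follow by Hölder. First, $\|\sqrt\rho\,V*\rho\|_{L^2}\le\|u\|_{L^4}\|V*\rho\|_{L^4}$, and both factors lie in $L^4(0,T)$, so the product lies in $L^2(Q_T)$. Next, $\nabla\rho=2u\nabla u$ gives $\|\nabla\rho\|_{L^1}\le 2\sqrt M\,\|\nabla u\|_{L^2}\in L^2(0,T)$. Finally, $\|\rho\,V*\rho\|_{L^1}\le \sqrt M\,\|\sqrt\rho\,V*\rho\|_{L^2}$, which is again in $L^2(0,T)$.

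The cross terms in \eqref{eq:coulomb-interaction-integrability} are handled the same way, and this is where care with exponents matters:
\[
\int_0^T\!\!\int \sqrt{\rho^i}|\nabla\sqrt{\rho^i}||V*\rho^j|\le\int_0^T\|\nabla u^i\|_{L^2}\|u^i\|_{L^4}\|V*\rho^j\|_{L^4}\,\dd t .
\]
The three factors lie in $L^2,L^4,L^4$ in time, so the integral is finite. The term $\nabla\rho^i\cdot V*\rho^j=2u^i\nabla u^i\cdot V*\rho^j$ is twice the same quantity. For the last term, \eqref{eq:coulomb-identities} gives $\diver(V*\rho^j)=-\lambda_V\cos(\alpha_V)(\rho^j-M_j)$. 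Hence $\rho^i\diver(V*\rho^j)$ is controlled by $\rho^i\rho^j+M_j\rho^i$, which is integrable since $\rho^i,\rho^j\in L^2(Q_T)$.

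The main obstacle is justifying the $L^{4/3}\to L^4$ bound for the torus kernel and handling the Dirac part of $\diver V$. I would split $\nabla\Green=\chi\nabla\Green+(1-\chi)\nabla\Green$ with a cutoff $\chi$ near $0$. The local part is dominated by $C|x|^{-1}$, which is controlled by Euclidean HLS. The smooth remainder is harmless, since $\|(1-\chi)\nabla\Green*f\|_{L^\infty}\lesssim\|f\|_{L^1}$. The divergence identity is used only in distributional form and is already recorded in \eqref{eq:coulomb-identities}.
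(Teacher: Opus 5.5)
Your proof is correct and follows essentially the paper's route: Gagliardo--Nirenberg (Ladyzhenskaya) for $\sqrt\rho$, an $L^{4/3}\to L^4$ Hardy--Littlewood--Sobolev bound for the singular part of $\nabla\Green$ with a bounded smooth remainder, and then H\"older together with $\nabla\rho=2\sqrt\rho\,\nabla\sqrt\rho$ and the divergence identity \eqref{eq:coulomb-identities}. The only cosmetic difference is that you obtain the $L^4(0,T;L^{4/3}(\T^2))$ bound by interpolating between $L^1$ and $L^2$, whereas the paper applies Gagliardo--Nirenberg at exponent $8/3$; both give $\|\rho\|_{L^{4/3}}^4\lesssim M^3\mathcal I(\rho)+M^4$.
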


\begin{proof}
The torus Gagliardo--Nirenberg inequality applied to $\sqrt\rho$ gives
\eqref{eq:mass-fisher-L2} and the first assertion in
\eqref{eq:mass-fisher-L4}.  At exponent $8/3$ it gives
\begin{equation*}
 \|\rho(t)\|_{L^{4/3}}^4
 =\|\sqrt{\rho(t)}\|_{L^{8/3}}^8
 \lesssim_M 1+\mathcal I(\rho(t)).
\end{equation*}
The order-one Hardy--Littlewood--Sobolev estimate maps $L^{4/3}$ to
$L^4$ under convolution with the singular part of $\nabla\Green$; its
regular part is bounded.  This proves
\eqref{eq:mass-fisher-velocity}.  The remaining assertions follow from
H\"older's inequality, \eqref{eq:coulomb-identities}, and
$\nabla\rho=2\sqrt\rho\nabla\sqrt\rho$; the same estimates give
\eqref{eq:coulomb-flux-integrability}.
\end{proof}

\subsection{Endpoint Coulomb estimates and logarithmic loss}

\begin{lemma}[Quantitative endpoint estimates]
The following estimates hold uniformly in $\theta$ and $q$ in the stated
ranges.
\begin{enumerate}[label=\textup{(\roman*)}]
\item For $0<\theta\leq1/4$ and
$f\in L^{1/(1-\theta)}(\T^2)$,
\begin{equation}\label{eq:endpoint-hls}
 \|V*f\|_{L^{2/(1-2\theta)}}
 \lesssim \lambda_V\theta^{-1/2}
 \|f\|_{L^{1/(1-\theta)}}.
\end{equation}
\item For $q\geq4$ and $u\in H^1(\T^2)$,
\begin{equation}\label{eq:endpoint-sobolev}
 \|u\|_{L^q}
 \lesssim \sqrt q\,
 \|\nabla u\|_{L^2}^{1-2/q}\|u\|_{L^2}^{2/q}
 +\|u\|_{L^2}.
\end{equation}
\end{enumerate}
\end{lemma}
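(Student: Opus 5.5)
For (i), I will split the kernel as $V=V_{\mathrm{sing}}+V_{\mathrm{reg}}$. Here $V_{\mathrm{sing}}(x)=\frac{\lambda_V}{2\pi}\chi(x)\bigl(-\cos(\alpha_V)x+\sin(\alpha_V)\J(-x)\bigr)|x|^{-2}$ with $\chi$ a smooth cutoff near the origin, and $V_{\mathrm{reg}}$ is smooth on $\T^2$; this uses $\Green(x)=-\frac1{2\pi}\log|x|+$ smooth near $0$. The regular part is harmless. By Young's inequality, $\|V_{\mathrm{reg}}*f\|_{L^\infty}\lesssim\lambda_V\|f\|_{L^1}\le\lambda_V\|f\|_{L^{1/(1-\theta)}}$, and since $\theta\le 1/4$ we have $\theta^{-1/2}\ge 2$, so this is absorbed. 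For the singular part I note $|V_{\mathrm{sing}}|\lesssim\lambda_V|x|^{-1}$ and use Young's inequality on $\T^2$ with exponents $p=1/(1-\theta)$, target $r=2/(1-2\theta)$, and kernel exponent $s$ determined by $1+1/r=1/p+1/s$. This gives $1/s=1+(1-2\theta)/2-(1-\theta)=1/2$, i.e.\ $s=2$, which is exactly the endpoint where $|x|^{-1}\notin L^2$. So plain Young fails, and the constant must come from HLS with a sharp dependence on the exponent.

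The step I expect to be the main obstacle is getting the $\theta^{-1/2}$ blow-up rate. First I will try Young in the weak-type (Lorentz) form with an explicit constant: $|x|^{-1}\in L^{2,\infty}$, and O'Neil's inequality gives $\|k*f\|_{L^{r}}\le C(p,r)\|k\|_{L^{2,\infty}}\|f\|_{L^p}$. Tracking the constant through real interpolation (Marcinkiewicz), it behaves like $(1/p-1/r)^{-1/2}$-type factors times $(1-1/p)^{-?}$. Since $1/p-1/r=1/2$ is fixed, the degeneration comes only from $p\to1$, $r\to2$.

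A cleaner route is Fourier multipliers. $V_{\mathrm{sing}}*$ behaves like $\nabla(-\Delta)^{-1}$, which is a Riesz transform composed with $(-\Delta)^{-1/2}$. I would write $\|V*f\|_{L^r}\lesssim\lambda_V\|(-\Delta)^{-1/2}(f-\bar f)\|_{L^r}$ times the Riesz constant on $L^r$, which is $O(1)$ for $r\in[2,4]$. Then I use the sharp fractional integration bound $\|I_1 g\|_{L^r}\lesssim(\text{const})\|g\|_{L^p}$ with $1/p-1/r=1/2$. The standard proof via the Hedberg pointwise bound $|I_1g|\lesssim (Mg)^{1-p/r}\cdots$ loses a factor $(p-1)^{-1}$ from the maximal function, which is too much. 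So instead I would get the bound by duality. The dual estimate $\|I_1 h\|_{L^{p'}}\lesssim\|h\|_{L^{r'}}$ has $r'=2/(1+2\theta)\to2$ and $p'=1/\theta\to\infty$. This is the near-endpoint Sobolev embedding $\dot H^{1}$-type into $L^q$ with $q=p'$, whose constant is $\sqrt q=\theta^{-1/2}$ by the Trudinger–Moser scaling. That is precisely (ii) in disguise.

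So I would prove (ii) first and deduce (i) by duality. For (ii), I expand $u=\sum\hat u(k)e^{2\pi ik\cdot x}$ and split low/high frequencies, or more simply use the Gagliardo–Nirenberg proof via Sobolev $W^{1,s}\subset L^{q}$, $1/q=1/s-1/2$, with constant $\lesssim\sqrt q$ (the classical Talenti/Moser-type constant). I would take the explicit route: $\|u-\bar u\|_{L^q}\le\|(-\Delta)^{-1/2}\|_{L^{s'}\to}$... Concretely, I bound $\|u\|_{L^q}^q$ through the exponential integrability of Fontana/Trudinger, $\int e^{c(u-\bar u)^2/\|\nabla u\|_2^2}\lesssim1$. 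Expanding the exponential gives $\|u-\bar u\|_{L^{q}}\lesssim\sqrt q\,\|\nabla u\|_{L^2}$ via $\Gamma(q/2+1)^{1/q}\sim\sqrt q$. Interpolating this at exponent $2q$ against $L^2$ by Hölder produces the power $\|\nabla u\|^{1-2/q}\|u\|^{2/q}$ up to constants; the $\sqrt{2q}$ factor stays $O(\sqrt q)$, and the mean $\bar u$ is controlled by $\|u\|_{L^2}$. The duality step transferring (ii) to the operator $I_1$ on $L^{r'}\to L^{p'}$ requires (ii) in the operator form $\|I_1h\|_{L^q}\lesssim\sqrt q\|h\|_{L^{2q/(q+2)}}$. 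I would obtain this by applying the exponential-integrability argument to $I_1h$ after Hölder, which is the technical point to verify carefully.
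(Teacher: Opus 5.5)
Both parts have a genuine gap.

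\textbf{Part (ii).} The Fontana--Trudinger step is fine and gives $\|u-\bar u\|_{L^{Q}}\lesssim\sqrt Q\,\|\nabla u\|_{L^2}$, but the interpolation does not produce the stated exponents. For $w=u-\bar u$, H\"older between $L^2$ and $L^{2q}$ gives $\|w\|_{L^q}\le\|w\|_{L^2}^{1/(q-1)}\|w\|_{L^{2q}}^{(q-2)/(q-1)}$. Hence you only get $\sqrt q\,\|\nabla u\|_{L^2}^{(q-2)/(q-1)}\|u\|_{L^2}^{1/(q-1)}$. More generally, interpolating $L^2$ against any fixed $L^Q$ with $Q<\infty$ puts an exponent strictly smaller than $2/q$ on $\|u\|_{L^2}$; exactly $2/q$ would require the false $L^\infty$ endpoint. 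This is strictly weaker than \eqref{eq:endpoint-sobolev} whenever $\|\nabla u\|_{L^2}\gg\|u\|_{L^2}$. Even choosing $Q$ in terms of $\|\nabla u\|_{L^2}/\|u\|_{L^2}$ leaves a logarithmic loss, because the Fontana bound only involves the scale-invariant quantity $\|\nabla u\|_{L^2}$.

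The exact exponents are load-bearing. In Lemma~\ref{lem:log-interaction} they make the total power of $\mathcal I$ exactly one, which is what keeps the Osgood coefficient integrable in time. You need a splitting at the scale $\|u\|_{L^2}/\|\nabla u\|_{L^2}$, as in the paper's Littlewood--Paley argument:
\begin{itemize}
\item Bernstein bounds $P_{\le j_0}u$ in $L^q$ by $2^{j_0(1-2/q)}\|u\|_{L^2}$.
\item The high part is bounded by $\sum_{j>j_0}2^{-2j/q}\|\nabla P_ju\|_{L^2}$. Cauchy--Schwarz in $j$ gives the factor $\bigl(\sum_{j>j_0}2^{-4j/q}\bigr)^{1/2}\approx\sqrt q\,2^{-2j_0/q}$, which is the source of $\sqrt q$.
\item Then choose $2^{j_0}\approx\|\nabla u\|_{L^2}/\|u\|_{L^2}$.
\end{itemize}

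\textbf{Part (i).} You discarded the working route because of a miscalculation. In Hedberg's bound $|\mathord\cdot|^{-1}*|f|\lesssim\|f\|_{L^p}^{p/2}(\mathcal Mf)^{1-p/2}+\|f\|_{L^p}$, the maximal function enters only to the power $1-p/2$. Since $r(1-p/2)=p$ for $r=2p/(2-p)$, taking the $L^r$ norm costs $\bigl(p/(p-1)\bigr)^{1-p/2}$, not $(p-1)^{-1}$. At $p=1/(1-\theta)$ this equals $\theta^{-(1-2\theta)/(2(1-\theta))}\le\theta^{-1/2}$. This is exactly the paper's proof. The angle enters only through orthogonality of $\cos(\alpha_V)I+\sin(\alpha_V)\J$, so your Riesz-transform reduction is correct but unnecessary.

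The duality route you substituted does not close. The dual estimate $\|I_1h\|_{L^q}\lesssim\sqrt q\,\|h\|_{L^{2q/(q+2)}}$ is an exact-scaling fractional-integration bound, equivalently $W^{1,s}\to L^{s^*}$ with $s<2$. Its source space is strictly larger than $L^2$ on $\T^2$, so it is not (ii) in disguise and does not follow from it. Exponential integrability of $I_1h$ is available only for $h\in L^2$. H\"older splittings of the kernel degenerate precisely at $1/p-1/q=1/2$.

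The step you defer as ``the technical point to verify carefully'' is therefore the whole content of (i). The estimate is true. For instance, Hedberg's argument on the dual side works: $\mathcal M$ is uniformly bounded on $L^s$ for $s$ near $2$, and the far-field H\"older factor $(s'-2)^{-1/s'}\approx\sqrt q$ supplies the constant. Your proposal, however, does not prove it.
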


\begin{proof}
The singular part of $\nabla\Green$ is the two-dimensional Riesz kernel,
and the remainder is smooth.  For $1<p\leq4/3$ and
$q=2p/(2-p)$, the near--far decomposition in Hedberg's inequality gives
\begin{equation*}
 |\mathord\cdot|^{-1}*|f|\lesssim
 \|f\|_{L^p}^{p/2}(\mathcal M f)^{1-p/2}+\|f\|_{L^p}.
\end{equation*}
Here $\mathcal M$ denotes the periodic Hardy--Littlewood maximal
operator.  Since $q(1-p/2)=p$ and
\begin{equation*}
 \|\mathcal M f\|_{L^p}
 \lesssim\frac p{p-1}\|f\|_{L^p},
\end{equation*}
we obtain
\begin{equation*}
 \||\mathord\cdot|^{-1}*f\|_{L^q}
 \lesssim
 \left(\frac p{p-1}\right)^{1-p/2}\|f\|_{L^p}.
\end{equation*}
Taking $p=(1-\theta)^{-1}$ and using that
$\cos(\alpha_V)I+\sin(\alpha_V)\J$ is orthogonal proves
\eqref{eq:endpoint-hls}.

For \eqref{eq:endpoint-sobolev}, let $\{P_j\}$ be a periodic
Littlewood--Paley decomposition.  Bernstein's inequality and
Cauchy--Schwarz yield
\begin{align*}
 \|P_{\leq j_0}u\|_{L^q}
 &\lesssim2^{j_0(1-2/q)}\|u\|_{L^2},\\
 \|P_{>j_0}u\|_{L^q}
 &\lesssim\sum_{j>j_0}2^{-2j/q}2^j\|P_ju\|_{L^2}
 \lesssim\sqrt q\,2^{-2j_0/q}\|\nabla u\|_{L^2}.
\end{align*}
If $\|\nabla u\|_{L^2}>\|u\|_{L^2}$, choose $2^{j_0}$ comparable to
their ratio; otherwise take $j_0=0$.
\end{proof}

\begin{lemma}[Logarithmic Coulomb interaction]\label{lem:log-interaction}
Let $\rho^1,\rho^2\geq0$ have the same mass $M$ and satisfy
$\sqrt{\rho^i}\in H^1(\T^2)$.  Then
\begin{equation}\label{eq:log-interaction}
\begin{aligned}
 &\int_{\T^2}|\nabla\rho^2|\,
 \bigl|V*(\rho^1-\rho^2)\bigr|
 +\lambda_V|\cos(\alpha_V)|
 \int_{\T^2}\rho^2|\rho^1-\rho^2|\\
 &\quad\lesssim_{M,V}
 \left(1+\sum_{i=1}^2
 \mathcal I(\rho^i)\right)
 \|\rho^1-\rho^2\|_{L^1}
 \log\!\left(
 \frac{e(1+2M)}{\|\rho^1-\rho^2\|_{L^1}}
 \right).
\end{aligned}
\end{equation}
The right-hand side is zero when the two densities agree.
\end{lemma}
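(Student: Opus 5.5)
We prove the estimate by interpolating, with an exponent optimized in terms of the size of the difference, between the $L^1$ norm of $w=\rho^1-\rho^2$ and Lebesgue norms of $w$ controlled by the Fisher informations. Throughout, set $\delta=\|w\|_{L^1}\le 2M$ and $L=\log\bigl(e(1+2M)/\delta\bigr)\ge1$, and choose $\theta=\min\{1/4,1/L\}$. Since $\delta\le 2M$, we have $L\geq1$. The only loss we may incur is a factor $\theta^{-1}\lesssim L$, and this is exactly the logarithm on the right-hand side of \eqref{eq:log-interaction}.

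\emph{Second term.} Hölder's inequality gives $\int\rho^2|w|\le\|\rho^2\|_{L^{r'}}\|w\|_{L^{r}}$ with $r=1/(1-\theta)$, so that $r'=1/\theta$. Interpolating between $L^1$ and $L^2$,
\[
\|w\|_{L^r}\le\|w\|_{L^1}^{1-2\theta}\|w\|_{L^2}^{2\theta}.
\]
By \eqref{eq:mass-fisher-L2}, $\|w\|_{L^2}^{2}\lesssim_M 1+\sum_i\mathcal I(\rho^i)$. Since $\delta\le2M$, we also have $\delta^{-2\theta}\le e^{2\theta\log((1+2M)/\delta)}\lesssim1$ by the choice of $\theta$. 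Hence $\|w\|_{L^r}\lesssim_M \delta\,(1+\sum_i\mathcal I(\rho^i))^{\theta}$. For the other factor, write $\|\rho^2\|_{L^{1/\theta}}=\|\sqrt{\rho^2}\|_{L^{2/\theta}}^2$ and apply \eqref{eq:endpoint-sobolev} with $q=2/\theta\ge4$. Using $\|\sqrt{\rho^2}\|_{L^2}^2=M$, this yields $\|\rho^2\|_{L^{1/\theta}}\lesssim_M \theta^{-1}\bigl(1+\mathcal I(\rho^2)\bigr)^{1-\theta}$. Multiplying the two bounds gives $\lesssim\theta^{-1}\delta(1+\sum\mathcal I)$, as required.

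\emph{First term.} Write $|\nabla\rho^2|=2\sqrt{\rho^2}|\nabla\sqrt{\rho^2}|$ and apply Hölder's inequality:
\[
\int|\nabla\rho^2||V*w|\le2\|\nabla\sqrt{\rho^2}\|_{L^2}\,\|\sqrt{\rho^2}\|_{L^{s}}\,\|V*w\|_{L^{2/(1-2\theta)}},\qquad s=1/\theta .
\]
By \eqref{eq:endpoint-hls}, $\|V*w\|_{L^{2/(1-2\theta)}}\lesssim\theta^{-1/2}\|w\|_{L^{1/(1-\theta)}}$, and the last norm is bounded exactly as above, by $\delta(1+\sum\mathcal I)^\theta$. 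By \eqref{eq:endpoint-sobolev} with $q=1/\theta\geq4$,
\[
\|\sqrt{\rho^2}\|_{L^{1/\theta}}\lesssim_M\theta^{-1/2}\bigl(1+\mathcal I(\rho^2)\bigr)^{(1-2\theta)/2}.
\]
Collecting the factors gives
\[
\theta^{-1}\,\delta\,\bigl(1+\mathcal I\bigr)^{1/2+(1-2\theta)/2+\theta}=\theta^{-1}\delta(1+\mathcal I),
\]
so the powers of the Fisher information add up to exactly one.

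The main delicacy is this exponent bookkeeping. The powers of $1+\sum\mathcal I$ must total at most one, and the constants of \eqref{eq:endpoint-hls} and \eqref{eq:endpoint-sobolev} must combine to precisely $\theta^{-1}$, not $\theta^{-3/2}$. This is why both factors $\theta^{-1/2}$ have to come from the endpoint estimates, with no additional loss from the interpolation of $w$. That interpolation loss is controlled only because $\delta^{-\theta}\lesssim1$ when $\theta\lesssim1/L$. When $\theta=1/4$, that is, when $\delta$ is comparable to $M$, everything is bounded by constants depending on $M$. Finally, when $\delta=0$ we have $w=0$ and both sides vanish.
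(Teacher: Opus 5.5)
Your proposal is correct and follows the paper's proof essentially step for step. It uses the same $L^1$--$L^2$ interpolation of $\rho^1-\rho^2$, the same H\"older splittings with \eqref{eq:endpoint-hls}, and \eqref{eq:endpoint-sobolev} applied to $\sqrt{\rho^2}$ at exponents $1/\theta$ and $2/\theta$. Your explicit exponent bookkeeping (in place of the paper's weighted Young inequalities) and your choice $\theta=\min\{1/4,1/L\}$ (in place of the paper's $\theta=1/(4L)$) are immaterial variants that yield the same bound $\theta^{-1}\bigl(1+\sum_i\mathcal I(\rho^i)\bigr)\|\rho^1-\rho^2\|_{L^1}$.
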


\begin{proof}
The assertion is immediate if $M=0$ or the two densities agree.  For
$0<\theta\leq1/4$, interpolation gives
\begin{equation}\label{eq:difference-interpolation}
 \|\rho^1-\rho^2\|_{L^{1/(1-\theta)}}
 \leq
 \|\rho^1-\rho^2\|_{L^1}^{1-2\theta}
 \|\rho^1-\rho^2\|_{L^2}^{2\theta}.
\end{equation}
Applying \eqref{eq:endpoint-sobolev} to $\sqrt{\rho^2}$ with
exponents $1/\theta$ and $2/\theta$ gives
\begin{align*}
 \|\sqrt{\rho^2}\|_{L^{1/\theta}}
 &\lesssim_M\theta^{-1/2}
 \left(1+
 \mathcal I(\rho^2)^{(1-2\theta)/2}\right),\\
 \|\rho^2\|_{L^{1/\theta}}
 &\lesssim_M\theta^{-1}
 \left(1+
 \mathcal I(\rho^2)^{1-\theta}\right).
\end{align*}
Furthermore, \eqref{eq:mass-fisher-L2} implies
\begin{equation*}
 \|\rho^1-\rho^2\|_{L^2}^2
 \lesssim_M 1+\sum_{i=1}^2
 \mathcal I(\rho^i).
\end{equation*}
Using $\nabla\rho^2=2\sqrt{\rho^2}\nabla\sqrt{\rho^2}$,
H\"older's inequality, \eqref{eq:endpoint-hls}, and
\eqref{eq:difference-interpolation}, followed by weighted Young
inequalities, yields
\begin{align*}
 &\int_{\T^2}|\nabla\rho^2|\,
 \bigl|V*(\rho^1-\rho^2)\bigr|
 +\lambda_V|\cos(\alpha_V)|
 \int_{\T^2}\rho^2|\rho^1-\rho^2|\\
 &\quad\lesssim_{M,V}\frac1\theta
 \left(1+\sum_{i=1}^2
 \mathcal I(\rho^i)\right)
 \|\rho^1-\rho^2\|_{L^1}^{1-2\theta}.
\end{align*}
Since $\|\rho^1-\rho^2\|_{L^1}\leq2M$, the choice
\begin{equation*}
 \theta=
 \frac1{4\log\!\left(e(1+2M)/
 \|\rho^1-\rho^2\|_{L^1}\right)}
\end{equation*}
satisfies $0<\theta\leq1/4$ and
\begin{equation*}
 \theta^{-1}=4\log\!\left(
 \frac{e(1+2M)}{\|\rho^1-\rho^2\|_{L^1}}
 \right),
 \qquad
 \|\rho^1-\rho^2\|_{L^1}^{1-2\theta}
 \leq e^{1/2}\|\rho^1-\rho^2\|_{L^1}.
\end{equation*}
Substitution eliminates $\theta$ and proves
\eqref{eq:log-interaction}.
\end{proof}

\subsection{Osgood's lemma}

\begin{lemma}[Osgood's lemma]\label{lem:shared-osgood}
Let $M>0$, let $a\in L^1(0,T)$ be nonnegative, and let
$\delta:[0,T]\to[0,2M]$ be measurable.  Suppose that, for some
$C\geq0$, for almost every $t\in(0,T)$,
\begin{equation}\label{eq:abstract-osgood-inequality}
 \delta(t)\leq \delta(0)+C\int_0^t a(s)\delta(s)
 \log\!\left(\frac{e(1+2M)}{\delta(s)}\right)\dd s,
\end{equation}
where the integrand is zero when $\delta(s)=0$.  If $\delta(0)=0$, then
$\delta(t)=0$ for almost every $t$.  If $\delta(0)>0$, then
\begin{equation}\label{eq:osgood-continuous-dependence}
 \log\!\left(\frac{e(1+2M)}{\delta(t)}\right)
 \geq
 \exp\!\left(-C\int_0^t a(s)\dd s\right)
 \log\!\left(\frac{e(1+2M)}{\delta(0)}\right).
\end{equation}
The left-hand side of \eqref{eq:osgood-continuous-dependence} is interpreted
as $+\infty$ when $\delta(t)=0$.  In particular, this deterministic conclusion
may be applied pathwise whenever \eqref{eq:abstract-osgood-inequality}
holds outside one null set.
\end{lemma}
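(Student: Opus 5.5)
The plan is to compare $\delta$ with the solution of the corresponding Osgood ODE, using the function
\[
 \Phi(r)=\log\log\!\left(\frac{e(1+2M)}{r}\right),\qquad 0<r\leq 2M,
\]
which is well defined because $e(1+2M)/r\geq e(1+2M)/(2M)>e$.  Write $L(r)=\log(e(1+2M)/r)\geq1$ and $\omega(r)=rL(r)$, extended by $\omega(0)=0$.  Then $\omega$ is continuous, nondecreasing on $[0,2M]$ (since $\omega'(r)=L(r)-1\geq0$), and $\Phi'(r)=-1/\omega(r)$, with $\int_0^{2M}\dd r/\omega(r)=+\infty$.

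Since $\delta$ is only measurable and the inequality holds only almost everywhere, I will first pass to the continuous majorant
\[
 R(t)=\delta(0)+C\int_0^t a(s)\,\omega(\delta(s))\,\dd s .
\]
This function is absolutely continuous and nondecreasing, and $\delta(t)\leq R(t)$ for almost every $t$.  Because $\omega$ is bounded on $[0,2M]$ and $a\in L^1$, the integrand is integrable.  For almost every $s$ we have $\delta(s)\leq R(s)$, and $\delta(s)\le 2M$.  Monotonicity of $\omega$ on $[0,2M]$ then gives $\omega(\delta(s))\leq\omega(\min\{R(s),2M\})$.  Hence, almost everywhere,
\[
 R'(t)\leq C a(t)\,\omega(\tilde R(t)),\qquad \tilde R=\min\{R,2M\}.
\]
The truncation at $2M$ is a small technical point; $\tilde R$ is still absolutely continuous and nondecreasing.

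Case $\delta(0)>0$: here $\tilde R\geq\delta(0)>0$ throughout.  The composition $\Phi\circ\tilde R$ is absolutely continuous, since $\Phi$ is $C^1$ on $[\delta(0),2M]$.  On the set where $R<2M$ its derivative satisfies $\frac{\dd}{\dd t}\Phi(\tilde R)=-R'/\omega(\tilde R)\geq -Ca(t)$; where $\tilde R$ is constant the derivative vanishes, so the same bound holds there.  Integrating gives $\Phi(\tilde R(t))\geq\Phi(\delta(0))-C\int_0^ta$.  Exponentiating yields
\[
 L(\tilde R(t))\geq e^{-C\int_0^t a}\,L(\delta(0)).
\]
Since $L$ is decreasing and $\delta(t)\leq\tilde R(t)$ almost everywhere, we get $L(\delta(t))\geq L(\tilde R(t))$, which is \eqref{eq:osgood-continuous-dependence}.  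If $\delta(t)=0$ the left-hand side is $+\infty$ and the inequality holds trivially.

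Case $\delta(0)=0$: the inequality \eqref{eq:abstract-osgood-inequality} still holds with any $\eta>0$ in place of $\delta(0)$, since its right-hand side only increases.  Running the previous argument with $R_\eta=\eta+C\int_0^t a\,\omega(\delta)$ gives, for almost every $t$,
\[
 L(\delta(t))\geq e^{-C\int_0^T a}\,L(\eta)\longrightarrow\infty\quad\text{as }\eta\downarrow0.
\]
Taking $\eta=1/n$ along a countable sequence makes the exceptional null set independent of $\eta$, so $\delta(t)=0$ almost everywhere.

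The pathwise remark follows immediately, since the argument is purely deterministic.  The main care needed is in the measurability and almost-everywhere bookkeeping, which is handled by working with the absolutely continuous majorant $R$ rather than with $\delta$ itself, together with the monotonicity of $\omega$ on $[0,2M]$.
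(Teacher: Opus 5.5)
Your proposal is correct and follows essentially the same route as the paper's proof: you pass to the absolutely continuous right-hand side, truncated at $2M$ where the paper stops it upon reaching $2M$. You then use that $r\log(e(1+2M)/r)$ is nondecreasing on $[0,2M]$ to integrate $1/(r\log(e(1+2M)/r))$ via $\Phi=\log\log$, and you treat $\delta(0)=0$ by replacing $\delta(0)$ with a positive $\eta\downarrow0$ (taken along a countable sequence to fix the null set), which makes explicit the almost-everywhere bookkeeping the paper leaves implicit.
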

The proof is given in Appendix~\ref{app:osgood}.

\section{Mass-subcritical stochastic estimates and construction}\label{sec:spde}

We prove the estimates and existence assertions of
Theorem~\ref{thm:main-spde}.  The nonattractive entropy estimate and weak
construction follow
\cite[Theorem~3.1 and Proposition~3.3]{JSW26}, after translating the sign
convention.  For attractive interactions, we use the mass-subcritical
free-energy estimate and entropy localization proved below.
Section~\ref{sec:comparison} proves comparison, pathwise uniqueness, and
the existence of a Borel solution map, using the common-noise comparison
of \cite[Theorem~4.7]{FG24}.

\subsection{Stochastic kinetic formulation}

For a predictable $g$ with finite $L^2(Q_T)$ energy, we write the
controlled equation as
\begin{equation}\label{eq:controlled-spde}
\begin{aligned}
 \dd\rho={}&\left[
 \Delta\rho-\diver\bigl(\rho(V*\rho)\bigr)
 -\diver(\sqrt\rho\,P_Kg)
 \right]\dd t
 -\sqrt\varepsilon\,\diver(\sqrt\rho\circ\dd W_K).
\end{aligned}
\end{equation}

The stationary sine--cosine family satisfies
$\sum_ke_k\nabla e_k=0$.  Thus, for smooth strictly positive
approximations, \eqref{eq:controlled-spde} has the It\^o form
\begin{equation}\label{eq:controlled-spde-ito}
\begin{aligned}
 \dd\rho={}&\left[
 \Delta\rho-\diver\bigl(\rho(V*\rho)\bigr)
 -\diver(\sqrt\rho\,P_Kg)
 +\frac{\varepsilon F_{1,K}}8\Delta\log\rho
 \right]\dd t\\
 &-\sqrt\varepsilon
 \sum_{k\in\mathcal E_K}\sum_{j=1}^2
 \partial_j(e_k\sqrt\rho)\,\dd\beta_t^{k,j}.
\end{aligned}
\end{equation}
This identity is not used as a weak formulation at vacuum.  It only
identifies the correction in velocity-localized renormalized identities
and in the smooth coefficient approximation of
Appendix~\ref{app:approximation}.

\begin{definition}[Stochastic kinetic solution]
\label{def:stochastic-kinetic-solution}
Let $(\Omega,\mathcal F,(\mathcal F_t)_{t\in[0,T]},\Pp)$ carry the
Brownian motions in \eqref{eq:finite-mode-noise}, and let $g$ be a
predictable $L^2(\T^2;\R^2)$-valued process such that
$\int_{Q_T}|g|^2<\infty$ almost surely.
A stochastic kinetic solution of \eqref{eq:controlled-spde} with initial
datum $\rho_0$ and control $g$ is a nonnegative,
almost surely continuous $L^1(\T^2)$-valued predictable process
\begin{equation*}
 \rho\in L^1(\Omega\times(0,T);L^1(\T^2))
\end{equation*}
such that, almost surely for every $t\in[0,T]$,
\begin{equation*}
 \int_{\T^2}\rho(x,t)\,\dd x
 =\int_{\T^2}\rho_0(x)\,\dd x=M.
\end{equation*}
It belongs to the physical-energy class
\begin{equation}\label{eq:stochastic-finite-energy}
 \sup_{t\in[0,T]}\int_{\T^2}H(\rho(t))
 +\int_0^T\mathcal D_V(\rho(t))\,\dd t<\infty
 \qquad\Pp\text{-a.s.}
\end{equation}
Under the subcritical condition \eqref{eq:subcritical},
Lemma~\ref{lem:green-potential-coercivity} shows that this is equivalent, up to
constants depending on $V$ and $M$, to the corresponding condition with
$\sup_{t\leq T}\mathcal F_V(\rho(t))$ in place of the entropy supremum.

Its kinetic function and initial kinetic function are
\begin{equation*}
 \chi_\rho(x,\xi,t)=\mathbf 1_{\{0<\xi<\rho(x,t)\}},
 \qquad
 \chi_{\rho_0}(x,\xi)=\mathbf 1_{\{0<\xi<\rho_0(x)\}}.
\end{equation*}
There is a measurable random nonnegative Radon measure $q$ on
$\T^2\times(0,\infty)\times[0,T]$, almost surely finite on compact velocity
intervals, such that, for every
$\psi\in C_c^\infty(\T^2\times(0,\infty))$, the process
\begin{equation*}
 t\longmapsto
 \int_{\T^2\times(0,\infty)\times[0,t]}
 \psi(x,\xi)\,q(\dd x\,\dd\xi\,\dd s)
\end{equation*}
is predictable, as in \cite[Definition~3.1]{FG24}.  In addition, almost
surely as measures on
$\T^2\times(0,\infty)\times[0,T]$,
\begin{equation}\label{eq:stochastic-parabolic-lower-bound}
 4\xi\,\delta_{\rho(x,t)}(\dd\xi)
 |\nabla\sqrt\rho(x,t)|^2\dd x\,\dd t\leq q,
\end{equation}
where the Fisher regularity needed to interpret the left-hand side follows
directly from \eqref{eq:stochastic-finite-energy} and
\eqref{eq:full-fisher-physical-dissipation}.  For $m\in\mathbb N$,
define the canonical energy stopping time
\begin{equation}\label{eq:kinetic-energy-stops}
 \sigma_m:=T\wedge\inf\left\{t\geq0:
  \sup_{s\leq t}\int_{\T^2}H(\rho(s))
  +\int_0^t\mathcal D_V(\rho(s))\,\dd s
  +\int_0^t\!\int_{\T^2}|g|^2\,\dd x\,\dd s>m
 \right\},
\end{equation}
with $\inf\varnothing=\infty$.  Lower semicontinuity of
$\rho\mapsto\int H(\rho)$ along the adapted $L^1$-continuous path, or
equivalently the use of rational times in the running maximum, shows that
$\sigma_m$ is a stopping time.  The kinetic measure is required to satisfy,
for every $m\in\mathbb N$,
\begin{equation}\label{eq:stopped-high-velocity-tail}
 \lim_{R\to\infty}\E q\bigl(
 \T^2\times[R,R+1]\times[0,\sigma_m]\bigr)=0.
\end{equation}

Finally, for every $\psi\in C_c^\infty(\T^2\times(0,\infty))$, almost
surely for every $t\in[0,T]$,
\begin{align}
&\int_0^\infty\!\int_{\T^2}
 \chi_\rho(x,\xi,t)\psi(x,\xi)\,\dd x\,\dd\xi
 =\int_0^\infty\!\int_{\T^2}
 \chi_{\rho_0}(x,\xi)\psi(x,\xi)\,\dd x\,\dd\xi
 \notag\\
&\quad-\int_0^t\!\int_{\T^2}
 \nabla\rho\cdot(\nabla_x\psi)(x,\rho)\,\dd x\,\dd s
 -\frac{\varepsilon F_{1,K}}8
 \int_0^t\!\int_{\T^2}
 \nabla\log\rho\cdot(\nabla_x\psi)(x,\rho)\,\dd x\,\dd s
 \notag\\
&\quad-\int_{\T^2\times(0,\infty)\times[0,t]}
 \partial_\xi\psi(x,\xi)\,q(\dd x\,\dd\xi\,\dd s)
 +\frac{\varepsilon N_K}{2}
 \int_0^t\!\int_{\T^2}
 \rho\,(\partial_\xi\psi)(x,\rho)\,\dd x\,\dd s
 \notag\\
&\quad+\int_0^t\!\int_{\T^2}
 \bigl(\rho(V*\rho)+\sqrt\rho\,P_Kg\bigr)
 \cdot(\nabla_x\psi)(x,\rho)\,\dd x\,\dd s
 \notag\\
&\quad+\int_0^t\!\int_{\T^2}
 \bigl(\rho(V*\rho)+\sqrt\rho\,P_Kg\bigr)
 \cdot\nabla\rho\,(\partial_\xi\psi)(x,\rho)\,\dd x\,\dd s
 \notag\\
&\quad+\sqrt\varepsilon
 \sum_{k\in\mathcal E_K}\sum_{j=1}^2
 \int_0^t\!\int_{\T^2}
 \left[
 \sqrt\rho\,e_k\,\partial_j\psi(x,\rho)
 +\sqrt\rho\,e_k\,\partial_j\rho\,
 (\partial_\xi\psi)(x,\rho)
 \right]\dd x\,\dd\beta_s^{k,j}.
 \label{eq:controlled-stochastic-kinetic-identity}
\end{align}
Here $\nabla_x\psi$ differentiates only the spatial variable, before the
velocity variable is evaluated at $\xi=\rho$.  All gradient terms are
localized to a compact subset of $(0,\infty)$ by the support of $\psi$ and
are locally integrable on the stops
\eqref{eq:kinetic-energy-stops}; the stochastic integral is understood as a
continuous local martingale.  When $g=0$ we use the same terminology, with
the control terms omitted.
\end{definition}

\begin{remark}[Relation with the Fehrman--Gess formulation]
\label{rem:comparison-fg-definition}
The compact-velocity identity in
Definition~\ref{def:stochastic-kinetic-solution} is the specialization of
\cite[Definition~3.4, equation~(3.8)]{FG24} to $\Phi(r)=r$,
$\sigma(r)=\sqrt r$, and $F_2=0$.  The modification concerns the
integrability and kinetic-measure conditions, which are used here on the
canonical energy stops \eqref{eq:kinetic-energy-stops}.

Indeed, Proposition~\ref{prop:fisher-from-dissipation} gives
\begin{equation*}
 \mathcal I(\rho)
 \lesssim_{V,M}1+\mathcal D_V(\rho)
 +\exp\!\left(C_{V,M}
 \left(1+\int_{\T^2}H(\rho)\right)\right).
\end{equation*}
The pathwise free-energy estimate therefore yields pathwise finite Fisher
information, whereas an expectation bound would require an exponential
entropy estimate.  After integration in time, the right-hand side is
deterministically bounded on every canonical stop.  We accordingly replace
the global
high-velocity condition in \cite[Definition~3.4]{FG24} by
\eqref{eq:stopped-high-velocity-tail}.  This condition is verified by
Proposition~\ref{prop:approximation-high-velocity-tail}, and the stopped
low-velocity estimate follows from the argument of
\cite[Proposition~4.6]{FG24}; see
Lemma~\ref{lem:canonical-energy-localization}.  Since the canonical stops
exhaust $[0,T]$ almost surely, these localized conditions suffice for the
kinetic comparison and compactness arguments.
\end{remark}

\begin{theorem}[Global stochastic well-posedness]
\label{thm:spde-gwp}
Let $\rho_0\geq0$ have finite entropy and satisfy
\eqref{eq:subcritical}.  Fix $\varepsilon>0$ and $K<\infty$.
Equation~\eqref{eq:dk-exact}, equivalently
\eqref{eq:controlled-spde} with $g=0$, is globally well posed in the class
of stochastic kinetic solutions from
Definition~\ref{def:stochastic-kinetic-solution}.  The
modes are the real orthonormal
sine--cosine family in
\eqref{eq:fourier-cutoff}; in particular, they satisfy the three constant
covariance identities following \eqref{eq:finite-mode-noise}, and $P_K$
is an $L^2$ contraction.  The solution is nonnegative, conserves the mass
$M$, and has initial trace $\rho_0$.  For every $T<\infty$,
\begin{equation}\label{eq:spde-pathwise-estimates}
 \sup_{t\leq T}\int_{\T^2}H(\rho(t))
 +\int_0^T\mathcal D_V(\rho(t))\,\dd t
 +\int_0^T\mathcal I(\rho(t))\,\dd t<\infty
 \qquad\Pp\text{-a.s.}
\end{equation}
The expectation of the first two terms in
\eqref{eq:spde-pathwise-estimates} is finite.  At an arbitrary fixed noise
strength, the full Fisher integral is asserted to be finite almost surely,
but no first or exponential moment of it is included in this statement.
Its kinetic measure satisfies the high- and low-velocity estimates on the
canonical stops in Definition~\ref{def:stochastic-kinetic-solution} and
Lemma~\ref{lem:canonical-energy-localization}.  If, in addition,
\begin{equation}\label{eq:expected-full-fisher}
 \E\int_0^T\mathcal I(\rho(t))\,\dd t<\infty,
\end{equation}
then both tails are unlocalized and the solution satisfies the global
conditions of \cite[Definition~3.4]{FG24}.
\end{theorem}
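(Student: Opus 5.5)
The plan is to split the proof into three parts: a priori estimates on a regularized equation, existence through stochastic compactness, and pathwise uniqueness by comparison.

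\emph{Step 1: regularized problems and the stochastic free-energy estimate.}
I would approximate \eqref{eq:controlled-spde-ito} with $g=0$ by smooth strictly positive solutions. To do this, regularize the square-root coefficient, mollify $V$ at scale $\eta$, and smooth $\rho_0$, using the scheme of Appendix~\ref{app:approximation}. Then apply It\^o's formula to the free energy $\mathcal F_V(\rho)$, with the entropy and Coulomb parts treated separately.

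For the entropy $\int H(\rho)$ the argument follows Fehrman--Gess. The Hessian $1/\rho$ against the quadratic variation $\varepsilon\sum_k|\partial_j(e_k\sqrt\rho)|^2$ produces the term $\frac{\varepsilon N_K}{2}M$ plus a singular piece. That singular piece cancels exactly with the It\^o correction $\frac{\varepsilon F_{1,K}}8\Delta\log\rho$; this step uses $\sum_k e_k\nabla e_k=0$. The drift contributes $-\mathcal I(\rho)$ together with the cross term $\lambda_V\cos(\alpha_V)\int\rho^2$-type terms.

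For the Coulomb part $-\frac{\lambda_V\cos\alpha_V}{2}\int\rho(\Green*\rho)$, the It\^o correction $\varepsilon\sum_k\int|\nabla\Green*\partial_j(e_k\sqrt\rho)|^2$ is essentially $\varepsilon F_{1,K}M$ times a logarithmic singularity. It can only be controlled after shifting and localizing the interaction energy. Combining the two parts gives, for the attractive case,
\[
 \dd\mathcal F_V(\rho)+\mathcal D_V(\rho)\,\dd t\le C\varepsilon(N_K+F_{1,K}\log(\cdots))\,\dd t+\dd\mathcal M_t .
\]
In the nonattractive case I would simply cite \cite[Theorem~3.1]{JSW26}.

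\emph{Step 2: closing the estimates.}
Subcriticality \eqref{eq:subcritical} enters through Lemma~\ref{lem:green-potential-coercivity}, which shows that $\mathcal F_V$ controls the entropy. Proposition~\ref{prop:fisher-from-dissipation} then converts $\int\mathcal D_V$ plus an exponential of the entropy into $\int\mathcal I$. Because the Fisher bound involves an exponential of the entropy, I would work with the stopping times $\sigma_m$ of \eqref{eq:kinetic-energy-stops} at successive entropy levels. On each such interval the martingale $\mathcal M$ is controlled by BDG, so the bounds close uniformly in the regularization. Letting $m\to\infty$ gives \eqref{eq:spde-pathwise-estimates} almost surely, and gives finite expectation of the first two terms. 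Along the way I would record the stopped high- and low-velocity bounds on the kinetic measure, $q^\eta\ge4\xi\delta_\rho|\nabla\sqrt\rho|^2$, as in Proposition~\ref{prop:approximation-high-velocity-tail} and Lemma~\ref{lem:canonical-energy-localization}.

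\emph{Step 3: existence and pathwise uniqueness.}
Existence of a probabilistically weak solution comes from tightness. The uniform Fisher and entropy bounds on stops, together with Lemma~\ref{lem:mass-fisher-consequences}, give $L^1(Q_T)$ compactness via Aubin--Lions and time-regularity through the kinetic identity. Applying Skorokhod and Jakubowski, I would pass to the limit in \eqref{eq:controlled-stochastic-kinetic-identity}. The nonlinear terms $\rho(V*\rho)$ converge by \eqref{eq:coulomb-flux-integrability}, and the parabolic lower bound \eqref{eq:stochastic-parabolic-lower-bound} survives by weak lower semicontinuity.

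Pathwise uniqueness follows from the common-noise $L^1$ comparison of \cite[Theorem~4.7]{FG24}, adapted to the stopped tails. For two solutions, the comparison leaves the interaction error terms bounded by Lemma~\ref{lem:log-interaction}. Osgood's Lemma~\ref{lem:shared-osgood}, applied pathwise with $a=1+\mathcal I(\rho^1)+\mathcal I(\rho^2)\in L^1(0,T)$ almost surely, then gives $\rho^1=\rho^2$. Yamada--Watanabe and Gyöngy--Krylov upgrade weak existence plus pathwise uniqueness to strong well-posedness.

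The final assertion is that, under \eqref{eq:expected-full-fisher}, the tails become global. This holds because the stopped estimates pass to $m\to\infty$ by dominated convergence.

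The main obstacle is the Coulomb Itô correction in Step 1. Its logarithmic singularity near vacuum is not controlled by the entropy, and the shift of the interaction energy must be chosen so that a second Hessian cancellation absorbs it. I would first try to renormalize $\int\rho(\Green*\rho)$ as $\int(\rho+\kappa)(\Green*(\rho+\kappa))$ with $\kappa\sim\varepsilon F_{1,K}$, and to localize through the stops.
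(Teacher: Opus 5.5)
Your uniqueness argument (common-noise comparison, Lemma~\ref{lem:log-interaction}, Osgood, Gy\"ongy--Krylov) follows the paper. The gap is in the attractive-case a priori estimate, which is the core of the theorem.

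\emph{First, you place the obstruction in the wrong term.} The Hessian of the Green form is harmless. By \eqref{eq:green-mode-identity},
$\sum_{k,j}\langle\partial_j(e_k\sqrt\rho),G*\partial_j(e_k\sqrt\rho)\rangle\in[0,F_{1,K}M]$,
and this term enters $\dd\mathcal F_V(\rho)$ with a nonpositive sign. The bad term is the pairing of the It\^o drift $\frac{\varepsilon F_{1,K}}8\Delta\log\rho$ with the Coulomb part $-\lambda_V\cos(\alpha_V)\,G*\rho$ of the chemical potential. By \eqref{eq:vacuum-log-identity} this pairing equals $\frac{\varepsilon\lambda_V\cos(\alpha_V)F_{1,K}}8\int_{\T^2}(\rho-M)\log\rho$. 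Its piece $-M\int_{\T^2}\log\rho$ is bounded neither by the entropy, nor by $\mathcal D_V$, nor by $\mathcal I$, because densities in the energy class may vanish on sets of positive measure. So your inequality for $\mathcal F_V$, with its unspecified $\log(\cdots)$, cannot be closed.

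\emph{Second, your proposed remedy changes nothing.} Because $\int_{\T^2}G=0$, one has $G*(\rho+\kappa)=G*\rho$ and $\int(\rho+\kappa)\,G*(\rho+\kappa)=\int\rho\,G*\rho$. The functional, its first variation, and the obstruction are therefore unchanged for every $\kappa$.

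\emph{The missing idea is the localized energy $\mathcal F_S$} of \eqref{eq:localized-energy-objects}, built from \eqref{eq:localized-cutoff-objects}.
\begin{itemize}
\item The interaction is switched off near vacuum through $h_S(\rho)=\int_0^\rho\theta_S$, with $\theta_S=0$ on $[0,e^{-S}]$ and $r\theta_S'(r)\lesssim S^{-1}$.
\item The potential, not the density, is shifted by $\Lambda_*$, so that $\Phi_S=G*h_S(\rho)+\Lambda_*\geq1$.
\end{itemize}
With these choices three things happen:
\begin{itemize}
\item The drift pairing becomes $\int_{\T^2}\bigl(h_S(\rho)-\int_{\T^2}h_S(\rho)\bigr)\bigl(\int_0^\rho\theta_S(s)s^{-1}\,\dd s\bigr)\lesssim MS+\int H(\rho)+C_M$.
\item The term produced by differentiating $\theta_S(\rho)$ in the first variation cancels exactly against the gradient part of the local Hessian $-\lambda_V\cos(\alpha_V)\int\theta_S'(\rho)\Phi_S v^2$.
\item The remaining $N_K$ part of that Hessian is nonpositive, because the shift makes $\Phi_S$ positive.
\end{itemize}
The price is $\mu_S\neq\mu$. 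The mismatch $\int\rho|\nabla(\mu_S-\mu)|^2$ can be absorbed into $\mathcal D_V$ only on an entropy sublevel $\int H\leq R$ with $S\geq L(1+R)$ (Proposition~\ref{prop:stopped-coulomb-energy}).

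\emph{Your Step~2 stops at successive levels but never says which functional is used on each interval, or why the estimate closes there.} What is needed is a fixed scale $S_n=L(1+R_{n+1})$ on each stage of the dyadic entropy localization \eqref{eq:dyadic-entropy-localization}. This has three effects:
\begin{itemize}
\item No state-dependent $S$ enters It\^o's formula.
\item The switch costs, of order $e^{-S_{n-1}/2}$, are summable.
\item The trace $\varepsilon F_{1,K}MS_n$ is bounded by $\varepsilon F_{1,K}L\bigl(1+\int H(\rho_0)+\sup_{s\leq t}\int H(\rho(s))\bigr)$, i.e.\ proportional to the running entropy, and is closed by pathwise Gr\"onwall together with the exponential martingale inequality.
\end{itemize}
The hitting times also require a continuous version of the regularized entropy. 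A single scale tied to a terminal level $m$, as your $\sigma_m$ would give, leaves a trace of order $\varepsilon F_{1,K}Lm$. That does not close when $\varepsilon F_{1,K}LT$ is not small.

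A smaller point concerns compactness. Aubin--Lions applied directly to $\rho$ is not uniform in the coefficient regularization, since $\frac{\varepsilon F_{1,K}}2\diver(\sigma'(\rho)^2\nabla\rho)$ has no uniform negative-Sobolev bound near vacuum. Compactness has to go through the cutoffs $p_\delta(\rho)$, as in Lemma~\ref{lem:cutoff-time-compactness}.
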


For the controlled extension, we first consider predictable $g$ for which,
for some deterministic $N_g<\infty$,
\begin{equation}\label{eq:bounded-control-action}
 \int_{Q_T}|g|^2\leq N_g\qquad\Pp\text{-a.s.}
\end{equation}

If $M=0$, nonnegativity and mass conservation give $\rho=0$, and every
claim is immediate; the logarithmic chemical-potential notation is not
used for this solution.  We henceforth assume $M>0$.

\subsection{Physical free energy and the vacuum obstruction}

We start with the entropy production created by the deterministic flow.  The Green-potential calculations in the next two lemmas are justified first
for smooth positive densities and then by mass-preserving heat regularization
and lower semicontinuity.
For a smooth positive density, define the physical chemical potential by
\begin{equation}\label{eq:physical-chemical-potential}
 \mu(\rho)=\log\rho-
 \lambda_V\cos(\alpha_V)\Green*\rho.
\end{equation}
In particular, $\mathcal D_V(\rho)=\int_{\T^2}\rho|\nabla\mu(\rho)|^2$
and $\mathcal I(\rho)=\int_{\T^2}|\nabla\rho|^2/\rho$ for such densities.
The deterministic controlled equation can then be written as
\begin{equation*}
 \partial_t\rho
 =\diver\bigl(\rho\nabla\mu(\rho)\bigr)
 -\lambda_V\sin(\alpha_V)
  \diver\bigl(\rho\J\nabla\Green*\rho\bigr)
 -\diver(\sqrt\rho\,g).
\end{equation*}
Since $D\mathcal F_V(\rho)=\mu(\rho)$ up to a spatial constant,
integration by parts yields
\begin{equation}\label{eq:deterministic-free-energy-backbone}
 \frac{\dd}{\dd t}\mathcal F_V(\rho)
 +\mathcal D_V(\rho)
 =\int_{\T^2}\sqrt\rho\,g\cdot\nabla\mu(\rho).
\end{equation}
Indeed, the rotational contribution vanishes because
\begin{equation*}
 \int_{\T^2}\rho\,\J\nabla\Green*\rho\cdot\nabla\mu(\rho)
 =\int_{\T^2}\J\nabla\Green*\rho\cdot\nabla\rho=0.
\end{equation*}
Here we used both
$\J\nabla\Green*\rho\cdot\nabla\Green*\rho=0$ and
$\diver(\J\nabla\Green*\rho)=0$.  Thus Young's inequality gives the
common deterministic estimate
\begin{equation*}
 \mathcal F_V(\rho(t))
 +\frac12\int_0^t\mathcal D_V(\rho(s))\,\dd s
 \leq\mathcal F_V(\rho_0)
 +\frac12\int_0^t\!\int_{\T^2}|g|^2.
\end{equation*}

The following lemma collects the potential bounds and sharp coercivity
behind the Keller--Segel mass threshold.

\begin{lemma}[Green-potential bounds and sharp coercivity]
\label{lem:green-potential-coercivity}
Let $\rho\geq0$ have mass $M$ and finite entropy.  Then
\begin{equation*}
 \|\Green*\rho\|_{L^\infty}
 +\|\nabla\Green*\rho\|_{L^2}^2
 \lesssim_M 1+\int_{\T^2}H(\rho).
\end{equation*}
The same estimate holds with $\rho$ inside either convolution replaced by
a measurable $f$ satisfying $0\leq f\leq\rho$.  Moreover,
\begin{equation*}
 \frac12\int_{\T^2}\rho(\Green*\rho)
 \leq\frac{M}{8\pi}\int_{\T^2}H(\rho)+C_M,
\end{equation*}
and hence
\begin{equation}\label{eq:free-energy-coercivity}
 \mathcal F_V(\rho)
 \geq
 \left(1-\frac{\lambda_V(\cos(\alpha_V))_+M}{8\pi}\right)
 \int_{\T^2}H(\rho)-C_{V,M}.
\end{equation}
Consequently, under \eqref{eq:subcritical}, one may fix any
\begin{equation*}
 0<c_{\mathrm{sub}}<
 1-\frac{\lambda_V(\cos(\alpha_V))_+M}{8\pi}
\end{equation*}
and obtain
\begin{equation*}
 \mathcal F_V(\rho)
 \geq c_{\mathrm{sub}}\int_{\T^2}H(\rho)-C_{V,M,c_{\mathrm{sub}}}.
\end{equation*}
If $\lambda_V\cos(\alpha_V)\leq0$, the Green quadratic form is
nonnegative and $\mathcal F_V(\rho)\geq\int H(\rho)$.
\end{lemma}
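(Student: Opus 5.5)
The whole proof rests on one tool, the Fontana exponential inequality on the compact manifold $\T^2$ (a Moser--Trudinger inequality with sharp constant $4\pi$ for $H^1$ functions of zero mean), combined with entropy duality. We first treat the quadratic form. Write $u=\Green*\rho$, which has zero mean. Entropy duality (Young's inequality for $H$ with respect to Lebesgue probability measure) gives, for any $\beta>0$,
\[
 \int_{\T^2}\rho\,u\le \frac1\beta\Bigl(\int_{\T^2}\rho\log\rho-M\log M\Bigr)+\frac M\beta\log\int_{\T^2}e^{\beta u}.
\]
Fontana's inequality in the form $\log\int e^{v}\le\frac1{16\pi}\|\nabla v\|_{L^2}^2+C$ for mean-zero $v$ applies with $v=\beta u$. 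Using \eqref{eq:green-quadratic-identity}, first for bounded approximations and then via a monotone truncation $\rho\wedge n$, the energy $\|\nabla u\|_2^2$ equals $\int\rho u$. This gives
\[
 \int\rho u\le\frac1\beta\int\rho\log\rho+\frac{M\beta}{16\pi}\int\rho u+C_{M,\beta}.
\]
We choose $\beta=8\pi/M$, so the coefficient becomes $1/2$. Absorbing that term, and noting that $\int\rho\log\rho=\int H(\rho)+M-1$, we obtain $\tfrac12\int\rho u\le\frac{M}{8\pi}\int H(\rho)+C_M$. The absorption requires $\int\rho u<\infty$ a priori, so we run it on $\rho\wedge n$ renormalized to mass $M$ and pass to the limit by monotone convergence. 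The coercivity \eqref{eq:free-energy-coercivity} follows immediately when $\cos\alpha_V>0$. When $\lambda_V\cos(\alpha_V)\le0$, we use $\int\rho u=\|\nabla u\|_2^2\ge0$ and conclude $\mathcal F_V\ge\int H$. The statement with $c_{\mathrm{sub}}$ follows by adding the constant.

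For the potential bounds, $\|\nabla\Green*\rho\|_2^2=\int\rho\,\Green*\rho$ is now controlled by the previous step. For $f$ with $0\le f\le\rho$ we have $\int f\,\Green*f\le\frac{\|f\|_1}{8\pi}\int f\log f+C$. The term $\int f\log f$ is bounded by $\int\rho\log_+\rho+C$, since $f\log f\ge-e^{-1}$ and $f\log_+ f\le\rho\log_+\rho$. For the $L^\infty$ bound we use that $\Green(x)=-\frac1{2\pi}\log|x|+\text{smooth}$. Then
\[
 |\Green*f(x)|\le C\int f(y)\bigl(1+\log_-|x-y|\bigr)\,dy .
\]
Pointwise Young's inequality $ab\le a\log a-a+e^{b}$, applied with $a=f(y)$ and $b=\log_-|x-y|$, bounds this by $C\bigl(\int H(f)+M+\int_{\T^2}|x-y|^{-1}\,dy\bigr)$. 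The last integral is finite, and $\int H(f)\lesssim 1+\int H(\rho)$. The result is uniform in $x$, which gives the $L^\infty$ bound with constants depending only on $M$.

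The main obstacle is the sharp constant. Fontana's inequality must be used with its optimal constant $4\pi$ on a two-dimensional compact manifold, which in exponential-integral form is $\frac1{16\pi}$. We also need the normalization $-\Delta\Green=\delta_0-1$, so that $\int\rho\,\Green*\rho=\|\nabla u\|^2$ holds with no extra factor. Any loss in these constants shifts the threshold away from $8\pi$. The approximation justifying finiteness before absorption is the other delicate point, and we handle it through the truncation described above.
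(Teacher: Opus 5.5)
Your proposal is correct and follows the paper's argument: Young's inequality against the exponentially integrable logarithmic singularity of $\Green$ gives the uniform potential bound, and Fontana's sharp inequality with $\varphi=8\pi(\Green*\rho)/M$, inserted into the entropy variational inequality and followed by absorption, gives the $8\pi$ coercivity. The only difference is ordering: the paper proves the $L^\infty$ bound first, which already yields $\int\rho(\Green*\rho)\leq M\|\Green*\rho\|_{L^\infty}<\infty$ and makes your truncation step unnecessary (after renormalizing $\rho\wedge n$ to mass $M$, the passage to the limit there is by Fatou with the nonnegative kernel $\Green-\inf\Green$, not by monotone convergence).
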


\begin{proof}
The local expansion
\begin{equation*}
 \Green(z)=-\frac1{2\pi}\log|z|+g(z),
 \qquad g\in C^\infty,
\end{equation*}
shows that $\Green$ is bounded from below and that
$\exp(\beta\Green_+)\in L^1(\T^2)$ for every $0<\beta<4\pi$.
Young's inequality $rz\leq r\log r-r+e^z$, applied with
$z=\beta\Green_+(x-y)$, gives, uniformly in $x$,
\begin{equation*}
 \int_{\T^2}\rho(y)\Green_+(x-y)\,\dd y
 \lesssim_{M,\beta}1+\int_{\T^2}H(\rho).
\end{equation*}
The lower bound for $\Green$ controls the negative part, and
$\|\nabla\Green*\rho\|_2^2=\int\rho(\Green*\rho)$ gives the gradient
estimate.  If $0\leq f\leq\rho$, use
$f\log_+f\lesssim\rho\log_+\rho+\rho$ in the same argument.

The coercivity assertions are immediate for $M=0$.  For $M>0$, the sharp
Moser--Trudinger inequality on the unit torus \cite{Fon93} states that
every mean-zero $\varphi\in H^1(\T^2)$ satisfies
\begin{equation*}
 \log\int_{\T^2}e^\varphi
 \leq\frac1{16\pi}\int_{\T^2}|\nabla\varphi|^2+C_{\T^2}.
\end{equation*}
Apply the entropy variational inequality for $\rho/M$ with
$\varphi=8\pi(\Green*\rho)/M$ and use
\eqref{eq:green-quadratic-identity}.  This gives
\begin{equation*}
 \frac{8\pi}{M^2}\int_{\T^2}\rho(\Green*\rho)
 \leq \frac1M\int_{\T^2}\rho\log\!\left(\frac\rho M\right)
 +\frac{4\pi}{M^2}\int_{\T^2}\rho(\Green*\rho)+C_{\T^2}.
\end{equation*}
Moving the last Green term to the left and absorbing the mass-dependent
constants therefore yields
\begin{equation*}
 \frac12\int_{\T^2}\rho(\Green*\rho)
 \leq \frac{M}{8\pi}\int_{\T^2}H(\rho)+C_M,
\end{equation*}
which proves the sharp interaction estimate.
Multiplication by the attractive coefficient, together with positivity of
the Green form in the nonattractive case, proves
\eqref{eq:free-energy-coercivity}.
\end{proof}

\begin{lemma}[Entropy-sublevel interpolation]
\label{lem:entropy-sublevel-interpolation}
Fix $M,L<\infty$.  For every $a>0$, every nonnegative $\rho$ satisfying
\begin{equation*}
 \int_{\T^2}\rho=M,
 \qquad \int_{\T^2}H(\rho)\leq L,
 \qquad \sqrt\rho\in H^1(\T^2),
\end{equation*}
obeys
\begin{equation}\label{eq:entropy-sublevel-interpolation}
 \|\rho\|_{L^2}^2
 \leq a\mathcal I(\rho)
 +C_{a,M}\exp\!\left(C_a(1+L)\right).
\end{equation}
\end{lemma}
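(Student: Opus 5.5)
The idea is to split $\rho$ at a large height $\kappa$ and control the two pieces separately. Where $\rho\le\kappa$ we use only the mass. Where $\rho>\kappa$ the entropy bound makes the mass small, and a Gagliardo--Nirenberg/Ladyzhenskaya inequality for a truncated square root converts this small mass into a small multiple of the Fisher information. Choosing $\kappa$ exponentially large in $L$ then yields the exponential constant in \eqref{eq:entropy-sublevel-interpolation}.

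Fix $\kappa\ge e^2$ and write $u=\sqrt\rho$. Set $w=(u-\sqrt\kappa)_+\in H^1(\T^2)$, so that $|\nabla w|\le|\nabla u|$ and $\int|\nabla w|^2\le\frac14\mathcal I(\rho)$.

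On $\{\rho\ge4\kappa\}$ we have $u\le2w$. On $\{\rho<4\kappa\}$ we have $\rho^2\le4\kappa\rho$. Hence
\begin{equation*}
 \|\rho\|_{L^2}^2\le 16\int_{\T^2}w^4+4\kappa M .
\end{equation*}

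For $\int w^4$ we apply the torus Ladyzhenskaya inequality $\|w\|_{L^4}^4\lesssim\|w\|_{L^2}^2\|\nabla w\|_{L^2}^2+\|w\|_{L^2}^4$, which is \eqref{eq:endpoint-sobolev} with $q=4$. Since $w\le u$ and $w$ vanishes unless $\rho>\kappa$,
\begin{equation*}
 \|w\|_{L^2}^2\le\int_{\{\rho>\kappa\}}\rho\le\frac{1}{\log\kappa-1}\int_{\{\rho>\kappa\}}\rho(\log\rho-1)\le\frac{L}{\log\kappa-1}.
\end{equation*}
The last step uses $H(r)\ge r(\log r-1)$ and $H\ge0$. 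Combining these bounds gives
\begin{equation*}
 \|\rho\|_{L^2}^2\le C_0\frac{L}{\log\kappa-1}\,\mathcal I(\rho)+C_0\Bigl(\frac{L}{\log\kappa-1}\Bigr)^2+4\kappa M .
\end{equation*}

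The remaining step, which is the only one with real content, is the choice of $\kappa$. We need $C_0L/(\log\kappa-1)\le a$, so we take
\begin{equation*}
 \log\kappa=1+\max\{1,\,C_0(1+L)/a\}.
\end{equation*}
Then $\kappa\le\exp(C_a(1+L))$. The middle term is bounded by $a^2/C_0$, and this can be absorbed into $C_{a,M}\exp(C_a(1+L))$. This proves \eqref{eq:entropy-sublevel-interpolation}.

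A possible obstacle is whether $\int H(\rho)\le L$ controls $\int_{\{\rho>\kappa\}}\rho\log\rho$ without additional mass terms. It does, because $H$ is nonnegative. The only other thing to check is that the Ladyzhenskaya constant on $\T^2$ is uniform; this holds with the lower-order $L^2$ term as stated.
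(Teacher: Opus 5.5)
Your proof is correct and follows essentially the same route as the paper. Both arguments split at height $4\kappa$, apply the torus Gagliardo--Nirenberg (Ladyzhenskaya) inequality to $(\sqrt\rho-\sqrt\kappa)_+$, bound its $L^2$ mass by the entropy tail, and choose $\log\kappa$ of order $(1+L)/a$. Your tail bound $L/(\log\kappa-1)$, obtained from $H(r)\geq r(\log r-1)$, is a slightly sharper form of the paper's $(M+L)/\log R$, and you make the absorption of the quartic term explicit.
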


\begin{proof}
For $R>e$, let $(\sqrt\rho-\sqrt R)_+$ denote the positive part.  On
$\{\rho\leq4R\}$ one has $\rho^2\leq4R\rho$, while on
$\{\rho>4R\}$ one has
$\rho^2\leq16(\sqrt\rho-\sqrt R)_+^4$.  Hence
\begin{equation*}
 \|\rho\|_{L^2}^2
 \leq4RM+16\|(\sqrt\rho-\sqrt R)_+\|_{L^4}^4.
\end{equation*}
The torus Gagliardo--Nirenberg inequality and the entropy tail bound give
\begin{align*}
 \|(\sqrt\rho-\sqrt R)_+\|_{L^4}^4
 &\lesssim
 \|(\sqrt\rho-\sqrt R)_+\|_{L^2}^2\mathcal I(\rho)
 +\|(\sqrt\rho-\sqrt R)_+\|_{L^2}^4,\\
 \|(\sqrt\rho-\sqrt R)_+\|_{L^2}^2
 &\leq\int_{\{\rho>R\}}\rho
 \lesssim\frac{M+L}{\log R}.
\end{align*}
Choose $R$ so that the coefficient of $\mathcal I(\rho)$ is at most
$a$.  The required $R$ grows at most exponentially in $(1+L)/a$, which
gives \eqref{eq:entropy-sublevel-interpolation}.
\end{proof}

\begin{proposition}[Full Fisher information from physical dissipation]
\label{prop:fisher-from-dissipation}
If $\rho\geq0$ has mass $M$, finite entropy, and
$\mathcal D_V(\rho)<\infty$, then
\begin{equation}\label{eq:full-fisher-physical-dissipation}
 \mathcal I(\rho)
 \leq2\mathcal D_V(\rho)
 +C_{V,M}\exp\!\left(C_{V,M}
 \left(1+\int_{\T^2}H(\rho)\right)\right).
\end{equation}
Moreover, we have
\begin{equation}\label{eq:fisher-from-dissipation}
 \mathcal I(\rho)
 +\int_{\T^2}\rho|\nabla\Green*\rho|^2
 \lesssim_{V,M}1+\mathcal D_V(\rho)
 +\exp\!\left(C_{V,M}
 \left(1+\int_{\T^2}H(\rho)\right)\right).
\end{equation}
\end{proposition}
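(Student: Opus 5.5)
Set $c=\lambda_V\cos(\alpha_V)$ and $u=\nabla\Green*\rho$, so that $\mathcal D_V(\rho)=\|2\nabla\sqrt\rho-c\sqrt\rho\,u\|_{L^2}^2$. If $c=0$ then $\mathcal D_V=\mathcal I$ and the first bound is trivial. In general the approach is to expand the square and absorb the cross term and the interaction term against the Fisher information, using Lemma~\ref{lem:entropy-sublevel-interpolation} for the pieces that involve $\rho^2$. By the triangle inequality,
\begin{equation*}
 \mathcal I(\rho)^{1/2}=\|2\nabla\sqrt\rho\|_{L^2}
 \leq\mathcal D_V(\rho)^{1/2}+|c|\,\|\sqrt\rho\,u\|_{L^2}.
\end{equation*}
Squaring with $(a+b)^2\leq\frac32a^2+3b^2$ will not give the constant $2$ directly. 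I would instead use $(a+b)^2\leq 2a^2+2b^2$, which gives
\begin{equation*}
 \mathcal I(\rho)\leq2\mathcal D_V(\rho)+2c^2\int_{\T^2}\rho|u|^2.
\end{equation*}
It then suffices to prove that $\int\rho|u|^2$ is bounded by a small multiple of $\mathcal I(\rho)$ plus an exponential in the entropy. The small multiple can then be absorbed, at the cost of replacing the coefficient $2$ by $2/(1-\eta)$.

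Since the statement insists on exactly $2$, I would rather avoid the Young loss. The plan is to expand
\begin{equation*}
 \mathcal D_V=\mathcal I-4c\int\sqrt\rho\,\nabla\sqrt\rho\cdot u+c^2\int\rho|u|^2 .
\end{equation*}
The middle term is $-2c\int\nabla\rho\cdot u=2c\int\rho\,\diver u$. By \eqref{eq:coulomb-identities}, $\diver u=-(\rho-M)$, so the middle term equals $-2c\|\rho\|_2^2+2cM^2$. This gives the exact identity
\begin{equation*}
 \mathcal I(\rho)=\mathcal D_V(\rho)+2c\bigl(\|\rho\|_{L^2}^2-M^2\bigr)-c^2\int\rho|u|^2
 \leq\mathcal D_V(\rho)+2|c|\,\|\rho\|_{L^2}^2 .
\end{equation*}
It is justified first for smooth positive densities, then by heat regularization and lower semicontinuity as announced in the paper. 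Applying Lemma~\ref{lem:entropy-sublevel-interpolation} with $a=1/(4|c|)$ and $L=\int H(\rho)$ gives $\mathcal I\leq\mathcal D_V+\frac12\mathcal I+C\exp(C(1+L))$. Hence $\mathcal I\leq2\mathcal D_V+C_{V,M}\exp(C_{V,M}(1+L))$, which is exactly \eqref{eq:full-fisher-physical-dissipation}. The constant $2$ therefore comes out naturally. Finiteness of $\mathcal I$, needed for the absorption, comes from the regularization, or from a truncation in which $\mathcal I$ is a priori finite.

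For \eqref{eq:fisher-from-dissipation}, the same identity gives
\begin{equation*}
 c^2\int\rho|u|^2\leq\mathcal D_V-\mathcal I+2|c|\|\rho\|_2^2+2|c|M^2 .
\end{equation*}
Combining this with the already established bound $\|\rho\|_2^2\leq\mathcal I+C e^{C(1+L)}$ and with \eqref{eq:full-fisher-physical-dissipation} controls $\int\rho|u|^2$ when $c\neq0$. When $c=0$, so that the Green term is not controlled by $\mathcal D_V$, I would instead estimate directly: by Hölder, $\int\rho|u|^2\leq\|\rho\|_{L^2}\|u\|_{L^4}^2$, and $\|u\|_{L^4}\lesssim\|\rho\|_{L^{4/3}}\lesssim\|\rho\|_2^{1/2}M^{1/2}$ by Lemma~\ref{lem:mass-fisher-consequences}'s HLS step. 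This gives $\int\rho|u|^2\lesssim_M\|\rho\|_2^2\lesssim\mathcal I+e^{C(1+L)}$, which is finite since $\mathcal I=\mathcal D_V$ in that case.

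The main obstacle is the rigorous justification of the integration by parts $\int\nabla\rho\cdot u=-\int\rho\,\diver u$ and of the absorption step at low regularity, when only $\mathcal D_V<\infty$ is known and not a priori $\sqrt\rho\in H^1$. I would handle it by mass-preserving heat regularization $\rho_\delta$, for which all quantities are smooth and the Fisher information is finite. The difficulty is then to show $\limsup\mathcal D_V(\rho_\delta)\leq\mathcal D_V(\rho)$, or at least an adequate substitute. If that fails, a cleaner route is to first prove $\sqrt\rho\in H^1$ from $\mathcal D_V<\infty$ via a truncation $\min(\rho,n)$ argument, and then pass to the limit by lower semicontinuity of $\mathcal I$.
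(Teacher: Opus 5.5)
Your proof is correct and is essentially the paper's: it uses the exact expansion $\mathcal D_V(\rho)=\mathcal I(\rho)-2c(\|\rho\|_{L^2}^2-M^2)+c^2\int_{\T^2}\rho|\nabla\Green*\rho|^2$, discards the nonnegative Green term, and absorbs $\|\rho\|_{L^2}^2$ by the entropy-tail/Gagliardo--Nirenberg interpolation (the paper redoes this inline with $(\sqrt\rho-\sqrt A)_+$ and $\log A=C_{V,M}(1+\int_{\T^2}H(\rho))$ rather than citing Lemma~\ref{lem:entropy-sublevel-interpolation}); for the second estimate the paper applies the H\"older--HLS bound $\int_{\T^2}\rho|\nabla\Green*\rho|^2\lesssim_M1+\|\rho\|_{L^2}^2$ for every value of $c$, so your separate identity-based treatment of $c\neq0$ is unnecessary. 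Your low-regularity worry is also moot: by the convention stated after \eqref{eq:free-energy-dissipation}, $\mathcal D_V(\rho)<\infty$ already includes $\sqrt\rho\in H^1(\T^2)$, so $\mathcal I(\rho)<\infty$ and the integration by parts is justified directly from the integrability in Lemma~\ref{lem:mass-fisher-consequences}, with no need to control $\limsup$ of heat-regularized dissipations.
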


\begin{proof}
For $A>e$, set $f=(\sqrt\rho-\sqrt A)_+$.  The entropy tail and the torus
Gagliardo--Nirenberg inequality give
\begin{equation*}
 \|f\|_2^2\lesssim\frac{M+\int H(\rho)}{\log A},
 \qquad
 \|\rho\|_2^2
 \lesssim AM+\frac{M+\int H(\rho)}{\log A}\mathcal I(\rho)+C_M.
\end{equation*}
The exact expansion of the physical dissipation is
\begin{equation*}
 \mathcal D_V(\rho)
 =\mathcal I(\rho)
 -2\lambda_V\cos(\alpha_V)(\|\rho\|_2^2-M^2)
 +\lambda_V^2\cos^2(\alpha_V)
 \int\rho|\nabla\Green*\rho|^2.
\end{equation*}
Dropping the last, nonnegative term and using the preceding interpolation
bound yields
\begin{equation*}
 \mathcal I(\rho)
 \leq \mathcal D_V(\rho)+C_{V,M}(1+A)
 +C_{V,M}\frac{M+\int H(\rho)}{\log A}\,\mathcal I(\rho).
\end{equation*}
Choose
$\log A=C_{V,M}(1+\int H(\rho))$ with a sufficiently large constant.
The last term is then absorbed into the left-hand side, which proves
\eqref{eq:full-fisher-physical-dissipation}.  Finally,
\begin{equation}\label{eq:weighted-potential-gradient}
 \int_{\T^2}\rho|\nabla\Green*\rho|^2
 \leq\|\rho\|_2\|\nabla\Green*\rho\|_4^2
 \lesssim_M1+\|\rho\|_2^2,
\end{equation}
because $\|\nabla\Green*\rho\|_4\lesssim
\|\rho-M\|_{4/3}$ and
$\|\rho\|_{4/3}^2\leq M\|\rho\|_2$.  This proves
\eqref{eq:fisher-from-dissipation}; approximation completes the proof for
nonsmooth densities.
\end{proof}

\paragraph{The stochastic obstruction.}

The physical free energy $\mathcal F_V$, rather than the entropy alone, is
the natural Lyapunov functional for the deterministic Coulomb drift.  We
therefore begin with its It\^o differential.  The entropy component retains
the exact cancellation of the singular It\^o drift identified by
Fehrman--Gess.  The interaction component, however, leaves the logarithmic
pairing in \eqref{eq:vacuum-log-identity}.  For every smooth positive density
of mass $M$, monotonicity of the logarithm gives
\[
 \int_{\T^2}(\rho-M)\log\rho
 =\frac12\iint_{\T^2\times\T^2}
 (\rho(x)-\rho(y))(\log\rho(x)-\log\rho(y))\,\dd x\,\dd y
 \geq0.
\]
Its sign in the It\^o remainder is therefore favorable when
$\lambda_V\cos(\alpha_V)\leq0$, but unfavorable in the attractive regime.
In the latter case the unlocalized free-energy estimate does not control
this term near vacuum.  The following balance isolates this obstruction.

\begin{proposition}[Smooth untruncated stochastic free-energy balance]
Let $\rho$ be a smooth strictly positive approximation of
\eqref{eq:controlled-spde}.  Then
\begin{equation}\label{eq:physical-stochastic-balance}
\begin{aligned}
 \dd\mathcal F_V(\rho)+\mathcal D_V(\rho)\,\dd t
 ={}&\int_{\T^2}\sqrt\rho\,P_Kg\cdot\nabla\mu(\rho)\,\dd t\\
 &+\sqrt\varepsilon\sum_{k,j}
 \left(\int_{\T^2}\sqrt\rho\,e_k
 \partial_j\mu(\rho)\right)\dd\beta_t^{k,j}
 +\mathcal R_{\varepsilon,K}(\rho)\,\dd t,
\end{aligned}
\end{equation}
where
\begin{equation}\label{eq:physical-stochastic-remainder}
\begin{aligned}
 \mathcal R_{\varepsilon,K}(\rho)
 ={}&\frac{\varepsilon N_K}{2}
 +\frac{\varepsilon\lambda_V\cos(\alpha_V)F_{1,K}}8
 \int_{\T^2}\nabla(\Green*\rho)\cdot\nabla\log\rho\\
 &-\frac{\varepsilon\lambda_V\cos(\alpha_V)}2
 \sum_{k\in\mathcal E_K}\sum_{j=1}^2
 \left\langle
 \partial_j(e_k\sqrt\rho),
 \Green*\partial_j(e_k\sqrt\rho)
 \right\rangle.
\end{aligned}
\end{equation}
The two nonlocal corrections satisfy
\begin{align}
 \int_{\T^2}\nabla(\Green*\rho)\cdot\nabla\log\rho
 &=\int_{\T^2}(\rho-M)\log\rho,
 \label{eq:vacuum-log-identity}\\
 \sum_{k,j}\left\langle
 \partial_j(e_k\sqrt\rho),
 \Green*\partial_j(e_k\sqrt\rho)
 \right\rangle
 &=F_{1,K}M-
 \sum_{k\in\mathcal E_K}
 \left(\int_{\T^2}e_k\sqrt\rho\right)^2.
 \label{eq:green-mode-identity}
\end{align}
\end{proposition}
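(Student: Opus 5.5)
The plan is to apply It\^o's formula to $\mathcal F_V(\rho)=\int H(\rho)-\frac{\lambda_V\cos(\alpha_V)}2\int\rho(\Green*\rho)$ along the It\^o form \eqref{eq:controlled-spde-ito}. This is legitimate because $\rho$ is smooth and strictly positive, so $\mathcal F_V$ is a $C^2$ functional in the relevant topology. Its first derivative is $D\mathcal F_V(\rho)=\log\rho-\lambda_V\cos(\alpha_V)\Green*\rho=\mu(\rho)$; the Green term is symmetric, so the factor $\frac12$ disappears. Its second derivative is $D^2\mathcal F_V(\rho)[h,h]=\int h^2/\rho-\lambda_V\cos(\alpha_V)\langle h,\Green*h\rangle$. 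The drift contributions are then treated in three groups.

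\emph{Deterministic part.} Pairing $\mu(\rho)$ with $\Delta\rho-\diver(\rho V*\rho)-\diver(\sqrt\rho P_Kg)$ is exactly the computation behind \eqref{eq:deterministic-free-energy-backbone}. It produces $-\mathcal D_V(\rho)$, with the rotational part vanishing, together with the control term $\int\sqrt\rho\,P_Kg\cdot\nabla\mu$.

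\emph{Stratonovich correction.} Pairing $\mu(\rho)$ with $\frac{\varepsilon F_{1,K}}8\Delta\log\rho$ gives $-\frac{\varepsilon F_{1,K}}8\int|\nabla\log\rho|^2$ from the entropy part and $+\frac{\varepsilon\lambda_V\cos(\alpha_V)F_{1,K}}8\int\nabla(\Green*\rho)\cdot\nabla\log\rho$ from the Green part.

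\emph{Martingale term.} Pairing $\mu$ with $-\partial_j(e_k\sqrt\rho)$ and integrating by parts gives $\int e_k\sqrt\rho\,\partial_j\mu$, as displayed.

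The It\^o correction is $\frac\varepsilon2\sum_{k,j}D^2\mathcal F_V[\partial_j(e_k\sqrt\rho)]$. This is the Fehrman--Gess cancellation step and the one that needs care. Write $h_{k,j}=\partial_j e_k\sqrt\rho+e_k\partial_j\sqrt\rho$. Then
\[
\sum_{k,j}\int\frac{h_{k,j}^2}{\rho}=\sum_k\int\bigl(|\nabla e_k|^2+2e_k\nabla e_k\cdot\nabla\log\sqrt\rho+e_k^2|\nabla\log\sqrt\rho|^2\bigr).
\]
The identities following \eqref{eq:finite-mode-noise} reduce this to $N_K+0+\frac{F_{1,K}}4\int|\nabla\log\rho|^2$. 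Half of it, $\frac{\varepsilon N_K}2+\frac{\varepsilon F_{1,K}}8\int|\nabla\log\rho|^2$, is therefore produced. The second piece cancels exactly against the entropy part of the Stratonovich correction, leaving $\frac{\varepsilon N_K}2$. The Green part of the Hessian gives the last term of \eqref{eq:physical-stochastic-remainder} directly. Collecting everything yields \eqref{eq:physical-stochastic-balance}.

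Finally, the two identities. For \eqref{eq:vacuum-log-identity}, I integrate by parts and use \eqref{eq:coulomb-identities}:
\[
\int\nabla(\Green*\rho)\cdot\nabla\log\rho=\int(-\Delta\Green*\rho)\log\rho=\int(\rho-M)\log\rho.
\]
For \eqref{eq:green-mode-identity}, I use $\langle\partial_jf,\Green*\partial_jf\rangle=\langle f,(-\partial_j^2)\Green*f\rangle$, valid because convolution commutes with derivatives. Summing over $j$ gives $\langle f,-\Delta\Green*f\rangle=\|f\|_2^2-(\int f)^2$ with $f=e_k\sqrt\rho$. Summing over $k$ gives $\int F_{1,K}\rho-\sum_k(\int e_k\sqrt\rho)^2=F_{1,K}M-\sum_k(\int e_k\sqrt\rho)^2$, where $F_{1,K}$ is constant.

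The main obstacle is only bookkeeping in the It\^o correction. One must check that the cross term vanishes by $\sum_ke_k\nabla e_k=0$, and that the coefficient $\frac14\cdot\frac12=\frac18$ matches the Stratonovich-to-It\^o drift exactly. The Green pieces do not cancel, and they are precisely what appears in $\mathcal R_{\varepsilon,K}$.
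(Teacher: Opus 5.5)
Your proposal is correct and follows the same route as the paper. You apply It\^o's formula to $\mathcal F_V$ along the It\^o form, take the deterministic terms from the free-energy backbone, cancel the $\frac{\varepsilon F_{1,K}}8\int|\nabla\log\rho|^2$ terms to leave $\varepsilon N_K/2$, collect the uncancelled Green pieces into $\mathcal R_{\varepsilon,K}$, and prove the two identities via the Green equation and a double integration by parts. Your explicit expansion of the Hessian trace using $\sum_k e_k\nabla e_k=0$ fills in bookkeeping that the paper leaves implicit.
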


This is a smooth-level identity used to identify the obstruction.  The
regularized nonlocal It\^o calculation employed in the construction is
Proposition~\ref{prop:regularized-ito-trace}.

\begin{proof}
The deterministic terms in \eqref{eq:physical-stochastic-balance} are
those of \eqref{eq:deterministic-free-energy-backbone}.  For the entropy part of
the stochastic trace, the contribution of the It\^o drift and the
gradient contribution of the entropy Hessian are
\begin{equation}\label{eq:fg-entropy-cancellation}
 -\frac{\varepsilon F_{1,K}}8
 \int_{\T^2}|\nabla\log\rho|^2
 +\frac{\varepsilon F_{1,K}}8
 \int_{\T^2}|\nabla\log\rho|^2=0.
\end{equation}
This is precisely the exact-square-root cancellation in
\cite[Proposition~5.9, equations~(5.16)--(5.18), and
Proposition~5.18, equation~(5.29)]{FG24}.  The remaining local entropy
trace is $\varepsilon N_K/2$.  Pairing the same correction with the
Coulomb part of $D\mathcal F_V$ and taking the Hessian of its Green
quadratic form gives \eqref{eq:physical-stochastic-remainder}. The Green equation gives \eqref{eq:vacuum-log-identity}.  Integrating
twice by parts gives, for each $k$,
\begin{equation*}
 \sum_{j=1}^2\left\langle
 \partial_j(e_k\sqrt\rho),\Green*\partial_j(e_k\sqrt\rho)
 \right\rangle
 =\int_{\T^2}e_k^2\rho
 -\left(\int_{\T^2}e_k\sqrt\rho\right)^2.
\end{equation*}
Summation proves \eqref{eq:green-mode-identity}; its right-hand side
lies in $[0,F_{1,K}M]$.
\end{proof}

When $\lambda_V\cos(\alpha_V)\leq0$, the weak construction and the
entropy--Fisher estimate are those of
\cite[Theorem~3.1 and Proposition~3.3]{JSW26}, after translating the
sign convention: the radial entropy production is nonpositive, whereas
the Biot--Savart contribution vanishes.  The same entropy calculation
with the control term, followed by Young's inequality, is uniform over
bounded predictable controls.  Equations~\eqref{eq:mass-fisher-L2} and
\eqref{eq:weighted-potential-gradient} then give the required bound on
the physical dissipation.  Hence only the attractive case requires the
localized energy.  Throughout the localization argument
below, we therefore assume
\begin{equation*}
 \lambda_V\cos(\alpha_V)>0.
\end{equation*}

\subsection{Localized interaction energy and the exact It\^o cancellation}

The logarithmic trace term motivates the following modification: the ordinary entropy
is left unchanged, while only the interaction part of the free energy is
switched off near vacuum.  A positive shift in the localized potential
gives the local interaction-Hessian contribution a favorable nonpositive
sign and produces the second exact cancellation below.  For $S\geq1$,
choose a smooth nondecreasing function such that
\begin{equation}\label{eq:localized-cutoff-objects}
 \begin{gathered}
 \theta_S(r)=0\quad(r\leq e^{-S}),\qquad
 \theta_S(r)=1\quad(r\geq e^{-S/2}),\qquad
 0\leq r\theta_S'(r)\lesssim S^{-1},\\
 h_S(r)=\int_0^r\theta_S(s)\,\dd s,\qquad
 \Lambda_*=1+M(-\inf_{\T^2}\Green)_+,\qquad
 \Phi_S(\rho)=\Green*h_S(\rho)+\Lambda_*.
 \end{gathered}
\end{equation}
By \eqref{eq:localized-cutoff-objects}, we have
$0\leq\rho-h_S(\rho)\leq e^{-S/2}$; moreover,
$\theta_S'(\rho)$ is supported in $\{\rho\leq e^{-S/2}\}$, and
\begin{equation}\label{eq:mass-uniform-positive-shift}
 \Phi_S(\rho)\geq1.
\end{equation}
Indeed, $\Green*h_S(\rho)\geq(\inf\Green)\int h_S(\rho)$ and
$0\leq\int h_S(\rho)\leq M$.
The shift $\Lambda_*$ depends only on the mass and on $\Green$; in
particular, it is independent of entropy stopping levels.

\begin{definition}[Localized Coulomb energy]
For a nonnegative density of mass $M$, set
\begin{equation}\label{eq:localized-energy-objects}
 \begin{aligned}
 \mathcal F_S(\rho)
 &:=\int_{\T^2}H(\rho)
 -\lambda_V\cos(\alpha_V)\left[
 \frac12\int_{\T^2}h_S(\rho)(\Green*h_S(\rho))
 +\Lambda_*\left(\int_{\T^2}h_S(\rho)-M\right)\right],\\
 \mu_S(\rho)
 &:=\log\rho-\lambda_V\cos(\alpha_V)\theta_S(\rho)\Phi_S(\rho),\\
 \mathcal D_S(\rho)
 &:=\int_{\T^2}\rho|\nabla\mu_S(\rho)|^2.
 \end{aligned}
\end{equation}
At vacuum, $\mathcal D_V$ and $\mathcal D_S$ mean the squared norms of
the corresponding weighted fluxes in
\eqref{eq:free-energy-dissipation} and
\eqref{eq:localized-energy-objects}, rather than a pointwise value of the
logarithm.
\end{definition}

\begin{lemma}[Comparison and variations of the localized energy]
\label{lem:localized-energy-comparison}
For every state and every $S\geq1$,
\begin{equation}\label{eq:localized-energy-comparison}
 |\mathcal F_S(\rho)-\mathcal F_V(\rho)|
 \lesssim_{V,M}e^{-S/2},
 \qquad
 c_{\mathrm{sub}}\int_{\T^2}H(\rho)-C_{V,M,c_{\mathrm{sub}}}
 \leq\mathcal F_S(\rho)
 \leq\int_{\T^2}H(\rho)+C_{V,M}e^{-S/2}.
\end{equation}
For smooth positive $\rho$ and every smooth direction $v$,
\begin{equation}\label{eq:shifted-energy-first-variation}
 D\mathcal F_S(\rho)=\mu_S(\rho),
\end{equation}
up to an irrelevant spatial constant, and
\begin{equation}\label{eq:shifted-energy-hessian}
 \begin{aligned}
 D^2\mathcal F_S(\rho)[v,v]
 ={}&\int_{\T^2}\frac{v^2}{\rho}
 -\lambda_V\cos(\alpha_V)
 \int_{\T^2}\theta_S'(\rho)\Phi_S(\rho)v^2\\
 &-\lambda_V\cos(\alpha_V)
 \left\langle\theta_S(\rho)v,
 \Green*(\theta_S(\rho)v)\right\rangle.
 \end{aligned}
\end{equation}
\end{lemma}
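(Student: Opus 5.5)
The proof splits into three parts: the energy comparison, the coercivity bounds, and a direct computation of the first two variations. Write $\kappa=\lambda_V\cos(\alpha_V)>0$ and $d=\rho-h_S(\rho)$.

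\textbf{Comparison estimate.} From \eqref{eq:localized-cutoff-objects} we have $0\le d\le e^{-S/2}$ pointwise. Since the torus has unit measure, $0\le\int d\le e^{-S/2}$ and $\|d\|_{L^\infty}\le e^{-S/2}$. We then split
\[
 \mathcal F_S-\mathcal F_V=\frac\kappa2\Bigl[\int\rho(\Green*\rho)-\int h_S(\rho)(\Green*h_S(\rho))\Bigr]+\kappa\Lambda_*\int d .
\]
The last term is at most $\kappa\Lambda_*e^{-S/2}$, and $\Lambda_*$ depends only on $M$ and $\Green$. The bracket equals $\int d\,(\Green*\rho)+\int h_S(\rho)(\Green*d)$. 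Because $\Green\in L^1$, we get $\|\Green*d\|_{L^\infty}\le\|\Green\|_{L^1}e^{-S/2}$, and the second piece is therefore $\le M\|\Green\|_{L^1}e^{-S/2}$. For the first piece we use symmetry of the Green kernel to rewrite it as $\int\rho(\Green*d)$, which admits the same bound. No entropy enters, so the estimate $\lesssim_{V,M}e^{-S/2}$ holds for every state of mass $M$.

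\textbf{Coercivity bounds.} The lower bound follows from \eqref{eq:free-energy-coercivity} with $c_{\mathrm{sub}}$ as in Lemma~\ref{lem:green-potential-coercivity}, after absorbing $e^{-S/2}\le1$ into the constant. For the upper bound we use $\mathcal F_V(\rho)=\int H(\rho)-\frac\kappa2\|\nabla\Green*\rho\|_2^2\le\int H(\rho)$ by \eqref{eq:green-quadratic-identity}, and then add the comparison error. States of infinite entropy are trivial: both sides are infinite and the comparison is interpreted as above.

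\textbf{First and second variations.} These are computed for smooth positive $\rho$ and a smooth direction $v$ by differentiating $\mathcal F_S(\rho+tv)$.

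For the first variation, the entropy gives $\int\log\rho\,v$. The quadratic term gives $\frac12\cdot2\int\theta_S(\rho)v\,(\Green*h_S(\rho))$, using symmetry of $\Green$. The linear term gives $\Lambda_*\int\theta_S(\rho)v$. Combining these yields
\[
 \int\bigl(\log\rho-\kappa\theta_S(\rho)\Phi_S(\rho)\bigr)v=\int\mu_S(\rho)\,v ,
\]
which is \eqref{eq:shifted-energy-first-variation}. The spatial-constant ambiguity comes only from restricting to mass-preserving directions with $\int v=0$.

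For the second variation, we differentiate again. The entropy gives $\int v^2/\rho$. Differentiating $\theta_S(\rho)v$ inside $\int\theta_S(\rho)v\,\Phi_S(\rho)$ produces $\int\theta_S'(\rho)v^2\Phi_S(\rho)$. Differentiating $\Phi_S(\rho)$ produces $\langle\theta_S(\rho)v,\Green*(\theta_S(\rho)v)\rangle$. Multiplying both by $-\kappa$ gives \eqref{eq:shifted-energy-hessian}.

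\textbf{Main obstacle.} The only delicate point is bookkeeping. The comparison must be uniform in the state, so we rely on the $L^\infty$ bound on $d$ rather than on entropy. We must also make sure the shift $\Lambda_*$ contributes exactly through the $\theta_S'$ term, with no further Hessian piece, so that the Hessian has precisely the three displayed terms.
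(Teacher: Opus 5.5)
Your proof is correct and follows the paper's argument. The paper uses the $L^\infty$ bound on $\rho-h_S(\rho)$, symmetry of $\Green$ and $\|\Green\|_{L^1}<\infty$ for the comparison, where it writes the bracket as the single pairing $\langle\rho-h_S(\rho),\Green*(\rho+h_S(\rho))\rangle$ rather than your two pieces. It then uses coercivity and positivity of the Green quadratic form for the two-sided bounds, and direct differentiation for the first and second variations.
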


\begin{proof}
Symmetry of $\Green$, $\|\rho-h_S(\rho)\|_\infty\leq e^{-S/2}$, and
$\int\rho=M$ give
\begin{equation*}
 \left|\langle\rho,\Green*\rho\rangle
 -\langle h_S(\rho),\Green*h_S(\rho)\rangle\right|
 =\left|\langle \rho-h_S(\rho),
 \Green*(\rho+h_S(\rho))\rangle\right|
 \leq2M\|\Green\|_1e^{-S/2}.
\end{equation*}
Moreover $0\leq M-\int h_S(\rho)\leq e^{-S/2}$.  This proves the first
estimate in \eqref{eq:localized-energy-comparison}; the remaining two
follow from Lemma~\ref{lem:green-potential-coercivity} and positivity
of the Green quadratic form.  Direct differentiation gives
\eqref{eq:shifted-energy-first-variation} and
\eqref{eq:shifted-energy-hessian}.
\end{proof}

Applying It\^o's formula to $\mathcal F_S$ yields the following identity,
involving both the physical chemical potential $\mu$ from
\eqref{eq:physical-chemical-potential} and the auxiliary first variation
$\mu_S$.

\begin{proposition}[It\^o formula for the localized free energy]
\label{prop:localized-coulomb-ito}
Let a smooth positive mass-preserving process solve the It\^o equation
\eqref{eq:controlled-spde-ito}, and suppose that the Fourier modes are
the orthonormal family in \eqref{eq:fourier-cutoff}.  Then
\begin{equation}\label{eq:localized-coulomb-ito}
 \begin{aligned}
 \dd\mathcal F_S(\rho)
 ={}&\left[-\frac12\mathcal D_V(\rho)
 -\frac12\mathcal D_S(\rho)
 +\frac12\int_{\T^2}\rho
 \left|\nabla\bigl(\mu_S(\rho)-\mu(\rho)\bigr)\right|^2\right.\\
 &\left.\qquad
 +\lambda_V\sin(\alpha_V)
 \int_{\T^2}\rho\,\J\nabla\Green*\rho\cdot\nabla\mu_S(\rho)\right.\\
 &\left.\qquad
 +\int_{\T^2}\sqrt\rho\,P_Kg\cdot\nabla\mu_S(\rho)
 +\mathscr C_S^{\varepsilon,K}(\rho)\right]\dd t
 +\dd\mathcal M_t^S,
 \end{aligned}
\end{equation}
where
\begin{equation}\label{eq:localized-energy-martingale}
 \begin{aligned}
 \dd\mathcal M_t^S
 &:=\sqrt\varepsilon\sum_{k\in\mathcal E_K}\sum_{j=1}^2
 \left(\int_{\T^2}e_k\sqrt\rho\,\partial_j\mu_S(\rho)\right)
 \dd\beta_t^{k,j},
 \qquad
 \dd\langle\mathcal M^S\rangle_t
 \leq\varepsilon\mathcal D_S(\rho)\,\dd t.
 \end{aligned}
\end{equation}
The complete trace remaining after cancellation is
\begin{equation}\label{eq:complete-ito-hessian-grouping}
 \begin{aligned}
 \mathscr C_S^{\varepsilon,K}(\rho)
 ={}&\frac{\varepsilon N_K}{2}
 +\frac{\varepsilon\lambda_V\cos(\alpha_V)F_{1,K}}8
 \int_{\T^2}\theta_S(\rho)\nabla\Green*h_S(\rho)
 \cdot\nabla\log\rho\\
 &-\frac{\varepsilon\lambda_V\cos(\alpha_V)N_K}{2}
 \int_{\T^2}\rho\theta_S'(\rho)\Phi_S(\rho)\\
 &-\frac{\varepsilon\lambda_V\cos(\alpha_V)}2
 \sum_{k\in\mathcal E_K}\sum_{j=1}^2
 \left\langle\theta_S(\rho)\partial_j(e_k\sqrt\rho),
 \Green*\bigl(\theta_S(\rho)\partial_j(e_k\sqrt\rho)\bigr)
 \right\rangle.
 \end{aligned}
\end{equation}
In particular,
\begin{equation}\label{eq:localized-trace-upper-bound}
 \mathscr C_S^{\varepsilon,K}(\rho)
 \lesssim_{V,M}\varepsilon N_K
 +\varepsilon F_{1,K}
 \left(1+\int_{\T^2}H(\rho)+MS\right).
\end{equation}
\end{proposition}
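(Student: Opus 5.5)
The plan is to apply It\^o's formula to $\mathcal F_S(\rho_t)$ for the smooth positive solution of \eqref{eq:controlled-spde-ito}. I will use the first variation \eqref{eq:shifted-energy-first-variation} for the drift and noise pairings, and one half of the Hessian \eqref{eq:shifted-energy-hessian} summed over the directions $v_{k,j}=\partial_j(e_k\sqrt\rho)$ for the It\^o trace. The terms are then grouped so that two exact cancellations occur. The first is the Fehrman--Gess entropy cancellation \eqref{eq:fg-entropy-cancellation}. The second is a cancellation between the Stratonovich drift and the $\theta_S'$ part of the Hessian.

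First, the deterministic part. I write $\Delta\rho-\diver(\rho V*\rho)=\diver(\rho\nabla\mu(\rho))-\lambda_V\sin(\alpha_V)\diver(\rho\J\nabla\Green*\rho)$, as in the derivation of \eqref{eq:deterministic-free-energy-backbone}. Pairing with $\mu_S$ gives $-\int\rho\nabla\mu\cdot\nabla\mu_S$, and polarization yields $-\frac12\mathcal D_V-\frac12\mathcal D_S+\frac12\int\rho|\nabla(\mu_S-\mu)|^2$. The rotational and control terms give the displayed pairings with $\nabla\mu_S$ after one integration by parts. The noise term gives $\mathcal M^S$. Its quadratic variation is $\varepsilon\sum_{k,j}\langle e_k,\sqrt\rho\,\partial_j\mu_S\rangle^2\leq\varepsilon\mathcal D_S$ by Bessel's inequality for the orthonormal family $\{e_k\}$.

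Second, the trace bookkeeping. Since $\sum_ke_k\nabla e_k=0$, we have $\sum_{k,j}v_{k,j}^2=N_K\rho+F_{1,K}|\nabla\rho|^2/(4\rho)$. The entropy Hessian $\frac\varepsilon2\int v^2/\rho$ thus gives $\frac{\varepsilon N_K}2+\frac{\varepsilon F_{1,K}}8\int|\nabla\log\rho|^2$. The Stratonovich drift $\frac{\varepsilon F_{1,K}}8\Delta\log\rho$ paired with $\mu_S$ gives $-\frac{\varepsilon F_{1,K}}8\int|\nabla\log\rho|^2$, which cancels the previous term. It also gives $+\frac{\varepsilon\lambda_V\cos(\alpha_V)F_{1,K}}8\int\nabla\log\rho\cdot\nabla(\theta_S\Phi_S)$. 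Expanding $\nabla(\theta_S\Phi_S)=\theta_S'\Phi_S\nabla\rho+\theta_S\nabla\Green*h_S$, the $\theta_S'$ piece equals $+\frac{\varepsilon\lambda_V\cos(\alpha_V)F_{1,K}}8\int\theta_S'\Phi_S|\nabla\rho|^2/\rho$. This exactly cancels the $F_{1,K}$ part of $-\frac\varepsilon2\lambda_V\cos(\alpha_V)\int\theta_S'\Phi_S\sum v^2$. What survives is the $N_K$ piece, the $\theta_S\nabla\Green*h_S$ piece, and the Green quadratic Hessian term, which is precisely \eqref{eq:complete-ito-hessian-grouping}. I expect this sign and constant bookkeeping to be the main point requiring care. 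In particular, one must check that the factor $\frac14$ from $|\nabla\sqrt\rho|^2=|\nabla\rho|^2/(4\rho)$ matches the $\frac18$ in the drift correction.

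Third, the bound \eqref{eq:localized-trace-upper-bound}. The $N_K\theta_S'$ term is nonpositive because $\theta_S'\geq0$ and $\Phi_S\geq1$ by \eqref{eq:mass-uniform-positive-shift}. The Green quadratic term is nonpositive because $\langle f,\Green*f\rangle=\sum_{m\neq0}|\hat f(m)|^2/(4\pi^2|m|^2)\geq0$. For the remaining term, set $\Psi_S(r)=\int_1^r\theta_S(s)s^{-1}\dd s$, so that $\theta_S\nabla\log\rho=\nabla\Psi_S(\rho)$ and $|\Psi_S(r)|\leq\log_+r+S$, since $\theta_S$ vanishes below $e^{-S}$. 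Integrating by parts with \eqref{eq:coulomb-identities} gives
\begin{equation*}
\int_{\T^2}\theta_S\nabla\Green*h_S\cdot\nabla\log\rho=\int_{\T^2}\Psi_S(\rho)\Bigl(h_S(\rho)-\int_{\T^2} h_S(\rho)\Bigr).
\end{equation*}
Using $0\leq h_S(\rho)\leq\rho$, this is bounded by $\int(\log_+\rho+S)(\rho+M)\lesssim_M1+\int H(\rho)+MS$, which proves \eqref{eq:localized-trace-upper-bound}.
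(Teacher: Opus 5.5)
Your proposal is correct and follows the paper's proof essentially step for step. The steps match: polarization of $-\int\rho\nabla\mu\cdot\nabla\mu_S$, Bessel's inequality for the bracket, the Fehrman--Gess entropy cancellation, the second cancellation between the $\theta_S'\Phi_S$ part of the first variation and the local interaction Hessian, and integration by parts against a primitive of $\theta_S(r)/r$ for the bound. The only difference is cosmetic: you bound the remaining $F_{1,K}$ term in absolute value using a primitive based at $1$, whereas the paper takes the primitive from $0$ and uses a monotonicity argument to show the term is nonnegative. Both arguments rely on the standing assumption $\lambda_V\cos(\alpha_V)>0$ for the nonpositivity of the last two lines.
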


Proposition~\ref{prop:regularized-ito-trace} gives the corresponding identity
for the Green-kernel and square-root approximations used below.  Thus the
displayed cancellation is retained throughout the limiting construction.

\begin{proof}
The radial part of the physical drift is
$\diver(\rho\nabla\mu(\rho))$.  Hence polarization gives exactly
\begin{equation*}
 -\int_{\T^2}\rho\nabla\mu_S\cdot\nabla\mu
 =-\frac12\mathcal D_V-\frac12\mathcal D_S
 +\frac12\int_{\T^2}\rho|\nabla(\mu_S-\mu)|^2.
\end{equation*}
The rotational drift and the control give the two remaining deterministic
terms in \eqref{eq:localized-coulomb-ito}.  Bessel's inequality for the
orthonormal modes proves the bracket estimate in
\eqref{eq:localized-energy-martingale}.

The entropy part cancels exactly as in
\eqref{eq:fg-entropy-cancellation}, leaving $\varepsilon N_K/2$.
The localization produces a second exact cancellation:
\begin{equation}\label{eq:localized-coulomb-hessian-cancellation}
 \frac{\varepsilon\lambda_V\cos(\alpha_V)F_{1,K}}8
 \int_{\T^2}\theta_S'(\rho)\Phi_S(\rho)
       \frac{|\nabla\rho|^2}{\rho}
 -\frac{\varepsilon\lambda_V\cos(\alpha_V)F_{1,K}}8
 \int_{\T^2}\theta_S'(\rho)\Phi_S(\rho)
       \frac{|\nabla\rho|^2}{\rho}=0.
\end{equation}
The first term in
\eqref{eq:localized-coulomb-hessian-cancellation} comes from
differentiating $\theta_S(\rho)\Phi_S(\rho)$ in the first variation; the
second comes from the local interaction Hessian.  Both cancellations are
performed before any singular term is estimated.  The uncancelled terms
are exactly
\eqref{eq:complete-ito-hessian-grouping}.  Its last two lines are
nonpositive by \eqref{eq:mass-uniform-positive-shift} and positivity of
the Green quadratic form.

Since
$0\leq\int_0^r\theta_S(s)s^{-1}\,\dd s\leq S+\log_+r$,
integration by parts gives
\begin{equation*}
 \int\theta_S(\rho)\nabla\Green*h_S(\rho)\cdot\nabla\log\rho
 =\int\left(h_S(\rho)-\int h_S(\rho)\right)
 \left(\int_0^\rho\frac{\theta_S(s)}s\,\dd s\right).
\end{equation*}
The last integral is nonnegative because both scalar factors are
nondecreasing functions of $\rho$, and it is bounded above by
$MS+\int H(\rho)+C_M$.  This proves
\eqref{eq:localized-trace-upper-bound}.
\end{proof}

The trace estimate is global in the state.  Compatibility between
$\mathcal F_S$ and the physical drift is instead an entropy-sublevel
statement, to which we now turn.

\begin{proposition}[Compatibility on an entropy sublevel]
\label{prop:stopped-coulomb-energy}
If $R,L\geq1$, $\int H(\rho)\leq R$, and $S\geq L(1+R)$, then
\begin{equation}\label{eq:localized-compatibility-bounds}
 \begin{aligned}
 \int_{\T^2}\rho
 \left|\nabla\bigl(\mu_S(\rho)-\mu(\rho)\bigr)\right|^2
 &\lesssim_{V,M}\frac1{L^2}\mathcal D_V(\rho)
 +e^{-S/2}(1+R)+e^{-S},\\
 \left|\lambda_V\sin(\alpha_V)
 \int_{\T^2}\rho\,\J\nabla\Green*\rho\cdot\nabla\mu_S(\rho)\right|
 &\lesssim_{V,M}e^{-S/2}
 \left(1+\int_{\T^2}H(\rho)\right).
 \end{aligned}
\end{equation}
Consequently, there are $L_0,c_0>0$, depending only on $V$ and $M$,
such that, if $L\geq L_0$ and $S=L(1+R)$ along a stochastic interval,
then
\begin{equation*}
 \begin{aligned}
 \dd\mathcal F_S(\rho)
 +c_0\bigl(\mathcal D_V(\rho)+\mathcal D_S(\rho)\bigr)\dd t
 \leq{}&\left[C_{V,M}e^{-L/2}+C\varepsilon N_K\right.\\
 &\left.\qquad+C_V\varepsilon F_{1,K}
 \left(1+\int H(\rho)+MS\right)
 +\int_{\T^2}|g|^2\right]\dd t
 +\dd\mathcal M_t^S,
 \end{aligned}
\end{equation*}
with $\dd\langle\mathcal M^S\rangle_t
\leq\varepsilon\mathcal D_S(\rho)\dd t$.  One may take $c_0=1/4$ after
increasing $L_0$.
\end{proposition}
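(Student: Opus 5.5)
Set $\kappa=\lambda_V\cos(\alpha_V)>0$. Since $\mu_S-\mu=\kappa\bigl(\Green*\rho-\theta_S(\rho)\Phi_S(\rho)\bigr)$, I will split the gradient into three pieces:
\begin{equation*}
 \nabla(\mu_S-\mu)/\kappa
 =\nabla\Green*(\rho-h_S(\rho))
 +(1-\theta_S(\rho))\nabla\Green*h_S(\rho)
 -\theta_S'(\rho)\Phi_S(\rho)\nabla\rho .
\end{equation*}

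\emph{First piece.} We have $0\le\rho-h_S(\rho)\le e^{-S/2}$, so $\|\nabla\Green*(\rho-h_S)\|_\infty\lesssim e^{-S/2}$, because $\nabla\Green\in L^1$. Weighting by $\rho$ then costs at most $Me^{-S}$.

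\emph{Second piece.} It is supported where $\rho\le e^{-S/2}$, so its weighted square is at most $e^{-S/2}\|\nabla\Green*h_S\|_2^2$. By Lemma~\ref{lem:green-potential-coercivity} with $f=h_S\le\rho$, this is $\lesssim_M e^{-S/2}(1+R)$.

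\emph{Third piece (the main term).} On the support of $\theta_S'$ we have $\rho\le e^{-S/2}$. Using $r\theta_S'(r)\lesssim S^{-1}$, its weighted square is
\begin{equation*}
\int\rho\,\theta_S'^2\Phi_S^2|\nabla\rho|^2\lesssim S^{-2}\int_{\{\rho\le e^{-S/2}\}}\Phi_S^2\frac{|\nabla\rho|^2}{\rho}.
\end{equation*}
The plan is to bound $\Phi_S\le\Lambda_*+\|\Green*h_S\|_\infty\lesssim_M 1+R$, again by Lemma~\ref{lem:green-potential-coercivity}. This gives $\lesssim S^{-2}(1+R)^2\mathcal I(\rho)\le L^{-2}\mathcal I(\rho)$, since $S\ge L(1+R)$. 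I then need $\mathcal I$ in terms of $\mathcal D_V$ without the exponential remainder of Proposition~\ref{prop:fisher-from-dissipation}, and this is the main obstacle. To get around it, I will use the expansion $\mathcal D_V=\|2\nabla\sqrt\rho-\kappa\sqrt\rho\nabla\Green*\rho\|_2^2$ restricted to the low-density set $\{\rho\le e^{-S/2}\}$. There $\sqrt\rho|\nabla\Green*\rho|$ is small in weighted $L^2$: it is at most $e^{-S/4}\|\nabla\Green*\rho\|_2\lesssim e^{-S/4}(1+R)^{1/2}$. The triangle inequality then gives
\begin{equation*}
4\int_{\{\rho\le e^{-S/2}\}}|\nabla\sqrt\rho|^2\le2\mathcal D_V+Ce^{-S/2}(1+R).
\end{equation*}
This localized bound is enough, so the global Fisher information is never needed, and the first estimate follows. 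If the $(1+R)^2$ factor causes trouble, I will instead use the sharper support fact $\theta_S(\rho)\Phi_S$ with $\Phi_S$ evaluated pointwise; but the bound by $\|\Green*h_S\|_\infty$ suffices.

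\emph{Rotational term.} Its integrand is $\int\rho\,\J\nabla\Green*\rho\cdot\nabla\mu_S$. Since $\int\rho\,\J\nabla\Green*\rho\cdot\nabla\mu=0$ (shown earlier), I replace $\nabla\mu_S$ by $\nabla(\mu_S-\mu)$, which is the same decomposition as above. Rather than Cauchy--Schwarz, I use the structure of each piece:
\begin{itemize}
 \item For the first piece, integrate by parts using $\diver(\rho\J\nabla\Green*\rho)=\nabla\rho\cdot\J\nabla\Green*\rho$ together with $\|\Green*(\rho-h_S)\|_\infty\lesssim e^{-S/2}$. This gives $\lesssim e^{-S/2}\int|\nabla\rho||\nabla\Green*\rho|$.
 \item This integrand is still not entropy-controlled, so instead I write the piece as $\int\rho\,\J\nabla\Green*\rho\cdot\nabla\Green*(\rho-h_S)$. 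I then bound it by $\|\nabla\Green*(\rho-h_S)\|_\infty M\|\nabla\Green*\rho\|_\infty$. Since the latter norm is not finite, I will use the antisymmetric structure instead: $\J a\cdot a=0$ lets me write it as $-\int(\rho-h_S)\,\nabla\Green*\rho\cdot\J\nabla\Green*\rho$ plus a term in $h_S$. Bounding this by $e^{-S/2}\|\nabla\Green*\rho\|_2^2\lesssim e^{-S/2}(1+R)$ is the intended route.
 \item For the third piece, the term $\theta_S'\Phi_S\nabla\rho=\nabla(\Theta)$ with a primitive $\Theta(\rho)$ of size $\lesssim\Phi_S e^{-S/2}$. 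Integrating by parts against the divergence-free field gives the same type of bound.
 \item The second piece is handled similarly, using the support of $1-\theta_S$.
\end{itemize}

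\emph{Consequence.} I insert these bounds into Proposition~\ref{prop:localized-coulomb-ito}. The $\frac12\int\rho|\nabla(\mu_S-\mu)|^2$ term is absorbed into $\frac12\mathcal D_V$ for $L\ge L_0$. The control term is handled by Young's inequality: $\int\sqrt\rho P_Kg\cdot\nabla\mu_S\le\frac14\mathcal D_S+\|g\|_2^2$. The trace $\mathscr C_S$ is estimated by \eqref{eq:localized-trace-upper-bound}, and the errors $e^{-S/2}(1+R)\le e^{-L/2}$ are absorbed into $C_{V,M}e^{-L/2}$. Together these yield the stated inequality with $c_0=1/4$.
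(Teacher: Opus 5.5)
Your mismatch estimate (the three-term splitting, $\Phi_S\lesssim_M 1+R$, and the low-density Fisher bound obtained by restricting $\mathcal D_V$ to $\{\rho\leq e^{-S/2}\}$) and the final absorption into Proposition~\ref{prop:localized-coulomb-ito} with $c_0=1/4$ follow the paper's argument. The gap is in the rotational estimate.

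\emph{The pieces you do control.} Your second and third pieces, and the part of the first piece weighted by $\rho-h_S(\rho)$, are $O(e^{-S/2}(1+R))$. For the third piece the right primitive is $B(r)=\int_0^r s\theta_S'(s)\,\dd s$, applied to $\rho\,\theta_S'(\rho)\nabla\rho$. Because $\Phi_S$ is nonlocal, the integration by parts also produces $\int B(\rho)\,\J\nabla\Green*\rho\cdot\nabla\Green*h_S(\rho)$. This is harmless, since $0\leq B\lesssim S^{-1}e^{-S/2}$.

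\emph{The piece you do not control.} In your rewriting of the first piece, $\int(\rho-h_S)\,\nabla\Green*\rho\cdot\J\nabla\Green*\rho$ vanishes identically. So everything sits in the leftover ``term in $h_S$'':
\[
 \int_{\T^2}h_S(\rho)\,\J\nabla\Green*\rho\cdot
 \nabla\Green*\bigl(\rho-h_S(\rho)\bigr)
 =-\int_{\T^2}h_S(\rho)\,
 \bigl[\J\nabla\Green*\bigl(\rho-h_S(\rho)\bigr)\bigr]
 \cdot\nabla\Green*h_S(\rho).
\]
Multiplied by $\lambda_V^2\sin(\alpha_V)\cos(\alpha_V)$, this is exactly the paper's reduced form of the entire rotational term. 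All your other contributions cancel to zero exactly, since $h_S(r)=r\theta_S(r)-B(r)$. The bound $e^{-S/2}\|\nabla\Green*\rho\|_2^2$ does not follow, because the weight $h_S(\rho)$ is unbounded. H\"older with $\|\J\nabla\Green*(\rho-h_S)\|_\infty\lesssim e^{-S/2}$ leaves $\int h_S|\nabla\Green*h_S|$, which is essentially $\iint h_S(x)h_S(y)|x-y|^{-1}$. By Hardy--Littlewood--Sobolev this needs $\rho\in L^{4/3}$, i.e.\ Fisher information, and it is not bounded by any function of the entropy alone.

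\emph{The missing idea} is the paper's odd-kernel symmetrization combined with a log-Lipschitz bound. Set $w=\J\nabla\Green*(\rho-h_S(\rho))$. The term is $\iint h_S(x)h_S(y)\,w(x)\cdot\nabla\Green(x-y)$, and oddness of $\nabla\Green$ turns it into $\frac12\iint h_S(x)h_S(y)\,(w(x)-w(y))\cdot\nabla\Green(x-y)$. Since $0\leq\rho-h_S(\rho)\leq e^{-S/2}$ and $|D^2\Green|\lesssim d^{-2}$, the field $w$ satisfies $|w(x)-w(y)|\lesssim e^{-S/2}r(1+\log_+(1/r))$ with $r=d(x,y)$. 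This reduces the kernel to $e^{-S/2}(1+\log_+(1/|x-y|))$, which is exponentially integrable. Young's inequality $rz\leq r\log r-r+e^z$ then gives $e^{-S/2}(1+\int H(\rho))$.

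\emph{A weaker fallback.} If you only need the ``Consequently'' part, you could instead bound the term by $e^{-S/2}\|\rho\|_{4/3}^2\leq e^{-S/2}M\|\rho\|_2$ and recover Fisher information from Proposition~\ref{prop:fisher-from-dissipation}. With $S=L(1+R)$ and $L$ large, $e^{-S/2}\exp(C(1+R))\lesssim e^{-L/2}$, and $e^{-S/2}\mathcal D_V^{1/2}$ is absorbed. That route does not prove the entropy-only second line of \eqref{eq:localized-compatibility-bounds} as stated.
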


\begin{proof}
The exact difference of the chemical potentials is
\begin{equation*}
 \nabla(\mu_S-\mu)
 =\lambda_V\cos(\alpha_V)\left[
 \nabla\Green*(\rho-h_S(\rho))
 +(1-\theta_S(\rho))\nabla\Green*h_S(\rho)
 -\theta_S'(\rho)\Phi_S(\rho)\nabla\rho\right].
\end{equation*}
The first two terms on the right have weighted squared norms bounded by
$e^{-S}$ and $e^{-S/2}(1+R)$, respectively.  Indeed,
$\|\nabla\Green*(\rho-h_S(\rho))\|_\infty
\leq\|\nabla\Green\|_1e^{-S/2}$, whereas the second term is supported on
$\{\rho\leq e^{-S/2}\}$ and is controlled by
Lemma~\ref{lem:green-potential-coercivity}.

For the cutoff-derivative term, observe on $\{\rho>0\}$ that
\begin{equation*}
 \frac{\nabla\rho}{\sqrt\rho}
 =\sqrt\rho\,\nabla\mu(\rho)
 +\lambda_V\cos(\alpha_V)\sqrt\rho\,\nabla\Green*\rho.
\end{equation*}
Squaring on $\{\rho\leq\delta\}$ and using
Lemma~\ref{lem:green-potential-coercivity} gives, for every $\delta>0$,
\begin{equation}\label{eq:low-density-fisher}
 \int_{\{\rho\leq\delta\}}\frac{|\nabla\rho|^2}{\rho}
 \leq2\mathcal D_V(\rho)
 +2\lambda_V^2\cos^2(\alpha_V)\delta
 \|\nabla\Green*\rho\|_2^2
 \lesssim_{V,M}\mathcal D_V(\rho)
 +\delta\left(1+\int_{\T^2}H(\rho)\right).
\end{equation}
Taking $\delta=e^{-S/2}$, the last term satisfies
\begin{align*}
 \int\rho|\theta_S'(\rho)\Phi_S(\rho)\nabla\rho|^2
 &\lesssim_M\frac{(1+R)^2}{S^2}
 \int_{\{\rho\leq e^{-S/2}\}}\frac{|\nabla\rho|^2}{\rho}\\
 &\lesssim_{V,M}\frac1{L^2}
 \left(\mathcal D_V(\rho)+e^{-S/2}(1+R)\right),
\end{align*}
by \eqref{eq:low-density-fisher}.  This proves
the first estimate in \eqref{eq:localized-compatibility-bounds}.

For the rotational term, an integration by parts using
$\diver(\J\nabla\Green*\rho)=0$, cancels the shift and cutoff-derivative
terms and gives
\begin{equation*}
 \begin{aligned}
 &\lambda_V\sin(\alpha_V)
 \int_{\T^2}\rho\,\J\nabla\Green*\rho\cdot\nabla\mu_S(\rho)\\
 &\qquad=-\lambda_V^2\sin(\alpha_V)\cos(\alpha_V)
 \int_{\T^2}h_S(\rho)\,
 \bigl[\J\nabla\Green*(\rho-h_S(\rho))\bigr]
 \cdot\bigl(\nabla\Green*h_S(\rho)\bigr).
 \end{aligned}
\end{equation*}
Here the contribution containing
$(\J\nabla\Green*h_S(\rho))\cdot(\nabla\Green*h_S(\rho))$ vanishes
pointwise.  The
kernel bounds for $\nabla\Green$ and $D^2\Green$ give
\begin{equation}\label{eq:biot-savart-log-lipschitz}
 \left|
 \bigl[\J\nabla\Green*(\rho-h_S(\rho))\bigr](x)
 -\bigl[\J\nabla\Green*(\rho-h_S(\rho))\bigr](y)
 \right|
 \lesssim e^{-S/2}r\left(1+\log_+\frac1r\right),
 \qquad r=d(x,y).
\end{equation}
Indeed, one integrates $|\nabla\Green|$ over the two balls of radius
$2r$ and uses the mean-value bound on their complement, whose radial
integral is $\int_r^1s^{-1}\,\dd s$.  Oddness of $\nabla\Green$
symmetrizes the preceding integral; then
\eqref{eq:biot-savart-log-lipschitz}, exponential integrability of the
logarithmic kernel, and Young's entropy inequality yield
the second estimate in \eqref{eq:localized-compatibility-bounds}.

Choose $L_0$ large enough that the $L^{-2}\mathcal D_V$ term in
\eqref{eq:localized-compatibility-bounds} is absorbed, and use
$(1+R)e^{-L(1+R)/2}\lesssim e^{-L/2}$.  Insert the resulting mismatch
and rotational bounds, together with
\eqref{eq:localized-trace-upper-bound} into
\eqref{eq:localized-coulomb-ito}.  The control satisfies
\begin{equation*}
 \int_{\T^2}\sqrt\rho\,P_Kg\cdot\nabla\mu_S(\rho)
 \leq\frac14\mathcal D_S(\rho)+\int_{\T^2}|g|^2.
\end{equation*}
The stated inequality follows after absorption.  The trace bound does not
require an entropy bound; such a bound is needed to compare the physical
and localized chemical potentials.
\end{proof}

Fix a coercivity constant $c_{\mathrm{sub}}$ as in
Lemma~\ref{lem:green-potential-coercivity}, and
fix $L\geq L_0$, where $L_0$ is given by
Proposition~\ref{prop:stopped-coulomb-energy}.  We first work with the
smooth regularized solutions, whose entropy has a continuous
representative, and transfer the resulting uniform estimate to the
kinetic limit only afterward.

The hitting times below are stopping times because
Lemma~\ref{lem:regularized-entropy-chain-rule} in
Appendix~\ref{app:approximation} gives a continuous adapted representative
of the regularized entropy.  This entropy continuity is established
independently of the localization argument; it does not follow from
$L^1$ continuity of the density.

\subsection{Global estimates and weak construction}

Set
\begin{equation}\label{eq:dyadic-entropy-localization}
 \begin{gathered}
 R_n=2^n\left(1+\int_{\T^2}H(\rho_0)\right),\qquad n\geq0,\\
 \tau_0=0,\qquad
 \tau_{n+1}=\inf\left\{t\geq\tau_n:
 \int_{\T^2}H(\rho(t))\geq R_{n+1}\right\},\\
 S_n=L(1+R_{n+1}).
 \end{gathered}
\end{equation}
The infimum of the empty set is $+\infty$.  On
$(\tau_n,\tau_{n+1}]$ we apply the fixed-functional identity to
$\mathcal F_{S_n}$.  Thus only the localization scale in the estimate is
changed; the equation and its noise are unchanged.

Consequently,
\begin{equation}\label{eq:entropy-hitting-value}
 \int_{\T^2}H(\rho(\tau_n))=R_n
 \quad\text{on }\{\tau_n<\infty\},\qquad n\geq1.
\end{equation}
Thus no state-dependent parameter is inserted into It\^o's formula, which
would create derivatives with respect to $S$ and mixed Hessian terms.

Two elementary consequences of the dyadic choice will be used in the
summation.  Lemma~\ref{lem:localized-energy-comparison} and the dyadic
growth of $R_n$ give
\begin{equation}\label{eq:energy-switch-budget}
 \sum_{n\geq1}\left|
 \mathcal F_{S_n}(\rho(\tau_n))
 -\mathcal F_{S_{n-1}}(\rho(\tau_n))
 \right|\lesssim_{V,M}e^{-L/2}.
\end{equation}
Indeed,
$|\mathcal F_{S_n}(\rho(\tau_n))-
\mathcal F_{S_{n-1}}(\rho(\tau_n))|
\lesssim_{V,M}(e^{-S_n/2}+e^{-S_{n-1}/2})$.
If $t\in(\tau_n,\tau_{n+1}]$, then
\begin{equation}\label{eq:active-scale-running-maximum}
 \begin{aligned}
 S_n&\leq L\left[1+2\max\left\{
 1+\int_{\T^2}H(\rho_0),
 \sup_{s\leq t}\int_{\T^2}H(\rho(s))\right\}\right]\\
 &\lesssim L\left(1+\int_{\T^2}H(\rho_0)
 +\sup_{s\leq t}\int_{\T^2}H(\rho(s))\right).
 \end{aligned}
\end{equation}
For $n\geq1$, \eqref{eq:entropy-hitting-value} gives
$\sup_{s\leq t}\int H(\rho(s))\geq R_n$ and $R_{n+1}=2R_n$; the first
stage is bounded directly by the initial entropy.

For $N\geq1$, stop after finitely many stages and define
\begin{equation}\label{eq:concatenated-dissipation-martingale}
 \begin{aligned}
 \mathscr H_t^N
 &:=\sup_{s\leq t\wedge\tau_N}\int_{\T^2}H(\rho(s)),\\
 \mathscr D_t^N
 &:=\int_0^{t\wedge\tau_N}\mathcal D_V(\rho(s))\,\dd s
 +\sum_{n=0}^{N-1}\int_{t\wedge\tau_n}^{t\wedge\tau_{n+1}}
 \mathcal D_{S_n}(\rho(s))\,\dd s,\\
 \mathcal M_t^N
 &:=\sum_{n=0}^{N-1}\int_0^{t\wedge\tau_N}
 \mathbf1_{(\tau_n,\tau_{n+1}]}(s)\,\dd\mathcal M_s^{S_n},
 \qquad
 \langle\mathcal M^N\rangle_t\leq\varepsilon\mathscr D_t^N.
 \end{aligned}
\end{equation}
The indicator has a predictable version, so the concatenated stochastic
integral is well defined.  No strong Markov property is used.

\begin{theorem}[Subcritical entropy estimate]
\label{thm:subcritical-entropy-estimate}
Suppose that $\lambda_V\cos(\alpha_V)>0$,
$\lambda_V\cos(\alpha_V)M<8\pi$, and that
\eqref{eq:bounded-control-action} holds.
Let $\rho$ be a smooth positive mass-preserving approximation for which
Proposition~\ref{prop:localized-coulomb-ito}, or its compatible
regularized counterpart in
Proposition~\ref{prop:regularized-ito-trace}, is valid.  There are constants
$c_0,C>0$, independent of the number of stages and of the compatible
coefficient and Green regularizations, such that
\begin{equation}\label{eq:summed-localized-energy}
 \begin{aligned}
 &c_{\mathrm{sub}}\int_{\T^2}H(\rho(t\wedge\tau_N))
 +c_0\mathscr D_t^N\\
 &\quad\leq C\left[1+\int_{\T^2}H(\rho_0)+N_g
 +T\left\{e^{-L/2}+\varepsilon N_K
 +\varepsilon F_{1,K}L
 \left(1+\int_{\T^2}H(\rho_0)\right)\right\}\right]\\
 &\qquad+C\varepsilon F_{1,K}L\int_0^t\mathscr H_s^N\,\dd s
 +\mathcal M_t^N.
 \end{aligned}
\end{equation}
Consequently, uniformly in $N$,
\begin{equation}\label{eq:entropy-dissipation-tail}
 \begin{aligned}
 &\Pp\Biggl(\mathscr H_T^N+\mathscr D_T^N>
 C\exp\!\left(\frac{C\varepsilon F_{1,K}LT}{c_{\mathrm{sub}}}\right)
 \Biggl[1+\int_{\T^2}H(\rho_0)+N_g+r\\[-2mm]
 &\hspace{42mm}
 +T\left\{e^{-L/2}+\varepsilon N_K
 +\varepsilon F_{1,K}L
 \left(1+\int_{\T^2}H(\rho_0)\right)\right\}\Biggr]\Biggr)\\
 &\hspace{32mm}\leq\exp\!\left(-\frac{c_0r}{\varepsilon}\right),
 \qquad r\geq0,
 \end{aligned}
\end{equation}
and
\begin{equation}\label{eq:entropy-dissipation-expectation}
 \begin{aligned}
 \E[\mathscr H_T^N+\mathscr D_T^N]
 \lesssim{}&\exp\!\left(\frac{C\varepsilon F_{1,K}LT}
 {c_{\mathrm{sub}}}\right)
 \left[1+\int_{\T^2}H(\rho_0)+N_g+\frac{\varepsilon}{c_0}\right.\\
 &\left.\qquad
 +T\left\{e^{-L/2}+\varepsilon N_K
 +\varepsilon F_{1,K}L
 \left(1+\int_{\T^2}H(\rho_0)\right)\right\}\right].
 \end{aligned}
\end{equation}
For $\varepsilon=0$, the deterministic estimate
\eqref{eq:summed-localized-energy} and its resulting uniform bound hold with
the martingale and every $\varepsilon$-dependent term omitted; the
probability-tail formulation \eqref{eq:entropy-dissipation-tail} is not used.
\end{theorem}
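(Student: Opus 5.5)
The plan is to integrate the stage inequality of Proposition~\ref{prop:stopped-coulomb-energy} over each interval $(t\wedge\tau_n,t\wedge\tau_{n+1}]$ with the fixed functional $\mathcal F_{S_n}$, and then sum over $n<N$.

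\textbf{Stage inequality.} On stage $n$, before $\tau_{n+1}$, one has $\int H(\rho)\le R_{n+1}$. So the proposition applies with $R=R_{n+1}$ and $S=S_n=L(1+R_{n+1})$. Integrating gives
\begin{equation*}
 \mathcal F_{S_n}(\rho(t\wedge\tau_{n+1}))-\mathcal F_{S_n}(\rho(t\wedge\tau_n))
 +c_0\!\int_{t\wedge\tau_n}^{t\wedge\tau_{n+1}}\!(\mathcal D_V+\mathcal D_{S_n})
 \le\int_{t\wedge\tau_n}^{t\wedge\tau_{n+1}}\!\bigl[\cdots\bigr]\dd s
 +\mathcal M^{S_n}\text{-increment}.
\end{equation*}
At the switching times $\tau_n$ the functionals $\mathcal F_{S_n}$ and $\mathcal F_{S_{n-1}}$ differ. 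Summing over stages telescopes up to these switching errors, which \eqref{eq:energy-switch-budget} bounds by $C e^{-L/2}$. The initial value is handled by $\mathcal F_{S_0}(\rho_0)\le\int H(\rho_0)+C$, and the final value by the lower bound in \eqref{eq:localized-energy-comparison}, $\mathcal F_{S}(\rho)\ge c_{\mathrm{sub}}\int H(\rho)-C$. The dissipation integrals add up to $\mathscr D^N_t$ (absorbing $c_0$), and the martingale increments add up to $\mathcal M^N_t$.

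\textbf{Right-hand side terms.} The $e^{-L/2}$ and $\varepsilon N_K$ terms each contribute $T$ times a constant. The control contributes at most $N_g$. For the $MS$ term, use \eqref{eq:active-scale-running-maximum}: $S_n\lesssim L(1+\int H(\rho_0)+\mathscr H^N_s)$ for $s$ in stage $n$. The entropy factor $\int H(\rho(s))$ is bounded by $\mathscr H^N_s$, and $L\ge1$. Together these give the terms $\varepsilon F_{1,K}L T(1+\int H(\rho_0))$ and $C\varepsilon F_{1,K}L\int_0^t\mathscr H^N_s\,\dd s$, which proves \eqref{eq:summed-localized-energy}.

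\textbf{Tail bound \eqref{eq:entropy-dissipation-tail}.} Let $A$ denote the bracket constant, and write $\mathcal M^N_t=(\mathcal M^N_t-\tfrac{c_0}{2\varepsilon}\langle\mathcal M^N\rangle_t)+\tfrac{c_0}{2\varepsilon}\langle\mathcal M^N\rangle_t$. The second piece is at most $\tfrac{c_0}{2}\mathscr D^N_t$ by \eqref{eq:concatenated-dissipation-martingale} and is absorbed into the left. For the first piece, the exponential martingale inequality gives
\[
\Pp\Bigl(\sup_t\bigl(\mathcal M_t-\tfrac{\lambda}{2}\langle\mathcal M\rangle_t\bigr)>r\Bigr)\le e^{-\lambda r},
\qquad \lambda=c_0/\varepsilon .
\]
Outside an event of probability at most $e^{-c_0r/\varepsilon}$, we therefore have, for all $t$,
\[
c_{\mathrm{sub}}\int H(\rho(t\wedge\tau_N))+\tfrac{c_0}{2}\mathscr D_t^N\le A+r+C\varepsilon F_{1,K}L\int_0^t\mathscr H_s\,\dd s .
\]
The left side is evaluated at a single time, while $\mathscr H$ is a running maximum. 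Since the right side is nondecreasing in $t$, the left-hand bound also holds for $\sup_{s\le t}$, so $c_{\mathrm{sub}}\mathscr H^N_t\le A+r+C\varepsilon F_{1,K}L\int_0^t\mathscr H^N$. Gronwall then gives the factor $\exp(C\varepsilon F_{1,K}LT/c_{\mathrm{sub}})$, and substituting back bounds $\mathscr D^N_T$. This yields \eqref{eq:entropy-dissipation-tail} after renaming constants.

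\textbf{Expectation bound.} Integrating the tail in $r$ gives \eqref{eq:entropy-dissipation-expectation}; this produces the additive $\varepsilon/c_0$. For $\varepsilon=0$ there is no martingale, and Gronwall is trivial.

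\textbf{Main obstacle.} The delicate step is the pathwise bookkeeping at switching times. On each stage the hypothesis $\int H\le R_{n+1}$ must hold, including at the closed right endpoint, which follows from continuity of the regularized entropy (Lemma~\ref{lem:regularized-entropy-chain-rule}). The running-maximum control \eqref{eq:active-scale-running-maximum} must make the linear-in-$S$ trace term Gronwall-able rather than dyadically growing. Finally, the exponential martingale argument must be run on the concatenated predictable integrand, whose bracket is dominated by $\varepsilon\mathscr D^N$. All constants come from Proposition~\ref{prop:stopped-coulomb-energy} and Lemma~\ref{lem:localized-energy-comparison}, so they are independent of $N$ and of the regularizations.
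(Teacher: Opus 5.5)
Your proposal is correct and follows essentially the same route as the paper. You apply Proposition~\ref{prop:stopped-coulomb-energy} stage by stage with the frozen scale $S_n$, telescope using the switch budget \eqref{eq:energy-switch-budget} and the running-maximum bound \eqref{eq:active-scale-running-maximum}, absorb half of $\mathscr D^N$ through the exponential local-martingale inequality, apply Gr\"onwall pathwise to the running maximum, and integrate the tail. The only difference is cosmetic: you split off $\tfrac{c_0}{2\varepsilon}\langle\mathcal M^N\rangle$ before invoking the standard inequality, whereas the paper bounds $\mathcal M^N-\tfrac{c_0}{2}\mathscr D^N$ directly via the supermartingale $\exp\bigl(\tfrac{c_0}{\varepsilon}\mathcal M^N-\tfrac{c_0^2}{2\varepsilon^2}\langle\mathcal M^N\rangle\bigr)$, and the two are equivalent by the bracket bound.
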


\begin{proof}
Apply Proposition~\ref{prop:stopped-coulomb-energy}, or its uniform
regularized counterpart, with the fixed parameter $S_n$ on each of the
first $N$ intervals.  Sum these finitely
many inequalities, include the switch costs from
\eqref{eq:energy-switch-budget}, and use
\eqref{eq:active-scale-running-maximum}.  The lower and upper energy
bounds in \eqref{eq:localized-energy-comparison} yield
\eqref{eq:summed-localized-energy}; all constants are independent of
$N$.  If an ordinary bracket localization is needed, stop additionally
when $\mathscr D^N$ reaches an integer and remove this stop after obtaining the
uniform estimate.

The bracket bound in \eqref{eq:concatenated-dissipation-martingale} and
the exponential local-martingale inequality give
\begin{equation}\label{eq:concatenated-martingale-tail}
 \Pp\left(\sup_{t\leq T}
 \left\{\mathcal M_t^N-\frac{c_0}{2}\mathscr D_t^N\right\}>r\right)
 \leq e^{-c_0r/\varepsilon}.
\end{equation}
Indeed, the process
\begin{equation*}
 \exp\!\left(\frac{c_0}{\varepsilon}\mathcal M_t^N
 -\frac{c_0^2}{2\varepsilon^2}\langle\mathcal M^N\rangle_t\right)
\end{equation*}
is a nonnegative local martingale; multiplication by the remaining
nonincreasing finite-variation factor gives the supermartingale used in
the maximal inequality.  This does not assign mean zero to an
unlocalized local martingale.

Subtract $c_0\mathscr D^N/2$ from the martingale in
\eqref{eq:summed-localized-energy}.  Taking the running supremum in the
entropy term and applying deterministic Gr\"onwall pathwise gives
\begin{equation}\label{eq:running-maximum-bound}
 \begin{aligned}
 \mathscr H_T^N+\mathscr D_T^N
 \lesssim{}&\exp\!\left(\frac{C\varepsilon F_{1,K}LT}
 {c_{\mathrm{sub}}}\right)
 \left[1+\int_{\T^2}H(\rho_0)+N_g\right.\\
 &+T\left\{e^{-L/2}+\varepsilon N_K
 +\varepsilon F_{1,K}L
 \left(1+\int_{\T^2}H(\rho_0)\right)\right\}\\
 &\left.+\sup_{t\leq T}
 \left\{\mathcal M_t^N-\frac{c_0}{2}\mathscr D_t^N\right\}\right].
 \end{aligned}
\end{equation}
Equations \eqref{eq:concatenated-martingale-tail} and
\eqref{eq:running-maximum-bound} prove
\eqref{eq:entropy-dissipation-tail}; integration of the exponential tail
proves \eqref{eq:entropy-dissipation-expectation}.
\end{proof}

\begin{corollary}[Removal of the entropy-level stopping]
Under the hypotheses of
Theorem~\ref{thm:subcritical-entropy-estimate},
\begin{equation}\label{eq:entropy-level-nonaccumulation}
 \begin{aligned}
 \Pp(\tau_N\leq T)
 &\leq\exp\Biggl[-\frac{c_0}{\varepsilon}
 \Biggl(C^{-1}e^{-C\varepsilon F_{1,K}LT}R_N\\
 &\qquad-C\Biggl\{1+\int_{\T^2}H(\rho_0)+N_g
 +T\Bigl(e^{-L/2}+\varepsilon N_K\\
 &\hspace{42mm}
 +\varepsilon F_{1,K}L
 \Bigl(1+\int_{\T^2}H(\rho_0)\Bigr)\Bigr)\Biggr\}\Biggr)_+\Biggr]
 \longrightarrow0.
 \end{aligned}
\end{equation}
Thus the entropy levels cannot accumulate in finite time.  After removal
of the entropy stopping,
\begin{equation}\label{eq:global-entropy-dissipation-estimate}
 \begin{aligned}
 &\E\left[\sup_{t\leq T}\int_{\T^2}H(\rho(t))
 +\int_0^T\mathcal D_V(\rho(t))\,\dd t\right.\\
 &\left.\hspace{18mm}
 +\sum_{n\geq0}\int_{\tau_n\wedge T}^{\tau_{n+1}\wedge T}
 \mathcal D_{S_n}(\rho(t))\,\dd t\right]\\
 &\quad\lesssim \exp(C\varepsilon F_{1,K}LT)
 \left[1+\int_{\T^2}H(\rho_0)+N_g+\varepsilon\right.\\
 &\left.\hspace{35mm}
 +T\left\{e^{-L/2}+\varepsilon N_K
 +\varepsilon F_{1,K}L
 \left(1+\int_{\T^2}H(\rho_0)\right)\right\}\right].
 \end{aligned}
\end{equation}
\end{corollary}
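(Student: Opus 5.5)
The plan is to read the corollary directly off the tail bound \eqref{eq:entropy-dissipation-tail} together with the hitting identity \eqref{eq:entropy-hitting-value}. On the event $\{\tau_N\leq T\}$, continuity of the regularized entropy (Lemma~\ref{lem:regularized-entropy-chain-rule}) gives $\int H(\rho(\tau_N))=R_N$. Hence $\mathscr H_T^N\geq R_N$ on this event. Write $B$ for the bracketed quantity $1+\int H(\rho_0)+N_g+T\{\cdots\}$ and $E=\exp(C\varepsilon F_{1,K}LT/c_{\mathrm{sub}})$. Then $\{\tau_N\leq T\}$ is contained in $\{\mathscr H_T^N+\mathscr D_T^N>CE(B+r)\}$ for any $r$ with $CE(B+r)<R_N$. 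We take $r=(C^{-1}E^{-1}R_N-B)_+$, shrinking it slightly, or using $\geq$ in place of $>$ via a limiting argument. Inserting this into \eqref{eq:entropy-dissipation-tail} yields \eqref{eq:entropy-level-nonaccumulation}, after absorbing $c_{\mathrm{sub}}$ into $C$. Since $R_N=2^N(1+\int H(\rho_0))\to\infty$ while $B$ and $E$ do not depend on $N$, the right-hand side tends to $0$.

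Next, the events $\{\tau_N\leq T\}$ decrease in $N$ because the stopping times are nondecreasing. Their intersection therefore has probability zero, so almost surely $\tau_N>T$ for all large $N$. In particular the levels do not accumulate before $T$. On $\{\tau_N>T\}$ we have $t\wedge\tau_N=t$ for all $t\leq T$. Thus $\mathscr H_T^N$ and $\mathscr D_T^N$ increase monotonically in $N$, by nonnegativity of $\mathcal D_V$ and $\mathcal D_{S_n}$ and monotonicity of the truncation. They converge almost surely to the unstopped supremum of the entropy plus $\int_0^T\mathcal D_V$ plus the concatenated sum $\sum_n\int\mathcal D_{S_n}$.

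Monotone convergence applied to \eqref{eq:entropy-dissipation-expectation} then gives \eqref{eq:global-entropy-dissipation-estimate}, with the constants uniform in $N$ and $\varepsilon/c_0$ absorbed into $\varepsilon$.

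The only delicate point is the inclusion $\{\tau_N\leq T\}\subset\{\mathscr H_T^N\geq R_N\}$. It requires the hitting value to be attained, which holds because the regularized entropy has a continuous adapted representative. If $\tau_N$ were attained only in a limiting sense, lower semicontinuity would still give $\mathscr H^N_T\geq R_N$, since the supremum runs over $s\leq\tau_N$ inclusive.
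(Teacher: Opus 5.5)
Your argument is correct and follows the paper's proof. On $\{\tau_N\le T\}$, continuity of the regularized entropy gives $\mathscr H_T^N\ge R_N$; inserting this into the tail bound (which is just \eqref{eq:running-maximum-bound} combined with \eqref{eq:concatenated-martingale-tail}) yields \eqref{eq:entropy-level-nonaccumulation}, and letting $N\to\infty$ in \eqref{eq:entropy-dissipation-expectation} gives \eqref{eq:global-entropy-dissipation-estimate}. The paper uses Fatou's lemma for this last step, while you use monotone convergence; both work. One aside: your fallback remark is backwards, since lower semicontinuity only bounds the entropy at $\tau_N$ from above by its limits along times $t_k\downarrow\tau_N$, so it would not give $\mathscr H_T^N\ge R_N$ (upper semicontinuity would), but this is harmless because continuity is available.
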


\begin{proof}
On $\{\tau_N\leq T\}$, entropy continuity gives
$\mathscr H_T^N\geq R_N$.
Combine this with \eqref{eq:running-maximum-bound} and
\eqref{eq:concatenated-martingale-tail}.  Since these events decrease in
$N$, \eqref{eq:entropy-level-nonaccumulation} excludes finite-time
accumulation.  Fatou's lemma in
\eqref{eq:entropy-dissipation-expectation} proves
\eqref{eq:global-entropy-dissipation-estimate}.
\end{proof}

The preceding argument proves an a priori estimate, not a local existence
or continuation theorem.  It is first applied to the compatible smooth
approximations, where entropy hitting times are legitimate, and then
transferred by strong $L^1$ compactness and lower semicontinuity.  The
solution construction and identification are addressed below and in
Appendix~\ref{app:approximation}.

At every fixed finite $(\varepsilon,K)$,
\eqref{eq:global-entropy-dissipation-estimate} and
Proposition~\ref{prop:fisher-from-dissipation} imply
\begin{equation*}
 \int_0^T\mathcal D_V(\rho(t))\,\dd t<\infty,
 \qquad
 \int_0^T\mathcal I(\rho(t))\,\dd t<\infty
 \quad\Pp\text{-a.s.}
\end{equation*}
The second assertion holds almost surely; the preceding estimate does
not imply a finite first moment at an arbitrary noise strength.  For
every fixed noise strength and $A<\infty$, however,
\begin{equation*}
 \begin{aligned}
 \E\int_0^T\!\int_{\{\rho\leq A\}}
 \frac{|\nabla\rho|^2}{\rho}
 \lesssim{}&\E\int_0^T\mathcal D_V(\rho(t))\,\dd t\\
 &+A\E\int_0^T\left(1+\int_{\T^2}H(\rho(t))\right)\dd t<\infty,
 \end{aligned}
\end{equation*}
by \eqref{eq:low-density-fisher}.  In particular, for every
$\ell\geq1$,
\begin{equation*}
 \E\int_0^T
 \left\|\nabla\bigl[(\rho\wedge\ell)\vee\ell^{-1}\bigr]\right\|_2^2
 \dd t
 \leq\ell\E\int_0^T\!\int_{\{\rho\leq\ell\}}
 \frac{|\nabla\rho|^2}{\rho}<\infty,
\end{equation*}
which is precisely the local Sobolev integrability in
\cite[equation~(3.5)]{FG24}.  No exponential moment of the full Fisher
integral at speed $\varepsilon^{-1}$ is asserted.

\paragraph{Weak construction.}

The compatible regularizations and their order of removal are specified in
Appendix~\ref{app:approximation}; the entropy-level stopping is removed before
the coefficient and Green-kernel regularizations are removed.  At each fixed
regularization, \cite[Proposition~3.2]{JSW26} gives the unforced strong
solution, whose measurable factorization follows
\cite[Theorem~6.1, equation~(6.6)]{FG23}.  Controlled approximations are
also available: under \eqref{eq:bounded-control-action}, the $L^2$
contraction property of $P_K$ gives
\begin{equation}\label{eq:finite-mode-novikov}
 \E\exp\left\{\frac1{2\varepsilon}
 \int_{Q_T}|P_Kg|^2\right\}
 \leq\exp\left\{\frac{N_g}{2\varepsilon}\right\}<\infty.
\end{equation}
By finite-dimensional Girsanov
\cite[Proposition~6.2]{FG23}, evaluating this map at
\begin{equation}\label{eq:finite-mode-cameron-martin-shift}
 \sqrt\varepsilon W_K+\int_0^\cdot P_Kg(s)\,\dd s
\end{equation}
produces the controlled approximation.  The estimates above were derived
directly for the controlled equation and therefore hold under the original
measure; Girsanov is used only to identify the shifted solution map.

For the singular limit, we first take $g=0$.  Weak construction and
kinetic identification follow
\cite[Theorem~3.1 and Proposition~3.5]{JSW26}.  In the attractive case,
their a priori estimate is replaced by
\eqref{eq:global-entropy-dissipation-estimate}; in the nonattractive case,
the entropy calculation cited above, with the control term absorbed by
Young's inequality, gives the required estimate.  For compactness, we use
Lemma~\ref{lem:cutoff-time-compactness}, which replaces the global expected
Fisher-information bound in the cited proof by compactness of density
truncations, followed by removal of the entropy and Fisher-information
localization in probability.  The entropy and physical
flux satisfy the lower-semicontinuity estimates in
Lemmas~\ref{lem:entropy-tail-transfer} and
\ref{lem:physical-flux-lower-semicontinuity}; the Coulomb drift passes to
the limit by Lemma~\ref{lem:interaction-compactness}.
The remaining terms are identified directly in the compact-velocity
stochastic kinetic identity, as detailed in
Appendix~\ref{app:approximation}.  The resulting unforced candidate
satisfies every clause of
Definition~\ref{def:stochastic-kinetic-solution} except the high-velocity
condition; Proposition~\ref{prop:approximation-high-velocity-tail} supplies
that condition, and Lemma~\ref{lem:canonical-energy-localization} gives the
low-velocity tail.  Thus \eqref{eq:spde-pathwise-estimates} holds for the
unforced solution.  After pathwise uniqueness and measurable
factorization, the same Girsanov shift constructs the singular controlled
solution; Section~\ref{sec:comparison} also verifies its estimates under
the original probability measure.

For a control of merely almost surely finite action, we will use
$g\mathbf1_{[0,\gamma_N]}$, where
\begin{equation}\label{eq:control-action-stop}
 \gamma_N=T\wedge\inf\left\{t\geq0:
 \int_0^t\!\int_{\T^2}|g|^2\geq N\right\}.
\end{equation}
After uniqueness, the stopped solutions paste consistently as
$N\to\infty$, as in Section~\ref{sec:comparison}.

The argument above establishes unforced weak existence and the required
a priori estimates.  Pathwise uniqueness, the controlled extension, and
measurable representation in Theorems~\ref{thm:spde-gwp} and
\ref{thm:main-spde}\textup{(i)--(ii)} are completed in
Section~\ref{sec:comparison};
part~\textup{(iii)} of Theorem~\ref{thm:main-spde} follows from
the uniform estimate \eqref{eq:uniform-small-noise-energy}.

\section{The skeleton equation: formulation and construction}\label{sec:skeleton}

The skeleton equation associated with the small-noise problem is
\begin{equation}\label{eq:skeleton}
 \partial_t\rho
 =\Delta\rho-\diver\bigl(\rho(V*\rho)\bigr)
 -\diver(\sqrt\rho\,g),
 \qquad \rho(0)=\rho_0,
\end{equation}
where $g\in L^2(\QT;\R^2)$.  After omitting the Coulomb flux,
\eqref{eq:skeleton} is the case $\Phi(r)=r$ of
\cite[equation~(1.2)]{FG23}.  In this section we introduce the entropy and
kinetic formulations and construct an entropy solution with the required energy
bounds.  Uniqueness and weak-to-strong stability are deduced from the shared
kinetic comparison in Section~\ref{sec:comparison}.

\subsection{Entropy and kinetic formulations}

\begin{definition}[Entropy skeleton solution]\label{def:entropy-skeleton}
Let $\rho_0\in L^1(\T^2)$ be nonnegative with
$\int_{\T^2}H(\rho_0)<\infty$ and mass $M$.
An entropy solution of \eqref{eq:skeleton} is a nonnegative function
$\rho$ such that
\begin{equation}\label{eq:skeleton-entropy-class}
 \rho\in L^\infty(0,T;L^1(\T^2)),
 \qquad \sqrt\rho\in L^2(0,T;H^1(\T^2)),
 \qquad
 \operatorname*{ess\,sup}_{t\in[0,T]}
 \int_{\T^2}H(\rho(t))<\infty,
\end{equation}
\begin{equation*}
 \int_{\T^2}\rho(t)=M
 \quad\text{for almost every }t,
\end{equation*}
and, for every $\varphi\in C_c^\infty([0,T)\times\T^2)$,
\begin{equation}\label{eq:skeleton-weak}
\begin{aligned}
0={}&\int_{\T^2}\rho_0\varphi(0)
 +\int_{\QT}\rho(\partial_t+\Delta)\varphi\\
 &+\int_{\QT}\rho(V*\rho)\cdot\nabla\varphi
 +\int_{\QT}\sqrt\rho\,g\cdot\nabla\varphi.
\end{aligned}
\end{equation}
The initial condition is included in \eqref{eq:skeleton-weak}.
\end{definition}

\begin{definition}[Renormalized kinetic skeleton solution]
\label{def:kinetic-skeleton}
Let $\rho_0\in L^1(\T^2)$ be nonnegative with finite entropy, and let
$g\in L^2(\QT;\R^2)$.  A renormalized kinetic solution of
\eqref{eq:skeleton} is a nonnegative function satisfying
\eqref{eq:skeleton-entropy-class},
\begin{equation*}
 \int_{\T^2}\rho(t)=\int_{\T^2}\rho_0=M
 \quad\text{for almost every }t,
\end{equation*}
and whose kinetic functions and exact parabolic defect are
\begin{equation*}
 \begin{gathered}
 \chi_\rho(x,\xi,t)=\mathbf 1_{\{0<\xi<\rho(x,t)\}},\\
 \chi_{\rho_0}(x,\xi)=\mathbf 1_{\{0<\xi<\rho_0(x)\}},\\
 p_\rho(\dd x\,\dd\xi\,\dd t)
 =4\xi\,\delta_{\rho(x,t)}(\dd\xi)
 |\nabla\sqrt\rho(x,t)|^2\dd x\,\dd t.
 \end{gathered}
\end{equation*}
There is a null set
$\mathcal N\subset(0,T]$ such that, for every
$t\in[0,T]\setminus\mathcal N$ and every
\begin{equation*}
 \psi\in C_c^\infty(\T^2\times(0,\infty))
 \cap C(\T^2\times[0,\infty)),
 \qquad \psi(x,0)=0,
\end{equation*}
these kinetic functions and defect satisfy
\begin{equation*}
\begin{aligned}
 \int_{\T^2\times\R}\chi_\rho(x,\xi,t)\psi(x,\xi)
       \dd x\dd\xi
={}&\int_0^t\!\int_{\T^2\times\R}
       \chi_\rho(x,\xi,s)\Delta_x\psi(x,\xi)
       \dd x\dd\xi\dd s
    -\int_{\T^2\times\R\times(0,t)}
       \partial_\xi\psi\,\dd p_\rho \\
 &+2\int_0^t\!\int_{\T^2}
     \rho\nabla\sqrt\rho\cdot
     \bigl(g+\sqrt\rho\,(V*\rho)\bigr)
     (\partial_\xi\psi)(x,\rho)\dd x\dd s \\
 &+\int_0^t\!\int_{\T^2}
     \sqrt\rho\,\bigl(g+\sqrt\rho\,(V*\rho)\bigr)
     \cdot(\nabla_x\psi)(x,\rho)\dd x\dd s \\
 &+\int_{\T^2\times\R}\chi_{\rho_0}(x,\xi)\psi(x,\xi)
       \dd x\dd\xi.
\end{aligned}
\end{equation*}
In the two integrals over $(0,t)\times\T^2$, all occurrences of $\rho$
and $g$ are evaluated at $(x,s)$, and $\nabla_x\psi$ differentiates before
the velocity variable is evaluated at $\xi=\rho$.  This is
\cite[Definition~2.4, equation~(2.12)]{FG23} with $\Phi(r)=r$ and control
$g+\sqrt\rho\,(V*\rho)$.
\end{definition}

\begin{remark}[Weak and kinetic formulations]
\label{rem:weak-kinetic-equivalence}
The equivalence follows from \cite[Theorem~4.3]{FG23}.  Indeed,
Lemma~\ref{lem:mass-fisher-consequences} gives
$\sqrt\rho\,(V*\rho)\in L^2(\QT;\R^2)$, so
\eqref{eq:skeleton} is their equation with control
$g+\sqrt\rho\,(V*\rho)$.  The additional renormalized products
$\nabla\rho\cdot(V*\rho)$ and
$\rho\,\diver(V*\rho)$ belong to $L^1(\QT)$ by the same lemma.
Consequently, Definitions~\ref{def:entropy-skeleton} and
\ref{def:kinetic-skeleton} are equivalent without any change to the
Fehrman--Gess argument.
\end{remark}

\begin{proposition}[Global well-posedness of the skeleton]
\label{prop:skeleton-gwp}
Let $\rho_0\in L^1(\T^2)$ be nonnegative with finite entropy and mass
$M$, and suppose that
\eqref{eq:subcritical} holds.
For every $g\in L^2(\QT;\R^2)$, equation \eqref{eq:skeleton} has a unique
entropy solution, denoted by $\rho[g]$.  It has a representative in
$C([0,T];\mathcal D'(\T^2))$, attains $\rho_0$ in that topology, and is
equivalently the unique renormalized kinetic solution with the exact
parabolic defect in the class \eqref{eq:skeleton-entropy-class}.

For almost every $t\in[0,T]$,
\begin{equation}\label{eq:skeleton-free-energy-bound}
 \mathcal F_V(\rho[g](t))
 +\frac12\int_0^t\mathcal D_V(\rho[g](s))\dd s
 \leq \mathcal F_V(\rho_0)
 +\frac12\int_0^t\!\int_{\T^2}|g|^2,
\end{equation}
where $\mathcal F_V$ and $\mathcal D_V$ are defined in
\eqref{eq:free-energy-dissipation}.  For every $N<\infty$, we also have
\begin{equation}\label{eq:skeleton-entropy-fisher-bound}
 \sup_{\|g\|_{L^2(\QT)}^2\leq N}
 \left\{
 \operatorname*{ess\,sup}_{t\in[0,T]}\int_{\T^2}H(\rho[g](t))
 +\int_0^T\mathcal D_V(\rho[g](s))\dd s
 +\int_0^T\mathcal I(\rho[g](s))\,\dd s
 \right\}<\infty.
\end{equation}
The bound depends only on $T$, $M$, $\lambda_V$, $\alpha_V$,
$\int_{\T^2}H(\rho_0)$, $N$, and the strict gap in
\eqref{eq:subcritical}.
The strict subcritical condition is needed for the global estimate, but
not for conditional uniqueness once mass conservation and finite
integrated Fisher information are known.
\end{proposition}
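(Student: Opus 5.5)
The proof splits into existence with the a priori bounds, and uniqueness. Since the statement says uniqueness is deduced from the shared kinetic comparison, this section will only construct a solution and prove \eqref{eq:skeleton-free-energy-bound}--\eqref{eq:skeleton-entropy-fisher-bound}. Uniqueness is then deferred to the comparison in Section~\ref{sec:comparison}.

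\textbf{Regularization and the uniform free-energy bound.} Regularize the data: replace $\Green$ by a mollified $\Green_\delta$, $g$ by smooth $g_\delta\to g$ in $L^2(\QT)$, $\sqrt\rho$ by a smooth positive coefficient $\sigma_\delta(\rho)$, and $\rho_0$ by a positive smooth $\rho_{0,\delta}$ with $\int H(\rho_{0,\delta})\to\int H(\rho_0)$ and the same mass. For each $\delta$ a classical positive, mass-preserving solution exists. Apply the deterministic backbone \eqref{eq:deterministic-free-energy-backbone} with Young's inequality; the rotational term vanishes exactly as shown above. This gives \eqref{eq:skeleton-free-energy-bound} for the approximations, up to errors vanishing with $\delta$. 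The regularized coefficient is handled as in Proposition~\ref{prop:regularized-ito-trace} with $\varepsilon=0$, so no trace terms appear. Lemma~\ref{lem:green-potential-coercivity} with $c_{\mathrm{sub}}$ turns the bound on $\mathcal F_V$ into a uniform entropy bound. Proposition~\ref{prop:fisher-from-dissipation} then bounds $\int_0^T\mathcal I$ with constants depending only on the listed quantities. Alternatively, one can use the $\varepsilon=0$ case of Theorem~\ref{thm:subcritical-entropy-estimate}, which already covers the regularizations uniformly.

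\textbf{Compactness and passage to the limit.} With uniform bounds on the entropy and on $\sqrt{\rho_\delta}$ in $L^2(0,T;H^1)$, Lemma~\ref{lem:mass-fisher-consequences} bounds the fluxes $\nabla\rho_\delta$, $\rho_\delta(V*\rho_\delta)$ and $\sqrt{\rho_\delta}g_\delta$ in $L^1$ in space and integrable in time. Hence $\partial_t\rho_\delta$ is bounded in $L^1(0,T;W^{-1,1})$. Aubin--Lions--Simon, applied to $\sqrt{\rho_\delta}$ (bounded in $L^2H^1$ with a time derivative controlled through truncations, as in Lemma~\ref{lem:cutoff-time-compactness}), gives $\rho_\delta\to\rho$ strongly in $L^1(\QT)$; the $L^2(\QT)$ bound makes this strong in $L^p$ for $p<2$. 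Passing to the limit in the weak formulation then runs term by term. The products $\sqrt{\rho_\delta}g_\delta\to\sqrt\rho g$ converge weakly because $\sqrt{\rho_\delta}\to\sqrt\rho$ strongly in $L^2$. The Coulomb flux $\rho_\delta(V_\delta*\rho_\delta)$ converges via the $L^4$ bounds \eqref{eq:mass-fisher-velocity} and Lemma~\ref{lem:interaction-compactness}. Lower semicontinuity gives the class \eqref{eq:skeleton-entropy-class} and \eqref{eq:skeleton-free-energy-bound} for a.e.\ $t$: convexity handles the entropy, Lemma~\ref{lem:physical-flux-lower-semicontinuity} handles $\mathcal D_V$, and the Green energy converges by Lemma~\ref{lem:green-potential-coercivity}. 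Mass conservation also passes to the limit. The distributional time continuity follows from the bound on $\partial_t\rho$.

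\textbf{Kinetic formulation, uniqueness, and the main difficulty.} Remark~\ref{rem:weak-kinetic-equivalence} supplies the kinetic formulation with the exact defect; it is the Fehrman--Gess equivalence applied with the augmented control $g+\sqrt\rho(V*\rho)\in L^2$. Uniqueness follows from the comparison: Lemma~\ref{lem:log-interaction} bounds the Coulomb cross terms in the kinetic $L^1$ contraction, and Lemma~\ref{lem:shared-osgood} applies with $a=1+\mathcal I(\rho^1)+\mathcal I(\rho^2)$. This step uses only mass conservation and integrated Fisher information, which matches the closing remark of the statement.

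I expect the main obstacle to be the lower semicontinuity of the physical dissipation. The term $\sqrt\rho\,\nabla\Green*\rho$ must converge strongly enough together with the weak convergence of $\nabla\sqrt{\rho_\delta}$. My first approach would be to show strong $L^2(\QT)$ convergence of $\sqrt{\rho_\delta}\,\nabla\Green_\delta*\rho_\delta$, using the $L^4$ bounds and the strong $L^p$ convergence. The weighted flux then converges weakly in $L^2$, and norm lower semicontinuity finishes the argument.
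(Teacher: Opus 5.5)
Your proposal follows the paper's proof. Existence and the bounds \eqref{eq:skeleton-free-energy-bound}--\eqref{eq:skeleton-entropy-fisher-bound} come from a compatible regularization, the free-energy identity, subcritical coercivity, Proposition~\ref{prop:fisher-from-dissipation}, and passage to the limit with Lemmas~\ref{lem:interaction-compactness} and~\ref{lem:physical-flux-lower-semicontinuity} (Proposition~\ref{prop:skeleton-free-energy}). Uniqueness then comes from Remark~\ref{rem:weak-kinetic-equivalence}, Proposition~\ref{prop:skeleton-comparison} and Lemma~\ref{lem:shared-osgood}. One proviso: your regularized coefficient must vanish at zero and satisfy $\sigma_\delta(r)^2\le r$, as the paper's $r/\sqrt{r+n^{-1}}$ does, so that nonnegativity holds and the Young step needs no error term.

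For the step you flag as the main obstacle, weak $L^2$ convergence of $\sqrt{\rho_\delta}\,\nabla G_\delta*\rho_\delta$ suffices, and it is exactly what Lemma~\ref{lem:physical-flux-lower-semicontinuity} uses. Strong $L^2$ convergence does not follow from the $L^4$ bounds and $L^p$ convergence ($p<2$) alone, since $L^2(Q_T)$ is critical for this product. It would additionally need the uniform integrability of $\rho_\delta^2$ supplied by Lemma~\ref{lem:entropy-sublevel-interpolation}.
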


\subsection{Structure-preserving existence and energy estimate}

\begin{proposition}[Existence and the free-energy inequality]
\label{prop:skeleton-free-energy}
Under the assumptions of Proposition~\ref{prop:skeleton-gwp}, there exists an
entropy solution satisfying \eqref{eq:skeleton-free-energy-bound} and
\eqref{eq:skeleton-entropy-fisher-bound}.
\end{proposition}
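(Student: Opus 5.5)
We construct the solution as a limit of structure-preserving regularizations. Concretely, we mollify the initial datum (replacing $\rho_0$ by a heat-regularized, strictly positive $\rho_0^\delta$ with the same mass and $\int H(\rho_0^\delta)\leq\int H(\rho_0)$). We replace the square-root coefficient by a smooth $\sigma_\delta(r)$ with $\sigma_\delta(r)\to\sqrt r$ and $\sigma_\delta^2(r)\lesssim r$, and the Coulomb kernel by $V_\delta=\lambda_V(\cos\alpha_V\nabla G_\delta+\sin\alpha_V J\nabla G_\delta)$ with $G_\delta=G*\eta_\delta$ for a symmetric mollifier. We also take a smooth control $g_\delta\to g$ in $L^2(Q_T)$ with $\|g_\delta\|_2\leq\|g\|_2$. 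For fixed $\delta$ the regularized equation is a uniformly parabolic equation with smooth bounded drift, so classical theory gives a smooth strictly positive global solution $\rho^\delta$ conserving mass. The key point is that $G_\delta$ is still symmetric with nonnegative Fourier multiplier $\hat\eta_\delta/|k|^2$ (take $\eta_\delta=\zeta_\delta*\zeta_\delta$). Hence $J\nabla G_\delta*\rho$ remains divergence free and orthogonal to $\nabla G_\delta*\rho$, and the computation \eqref{eq:deterministic-free-energy-backbone} goes through verbatim for the regularized free energy $\mathcal F_{V,\delta}$ and its dissipation $\mathcal D_{V,\delta}$. The only change is that the control term becomes $\int\sigma_\delta(\rho)g_\delta\cdot\nabla\mu_\delta$, which Young's inequality and $\sigma_\delta^2\lesssim\rho$ bound by $\frac12\mathcal D_{V,\delta}+\frac12(1+o(1))\int|g_\delta|^2$. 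This yields the regularized version of \eqref{eq:skeleton-free-energy-bound} for every $t$.

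Next we derive $\delta$-uniform bounds. Lemma~\ref{lem:green-potential-coercivity} applies to $G_\delta$ with the same sharp constant, since the Moser--Trudinger argument only uses $\int\rho(G_\delta*\rho)=\|\zeta_\delta*\nabla G*\rho\|_2^2\leq\|\nabla G*\zeta_\delta*\rho\|_2^2$ and $\int H(\zeta_\delta*\rho)\leq\int H(\rho)$. Coercivity then gives $\sup_t\int H(\rho^\delta)+\int_0^T\mathcal D_{V,\delta}\lesssim 1+\int H(\rho_0)+N$, with a constant depending on the subcritical gap. Proposition~\ref{prop:fisher-from-dissipation}, whose proof uses only the $L^2$ interpolation and the expansion of the dissipation and so adapts to $G_\delta$ uniformly, converts this into a uniform bound on $\int_0^T\mathcal I(\rho^\delta)$, with exponential dependence on the entropy. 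Lemma~\ref{lem:mass-fisher-consequences} then gives uniform bounds on the fluxes $\sqrt{\rho^\delta}(V_\delta*\rho^\delta)$ in $L^2(Q_T)$ and on $\nabla\rho^\delta$ in $L^2(0,T;L^1)$. The equation then bounds $\partial_t\rho^\delta$ in $L^2(0,T;W^{-1,1})+L^1(0,T;H^{-s})$ for large $s$.

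The main obstacle is compactness, since we only control $\sqrt{\rho^\delta}$ in $L^2H^1$, not $\rho^\delta$ in a Sobolev space. We would first try an Aubin--Lions--Simon argument applied to $\sqrt{\rho^\delta}$, or alternatively to the truncations $(\rho^\delta\wedge\ell)$ as in Lemma~\ref{lem:cutoff-time-compactness}. This is combined with the time-derivative bound to get strong convergence $\rho^\delta\to\rho$ in $L^1(Q_T)$, after using the uniform entropy bound for equi-integrability and to control the truncation error. Along a subsequence $\sqrt{\rho^\delta}\to\sqrt\rho$ strongly in $L^2(Q_T)$ and weakly in $L^2H^1$. The linear and control terms then pass to the limit directly, and the interaction term converges by Lemma~\ref{lem:interaction-compactness}: $V_\delta*\rho^\delta\to V*\rho$ in $L^2(Q_T)$, by the endpoint HLS bound \eqref{eq:endpoint-hls} with the $L^{4}L^{4/3}$ bound. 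This identifies \eqref{eq:skeleton-weak}. Mass conservation passes to the limit.

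Finally, lower semicontinuity gives the energy bounds. The entropy is convex and hence weakly lower semicontinuous. The Green energies converge by the above together with Lemma~\ref{lem:localized-energy-comparison}-type estimates, so $\mathcal F_{V,\delta}(\rho^\delta(t))\to\mathcal F_V(\rho(t))$ for a.e.\ $t$. The dissipation $\int_0^t\mathcal D_V$ is the squared $L^2$ norm of the flux $2\nabla\sqrt\rho-\lambda_V\cos\alpha_V\sqrt\rho\nabla G*\rho$. This flux is the weak $L^2$ limit of the regularized fluxes, so Lemma~\ref{lem:physical-flux-lower-semicontinuity} applies. Passing to the limit gives \eqref{eq:skeleton-free-energy-bound} for a.e.\ $t$. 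Then \eqref{eq:skeleton-entropy-fisher-bound} follows from coercivity \eqref{eq:free-energy-coercivity} and Proposition~\ref{prop:fisher-from-dissipation} applied directly to the limit $\rho$. The resulting constant depends only on the listed quantities, uniformly over $\|g\|_2^2\leq N$.
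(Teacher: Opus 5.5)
Your proposal is correct and follows essentially the same route as the paper's proof: a compatible regularization of the datum, the kernel (with nonnegative Fourier multiplier), the square root and the control; the regularized free-energy identity with the exact rotational cancellation and $\sigma_\delta^2\leq r$; uniform coercivity by applying Moser--Trudinger to the mollified density; Fisher recovery via Proposition~\ref{prop:fisher-from-dissipation}; compactness; and lower semicontinuity.

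Two small corrections. First, compactness is not a real obstacle, since your bound on $\nabla\rho^\delta=2\sqrt{\rho^\delta}\nabla\sqrt{\rho^\delta}$ in $L^2(0,T;L^1)$ already places $\rho^\delta$ in $L^2(0,T;W^{1,1})$, so Simon's lemma applies directly (as in Theorem~\ref{thm:skeleton-stability}). Second, at a.e.\ $t$ you only get $\liminf_{\delta}\mathcal F_{V,\delta}(\rho^\delta(t))\geq\mathcal F_V(\rho(t))$, not convergence: the entropy is merely lower semicontinuous, while the Green energy converges on entropy sublevels by the potential bounds of Lemma~\ref{lem:green-potential-coercivity}. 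This liminf inequality, together with convergence of the initial regularized free energy to $\mathcal F_V(\rho_0)$, which you should state explicitly, is exactly what the limit passage needs.
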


\begin{proof}
If $M=0$, nonnegativity forces $\rho_0=0$, and $\rho=0$ is the only
possible solution.  We henceforth assume $M>0$.  Let $Q_t=e^{t\Delta}$,
choose
\begin{equation*}
 \rho_0^n=Q_{1/n}\rho_0,
 \qquad
 g_n\longrightarrow g\quad\text{in }L^2(\QT;\R^2),
\end{equation*}
with $g_n$ smooth, and define
\begin{equation*}
 V_n=Q_{1/n}V,
 \qquad c_n=(Q_{1/n}\Green)*\rho_n,
 \qquad
 V_n*\rho_n
 =\lambda_V\cos(\alpha_V)\nabla c_n
  +\lambda_V\sin(\alpha_V)\J\nabla c_n.
\end{equation*}
The heat approximation preserves mass, converges strongly in $L^1$, and
converges in entropy.  Moreover, finite entropy and
Lemma~\ref{lem:green-potential-coercivity} imply
$\nabla\Green*\rho_0\in L^2(\T^2)$.  By self-adjointness, the regularized
Green quadratic form is the squared $L^2$ norm of a heat regularization of
this gradient; strong continuity of $Q_t$ on $L^2$ therefore shows that the
corresponding regularized free energy converges to $\mathcal F_V(\rho_0)$.

For fixed $n$, consider
\begin{equation*}
 \partial_t\rho_n
 =\Delta\rho_n-\diver\bigl(\rho_n(V_n*\rho_n)\bigr)
 -\diver\left(\frac{\rho_n}{\sqrt{\rho_n+n^{-1}}}g_n\right),
 \qquad \rho_n(0)=\rho_0^n.
\end{equation*}
After smoothly extending the square-root coefficient and truncating the
density, existence, nonnegativity, mass conservation, and removal of the
truncation are the linear-diffusion specialization of
\cite[Lemma~5.4 and Propositions~5.5--5.7]{FG23}.  The regularized
Coulomb drift is a smooth lower-order term; at fixed $n$, its coefficients
are bounded by the conserved mass, so the standard maximum estimate gives
global continuation.

The first variation of the regularized free energy is, up to a spatial
constant, $\log\rho_n-\lambda_V\cos(\alpha_V)c_n$.  The rotational
component contributes zero since
\begin{equation*}
 \int_{\T^2}\rho_n\J\nabla c_n\cdot
 \nabla\bigl(\log\rho_n
 -\lambda_V\cos(\alpha_V)c_n\bigr)
 =\int_{\T^2}\J\nabla c_n\cdot\nabla\rho_n=0.
\end{equation*}
Testing the equation by this first variation and using the
self-adjointness of $Q_{1/n}$ together with
\begin{equation*}
 \frac{\bigl(r/\sqrt{r+n^{-1}}\bigr)^2}{r}
 =\frac{r}{r+n^{-1}}\leq1\qquad(r>0),
\end{equation*}
gives
\begin{equation}\label{eq:skeleton-approximate-free-energy}
\begin{aligned}
 &\left[
 \int_{\T^2}H(\rho_n)
 -\frac{\lambda_V\cos(\alpha_V)}2
  \int_{\T^2}\rho_nc_n
 \right](t)
 +\frac12\int_0^t\!\int_{\T^2}\rho_n
 \left|\nabla\bigl(\log\rho_n
 -\lambda_V\cos(\alpha_V)c_n\bigr)\right|^2\dd x\dd s\\
 &\qquad\leq
 \left[
 \int_{\T^2}H(\rho_0^n)
 -\frac{\lambda_V\cos(\alpha_V)}2
  \int_{\T^2}\rho_0^n(Q_{1/n}\Green*\rho_0^n)
 \right]
 +\frac12\int_0^t\!\int_{\T^2}|g_n|^2.
\end{aligned}
\end{equation}

The strict-subcritical coercivity is uniform in $n$ because
\begin{equation*}
 \frac12\int_{\T^2}\rho_nc_n
 =\frac12\int_{\T^2}(Q_{1/(2n)}\rho_n)
   \Green*(Q_{1/(2n)}\rho_n),
\end{equation*}
so Lemma~\ref{lem:green-potential-coercivity} applies after entropy
contraction by the heat semigroup.  Thus
\eqref{eq:skeleton-approximate-free-energy} controls the entropy and
physical dissipation uniformly in $n$.  Proposition
\ref{prop:fisher-from-dissipation} also applies: heat contraction gives
$\int\rho_nQ_{1/n}\rho_n\leq\|\rho_n\|_2^2$, and
Lemma~\ref{lem:entropy-sublevel-interpolation} absorbs this term under the
strict subcritical gap.  This gives all three bounds in
\eqref{eq:skeleton-entropy-fisher-bound}, with a generally nonlinear
dependence on the entropy level.

Compactness and removal of the square-root approximation follow the
argument in \cite[the proof of Proposition~5.7,
equations~(5.29)--(5.37)]{FG23}.  The only additional term in the
time-compactness estimate, $\rho_n(V_n*\rho_n)$, is bounded in
$L^2(0,T;L^1(\T^2))$ by
Lemma~\ref{lem:mass-fisher-consequences} and heat contraction.  Hence,
along a subsequence,
\begin{equation*}
 \rho_n\longrightarrow\rho\quad\text{in }L^1(\QT),
 \qquad
 \sqrt{\rho_n}\longrightarrow\sqrt\rho
 \quad\text{in }L^2(\QT),
 \qquad
 \frac{\rho_n}{\sqrt{\rho_n+n^{-1}}}
 \longrightarrow\sqrt\rho
 \quad\text{in }L^2(\QT).
\end{equation*}
The diffusion and control terms converge as in that proof.  At fixed
velocity cutoffs, the Coulomb terms converge by
\cite[Theorem~3.1, Step~1, equations~(3.20)--(3.23)]{JSW26}; in the weak
formulation without velocity cutoffs,
Lemma~\ref{lem:interaction-compactness} gives
\begin{equation*}
 \rho_n(V_n*\rho_n)
 \longrightarrow\rho(V*\rho)
 \quad\text{in }\mathcal D'(\QT;\R^2).
\end{equation*}
This has the same role as the kernel limit in
\cite[proof of Proposition~3.16, especially equations~(3.74)--(3.79)]{WZ24},
but the distributional-flux argument is different: it uses odd-kernel
symmetrization at the weak-$L^2$ Coulomb endpoint, rather than the $L^p$
approximation and H\"older--Young estimates used there.
Together these limits prove \eqref{eq:skeleton-weak}.  Finally,
Lemma~\ref{lem:physical-flux-lower-semicontinuity} and
Lemma~\ref{lem:green-potential-coercivity} allow us to pass to the limit in
\eqref{eq:skeleton-approximate-free-energy}, giving
\eqref{eq:skeleton-free-energy-bound}.  The time-compactness argument in
\cite[Proposition~5.7]{FG23} preserves the initial trace and mass and
gives a representative in $C([0,T];\mathcal D'(\T^2))$; lower
semicontinuity preserves the entropy and Fisher information.  This also
proves the uniform assertion \eqref{eq:skeleton-entropy-fisher-bound}.
\end{proof}

The entropy and Fisher-information bounds established above allow us to
apply the kinetic comparison argument.  The next section proves the
resulting $L^1$ estimate for both equations.

\section{Kinetic Coulomb comparison, uniqueness, and stability}
\label{sec:comparison}

We prove an $L^1$ comparison estimate for the stochastic equation and its
skeleton.  The Coulomb contribution is controlled by the logarithmic
estimate of Section~\ref{sec:analytic}; the Osgood lemma then yields
uniqueness and continuous dependence.

\subsection{The Coulomb contribution to the doubled kinetic comparison}

\begin{proposition}[Coulomb perturbation of the kinetic comparison]
\label{prop:coulomb-comparison}
Let $g\in L^2(Q_T;\R^2)$, and let $\rho^1,\rho^2$ be two renormalized
kinetic skeleton solutions in the sense of
Definition~\ref{def:kinetic-skeleton}, with the same control and equal
mass $M$.  Define
\begin{equation}\label{eq:L1-discrepancy}
 \delta(t)=\|\rho^1(t)-\rho^2(t)\|_{L^1(\T^2)},
 \qquad
 \delta(0)=\|\rho_0^1-\rho_0^2\|_{L^1(\T^2)}.
\end{equation}
Then, for almost every $t\in(0,T)$,
\begin{equation}\label{eq:shared-L1-comparison}
\begin{aligned}
 \delta(t)\leq \delta(0)+C_{V,M}\int_0^t
 \left(1+\sum_{i=1}^2\mathcal I(\rho^i(s))\right)\delta(s)
 \log\!\left(\frac{e(1+2M)}{\delta(s)}\right)\dd s.
\end{aligned}
\end{equation}
The integrand is zero when $\delta(s)=0$.
Moreover, for two stochastic kinetic solutions driven by the same noise
and control, the Coulomb part of the doubled identity has the same limit on
every common canonical energy stop.  Thus
after this Coulomb calculation, the remaining stochastic terms are the
common-noise and kinetic-measure terms of
\cite[Theorem~4.7]{FG24}.
\end{proposition}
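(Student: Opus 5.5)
The plan is to run the doubling-of-variables kinetic comparison of \cite[Theorem~4.3 and Section~4]{FG23} for $\chi^1=\chi_{\rho^1}$, $\chi^2=\chi_{\rho^2}$. We treat the Coulomb flux $\sqrt{\rho^i}\,(V*\rho^i)$ as part of the control, as permitted by Remark~\ref{rem:weak-kinetic-equivalence}, and then isolate its contribution.

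Concretely, we test the kinetic identities of Definition~\ref{def:kinetic-skeleton} with $\psi(x,\xi)=\kappa^{\epsilon}(x-y)\varphi^{\delta'}(\xi-\eta)\zeta_\beta(\xi)\zeta_\beta(\eta)$. Here $\kappa^\epsilon,\varphi^{\delta'}$ are mollifiers, and $\zeta_\beta$ cuts off velocities near $0$ and near $\infty$. We then form
\[
 \int\chi^1(1-\chi^2)+\chi^2(1-\chi^1)=\|\rho^1-\rho^2\|_{L^1}.
\]
The diffusion, exact parabolic defect and common control $g$ contributions are handled exactly as in \cite{FG23}. The defects $p_{\rho^i}$ combine with the mixed gradient term into a nonpositive square, up to the cutoff error terms. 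These error terms vanish as $\beta\to0$ using $\sqrt{\rho^i}\in L^2H^1$, finite entropy and mass conservation. The only new terms are the interaction terms.

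Writing $u^i=V*\rho^i$, after the limits $\delta'\to0$ and then $\epsilon\to0$ the interaction terms reduce to the renormalized form of
\[
 -\int\mathrm{sgn}(\rho^1-\rho^2)\,\diver\bigl(\rho^1u^1-\rho^2u^2\bigr).
\]
We split this as
\[
 \rho^1u^1-\rho^2u^2=(\rho^1-\rho^2)u^1+\rho^2(u^1-u^2).
\]
The first piece is a transport of $\rho^1-\rho^2$ by a field whose divergence is, by \eqref{eq:coulomb-identities},
\[
 \diver u^1=-\lambda_V\cos(\alpha_V)(\rho^1-M).
\]
The commutator from the mollification $\kappa^\epsilon$ vanishes as $\epsilon\to0$ by DiPerna--Lions. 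This needs $u^1\in L^4_tL^4_x$ and $\nabla\rho^i\in L^2_tL^1_x$, both from Lemma~\ref{lem:mass-fisher-consequences}. The integrability \eqref{eq:coulomb-interaction-integrability} justifies each product. The piece then yields $\int|\rho^1-\rho^2|\,\diver u^1$ up to sign. Since $\diver u^1$ is a multiple of $\rho^1-M$, and
\[
 |\rho^1-\rho^2|\rho^1\leq|\rho^1-\rho^2|^2+|\rho^1-\rho^2|\rho^2,
\]
this is bounded by $\lambda_V|\cos\alpha_V|\int\rho^2|\rho^1-\rho^2|$ plus lower-order terms. The $\rho^2|\rho^1-\rho^2|$ term can be symmetrized, or the roles of $\rho^1$ and $\rho^2$ exchanged.

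The second piece gives the remaining terms
\[
 \int|\nabla\rho^2|\,|V*(\rho^1-\rho^2)|+\lambda_V|\cos\alpha_V|\int\rho^2|\rho^1-\rho^2|.
\]
These come from $\mathrm{sgn}(\rho^1-\rho^2)\bigl[\nabla\rho^2\cdot(u^1-u^2)+\rho^2\diver(u^1-u^2)\bigr]$. By Lemma~\ref{lem:log-interaction}, applied at almost every $s$ since $\mathcal I(\rho^i(s))<\infty$ almost everywhere, every Coulomb contribution is bounded by
\[
 C_{V,M}\bigl(1+\textstyle\sum_i\mathcal I(\rho^i)\bigr)\,\delta\log\bigl(e(1+2M)/\delta\bigr).
\]
Integrating in time gives \eqref{eq:shared-L1-comparison}. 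The factor $\mathcal I$ is integrable in time, so the right-hand side is finite.

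The main obstacle is the justification at the kinetic level rather than the final estimate. Renormalized solutions only give gradients on compact velocity ranges, so the Coulomb terms appear with $\zeta_\beta$ cutoffs and $\kappa^\epsilon$ mollifiers. One must show these errors vanish, and in particular control $\int\rho^i|u^j|$ on $\{\rho^i\leq\beta\}$ and on $\{\rho^i\geq\beta^{-1}\}$. For this I would use $u^j\in L^4_tL^4_x$, $\rho^i\in L^2(Q_T)$ and dominated convergence.

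For the stochastic statement, the Coulomb terms are pathwise identical in form and carry no stochastic integral. I would therefore repeat the computation on a common canonical stop $\sigma_m^1\wedge\sigma_m^2$. There the Fisher information is deterministically bounded by Proposition~\ref{prop:fisher-from-dissipation}, so the same limits hold almost surely and in expectation. The remaining noise and kinetic-measure terms are left untouched, as in \cite[Theorem~4.7]{FG24}.
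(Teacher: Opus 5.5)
Your proposal is correct in outline and follows the paper's proof. The Coulomb flux is isolated in the doubled kinetic identity of \cite{FG23} and justified by the product rule from Lemma~\ref{lem:mass-fisher-consequences}. The flux difference is split as $(\rho^1-\rho^2)(V*\rho^1)+\rho^2\,V*(\rho^1-\rho^2)$ and bounded by Lemma~\ref{lem:log-interaction}, and the stochastic case is repeated on common canonical stops.

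Three corrections, none of which changes the conclusion:

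\begin{enumerate}
\item The first piece contributes exactly zero. With $w=\rho^1-\rho^2$ one has $\operatorname{sgn}(w)\diver(wu^1)=\diver(|w|u^1)$, whose integral over $\T^2$ vanishes; the paper justifies this with convex approximations of $|\cdot|$. So your estimate of this piece through $\int|w|\,\diver u^1$, and the detour via $|w|\rho^1\le|w|^2+|w|\rho^2$, are superfluous but harmless.
\item No DiPerna--Lions commutator is needed. The product rule follows directly from $\nabla\rho^i\in L^{4/3}(Q_T)$, $V*\rho^i\in L^4(Q_T)$ and $\rho^i,\diver(V*\rho^i)\in L^2(Q_T)$. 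The exponents you quote, $L^2_tL^1_x$ against $L^4_tL^4_x$, do not give an $L^1$ product.
\item The step that needs care is the velocity-pairing limit on $\{\rho^1=\rho^2\}$ and at vacuum. The paper removes the spatial mollifier before the velocity mollifier, which is the reverse of your stated order, and uses evenness of the velocity mollifier to obtain $\operatorname{sgn}(0)=0$.
\end{enumerate}
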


\begin{proof}
If $M=0$, nonnegativity and mass conservation imply that both solutions
vanish.  Suppose henceforth that $M>0$.

We use the doubled-variable calculation of
\cite[equations~(3.10)--(3.14)]{FG23}, recording its normalization because
the Coulomb perturbation depends on it.  Choose nonnegative, even,
unit-mass functions
$\varrho\in C_c^\infty(\R^2)$ and
$\psi\in C_c^\infty((-1,1))$, with $\varrho$ supported in the unit ball.
Periodize the spatial mollifier and define
\begin{equation}\label{eq:comparison-mollifiers}
 \begin{aligned}
 \varrho_\nu(z)&=\nu^{-2}\varrho(z/\nu),
 &\psi_\gamma(r)&=\gamma^{-1}\psi(r/\gamma),\\
 \chi^i_{\nu,\gamma}(t,y,\eta)
 &=\int_{\T^2\times\R}
 \chi_{\rho^i}(t,x,\xi)\varrho_\nu(x-y)
 \psi_\gamma(\xi-\eta)\dd x\dd\xi,
 &i&=1,2.
 \end{aligned}
\end{equation}
For $L>1$, choose $\zeta_L\in C_c^\infty(0,\infty)$, taking values
between zero and one, such that
\begin{equation}\label{eq:comparison-level-cutoff}
 \zeta_L=1\ \text{on }[2/L,L],\qquad
 \zeta_L=0\ \text{outside }[1/L,L+1],\qquad
 |\zeta_L'|\lesssim
 \begin{cases}
 L,&\eta\in[1/L,2/L],\\
 1,&\eta\in[L,L+1].
 \end{cases}
\end{equation}
With \eqref{eq:comparison-mollifiers} and
\eqref{eq:comparison-level-cutoff}, the doubled functional is
\begin{equation}\label{eq:doubled-kinetic-functional}
 \int_{\T^2\times\R}
 \left(
 \chi^1_{\nu,\gamma}+\chi^2_{\nu,\gamma}
 -2\chi^1_{\nu,\gamma}\chi^2_{\nu,\gamma}
 \right)(t,y,\eta)\zeta_L(\eta)\dd y\dd\eta.
\end{equation}
It converges to $\delta(t)$ from \eqref{eq:L1-discrepancy} when the spatial scale
tends to zero, then the
velocity scale tends to zero, and finally the level cutoff tends to
infinity.

For $0<\gamma<(2L)^{-1}$, the complete additional contribution of the
Coulomb flux to the increment of \eqref{eq:doubled-kinetic-functional}
between times zero and $t$ is
\begin{equation}\label{eq:coulomb-prelimit-contribution}
\begin{aligned}
 &\int_0^t\!\int_{(\T^2)^2\times\R}
 \diver\!\left(\rho^1(s,x)(V*\rho^1)(s,x)\right)
 \varrho_\nu(x-y)\psi_\gamma(\rho^1(s,x)-\eta)\\
 &\hspace{38mm}\times
 \left(2\chi^2_{\nu,\gamma}(s,y,\eta)-1\right)
 \zeta_L(\eta)\dd x\dd y\dd\eta\dd s\\
 &+\int_0^t\!\int_{(\T^2)^2\times\R}
 \diver\!\left(\rho^2(s,x)(V*\rho^2)(s,x)\right)
 \varrho_\nu(x-y)\psi_\gamma(\rho^2(s,x)-\eta)\\
 &\hspace{38mm}\times
 \left(2\chi^1_{\nu,\gamma}(s,y,\eta)-1\right)
 \zeta_L(\eta)\dd x\dd y\dd\eta\dd s.
\end{aligned}
\end{equation}
Apart from \eqref{eq:coulomb-prelimit-contribution}, this is precisely the
doubled identity of \cite[Theorem~3.3]{FG23}.  Its hypotheses follow from
\eqref{eq:skeleton-entropy-class} and $g\in L^2(Q_T;\R^2)$.  In particular,
the local terms have a nonpositive limit and the common control leaves no
positive contribution.  For the common-control term, the translation error
is bounded by
\begin{equation*}
 \sum_{i=1}^2
 \left(\int_0^T\mathcal I(\rho^i(s))\,\dd s\right)^{1/2}
 \left(
 \int_{\T^2}\varrho_\nu(h)
 \|g(\mathord\cdot,\mathord\cdot+h)-g\|_{L^2(Q_T)}^2\dd h
 \right)^{1/2},
\end{equation*}
which tends to zero as $\nu\downarrow0$, since translations are continuous
in $L^2(Q_T)$.

We pass to the limit in \eqref{eq:coulomb-prelimit-contribution} directly.
Lemma~\ref{lem:mass-fisher-consequences} gives
$\nabla\rho^i\in L^{4/3}(Q_T)$, $V*\rho^i\in L^4(Q_T)$, and
$\rho^i,\diver(V*\rho^i)\in L^2(Q_T)$.  Spatial approximation in these
spaces justifies the product rule
\begin{equation*}
 \diver\bigl(\rho^i(V*\rho^i)\bigr)
 =\nabla\rho^i\cdot(V*\rho^i)
  +\rho^i\diver(V*\rho^i)\in L^1(Q_T).
\end{equation*}
For fixed $L$ and $\gamma$, the spatially convolved kinetic factors
converge in measure on the compact velocity support and are bounded by
one.  Thus, as
$\nu\downarrow0$, \eqref{eq:coulomb-prelimit-contribution} converges to
\begin{equation*}
\begin{aligned}
 \sum_{\substack{i,j\in\{1,2\}\\i\ne j}}
 \int_0^t\!\int_{\T^2}
 &\diver\bigl(\rho^i(V*\rho^i)\bigr)
 \int_{\R}\psi_\gamma(\rho^i-\eta)\\
 &\quad\times\left(2\int_0^{\rho^j}
 \psi_\gamma(\xi-\eta)\dd\xi-1\right)
 \zeta_L(\eta)\dd\eta\dd x\dd s.
\end{aligned}
\end{equation*}
Here both densities in the integrand are evaluated at $(s,x)$.  The
velocity integral has absolute value at most one.  The convergence
therefore also holds after integration against the absolute values of
the integrable divergences.

Extend $\zeta_L$ by zero to $\R$.  For $a,b\geq0$, we claim that
\begin{equation}\label{eq:coulomb-velocity-pairing-limit}
 \int_{\R}\psi_\gamma(a-\eta)
 \left(2\int_0^b\psi_\gamma(\xi-\eta)\dd\xi-1\right)
 \zeta_L(\eta)\dd\eta
 \longrightarrow\zeta_L(a)\operatorname{sgn}(b-a),
\end{equation}
where $\operatorname{sgn}(0)=0$.  For $a>0$ and $a\ne b$, this follows
from the supports of the mollifiers.  When $a=b>0$ and $2\gamma<a$,
evenness and unit mass give
\begin{equation*}
 \int_{\R}\psi_\gamma(a-\eta)
 \int_0^a\psi_\gamma(\xi-\eta)\dd\xi\dd\eta
 =\int_0^a(\psi_\gamma*\psi_\gamma)(a-\xi)\dd\xi
 =\frac12.
\end{equation*}
Replacing $\zeta_L(\eta)$ by $\zeta_L(a)$ therefore makes the integral
in \eqref{eq:coulomb-velocity-pairing-limit} zero; the replacement error
is at most $\gamma\|\zeta_L'\|_\infty$.  If $a=0$, the integral is zero
for $\gamma<(2L)^{-1}$ because the supports of
$\psi_\gamma(-\mathord\cdot)$ and $\zeta_L$ are disjoint.  This proves
\eqref{eq:coulomb-velocity-pairing-limit}, including the diagonal and
vacuum cases.

Applying this limit with $a=\rho^i$ and $b=\rho^j$, dominated convergence
removes the velocity mollifier.  It also removes the level cutoff from
the Coulomb terms as $L\to\infty$: $\zeta_L(\rho^i)\to1$ on
$\{\rho^i>0\}$, while
$\diver(\rho^i(V*\rho^i))=0$ almost everywhere on $\{\rho^i=0\}$ by
$\nabla\rho^i=2\sqrt{\rho^i}\nabla\sqrt{\rho^i}$ and the product rule
above.

For the remaining diffusion and common-control terms, the proof of
\cite[Theorem~3.3, after equation~(3.14)]{FG23} removes the lower cutoff by
dominated convergence and bounds the upper-level terms by
\begin{equation}\label{eq:comparison-high-level-bound}
 (L+1)\sum_{i=1}^2\int_{(0,t)\times\T^2}
 \mathbf1_{\{L\leq\rho^i<L+1\}}
 \left(|g|^2+|\nabla\sqrt{\rho^i}|^2\right).
\end{equation}
Its disjoint-strip argument gives a single sequence $L_n\uparrow\infty$
along which \eqref{eq:comparison-high-level-bound} vanishes for both
solutions.  The exact defects in Definition~\ref{def:kinetic-skeleton}
require no separate tail condition.  Since the Coulomb terms converge
for the full limit $L\to\infty$, they also converge along this sequence.

For stochastic kinetic solutions, stop at the minimum of the canonical
times in \eqref{eq:kinetic-energy-stops}.  The condition
\eqref{eq:stopped-high-velocity-tail} replaces the exact-defect observation,
and \cite[Theorem~4.7]{FG24} treats the unchanged common-noise and
kinetic-measure terms.  The same Coulomb limit argument applies pathwise.
The stopped Fisher bound in
Lemma~\ref{lem:canonical-energy-localization}, together with
Lemma~\ref{lem:mass-fisher-consequences}, makes the dominating divergences
integrable on $\Omega\times(0,T)\times\T^2$ up to each common stop, so
these limits also hold in $L^1(\Omega)$.  After removing
the velocity cutoffs, Lemma~\ref{lem:canonical-energy-localization} lets the
stops increase to $T$ almost surely.

Consequently, the limit order is
\begin{equation}\label{eq:comparison-limit-order}
 \nu\downarrow0,\qquad
 \gamma\downarrow0,\qquad
 L_n\uparrow\infty.
\end{equation}
With the limit order in \eqref{eq:comparison-limit-order}, the Coulomb contribution
in \eqref{eq:coulomb-prelimit-contribution} converges to
\begin{equation*}
 -\int_0^t\!\int_{\T^2}
 \operatorname{sgn}(\rho^1-\rho^2)
 \diver\!\left(
 \rho^1(V*\rho^1)-\rho^2(V*\rho^2)
 \right).
\end{equation*}
Decompose the interaction flux as
\begin{equation}\label{eq:coulomb-flux-decomposition}
 \rho^1(V*\rho^1)-\rho^2(V*\rho^2)
 = (\rho^1-\rho^2)(V*\rho^1)
 +\rho^2\bigl(V*(\rho^1-\rho^2)\bigr).
\end{equation}
The first term in \eqref{eq:coulomb-flux-decomposition} is conservative in
the limiting Kato identity.  To justify the sign chain rule, let
$s_\ell\in C^2(\R)$ be convex approximations of the absolute value such that
$|s_\ell'|\leq1$ and
$|r s_\ell'(r)-s_\ell(r)|\lesssim\ell^{-1}$.  Periodicity gives
\begin{align*}
 &-\int_{\T^2}s_\ell'(\rho^1-\rho^2)
 \diver\!\left((\rho^1-\rho^2)(V*\rho^1)\right)\\
 &\qquad=-\int_{\T^2}
 \left((\rho^1-\rho^2)s_\ell'(\rho^1-\rho^2)
 -s_\ell(\rho^1-\rho^2)\right)\diver(V*\rho^1).
\end{align*}
The right-hand side tends to zero by
\eqref{eq:coulomb-identities} and $\rho^1\in L^1(\T^2)$.  Letting
$\ell\to\infty$ therefore yields
\begin{equation*}
 -\int_{\T^2}\operatorname{sgn}(\rho^1-\rho^2)
 \diver\!\left((\rho^1-\rho^2)(V*\rho^1)\right)
 =-\int_{\T^2}\diver\!\left(
 |\rho^1-\rho^2|(V*\rho^1)\right)=0.
\end{equation*}
Since the masses agree, \eqref{eq:coulomb-identities} shows that the
remaining contribution is
\begin{equation}\label{eq:limiting-coulomb-kato-term}
\begin{aligned}
 \int_0^t\!\int_{\T^2}
 &\operatorname{sgn}(\rho^2-\rho^1)
 \nabla\rho^2\cdot\bigl(V*(\rho^1-\rho^2)\bigr)\\
 &+\lambda_V\cos(\alpha_V)\rho^2
 |\rho^1-\rho^2|.
\end{aligned}
\end{equation}
For $\cos(\alpha_V)\leq0$, the second term is nonpositive.  In every
case, \eqref{eq:limiting-coulomb-kato-term} is bounded above by
\begin{equation*}
 \int_0^t\!\int_{\T^2}
 |\nabla\rho^2|\,\bigl|V*(\rho^1-\rho^2)\bigr|
 +\lambda_V|\cos(\alpha_V)|
 \int_0^t\!\int_{\T^2}
 \rho^2|\rho^1-\rho^2|.
\end{equation*}
The combined contribution of the diffusion, defect measure, and common
control is nonpositive in the limit, so
Lemma~\ref{lem:log-interaction} proves \eqref{eq:shared-L1-comparison}.
\end{proof}

\subsection{Stochastic uniqueness and the Borel solution map}

\begin{proposition}[Pathwise stochastic Coulomb comparison]
\label{prop:stochastic-coulomb-comparison}
Let $\rho^1$ and $\rho^2$ be stochastic kinetic solutions driven
by the same $W_K$ and the same predictable control $g$, and suppose that
they have equal mass $M$.  Lemma~\ref{lem:canonical-energy-localization}
gives, almost surely,
\begin{equation*}
 \int_0^T\sum_{i=1}^2\mathcal I(\rho^i(s))\,\dd s<\infty.
\end{equation*}
Outside one null set, define
$\delta(t)=\|\rho^1(t)-\rho^2(t)\|_{L^1(\T^2)}$ and
$\delta(0)=\|\rho_0^1-\rho_0^2\|_{L^1(\T^2)}$.  Then, for every $t\in[0,T]$,
\begin{equation}\label{eq:spde-equal-mass-dependence}
 \delta(t)\leq \delta(0)+C_{V,M}\int_0^t
 \left(1+\sum_{i=1}^2\mathcal I(\rho^i(s))\right)
 \delta(s)\log\!\left(\frac{e(1+2M)}{\delta(s)}\right)\,\dd s.
\end{equation}
The integrand is defined to be zero when $\delta(s)=0$.
\end{proposition}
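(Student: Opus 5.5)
We reduce the statement to the stochastic part of Proposition~\ref{prop:coulomb-comparison} combined with the common-noise comparison of \cite[Theorem~4.7]{FG24}, carried out on canonical energy stops. First, let $\sigma_m$ denote the minimum of the canonical stops \eqref{eq:kinetic-energy-stops} for $\rho^1$ and $\rho^2$. On $[0,\sigma_m]$ the entropy, the physical dissipation and the control action are deterministically bounded by $m$. By Proposition~\ref{prop:fisher-from-dissipation}, the integrated Fisher information up to $\sigma_m$ is therefore bounded by a deterministic constant $C_m$. This is the stopped Fisher bound of Lemma~\ref{lem:canonical-energy-localization}. Since the stops exhaust $[0,T]$ almost surely, this gives the asserted almost-sure finiteness of $\int_0^T\sum_i\mathcal I(\rho^i)$.

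Next we run the doubled-variable calculation for the functional \eqref{eq:doubled-kinetic-functional} evaluated at time $t\wedge\sigma_m$, using the stochastic kinetic identity \eqref{eq:controlled-stochastic-kinetic-identity} for both solutions. The identity splits into three groups.
\begin{enumerate}[label=\textup{(\alph*)},leftmargin=2.2em]
\item The diffusion, It\^o-correction, kinetic-measure and noise terms. These are exactly those of \cite[Theorem~4.7]{FG24}. Their martingale parts cancel to leading order because both solutions are driven by the same $W_K$, and the remaining contribution is nonpositive in the limit. The high-velocity tail \eqref{eq:stopped-high-velocity-tail} replaces the global tail hypothesis of \cite{FG24}, and the low-velocity estimate is supplied by Lemma~\ref{lem:canonical-energy-localization}.
\item The common control term $\sqrt\rho\,P_Kg$. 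It is handled by the translation-continuity bound in the proof of Proposition~\ref{prop:coulomb-comparison}, now with $P_Kg$ in place of $g$ and the stopped Fisher bound in place of the skeleton one.
\item The Coulomb terms \eqref{eq:coulomb-prelimit-contribution}.
\end{enumerate}
For the Coulomb terms, the proof of Proposition~\ref{prop:coulomb-comparison} applies pathwise: take $\nu\downarrow0$, then $\gamma\downarrow0$, then $L_n\uparrow\infty$. The integrability in Lemma~\ref{lem:mass-fisher-consequences} is uniform in $\omega$ up to $\sigma_m$, so the same limits hold in $L^1(\Omega)$. The outcome is the upper bound by the integrand of Lemma~\ref{lem:log-interaction}.

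After these limits we obtain, almost surely for each fixed $t$,
\begin{equation*}
 \delta(t\wedge\sigma_m)\leq\delta(0)+C_{V,M}\int_0^{t\wedge\sigma_m}
 \Bigl(1+\sum_{i=1}^2\mathcal I(\rho^i)\Bigr)\delta\log\Bigl(\frac{e(1+2M)}{\delta}\Bigr)\dd s
 +\mathcal M^{m}_t,
\end{equation*}
where $\mathcal M^m$ collects whatever martingale remainder survives the doubling.

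The main obstacle is that this remainder is not pathwise negligible a priori. The cleanest route I would try first is to show that the limiting martingale term vanishes identically. In the kinetic comparison with a common square-root noise, the stochastic integrals combine into $\sqrt\varepsilon\int\operatorname{sgn}(\rho^1-\rho^2)\,\partial_j\bigl((\sqrt{\rho^1}-\sqrt{\rho^2})e_k\bigr)$, which is a total derivative of $|\sqrt{\rho^1}-\sqrt{\rho^2}|e_k$ type after the limit. The difference in the square-root coefficients produces only the nonpositive cross term controlled in \cite[Theorem~4.7]{FG24}. If that cancellation only holds in expectation, then a fallback is available. Since the Osgood integrand is concave in $\delta$, apply Lemma~\ref{lem:shared-osgood} to $\E\delta(t\wedge\sigma_m)$ with a deterministic weight; this requires first localizing further so that $\int_0^{\sigma_m}(1+\sum_i\mathcal I)\leq C_m$ deterministically and using Jensen. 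In the equal-initial-data case this suffices for uniqueness, but the continuity statement needs the pathwise version.

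Finally, the inequality holds for all rational $t$ outside one null set, uniformly over $m$. Almost-sure continuity of $t\mapsto\rho^i(t)$ in $L^1$ makes both sides continuous in $t$, which extends the inequality to every $t\in[0,T]$. Letting $m\to\infty$ with $\sigma_m\uparrow T$ then gives \eqref{eq:spde-equal-mass-dependence}, with the integrand set to zero on $\{\delta=0\}$ exactly as in Lemma~\ref{lem:log-interaction}.
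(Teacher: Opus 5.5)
Your proposal is correct and follows the paper's proof. You stop at the common canonical energy times $\sigma_m$. You take the diffusion, kinetic-measure, common-noise and common-control limits from \cite[Theorem~4.7]{FG24} and \cite[Theorem~3.3]{FG23} with the stopped tails \eqref{eq:stopped-high-velocity-tail} and \eqref{eq:stopped-low-velocity-tail}. You run the Coulomb passage of Proposition~\ref{prop:coulomb-comparison} pathwise in the order $\nu\downarrow0$, $\gamma\downarrow0$, $L_n\uparrow\infty$, then apply Lemma~\ref{lem:log-interaction}. Finally you extend from rational times by $L^1$ continuity and $\sigma_m\uparrow T$.

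The martingale remainder you isolate is not a separate obstacle. The total-derivative cancellation you describe is exactly what makes \cite[Theorem~4.7]{FG24} an almost-sure statement. With the level cutoffs present the integrand is not identically zero, but its quadratic variation on $[0,\sigma_m]$ tends to zero as the cutoffs are removed. Your expectation-level fallback is therefore unnecessary. As written it would also need a random time change, because Jensen's inequality cannot move the random weight $1+\sum_i\mathcal I(\rho^i)$ outside the expectation.
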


\begin{proof}
For each $m$, stop at the minimum of the canonical energy times in
\eqref{eq:kinetic-energy-stops}.  The stopped bounds justify optional
sampling.  The unchanged diffusion, kinetic-measure, common-noise, and
common-control limits, including the common strip sequence, are precisely
\cite[equations~(4.15)--(4.28) and the proof of Theorem~4.7]{FG24} and
\cite[equations~(3.10)--(3.14) and the proof of Theorem~3.3]{FG23}; their
tail hypotheses are \eqref{eq:stopped-low-velocity-tail} and
\eqref{eq:stopped-high-velocity-tail}.

On the stopped interval, Proposition~\ref{prop:coulomb-comparison} applies
pathwise by \eqref{eq:coulomb-interaction-integrability}.  Its full cutoff
limits require no additional strip sequence.  The order
\eqref{eq:comparison-limit-order}, followed by
Lemma~\ref{lem:log-interaction}, gives
\eqref{eq:spde-equal-mass-dependence} first at rational times.  Pathwise
$L^1(\T^2)$ continuity and the increase of the stops to $T$ complete the
argument.
\end{proof}

\begin{proof}[Completion of the proof of Theorems~\ref{thm:spde-gwp}
and~\ref{thm:main-spde}\textup{(i)--(ii)}]
Section~\ref{sec:spde} gives unforced weak existence, the pathwise
estimates, and the canonical kinetic-measure tails.
Lemma~\ref{lem:shared-osgood}, applied
pathwise to Proposition~\ref{prop:stochastic-coulomb-comparison}, gives
pathwise uniqueness and the stated continuous dependence for equal masses,
both for the unforced equation and, conditionally on existence, for any
common predictable control.

Applying the Gy\"ongy--Krylov argument and measurable factorization as in
\cite[Theorem~5.29 and Corollary~5.31]{FG24} and
\cite[Theorem~6.1, equation~(6.6)]{FG23}, with the velocity-localized Coulomb
convergence supplied by \eqref{eq:renormalized-coulomb-compactness} in
Lemma~\ref{lem:interaction-compactness}, gives, for fixed $\rho_0$,
$\varepsilon>0$, and $K<\infty$, an $L^1(Q_T)$-valued Borel function of
the finite-dimensional driving path representing the unforced solution.

For bounded controls, \eqref{eq:finite-mode-novikov} and
\eqref{eq:finite-mode-cameron-martin-shift} identify the shifted map as in
\cite[Proposition~6.2]{FG23}.  Under an equivalent probability measure,
\begin{equation*}
 W_K+\varepsilon^{-1/2}\int_0^\cdot P_Kg(s)\,\dd s
\end{equation*}
is a Brownian noise with the original covariance.  Evaluating the
unforced solution map at \eqref{eq:finite-mode-cameron-martin-shift}
therefore gives an unforced kinetic solution under that measure.
Equivalence preserves continuity, mass, pathwise physical energy, and
the parabolic lower bound.  Rewriting the compact-velocity semimartingale
identity under the original measure yields exactly the control terms in
\eqref{eq:controlled-stochastic-kinetic-identity}.

The expected velocity-tail bounds require a separate verification under
the original measure.  The fixed-shell identity used in the proof of
Proposition~\ref{prop:approximation-high-velocity-tail} transfers as a
semimartingale identity.  Its stopped estimates, now with the control
term, give \eqref{eq:stopped-high-velocity-tail} under the original
measure; they use only the already established pathwise energy and
parabolic lower bound.  Lemma~\ref{lem:canonical-energy-localization}
then gives the low-velocity tail.  Thus the shifted solution belongs to
Definition~\ref{def:stochastic-kinetic-solution}, and pathwise uniqueness
identifies it as the controlled solution.

To retain the constants in the controlled estimates, evaluate the
regularized solution maps at the same shifted path.  For each fixed
$(\varepsilon,K,g)$, unforced compactness and pathwise uniqueness give
convergence in probability under the equivalent measure, hence also
under the original measure.  The controlled approximation bounds of
Section~\ref{sec:spde}, applied under the original measure, pass to this
limit by Lemmas~\ref{lem:entropy-tail-transfer} and
\ref{lem:physical-flux-lower-semicontinuity} and Fatou's lemma.  This
proves the controlled energy estimates with their stated constants,
including the bounds used in the small-noise limit.  No uniform
comparison of the two probability measures is needed.

Finally, for $N_1<N_2$, the controls stopped at the times in
\eqref{eq:control-action-stop}, and hence their solutions, agree up to
$\gamma_{N_1}$.  Pasting and letting $\gamma_N\uparrow T$ gives the
solution for every predictable control with almost surely finite action
and preserves \eqref{eq:spde-pathwise-estimates}.  This completes the
controlled well-posedness and Borel solution-map assertions.
\end{proof}

\subsection{Skeleton comparison and uniqueness}

The comparison calculation is common to the deterministic and stochastic
equations.  Proposition~\ref{prop:coulomb-comparison} contains every term
created by the Coulomb flux; all local diffusion and common-control terms
are those of \cite[Theorem~3.3]{FG23}.

\begin{proposition}[\texorpdfstring{$L^1$}{L1} comparison]
\label{prop:skeleton-comparison}
Let $\rho^1$ and $\rho^2$ be exact-defect kinetic solutions of
\eqref{eq:skeleton} with the same control, equal mass $M$, and finite
integrated Fisher information.  With $\delta$ defined by
\eqref{eq:L1-discrepancy}, estimate \eqref{eq:shared-L1-comparison} holds for
almost every $t\in(0,T)$.
For $\delta(0)>0$, Lemma~\ref{lem:shared-osgood} also gives the explicit
continuous-dependence estimate \eqref{eq:osgood-continuous-dependence}.
\end{proposition}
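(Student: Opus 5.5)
This statement follows almost directly from Proposition~\ref{prop:coulomb-comparison} and Lemma~\ref{lem:shared-osgood}; the plan is to check that the hypotheses match and then invoke both results. Both $\rho^1$ and $\rho^2$ are exact-defect kinetic solutions in the sense of Definition~\ref{def:kinetic-skeleton}, with the same control $g\in L^2(Q_T;\R^2)$ and equal mass $M$. These are exactly the hypotheses of Proposition~\ref{prop:coulomb-comparison}. The finite integrated Fisher information is part of the class \eqref{eq:skeleton-entropy-class}, since $\sqrt{\rho^i}\in L^2(0,T;H^1)$. Lemma~\ref{lem:mass-fisher-consequences} then applies, so that all products appearing in the doubled identity, in particular those in \eqref{eq:coulomb-interaction-integrability}, are integrable. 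Proposition~\ref{prop:coulomb-comparison} then yields \eqref{eq:shared-L1-comparison} for almost every $t$. Here the Coulomb terms are bounded via \eqref{eq:limiting-coulomb-kato-term} and Lemma~\ref{lem:log-interaction}. The diffusion, exact defect and common-control terms are nonpositive in the limit order \eqref{eq:comparison-limit-order}, by \cite[Theorem~3.3]{FG23}.

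For the continuous-dependence statement, set
\[
a(s)=1+\sum_{i=1}^2\mathcal I(\rho^i(s)).
\]
This function is nonnegative, and it lies in $L^1(0,T)$ precisely because of the assumed finite integrated Fisher information. The discrepancy $\delta$ is measurable and takes values in $[0,2M]$, because both densities are nonnegative with mass $M$. Inequality \eqref{eq:shared-L1-comparison} is therefore exactly \eqref{eq:abstract-osgood-inequality} with $C=C_{V,M}$. Lemma~\ref{lem:shared-osgood} gives $\delta\equiv0$ almost everywhere when $\delta(0)=0$, and gives \eqref{eq:osgood-continuous-dependence} when $\delta(0)>0$.

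The only delicate point is the meaning of $\delta(0)$. It must be the $L^1$ distance between the initial data that enter the kinetic identities through $\chi_{\rho^i_0}$, and not a pointwise trace of the solutions. In Definition~\ref{def:kinetic-skeleton} the initial kinetic function appears explicitly, so the doubled functional \eqref{eq:doubled-kinetic-functional} at time zero converges to $\|\rho^1_0-\rho^2_0\|_{L^1}$ under the limits in \eqref{eq:comparison-limit-order}. This is already built into Proposition~\ref{prop:coulomb-comparison}, so no further argument is needed. If the two solutions have different initial data, the statement should be read with those data in Definition~\ref{def:kinetic-skeleton}, and the proof is unchanged.
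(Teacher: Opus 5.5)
Your proposal is correct and matches the paper's proof. The paper likewise checks that the common control is square integrable, that both Fisher integrals are finite and that the defects are exact, so that Proposition~\ref{prop:coulomb-comparison} applies, and then invokes Lemma~\ref{lem:shared-osgood}; the one thing it makes explicit that you omit is the trivial case $M=0$, where both densities vanish, which matters only formally because Lemma~\ref{lem:shared-osgood} is stated for $M>0$.
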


\begin{proof}
The assertion is immediate for $M=0$.  Otherwise, the common control is
square integrable, the two Fisher integrals are finite, and the defects are
the exact measures required in Definition~\ref{def:kinetic-skeleton}.
Hence all hypotheses of
Proposition~\ref{prop:coulomb-comparison} hold, and
\eqref{eq:shared-L1-comparison} applies.  In particular, no boundedness of
either density is required.
\end{proof}

\begin{proof}[Proof of Proposition~\ref{prop:skeleton-gwp}]
Proposition~\ref{prop:skeleton-free-energy} gives a global entropy
solution, and Remark~\ref{rem:weak-kinetic-equivalence} places every
entropy solution in the exact-defect kinetic class.  For two solutions
with the same initial datum and the same control,
Proposition~\ref{prop:skeleton-comparison} has $\delta(0)=0$.
Lemma~\ref{lem:shared-osgood} yields $\delta=0$ almost everywhere.  This
argument uses only common mass and finite integrated Fisher information;
in particular, it does not use \eqref{eq:subcritical}.
\end{proof}

\subsection{Weak-to-strong stability and compactness}

\begin{theorem}[Weak-to-strong stability of the skeleton map]
\label{thm:skeleton-stability}
Fix a nonnegative $\rho_0\in L^1(\T^2)$ with finite entropy satisfying
\eqref{eq:subcritical}.  If
\begin{equation*}
 g_n\weakto g
 \quad\text{weakly in }L^2(\QT;\R^2),
\end{equation*}
then
\begin{equation}\label{eq:skeleton-stability}
 \rho[g_n]\longrightarrow\rho[g]
 \quad\text{strongly in }L^1(\QT).
\end{equation}
\end{theorem}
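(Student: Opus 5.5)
The plan is the compactness--identification--uniqueness scheme of \cite[Theorem~5.9]{FG23}. The Coulomb drift is handled by the estimates of Section~\ref{sec:analytic}.

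\textbf{Uniform bounds and compactness.} Since $g_n\weakto g$, the norms are bounded, say $\|g_n\|_{L^2(\QT)}^2\leq N$. Then \eqref{eq:skeleton-entropy-fisher-bound} gives, uniformly in $n$, bounds on the entropy supremum, on $\int_0^T\mathcal D_V$, and on $\int_0^T\mathcal I(\rho_n)$, where $\rho_n=\rho[g_n]$. Lemma~\ref{lem:mass-fisher-consequences} then bounds $\sqrt{\rho_n}$ in $L^2(0,T;H^1)$. It also bounds the fluxes $\nabla\rho_n$, $\rho_n(V*\rho_n)$ and $\sqrt{\rho_n}g_n$ in $L^2(0,T;L^1(\T^2))$; the last of these is by Cauchy--Schwarz with mass conservation. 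Hence $\partial_t\rho_n$ is bounded in $L^2(0,T;W^{-1,1})$, or in a negative Sobolev space of high order. The spatial compactness of $\sqrt{\rho_n}$ in $L^2(0,T;H^1)$, the $L^2$ integrability of $\rho_n$ and the time-derivative bound combine in an Aubin--Lions--Simon argument, exactly as in the time-compactness step of \cite[Proposition~5.7]{FG23} already used in Proposition~\ref{prop:skeleton-free-energy}. This gives relative compactness of $\{\rho_n\}$ in $L^1(\QT)$ and of $\{\sqrt{\rho_n}\}$ in $L^2(\QT)$.

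\textbf{Identification of the limit.} Take any subsequence. Along a further subsequence, $\rho_n\to\bar\rho$ in $L^1(\QT)$ and $\sqrt{\rho_n}\to\sqrt{\bar\rho}$ strongly in $L^2(\QT)$. We pass to the limit in \eqref{eq:skeleton-weak}.
\begin{itemize}
\item The linear terms converge directly.
\item The control term is a product of the strongly convergent $\sqrt{\rho_n}\nabla\varphi$ and the weakly convergent $g_n$ in $L^2$, so it converges to $\int\sqrt{\bar\rho}\,g\cdot\nabla\varphi$. This step is precisely what makes weak convergence of the controls sufficient.
\item For the Coulomb flux, $V*\rho_n\to V*\bar\rho$ in $L^2(0,T;L^{q})$ for $q<4$, by the HLS bound of Lemma~\ref{lem:mass-fisher-consequences} and interpolation with the uniform $L^4(0,T;L^4)$ bound. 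Meanwhile $\rho_n\to\bar\rho$ strongly in $L^{p}(\QT)$ for $p<2$, by interpolation of $L^1$ convergence with the uniform $L^2(\QT)$ bound. With exponents chosen so that the product converges in $L^1$, this gives $\rho_n(V*\rho_n)\to\bar\rho(V*\bar\rho)$. Alternatively, one can simply invoke Lemma~\ref{lem:interaction-compactness}.
\end{itemize}
Lower semicontinuity then places $\bar\rho$ in the class \eqref{eq:skeleton-entropy-class}, with mass $M$ and finite entropy and Fisher information, so $\bar\rho$ is an entropy solution with control $g$. Proposition~\ref{prop:skeleton-gwp} (uniqueness) gives $\bar\rho=\rho[g]$. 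Since every subsequence has a further subsequence converging to the same limit, the whole sequence converges, which proves \eqref{eq:skeleton-stability}.

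\textbf{Main obstacle.} The delicate point is the time-compactness step. The control enters only through $\sqrt{\rho_n}g_n$, so the time-derivative bound is only in an $L^1$-type negative space. Spatial compactness must therefore come from the Fisher information, and the Coulomb flux must be bounded uniformly; Lemma~\ref{lem:mass-fisher-consequences} together with the $n$-uniform bound \eqref{eq:skeleton-entropy-fisher-bound} supplies this. My first attempt would be to apply Simon's lemma to $\rho_n$ with the space triple $W^{1,1}\subset\subset L^1\subset H^{-s}$, using $\|\nabla\rho_n\|_{L^1}\leq 2\sqrt M\,\mathcal I(\rho_n)^{1/2}$. Once compactness holds, the nonlinear limits follow from the strong--weak product structure described above.

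Finally, the compactness statement \eqref{eq:main-skeleton-compactness} follows directly. The $L^2$ ball is weakly sequentially compact, and the skeleton map is sequentially continuous from the weak topology to strong $L^1$, so the image of the ball is compact.
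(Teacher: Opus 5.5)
Your proposal is correct and follows essentially the same route as the paper. Both arguments use the uniform bounds from \eqref{eq:skeleton-entropy-fisher-bound}, the $L^2(0,T;W^{-1,1})$ time-derivative bound with Simon's lemma on $W^{1,1}\Subset L^1$, the strong--weak product for the control term, Lemma~\ref{lem:interaction-compactness} for the Coulomb flux, lower semicontinuity, and finally uniqueness together with the subsequence principle.

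One small caution concerns your direct HLS alternative for the Coulomb term. As literally stated it does not close: pairing $\rho_n\to\bar\rho$ in $L^p(\QT)$ with $p<2$ against $V*\rho_n\to V*\bar\rho$ in $L^2(0,T;L^q)$ fails because the time exponents are not H\"older-dual. It does close with mixed norms, e.g.\ $\rho_n\to\bar\rho$ in $L^2(0,T;L^{3/2})$ and $V*\rho_n\to V*\bar\rho$ in $L^2(0,T;L^3)$. Both follow by interpolating the $L^1(\QT)$ convergence with the uniform Fisher bounds, using HLS from $L^{6/5}$ to $L^3$ for the second.
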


\begin{proof}
This is the compactness--identification argument of
\cite[Proposition~5.8 and its proof]{FG23}; we give the two limit arguments
that change in the presence of the interaction.  Since weakly convergent
controls are bounded,
\eqref{eq:skeleton-entropy-fisher-bound} gives uniform entropy,
dissipation, and Fisher-information estimates.  The equation and
\eqref{eq:coulomb-flux-integrability} give the time-derivative estimate
\begin{equation*}
\begin{aligned}
 \|\partial_t\rho[g_n]\|_{L^2(0,T;W^{-1,1})}
 &\lesssim\sqrt M\bigl(
 2\|\nabla\sqrt{\rho[g_n]}\|_{L^2(Q_T)}\\
 &\qquad\qquad
 +\|\sqrt{\rho[g_n]}(V*\rho[g_n])\|_{L^2(Q_T)}
 +\|g_n\|_{L^2(Q_T)}\bigr).
\end{aligned}
\end{equation*}
The right-hand side is bounded uniformly in $n$.  Here we used
H\"older's inequality in space and mass conservation for each of the
three fluxes.
Since
$\nabla\rho[g_n]=2\sqrt{\rho[g_n]}\nabla\sqrt{\rho[g_n]}$,
mass conservation and the uniform Fisher-information bound imply
boundedness in $L^2(0,T;W^{1,1}(\T^2))$.  The compact embedding
$W^{1,1}(\T^2)\Subset L^1(\T^2)$, the continuous embedding
$L^1(\T^2)\hookrightarrow W^{-1,1}(\T^2)$, and
\cite[Corollary~4]{Sim87} therefore give, along a subsequence,
\begin{equation*}
 \rho[g_n]\longrightarrow\bar\rho\quad\text{in }L^1(\QT),
 \qquad
 \sqrt{\rho[g_n]}\longrightarrow\sqrt{\bar\rho}
 \quad\text{in }L^2(\QT).
\end{equation*}
Here the second convergence follows from
$\|\sqrt u-\sqrt v\|_2^2\leq\|u-v\|_1$.
The uniform Fisher-information bound also gives, after extraction,
$\nabla\sqrt{\rho[g_n]}\rightharpoonup\nabla\sqrt{\bar\rho}$ in
$L^2(\QT)$, which will be used through lower semicontinuity below.

For every smooth $\varphi$, the control term converges because
\begin{equation}\label{eq:skeleton-control-limit}
\begin{aligned}
 &\int_{\QT}g_n\cdot\sqrt{\rho[g_n]}\nabla\varphi
 -\int_{\QT}g\cdot\sqrt{\bar\rho}\nabla\varphi\\
 &\quad=
 \int_{\QT}g_n\cdot
 \bigl(\sqrt{\rho[g_n]}-\sqrt{\bar\rho}\bigr)\nabla\varphi
 +\int_{\QT}(g_n-g)\cdot\sqrt{\bar\rho}\nabla\varphi
 \longrightarrow0.
\end{aligned}
\end{equation}
The first term tends to zero by strong $L^2$ convergence and boundedness
of $g_n$, and the second by weak convergence.  The other new passage is
\begin{equation}\label{eq:skeleton-coulomb-limit}
 \int_{\QT}\rho[g_n]\bigl(V*(\rho[g_n])\bigr)\cdot\nabla\varphi
 \longrightarrow
 \int_{\QT}\bar\rho(V*\bar\rho)\cdot\nabla\varphi,
\end{equation}
which is precisely the distributional closure in
Lemma~\ref{lem:interaction-compactness}.

For every smooth spatial test $\varphi$ and $0\leq s\leq t\leq T$,
the $L^2$ time-derivative bound gives
\begin{equation*}
 \sup_n\left|\langle\rho[g_n](t)-\rho[g_n](s),\varphi\rangle\right|
 \lesssim_\varphi |t-s|^{1/2}.
\end{equation*}
Arzel\`a--Ascoli on a countable dense family of smooth tests, together
with the uniform mass bound, gives convergence along a further
subsequence in $C([0,T];\mathcal D'(\T^2))$.  Strong space--time $L^1$
convergence identifies the limit with $\bar\rho$, so its value at zero
is $\rho_0$.  Testing with the spatial constant preserves mass, and
lower semicontinuity preserves the essential-supremum entropy bound and
integrated Fisher information.  Equations
\eqref{eq:skeleton-control-limit} and \eqref{eq:skeleton-coulomb-limit}
therefore identify $\bar\rho$ as an entropy solution with control $g$.
By Remark~\ref{rem:weak-kinetic-equivalence} and
Proposition~\ref{prop:skeleton-gwp}, it equals $\rho[g]$.  Since every
subsequence has the same limit, \eqref{eq:skeleton-stability} follows.
\end{proof}

\begin{proof}[Proof of Proposition~\ref{prop:main-skeleton}]
Proposition~\ref{prop:skeleton-gwp} proves global well-posedness and the
equivalence of the entropy and renormalized kinetic formulations, while
Theorem~\ref{thm:skeleton-stability} gives the asserted weak-to-strong
continuity.  Finally, every closed bounded ball of $L^2(\QT;\R^2)$ is
weakly compact and metrizable.  Its continuous image under
$g\mapsto\rho[g]$ is therefore compact in $L^1(\QT)$, which proves
\eqref{eq:main-skeleton-compactness}.
\end{proof}

Thus the finite-noise equation has a Borel solution map, while the skeleton
map is single valued and continuous from weak $L^2$ controls to strong
$L^1$ trajectories.  These are the two solution maps used in the
weak-convergence argument of Section~\ref{sec:full-ldp}.

\section{Controlled convergence and the large deviation principle}
\label{sec:full-ldp}

We verify the weak-convergence criterion as in the proof of
\cite[Theorem~6.8]{FG23}.  Compactness of the skeleton images is already
given by Proposition~\ref{prop:main-skeleton}; it remains to establish
convergence of the controlled stochastic equations.  Throughout this
section the initial datum is fixed, \eqref{eq:subcritical} holds, and
$K(\varepsilon)\to\infty$ with
$\varepsilon N_{K(\varepsilon)}\to0$.

For $r>1$, we use the following driving, control, and state spaces:
\begin{equation}\label{eq:ldp-spaces}
 \mathcal W=C([0,T];H^{-r}(\T^2;\R^2)),
 \qquad \mathcal H=L^2(Q_T;\R^2),
 \qquad \cX=L^1(Q_T).
\end{equation}
Realize the cylindrical Wiener process $W$ in $\mathcal W$, with
$P_KW=W_K$, and define
\begin{equation}\label{eq:ldp-objects}
\begin{gathered}
 S_N=\{g\in\mathcal H:\|g\|_{\mathcal H}^2\leq N\},\qquad
 \mathcal A_N=\{g\text{ predictable}:g\in S_N\quad\Pp\text{-a.s.}\},\\
 \cG_{\rho_0}^{\varepsilon,K}:\mathcal W\to\cX,
 \qquad \cG_{\rho_0}^{0}:\mathcal H\to\cX,
 \qquad \cG_{\rho_0}^{0}(g)=\rho[g],\\
 \rho^{\varepsilon,K,g}
 =\cG_{\rho_0}^{\varepsilon,K}
 \left(\sqrt\varepsilon W+\int_0^{\mathord\cdot}g(s)\,\dd s\right).
\end{gathered}
\end{equation}
Equip $S_N$ with its compact metrizable weak topology.
Theorem~\ref{thm:main-spde}\textup{(ii)} provides the Borel map in
\eqref{eq:ldp-objects}, which depends only on the projected driving path.
Its Cameron--Martin shift solves
\eqref{eq:controlled-spde}, as in \cite[Proposition~6.2]{FG23}.
The zero-noise map is continuous on $S_N$ by
Theorem~\ref{thm:skeleton-stability}.  We use the rate function
$I_{\rho_0}$ defined in \eqref{eq:rate-intro}.

\subsection{Controlled estimates and compactness}

For the estimates below, write
\begin{equation}\label{eq:controlled-density-family}
 \rho^\varepsilon=\rho^{\varepsilon,K(\varepsilon),g^\varepsilon},
 \qquad g^\varepsilon\in\mathcal A_N.
\end{equation}
The entropy and dissipation bounds of Section~\ref{sec:spde} imply
\begin{equation}\label{eq:uniform-small-noise-energy}
 \limsup_{\varepsilon\downarrow0}\sup_{g^\varepsilon\in\mathcal A_N}
 \E\left[\sup_{t\leq T}\int_{\T^2}H(\rho^\varepsilon(t))
 +\int_0^T\bigl(\mathcal D_V(\rho^\varepsilon(t))
 +\mathcal I(\rho^\varepsilon(t))\bigr)\,\dd t\right]<\infty.
\end{equation}
Indeed, in the attractive case fix $L\geq L_0$ in
\eqref{eq:dyadic-entropy-localization}.  Since
$\varepsilon F_{1,K(\varepsilon)}L\to0$, the deterministic coefficients
in \eqref{eq:entropy-dissipation-tail} remain bounded, and its
exponential tail gives every fixed exponential moment of the entropy
maximum for sufficiently small $\varepsilon$, uniformly over
$\mathcal A_N$.  Equation~\eqref{eq:global-entropy-dissipation-estimate}
controls the physical dissipation.  These bounds pass from the compatible
approximations by Lemmas~\ref{lem:physical-flux-lower-semicontinuity}
and~\ref{lem:entropy-tail-transfer}.  Integration of
\eqref{eq:full-fisher-physical-dissipation} then gives a first-moment bound
for the time-integrated Fisher information.  In the nonattractive case, the entropy estimate of
\cite[Proposition~3.3 and Lemma~4.2]{JSW26}, with the control included
by Young's inequality, gives the entropy and Fisher bounds;
\eqref{eq:mass-fisher-L2}, \eqref{eq:weighted-potential-gradient}, and
the expansion of $\mathcal D_V$ give
$\mathcal D_V(\rho)\lesssim_{V,M}1+\mathcal I(\rho)$.
Thus
\begin{equation}\label{eq:controlled-probability-tail}
 \lim_{R\to\infty}\limsup_{\varepsilon\downarrow0}
 \sup_{g^\varepsilon\in\mathcal A_N}
 \Pp\left(\sup_{t\leq T}\int_{\T^2}H(\rho^\varepsilon(t))
 +\int_0^T\mathcal I(\rho^\varepsilon(t))\,\dd t>R\right)=0.
\end{equation}
Together with the physical-dissipation bound, this proves
Theorem~\ref{thm:main-spde}\textup{(iii)}.  Moreover,
Proposition~\ref{prop:approximation-high-velocity-tail} applies at every
sufficiently small $\varepsilon$: its global high-velocity condition
\eqref{eq:global-high-velocity-tail} is available for the kinetic measure
$q^\varepsilon$ of $\rho^\varepsilon$.

We use the compactness argument of
\cite[Proposition~6.4 and the proof of Theorem~6.6, following (6.15)]{FG23}
on entropy--Fisher levels.  The density-cutoff version needed here is
Lemma~\ref{lem:cutoff-time-compactness}, based on
\cite[Propositions~5.14 and~5.26]{FG24}.  We record the two additional
Coulomb bounds.  First, Lemma~\ref{lem:mass-fisher-consequences} gives,
on the level $R$ and for $0<h<T$,
\begin{equation}\label{eq:coulomb-time-translation}
\begin{aligned}
 &\int_0^{T-h}\left\|\int_t^{t+h}
 \diver\bigl(\rho^\varepsilon(V*\rho^\varepsilon)\bigr)(s)\,\dd s
 \right\|_{W^{-1,1}}\dd t\\
 &\qquad\leq h\int_{Q_T}|\rho^\varepsilon(V*\rho^\varepsilon)|
 \lesssim_{V,M,T}h\left(1+\int_0^T\mathcal I(\rho^\varepsilon)\,\dd t\right)
 \lesssim_{V,M,T,R}h.
\end{aligned}
\end{equation}
Second, the kinetic measures satisfy the weighted bound
\begin{equation}\label{eq:uniform-weighted-kinetic-mass}
 \limsup_{\varepsilon\downarrow0}\sup_{g^\varepsilon\in\mathcal A_N}
 \E\int_{\T^2\times(0,\infty)\times[0,T]}
 \frac1\xi\,q^\varepsilon(\dd x\,\dd\xi\,\dd t)<\infty.
\end{equation}
To see this, use convex renormalizations with
\[
 b_{a,R}''(r)=\frac{\vartheta(r/R)}{r+a},\qquad
 b_{a,R}(0)=b_{a,R}'(0)=0,\qquad 0<a\leq1,\quad R\geq1,
\]
where $\vartheta$ is smooth, nonincreasing, equal to one on $[0,1]$
and zero on $[2,\infty)$.  For fixed $\varepsilon,a,R$, the outer
velocity cutoffs are removed by \eqref{eq:global-high-velocity-tail}
and Lemma~\ref{lem:canonical-energy-localization}.
Mass conservation permits replacing $b_{a,R}(r)$ at both endpoints by
$b_{a,R}(r)+r\log a$, which is bounded below uniformly in $a,R$ and
above by $H(r)+C(1+r)$.  The only additional drift is
\[
 \lambda_V\cos(\alpha_V)\int_{Q_T}
 \left(\int_0^{\rho^\varepsilon}s b_{a,R}''(s)\,\dd s\right)
 (\rho^\varepsilon-M),\qquad
 0\leq\int_0^r s b_{a,R}''(s)\,\dd s\leq r.
\]
Its absolute value is controlled by
$\int_{Q_T}((\rho^\varepsilon)^2+M\rho^\varepsilon)$.
The control term is bounded by
$\int_{Q_T}|P_Kg^\varepsilon|^2+\int_0^T\mathcal I(\rho^\varepsilon)\,\dd t$,
up to an absolute constant; the trace is at most
$\varepsilon N_KT/2$.  Bessel's inequality bounds the martingale
quadratic variation by $4\varepsilon\int_0^T\mathcal I(\rho^\varepsilon)\,\dd t$.
Thus \eqref{eq:uniform-small-noise-energy} and
\eqref{eq:mass-fisher-L2} bound the expected dissipation
$\int b_{a,R}''(\xi)\,\dd q^\varepsilon$ uniformly.  Monotone convergence,
first as $R\uparrow\infty$ and then as $a\downarrow0$, proves
\eqref{eq:uniform-weighted-kinetic-mass}.  In particular, the masses of
$q^\varepsilon$ on compact velocity intervals are tight.

Together with \eqref{eq:coulomb-time-translation} and the compactness
estimates cited above, this proves tightness of the joint laws of
\begin{equation}\label{eq:controlled-tightness-tuple}
 (\rho^\varepsilon,\nabla\sqrt{\rho^\varepsilon},
 q^\varepsilon,g^\varepsilon,W)
\end{equation}
in the product of the strong $L^1(Q_T)$ topology for densities, the weak
$L^2$ topologies for gradients and controls, the local vague topology for
kinetic measures, and the topology of $\mathcal W$ in
\eqref{eq:ldp-spaces} for the driving paths.
For the mixed control--gradient term, adjoin as in the same FG23 proof
the auxiliary measures
\[
 \delta_{\rho^\varepsilon(x,t)}(\dd\xi)
 \bigl(|P_{K(\varepsilon)}g^\varepsilon|^2
       +|\nabla\sqrt{\rho^\varepsilon}|^2\bigr)\dd x\dd t,
\]
on $\T^2\times[0,\infty)\times[0,T]$, whose total masses are bounded
on each Fisher level.  The weak factors
then lie in compact metrizable sets; increasing the levels and using
\eqref{eq:controlled-probability-tail} removes the localization.

The extension from $L^2$ to finite-entropy initial data uses our
continuous-dependence estimate.  Couple solutions with initial data
$e^{m^{-1}\Delta}\rho_0$ and $\rho_0$, preserving their common mass and
using the same noise and control.  Their initial entropies are uniformly
bounded.  On a common Fisher level,
\eqref{eq:spde-equal-mass-dependence} and
\eqref{eq:osgood-continuous-dependence} give convergence in $L^1(Q_T)$;
\eqref{eq:controlled-probability-tail} removes the level uniformly in
$\varepsilon$.  This replaces the contraction argument at the end of
\cite[the proof of Theorem~6.6]{FG23}.

\subsection{Controlled convergence and proof of the LDP}

\begin{theorem}[Controlled convergence]\label{thm:controlled-convergence}
Let $g^\varepsilon\in\mathcal A_N$ converge in distribution to $g$ in the
weak topology of $S_N$.  Then
\begin{equation}\label{eq:controlled-convergence}
 (\rho^{\varepsilon,K(\varepsilon),g^\varepsilon},g^\varepsilon)
 \Longrightarrow(\rho[g],g)
 \qquad\text{in }\cX\times S_N.
\end{equation}
\end{theorem}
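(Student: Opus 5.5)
The proof follows \cite[Theorem~6.6]{FG23}: tightness, Skorokhod representation, identification of the limit as an entropy skeleton solution, and uniqueness. By the tightness established above for the tuple \eqref{eq:controlled-tightness-tuple}, localized on entropy--Fisher levels and delocalized by \eqref{eq:controlled-probability-tail}, every sequence $\varepsilon_n\downarrow0$ has a subsequence along which the joint laws converge. By Skorokhod's theorem (after the Jakubowski variant, since the weak and vague topologies are only locally metrizable on compact sets) we pass to a new probability space. There, almost surely, $\tilde\rho^{n}\to\bar\rho$ in $L^1(Q_T)$, $\sqrt{\tilde\rho^n}\to\sqrt{\bar\rho}$ in $L^2(Q_T)$, $\nabla\sqrt{\tilde\rho^n}\rightharpoonup\nabla\sqrt{\bar\rho}$ weakly in $L^2$, $\tilde g^n\rightharpoonup\tilde g$ in $S_N$ with $\tilde g$ distributed as $g$, and the kinetic measures converge vaguely. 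The transferred processes still solve \eqref{eq:controlled-spde} with the transferred noise, because the Borel solution map of Theorem~\ref{thm:main-spde}(ii) identifies the equation through the law of the tuple, as in \cite[Proposition~6.4]{FG23}.

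Next I identify $\bar\rho$ pathwise as an entropy solution of \eqref{eq:skeleton} with control $\tilde g$. I test the weak form of the controlled equation, which is obtained from \eqref{eq:controlled-stochastic-kinetic-identity} by letting the velocity test increase to $\xi$; this limit is legitimate by the high-velocity tail and the weighted bound \eqref{eq:uniform-weighted-kinetic-mass}. I then handle the terms in order.

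\textbf{Noise.} The stochastic integral has quadratic variation bounded by $\varepsilon F_{1,K}M\|\nabla\varphi\|_\infty^2 T$, which tends to $0$. In the exact weak form the It\^o correction $\frac{\varepsilon F_{1,K}}8\Delta\log\rho$ becomes an integral against $\Delta\varphi$. I bound it by $\varepsilon F_{1,K}\int|\nabla\sqrt\rho|/\sqrt\rho$, but this is not integrable at vacuum. I would therefore first work with the velocity-localized identity, with $\psi$ supported in $\xi\ge\delta$. There the correction is bounded by $\varepsilon F_{1,K}\delta^{-1/2}\|\nabla\sqrt\rho\|_{L^1}$, and the term $\varepsilon N_K\int\rho\,\partial_\xi\psi$ is bounded by a multiple of $\varepsilon N_K$. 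Both vanish under \eqref{eq:ldp-scaling-intro}.

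\textbf{Control.} Since $P_{K}\tilde g^n\rightharpoonup\tilde g$ (as $K\to\infty$ and $P_K$ is a contraction that converges strongly to the identity), the argument of \eqref{eq:skeleton-control-limit} applies. For the mixed term $\sqrt\rho P_Kg\cdot\nabla\rho\,\partial_\xi\psi$ I use the auxiliary measures adjoined above, as in FG23.

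\textbf{Coulomb flux.} This converges by Lemma~\ref{lem:interaction-compactness}, in its renormalized form \eqref{eq:renormalized-coulomb-compactness}.

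\textbf{Defect.} The kinetic measure's vague limit $\bar q$ dominates the exact defect $p_{\bar\rho}$ by weak lower semicontinuity of the parabolic lower bound \eqref{eq:stochastic-parabolic-lower-bound}.

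The limit is therefore a kinetic solution with measure $\bar q\ge p_{\bar\rho}$. Next I show that the $\partial_\xi\psi$-pairing of $\bar q-p_{\bar\rho}$ vanishes, so that $\bar\rho$ satisfies the exact-defect formulation. Following FG23, I would integrate against $\psi$ constant in $x$ and increasing to $\xi$, so that both sides become the weak formulation, and recover \eqref{eq:skeleton-weak}. Remark~\ref{rem:weak-kinetic-equivalence} then places $\bar\rho$ in the entropy class. Mass conservation, the initial trace (from equicontinuity in $W^{-1,1}$, using \eqref{eq:coulomb-time-translation}), the entropy supremum and the Fisher integrability all pass by lower semicontinuity and Fatou. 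Proposition~\ref{prop:skeleton-gwp} gives $\bar\rho=\rho[\tilde g]$ almost surely. The joint law of the limit is thus that of $(\rho[g],g)$, independent of the subsequence, which proves \eqref{eq:controlled-convergence}.

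The main obstacle is the passage from the velocity-localized identities to the full weak formulation. It requires uniform control of the vacuum region $\xi\downarrow0$ against the It\^o correction, and of the high-velocity kinetic measure, uniformly in $\varepsilon$. For the vacuum end I would use \eqref{eq:uniform-weighted-kinetic-mass}. For the large-$\xi$ end I would use the entropy bound $\int_{\{\rho>R\}}\rho\lesssim(\log R)^{-1}$, together with the global high-velocity condition available for small $\varepsilon$. If the defect-equality step proves delicate, the fallback is to show directly that every limit is an entropy solution through the weak form alone. In that formulation the defect does not appear, and the vacuum correction appears as $\varepsilon F_{1,K}\int\nabla\log\rho\cdot\nabla\varphi$. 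I would control this by approximating $\log$ with $\log(\rho+a)$ and using the $1/\xi$ weighted kinetic bound.
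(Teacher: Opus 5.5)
Your overall route is the paper's: representation on entropy--Fisher levels, then the passage $\varepsilon\downarrow0$ in the velocity-localized identity. There the It\^o corrections vanish because $|\nabla\log\rho|\lesssim_\delta|\nabla\sqrt\rho|$ on the support of the test and $\varepsilon F_{1,K},\varepsilon N_K\to0$. The Coulomb terms are handled by Lemma~\ref{lem:interaction-compactness}, followed by removal of the velocity cutoff, \eqref{eq:skeleton-weak}, and uniqueness.

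The step that fails as you state it is the removal of the upper velocity cutoff. Since $\varepsilon\downarrow0$ is taken first at fixed cutoff, this removal concerns the limiting measure $\bar q$. The per-$\varepsilon$ condition \eqref{eq:global-high-velocity-tail} says nothing about $\bar q$. Its proof uses $\|P_Kg\|_\infty\lesssim_K\|g\|_2$ and produces constants depending on $\varepsilon$ and $K$, so it is not uniform along $\varepsilon\downarrow0$. The entropy tail of $\rho$ alone does not bound the shell masses of $q^\varepsilon$ either. The tool that works at both ends is the weighted bound \eqref{eq:uniform-weighted-kinetic-mass}. Local vague lower semicontinuity gives $\int\xi^{-1}\,\dd\bar q<\infty$ almost surely. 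Take a cutoff $\phi_\delta$ equal to one on $[2\delta,\delta^{-1}]$, with $|\phi_\delta'|\lesssim\delta^{-1}$ on $[\delta,2\delta]$ and $|\phi_\delta'|\lesssim\delta$ on $[\delta^{-1},2\delta^{-1}]$. Then $|\phi_\delta'|\lesssim\xi^{-1}$ on its whole support, so $\int\varphi\phi_\delta'\,\dd\bar q\to0$.

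There are two smaller points. First, with this order no control of the It\^o correction near vacuum uniform in $\varepsilon$ is needed. You should drop both the opening claim that a weak form holds at fixed $\varepsilon$ and the $\log(\rho+a)$ fallback. The term $\frac{\varepsilon F_{1,K}}8\Delta\log\rho$ is not defined at vacuum. A bound on $\E\int\xi^{-1}\,\dd q^\varepsilon$ also gives no uniform smallness near $\xi=0$, which a simultaneous limit would require.

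Second, the exact-defect step is superfluous and, in your order, unjustified; identifying $\bar q$ with the exact defect is not needed. Take $\psi=\varphi(x)\phi_\delta(\xi)$, which is constant in $\xi$, not in $x$. As $\delta\downarrow0$, the following all vanish:
\begin{itemize}
\item the $\bar q$-pairing;
\item the Coulomb term $\int\phi_\delta'(\bar\rho)\bar\rho\nabla\bar\rho\cdot(V*\bar\rho)\varphi$, by dominated convergence, since $|r\phi_\delta'(r)|\lesssim1$ and $|\nabla\bar\rho|\,|V*\bar\rho|\in L^1(\QT)$ by \eqref{eq:coulomb-interaction-integrability};
\item the auxiliary control--gradient terms, which have finite total mass.
\end{itemize}
This gives \eqref{eq:skeleton-weak} directly, and Proposition~\ref{prop:skeleton-gwp} identifies $\bar\rho=\rho[\tilde g]$.
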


\begin{proof}
For the family \eqref{eq:controlled-density-family}, apply the joint
compactness of \eqref{eq:controlled-tightness-tuple} and the representation argument of
\cite[the proof of Theorem~6.6, following (6.15)]{FG23}, first on compact
entropy--Fisher levels and then by diagonal extraction.  Along a
subsequence on the representing space, almost surely,
\[
 \rho^\varepsilon\to\rho\quad\text{in }L^1(Q_T),
 \qquad g^\varepsilon\weakto g\quad\text{in }L^2(Q_T;\R^2).
\]
The kinetic identities, martingale structure, and initial datum transfer
as in that proof.  The additional Coulomb drift is an adapted integrable
finite-variation term, with Borel test pairings given by
\eqref{eq:odd-kernel-symmetrization}.

Strong convergence of the square roots, as in
\eqref{eq:skeleton-control-limit}, and strong convergence of $P_K$ to
the identity give
\begin{equation}\label{eq:projected-control-limit}
\begin{aligned}
 P_{K(\varepsilon)}(\sqrt{\rho^\varepsilon}\nabla\varphi)
 &\longrightarrow\sqrt\rho\nabla\varphi
 &&\text{in }L^2(Q_T;\R^2),\\
 \int_{Q_T}g^\varepsilon\cdot
 P_{K(\varepsilon)}(\sqrt{\rho^\varepsilon}\nabla\varphi)
 &\longrightarrow\int_{Q_T}g\cdot\sqrt\rho\nabla\varphi.
\end{aligned}
\end{equation}
Lemma~\ref{lem:interaction-compactness} gives convergence of both the
distributional Coulomb flux and its renormalized terms at every fixed
velocity cutoff.

For a renormalization $b$ with
$\operatorname{supp}b'\Subset(0,\infty)$, the It\^o correction terms are
\[
 -\frac{\varepsilon F_{1,K}}8\int_{Q_T}
 b'(\rho^\varepsilon)\nabla\log\rho^\varepsilon\cdot\nabla\varphi,
 \qquad
 \frac{\varepsilon N_K}2\int_{Q_T}
 b''(\rho^\varepsilon)\rho^\varepsilon\varphi.
\]
They vanish on every Fisher level because
$|b'(\rho)\nabla\log\rho|\lesssim_b|\nabla\sqrt\rho|$,
$|b''(\rho)\rho|\lesssim_b1$, and
$\varepsilon F_{1,K},\varepsilon N_K\to0$.
The velocity-localized martingales vanish by the estimate in
\cite[the proof of Theorem~6.6, (6.17)--(6.19)]{FG23}, using
\eqref{eq:uniform-small-noise-energy}.  For a classical spatial test
$\varphi$, the unlocalized martingale has quadratic variation at most
$\varepsilon MT\|\nabla\varphi\|_\infty^2$ by Bessel's inequality.

It remains to remove the velocity cutoff from the limiting Coulomb
terms.  Choose $\phi_\delta\in C_c^\infty((0,\infty);[0,1])$ equal to
one on $[2\delta,\delta^{-1}]$, supported in
$[\delta,2\delta^{-1}]$, with
$|\phi_\delta'|\lesssim
\delta^{-1}\mathbf1_{[\delta,2\delta]}
+\delta\mathbf1_{[\delta^{-1},2\delta^{-1}]}$.
For a smooth test $\varphi$, integration by parts gives
\begin{equation}\label{eq:coulomb-velocity-localized-term}
\begin{aligned}
 &\int_{Q_T}\phi_\delta(\rho)\rho(V*\rho)\cdot\nabla\varphi
 +\int_{Q_T}\phi_\delta'(\rho)\rho\nabla\rho\cdot(V*\rho)\varphi\\
 &=\int_{Q_T}\phi_\delta(\rho)\rho(V*\rho)\cdot\nabla\varphi
 -\int_{Q_T}\left(\int_0^\rho s\phi_\delta'(s)\,\dd s\right)
 (V*\rho)\cdot\nabla\varphi\\
 &\quad+\lambda_V\cos(\alpha_V)
 \int_{Q_T}\left(\int_0^\rho s\phi_\delta'(s)\,\dd s\right)
 (\rho-M)\varphi.
\end{aligned}
\end{equation}
The primitive satisfies
\[
 \left|\int_0^r s\phi_\delta'(s)\,\dd s\right|
 =\left|r\phi_\delta(r)-\int_0^r\phi_\delta(s)\,\dd s\right|
 \lesssim\delta+r\mathbf1_{\{r\geq\delta^{-1}\}}.
\]
Since $\rho^2,\rho|V*\rho|\in L^1(Q_T)$ by
Lemma~\ref{lem:mass-fisher-consequences}, the difference between
\eqref{eq:coulomb-velocity-localized-term} and
$\int_{Q_T}\rho(V*\rho)\cdot\nabla\varphi$ is bounded by
\[
 C_{V,\varphi}\left[
 \int_{Q_T}|1-\phi_\delta(\rho)|\rho|V*\rho|
 +\delta\bigl(\|V*\rho\|_{L^1(Q_T)}+2MT\bigr)
 +\int_{\{\rho\geq\delta^{-1}\}}
 \bigl(\rho|V*\rho|+\rho^2+M\rho\bigr)\right]\longrightarrow0.
\]
Thus the Coulomb cutoff limit holds as $\delta\downarrow0$.
For the limiting kinetic measure, local vague lower semicontinuity and
\eqref{eq:uniform-weighted-kinetic-mass} give $\int\xi^{-1}\,\dd q<\infty$
almost surely.  Since $|\phi_\delta'|\lesssim\xi^{-1}$ on its support,
\[
 \left|\int\varphi\phi_\delta'\,\dd q\right|
 \lesssim_\varphi
 \int_{\{\delta\leq\xi\leq2\delta\}\,\cup\,
        \{\delta^{-1}\leq\xi\leq2\delta^{-1}\}}
 \frac1\xi\,\dd q\longrightarrow0.
\]
The auxiliary control--gradient measure has finite total mass, so its
terms with $\xi\phi_\delta'$ vanish in the same way.  These bounds justify
the remaining cutoff passages in
\cite[the proof of Theorem~6.6, (6.20)--(6.25)]{FG23}.
The order is first $\varepsilon\downarrow0$ at fixed velocity cutoff,
then $\delta\downarrow0$.

Passing to the limit in the diffusion and control terms as in the cited
proof, and using \eqref{eq:projected-control-limit}, gives
\eqref{eq:skeleton-weak}, including $\int\rho_0\varphi(0)$.
Nonnegativity and mass pass to the limit; the entropy, physical
dissipation, and Fisher information are retained by
Lemmas~\ref{lem:physical-flux-lower-semicontinuity}
and~\ref{lem:entropy-tail-transfer} and weak lower semicontinuity on
the localization levels.  Equation~\eqref{eq:controlled-probability-tail}
then makes these quantities finite almost surely.  Hence $\rho$ is an
entropy skeleton solution with initial datum $\rho_0$ and control $g$.
Proposition~\ref{prop:main-skeleton}, whose uniqueness follows from the
Osgood lemma, identifies $\rho=\rho[g]$.  Every subsequential
limit is therefore the same, proving \eqref{eq:controlled-convergence}.
\end{proof}

\begin{proof}[Proof of Theorem~\ref{thm:ldp-intro}]
Proposition~\ref{prop:main-skeleton} gives compactness of
$\{\rho[g]:g\in S_N\}$ for every $N$, and
Theorem~\ref{thm:controlled-convergence} gives convergence of the shifted
solutions.  These are the two hypotheses of the weak-convergence criterion
\cite[Theorems~2 and~6]{BDM08}, applied as in the proof of
\cite[Theorem~6.8]{FG23}.  It yields the Laplace principle with rate
\eqref{eq:rate-intro}.

For completeness, whenever $I_{\rho_0}(\rho)<\infty$, a minimizing
sequence of controls has a weakly convergent subsequence.  Skeleton
stability and weak lower semicontinuity show that its limit represents
$\rho$ and attains the infimum.  Consequently, for every $a\geq0$,
\[
 \{\rho:I_{\rho_0}(\rho)\leq a\}
 =\{\rho[g]:g\in S_{2a}\},
\]
which is compact by Proposition~\ref{prop:main-skeleton}.  Thus the rate
is good, and the Laplace principle on the Polish space $\cX$ is equivalent
to the asserted large deviation principle.
\end{proof}

\subsection{Dual representation of the action}

For a nonnegative $\rho\in\cX$ with mass $M$ almost everywhere in time,
essentially bounded entropy, and finite integrated Fisher information,
define
\begin{equation}\label{eq:dual-residual}
 \mathscr L_{\rho,\rho_0}(\varphi)
 =-\int_{\T^2}\rho_0\varphi(0)
 -\int_{Q_T}\rho(\partial_t+\Delta)\varphi
 -\int_{Q_T}\rho(V*\rho)\cdot\nabla\varphi,
\end{equation}
for $\varphi\in C_c^\infty([0,T)\times\T^2)$.
The Coulomb term is integrable by
Lemma~\ref{lem:mass-fisher-consequences}; the boundary term encodes the
initial datum without assuming a temporal trace.

\begin{proposition}[Dual representation]\label{prop:dual-rate}
For every $\rho$ with the properties above,
\begin{equation}\label{eq:dual-rate}
 I_{\rho_0}(\rho)
 =\sup_{\varphi\in C_c^\infty([0,T)\times\T^2)}
 \left\{\mathscr L_{\rho,\rho_0}(\varphi)
 -\frac12\int_{Q_T}\rho|\nabla\varphi|^2\right\}.
\end{equation}
For all other $\rho\in\cX$, $I_{\rho_0}(\rho)=+\infty$.
When the value is finite, the unique minimal representing control satisfies
\begin{equation}\label{eq:minimal-control}
 g_\rho\in\overline{\{\sqrt\rho\nabla\varphi:
 \varphi\in C_c^\infty([0,T)\times\T^2)\}}^{L^2(Q_T;\R^2)},
 \qquad I_{\rho_0}(\rho)=\tfrac12\|g_\rho\|_2^2.
\end{equation}
\end{proposition}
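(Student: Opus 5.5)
The plan is the standard Hilbert-space duality for the quadratic action, as in \cite[Section~6]{FG23}, with the Coulomb flux treated as a fixed integrable drift. Fix $\rho$ with mass $M$, essentially bounded entropy and finite integrated Fisher information, and let $\mathcal H_\rho$ be the closure in $L^2(Q_T;\R^2)$ of $\{\sqrt\rho\nabla\varphi:\varphi\in C_c^\infty([0,T)\times\T^2)\}$. By Lemma~\ref{lem:mass-fisher-consequences}, $\mathscr L_{\rho,\rho_0}$ is a well-defined linear functional on test functions. For any $g$, \eqref{eq:skeleton-weak} for $\rho$ is exactly the identity $\mathscr L_{\rho,\rho_0}(\varphi)=\int_{Q_T}g\cdot\sqrt\rho\nabla\varphi$ for all $\varphi$. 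Since $\rho$ already lies in the class \eqref{eq:skeleton-entropy-class}, uniqueness in Proposition~\ref{prop:skeleton-gwp} gives $\rho=\rho[g]$ precisely when this identity holds.

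For the inequality ``$\geq$'', assume the supremum is some finite $A$. Replacing $\varphi$ by $\lambda\varphi$ and optimizing over $\lambda\in\R$ gives $|\mathscr L_{\rho,\rho_0}(\varphi)|\leq\sqrt{2A}\,\|\sqrt\rho\nabla\varphi\|_{L^2}$. Hence $\mathscr L_{\rho,\rho_0}$ factors through $\sqrt\rho\nabla\varphi$: if two tests have the same image, their difference is annihilated. It therefore extends to a bounded functional on $\mathcal H_\rho$. Riesz representation yields $g_\rho\in\mathcal H_\rho$ with $\mathscr L_{\rho,\rho_0}(\varphi)=\langle g_\rho,\sqrt\rho\nabla\varphi\rangle$, so $\rho=\rho[g_\rho]$. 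Moreover, $\tfrac12\|g_\rho\|^2=\sup_{h\in\mathcal H_\rho}\{\langle g_\rho,h\rangle-\tfrac12\|h\|^2\}=A$, using density of the test images in $\mathcal H_\rho$. Thus $I_{\rho_0}(\rho)\leq A$.

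For ``$\leq$'', take any $g$ with $\rho=\rho[g]$. Then $\mathscr L(\varphi)-\tfrac12\|\sqrt\rho\nabla\varphi\|^2=\langle g,\sqrt\rho\nabla\varphi\rangle-\tfrac12\|\sqrt\rho\nabla\varphi\|^2\leq\tfrac12\|g\|^2$. Taking the infimum over such $g$ gives $A\leq I_{\rho_0}(\rho)$, and this argument also covers the case $A=\infty$. Minimality and uniqueness follow from the same identity: every representing $g$ has the same projection onto $\mathcal H_\rho$, namely $g_\rho$. Its orthogonal component does not change the equation, so $\|g\|\geq\|g_\rho\|$ with equality only when $g=g_\rho$. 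This proves \eqref{eq:minimal-control}.

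For $\rho$ outside the class, we need $I_{\rho_0}(\rho)=\infty$. If $\rho=\rho[g]$ for some $g\in L^2$, then Proposition~\ref{prop:skeleton-gwp} and \eqref{eq:skeleton-entropy-fisher-bound} give mass $M$, bounded entropy and finite Fisher information; contrapositively, every other $\rho$ has an empty representing set.

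The main obstacle is the identification step $\rho=\rho[g]$ from the weak identity alone. It requires that an element of the stated class satisfying \eqref{eq:skeleton-weak} is an entropy solution in the sense of Definition~\ref{def:entropy-skeleton}. This holds by definition here, but the time-continuity and initial trace must be obtained from the weak formulation, as in the proof of Theorem~\ref{thm:skeleton-stability}. The only genuinely Coulomb-specific input is the integrability of $\rho(V*\rho)\cdot\nabla\varphi$, supplied by \eqref{eq:coulomb-flux-integrability}.
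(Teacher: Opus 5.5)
Your proposal is correct and follows the paper's proof essentially step for step. You use Young's inequality against any representing control for one inequality. For the other you use the scaling bound $|\mathscr L_{\rho,\rho_0}(\varphi)|\leq\sqrt{2A}\,\|\sqrt\rho\nabla\varphi\|_{L^2}$, Riesz representation on the weighted closure, and skeleton uniqueness to identify $\rho=\rho[g_\rho]$. Orthogonality gives minimality, and the class bounds of Proposition~\ref{prop:skeleton-gwp} handle $\rho$ outside the class.

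Your closing worry about recovering time continuity and an initial trace is unnecessary. Definition~\ref{def:entropy-skeleton} encodes the initial datum in \eqref{eq:skeleton-weak} and imposes no continuity requirement, so the weak identity alone places $\rho$ in the uniqueness class.
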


\begin{proof}
Use the weighted-Riesz argument of
\cite[Lemma~8.3 and Proposition~8.4]{FG23}, with the residual
\eqref{eq:dual-residual}.  The weak skeleton equation and Young's
inequality give the bound by $I_{\rho_0}$ in \eqref{eq:dual-rate}.
Conversely, if the supremum equals $a<\infty$, scaling the test function
gives $|\mathscr L_{\rho,\rho_0}(\varphi)|
\leq\sqrt{2a}\|\sqrt\rho\nabla\varphi\|_2$.
Riesz representation on the closed space in \eqref{eq:minimal-control}
therefore supplies $g_\rho$ with
$\mathscr L_{\rho,\rho_0}(\varphi)
=\int_{Q_T}g_\rho\cdot\sqrt\rho\nabla\varphi$ and
$a=\frac12\|g_\rho\|_2^2$.  This is precisely
\eqref{eq:skeleton-weak}, including its initial boundary term, so
Proposition~\ref{prop:main-skeleton} identifies $\rho=\rho[g_\rho]$.
Every other representing control differs from $g_\rho$ by a vector
orthogonal to that closed space, proving minimality and equality.
Finally, failure of any stated property excludes a representing control
by Proposition~\ref{prop:main-skeleton}.
\end{proof}

\appendix
\section{Regularization and passage to the limit}
\label{app:approximation}

This appendix justifies passage from the regularized equations to
stochastic kinetic solutions.  We first
choose compatible Green-kernel and square-root regularizations and justify
entropy continuity.  We then establish the regularized nonlocal It\^o trace
underlying the localized estimates of Section~\ref{sec:spde}.  Next we record
the closure and lower-semicontinuity properties under strong $L^1$
convergence and then prove compactness for the approximations.  Finally,
we fix the order of the
singular limits and verify the velocity-tail conditions in
Definition~\ref{def:stochastic-kinetic-solution}.  These results are also
used in Section~\ref{sec:full-ldp}.

Throughout this appendix we assume $M>0$.  When $M=0$, nonnegativity and
mass conservation force the density to vanish identically, so no
logarithmic approximation is needed.

\subsection{Compatible regularizations and entropy continuity}

Let $Q_\eta=e^{\eta\Delta}$ be the torus heat semigroup and set
\begin{equation}\label{eq:green-regularization}
 \begin{aligned}
 G_\eta&=Q_\eta G,
 &G_\eta*f&=Q_\eta(G*f)=G*(Q_\eta f),\\
 V_\eta&=Q_\eta V
 =\lambda_V\cos(\alpha_V)\nabla G_\eta
   +\lambda_V\sin(\alpha_V)\J\nabla G_\eta,
 &-\Delta(G_\eta*f)&=Q_\eta\left(f-\int_{\T^2}f\right).
 \end{aligned}
\end{equation}
The kernel $G_\eta$ is even and mean zero, and convolution by $G_\eta$
is self-adjoint.  Moreover,
\begin{equation*}
 \inf_{\T^2}G_\eta\geq\inf_{\T^2}G,
 \qquad
 0\leq\langle f,G_\eta*f\rangle
 \leq\langle f,G*f\rangle,
 \qquad
 \|\nabla G_\eta\|_{L^1}\leq\|\nabla G\|_{L^1}.
\end{equation*}
The quadratic-form inequalities follow from the nonnegative heat multiplier.
Positivity and Jensen's inequality make all entropy--potential bounds
uniform in $\eta$.  With $\Lambda_*$ from
\eqref{eq:localized-cutoff-objects}, in particular,
\begin{equation}\label{eq:regularized-positive-shift}
 G_\eta*h_S(\rho)+\Lambda_*\geq1
 \qquad\text{for every nonnegative density of mass }M.
\end{equation}
The shift is independent of all cutoff and regularization parameters.  The
local expansion of $G$ and standard heat-kernel estimates also give,
uniformly in $\eta$,
\begin{equation*}
 |\nabla G_\eta(z)|\lesssim
 1+\frac1{d_{\T^2}(z,0)+\sqrt\eta},
 \qquad
 |D^2G_\eta(z)|\lesssim
 1+\frac1{(d_{\T^2}(z,0)+\sqrt\eta)^2}.
\end{equation*}
Consequently the mismatch estimates are uniform; the rotational estimate
follows by writing $\J\nabla G_\eta*f=\J\nabla G*(Q_\eta f)$ in the
log--Lipschitz argument.

The lower-end approximation of the square root is
\begin{equation}\label{eq:square-root-approximation}
 \sigma_\zeta(r)=\sqrt{r+\zeta}-\sqrt\zeta,
 \qquad r\geq0,\quad \zeta>0.
\end{equation}
If the regularized existence theorem requires a bounded coefficient, fix
$0\leq\chi_m\leq1$ with $\chi_m=1$ on $[0,m]$ and $\chi_m=0$ on
$[2m,\infty)$, and use
\begin{equation}\label{eq:bounded-square-root-approximation}
 \sigma_{\zeta,m}(r)
 =\int_0^r\frac{\chi_m(s)}{2\sqrt{s+\zeta}}\,\dd s.
\end{equation}

\begin{lemma}[Admissible noise regularization]
\label{lem:admissible-noise-regularization}
Both coefficients in \eqref{eq:square-root-approximation}--
\eqref{eq:bounded-square-root-approximation} vanish at zero and satisfy
\begin{equation}\label{eq:admissible-noise-coefficient}
 \sigma(r)^2\leq r,
 \qquad
 4r\sigma'(r)^2\leq1
 \qquad(r>0).
\end{equation}
Furthermore, $\sigma_{\zeta,m}\to\sigma_\zeta$ locally in $C^1$ as
$m\to\infty$, and $\sigma_\zeta\to\sqrt{\mathord\cdot}$ locally in
$C^1((0,\infty))$ as $\zeta\downarrow0$.

For any smooth coefficient satisfying \eqref{eq:admissible-noise-coefficient},
define
\begin{equation}\label{eq:regularized-noise-operators}
 \mathcal B_{k,j}^{\sigma}(\rho)
 =-\partial_j(e_k\sigma(\rho)).
\end{equation}
Then the sine--cosine identities in \eqref{eq:finite-mode-noise} give
\begin{equation}\label{eq:regularized-noise-identities}
 \begin{aligned}
 \frac12\sum_{k\in\mathcal E_K}\sum_{j=1}^2
 \partial_j\!\left(
 e_k\sigma'(\rho)\partial_j(e_k\sigma(\rho))
 \right)
 &=\frac{F_{1,K}}2
 \diver\bigl(\sigma'(\rho)^2\nabla\rho\bigr),\\
 \sum_{k\in\mathcal E_K}\sum_{j=1}^2
 \left|\partial_j(e_k\sigma(\rho))\right|^2
 &=F_{1,K}\sigma'(\rho)^2|\nabla\rho|^2
   +N_K\sigma(\rho)^2.
 \end{aligned}
\end{equation}
\end{lemma}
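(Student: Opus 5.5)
The lemma consists of elementary coefficient bounds together with an algebraic identity. I would check these directly, in three steps.

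\emph{Coefficient bounds.} For $\sigma_\zeta$ we have $\sigma_\zeta(0)=0$ and $\sigma_\zeta'(r)=\tfrac12(r+\zeta)^{-1/2}$. Hence
\[
4r\sigma_\zeta'(r)^2=\frac{r}{r+\zeta}\leq1 .
\]
The bound $\sigma_\zeta(r)^2\leq r$ follows from subadditivity of the square root, which gives $\sqrt{r+\zeta}\leq\sqrt r+\sqrt\zeta$, so $0\leq\sigma_\zeta(r)\leq\sqrt r$.

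For $\sigma_{\zeta,m}$ we have $0\leq\sigma_{\zeta,m}'=\chi_m\sigma_\zeta'\leq\sigma_\zeta'$. Both coefficients vanish at zero, so integration gives $0\leq\sigma_{\zeta,m}\leq\sigma_\zeta$, and the two inequalities in \eqref{eq:admissible-noise-coefficient} are inherited.

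\emph{Convergence.} Local $C^1$ convergence as $m\to\infty$ is trivial: on $[0,R]$ with $m\geq R$, the functions $\sigma_{\zeta,m}$ and $\sigma_\zeta$ coincide.

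As $\zeta\downarrow0$, on a compact set $[a,b]\subset(0,\infty)$ the derivatives $\tfrac12(r+\zeta)^{-1/2}$ converge uniformly to $\tfrac12r^{-1/2}$. The values satisfy $|\sigma_\zeta(r)-\sqrt r|\leq2\sqrt\zeta$, again by subadditivity.

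\emph{The noise identities.} This is the only step with content. Expand
\[
\partial_j(e_k\sigma(\rho))=\partial_je_k\,\sigma(\rho)+e_k\sigma'(\rho)\partial_j\rho .
\]
For the first identity, the summand is
\[
\partial_j\bigl(e_k\sigma'\,\partial_je_k\,\sigma+e_k^2\sigma'^2\partial_j\rho\bigr).
\]
Summing over $k$, the term $\sum_ke_k\partial_je_k=0$ kills the first part. The identity $\sum_ke_k^2=F_{1,K}$ (constant in space) turns the second part into $F_{1,K}\partial_j(\sigma'^2\partial_j\rho)$. Summing over $j$ and keeping the factor $\tfrac12$ gives the first line of \eqref{eq:regularized-noise-identities}.

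For the second identity, square the expansion. There are three contributions:
\begin{itemize}
\item the diagonal term $\sum_k e_k^2\,\sigma'^2|\nabla\rho|^2=F_{1,K}\sigma'^2|\nabla\rho|^2$;
\item the cross term $2\sigma\sigma'\sum_ke_k\nabla e_k\cdot\nabla\rho$, which vanishes;
\item the term $\sigma^2\sum_k|\nabla e_k|^2=N_K\sigma^2$.
\end{itemize}

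The main point to check is that all three sums are spatially constant. This holds because each cosine–sine pair gives $\cos^2+\sin^2=1$, which makes $\sum_ke_k^2$ and $\sum_k|\nabla e_k|^2$ constant. The vanishing of $\sum_ke_k\nabla e_k$ likewise follows pairwise from $\cos\cdot\sin-\sin\cdot\cos=0$.

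Once these facts are in hand, no step is a real obstacle. The only care needed is the smoothness of $\sigma$ near zero: both $\sigma_\zeta$ and $\sigma_{\zeta,m}$ are smooth on $[0,\infty)$ because $\zeta>0$.
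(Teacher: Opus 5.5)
Your proposal is correct and follows the same route as the paper. The bounds come from $\sqrt{r+\zeta}-\sqrt\zeta\leq\sqrt r$ and $4r\sigma_\zeta'(r)^2=r/(r+\zeta)\leq1$, and the truncated coefficient inherits them because $\sigma_{\zeta,m}'=\chi_m\sigma_\zeta'$ with $0\leq\chi_m\leq1$. The noise identities follow by expanding $\partial_j(e_k\sigma(\rho))$ and using $\sum_ke_k^2=F_{1,K}$, $\sum_k|\nabla e_k|^2=N_K$ and $\sum_ke_k\nabla e_k=0$. You supply details the paper treats as immediate. Your value bound can even be sharpened to $|\sigma_\zeta(r)-\sqrt r|\leq\sqrt\zeta$, and smoothness of $\sigma_{\zeta,m}$ tacitly requires a smooth choice of $\chi_m$.
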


\begin{proof}
The bounds follow from
$\sqrt{r+\zeta}-\sqrt\zeta\leq\sqrt r$ and
$4r\sigma_\zeta'(r)^2=r/(r+\zeta)\leq1$; truncation multiplies the
derivative by $\chi_m\leq1$.  The asserted convergences are immediate, and
expansion of $\partial_j(e_k\sigma(\rho))$, together with
$\sum_ke_k\nabla e_k=0$, proves
\eqref{eq:regularized-noise-identities}.
\end{proof}

For a coefficient from Lemma~\ref{lem:admissible-noise-regularization},
the compatible It\^o equation is
\begin{equation}\label{eq:regularized-ito-equation}
\begin{aligned}
 \dd\rho={}&\Bigl[
 \Delta\rho-\diver\bigl(\rho(V_\eta*\rho)\bigr)
 -\diver\bigl(\sigma(\rho)P_Kg\bigr)
 +\frac{\varepsilon F_{1,K}}2
 \diver\bigl(\sigma'(\rho)^2\nabla\rho\bigr)
 \Bigr]\dd t\\
 &+\sqrt\varepsilon
 \sum_{k\in\mathcal E_K}\sum_{j=1}^2
 \mathcal B_{k,j}^{\sigma}(\rho)\,\dd\beta_t^{k,j}.
\end{aligned}
\end{equation}
The same $G_\eta$ is used in the physical drift, the physical energy, and
the auxiliary interaction energy.  This compatibility is essential:
replacing only one occurrence of $G$ by a smoother kernel would introduce
an uncontrolled mismatch in the energy identity.

We next justify the entropy hitting times used in the localization scheme.  For
$\ell>0$, let
\begin{equation}\label{eq:entropy-test-regularization}
 \beta_\ell(r)
 =(r+\ell)\log(r+\ell)-r-\ell\log\ell+1,
 \qquad \beta_\ell''(r)=\frac1{r+\ell}.
\end{equation}
For every nonnegative mass-$M$ density,
\begin{equation}\label{eq:uniform-entropy-test-error}
 0\leq\int_{\T^2}\beta_\ell(\rho)-\int_{\T^2}H(\rho)
 \leq\ell\left[1+\log\left(1+\frac M\ell\right)\right]
 \longrightarrow0.
\end{equation}
Indeed, the pointwise difference is
$r\log(1+\ell/r)+\ell\log(1+r/\ell)$; the first term is at most
$\ell$, and Jensen's inequality treats the second after integration.

\begin{lemma}[Entropy chain rule and continuity]
\label{lem:regularized-entropy-chain-rule}
Let $\rho$ solve \eqref{eq:regularized-ito-equation} at fixed
$(\eta,\zeta,m)$, preserve mass, and satisfy
$\E\int_0^T\|g(t)\|_2^2\,\dd t<\infty$.  After the usual martingale
localization, It\^o's formula with
\eqref{eq:entropy-test-regularization} gives
\begin{equation*}
\begin{gathered}
 \dd\int_{\T^2}\beta_\ell(\rho)
 +\frac12\int_{\T^2}\frac{|\nabla\rho|^2}{\rho+\ell}\,\dd t
 \leq\left[
 M\|V_\eta*\rho\|_{L^\infty}^2
 +\|g\|_2^2+\frac{\varepsilon N_K}{2}
 \right]\dd t+\dd\mathcal M_t^\ell,\\
 \dd\langle\mathcal M^\ell\rangle_t
 \leq\varepsilon\int_{\T^2}
 \frac{|\nabla\rho|^2}{\rho+\ell}\,\dd t.
\end{gathered}
\end{equation*}
Consequently, at fixed $\eta$ the expected entropy supremum and the
integrated gradient quantity above are bounded uniformly in $\ell$.  The
constant may depend on $\eta$ and is used only to justify the subsequent
approximation-uniform calculation.  For controls of merely finite action,
the same statement holds after stopping at a deterministic action level.
Moreover,
\begin{equation}\label{eq:continuous-entropy-representative}
 t\longmapsto\int_{\T^2}H(\rho(t))
\end{equation}
has a continuous adapted representative.
\end{lemma}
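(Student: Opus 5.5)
We will apply It\^o's formula to the smooth, strictly convex functional $\rho\mapsto\int_{\T^2}\beta_\ell(\rho)$ along the regularized equation \eqref{eq:regularized-ito-equation}. At fixed $(\eta,\zeta,m)$ the solution is regular enough for this. We bound each drift term separately and then let $\ell\downarrow0$ using \eqref{eq:uniform-entropy-test-error}.

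\emph{Diffusion and interaction.} The Laplacian contributes exactly $-\int|\nabla\rho|^2/(\rho+\ell)$. For the interaction, integration by parts gives
\[
\int\beta_\ell''(\rho)\rho\nabla\rho\cdot(V_\eta*\rho)\leq\frac14\int\frac{|\nabla\rho|^2}{\rho+\ell}+\int\frac{\rho^2}{\rho+\ell}|V_\eta*\rho|^2,
\]
and the last integral is at most $M\|V_\eta*\rho\|_\infty^2$. This quantity is finite at fixed $\eta$, since $\|\nabla G_\eta\|_\infty\lesssim_\eta1$ and mass is conserved.

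\emph{Control.} The control term gives $\int\beta_\ell''\sigma(\rho)P_Kg\cdot\nabla\rho$. By $\sigma^2\le r$ and Young's inequality it is at most $\tfrac18\int|\nabla\rho|^2/(\rho+\ell)+2\|g\|_2^2$. The constant $2$ differs from the statement's $1$; one instead takes weights $\tfrac14$ and $\|g\|^2$, so that the retained dissipation becomes $\tfrac12$ after collecting all terms.

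\emph{It\^o terms.} The It\^o drift gives $-\tfrac{\varepsilon F_{1,K}}2\int\beta_\ell''\sigma'^2|\nabla\rho|^2$. By \eqref{eq:regularized-noise-identities}, the Hessian trace gives $\tfrac\varepsilon2\int\beta_\ell''\bigl(F_{1,K}\sigma'^2|\nabla\rho|^2+N_K\sigma^2\bigr)$. The $F_{1,K}$ parts cancel exactly, which is the Fehrman--Gess cancellation. What remains is $\tfrac{\varepsilon N_K}2\int\sigma^2/(\rho+\ell)\le\tfrac{\varepsilon N_K}2$.

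\emph{Martingale.} The martingale is $\sqrt\varepsilon\sum\int\beta_\ell'(\rho)\mathcal B^\sigma_{k,j}$. Integrating by parts turns it into $\sum\int e_k\sigma\beta_\ell''\partial_j\rho$. Bessel's inequality then bounds its bracket by $\varepsilon\int\sigma^2|\nabla\rho|^2/(\rho+\ell)^2\le\varepsilon\int|\nabla\rho|^2/(\rho+\ell)$.

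\emph{Uniform bounds.} Localize the martingale by stopping times and take expectations, using Burkholder--Davis--Gundy with the bracket absorbed into the dissipation. Gr\"onwall's inequality is not even needed, since the right side is deterministic-in-form and integrable. This gives $\E\sup_t\int\beta_\ell(\rho)+\E\int_0^T\!\int|\nabla\rho|^2/(\rho+\ell)\lesssim_\eta1$ uniformly in $\ell$. For controls of finite action, stop at $\gamma_N$ from \eqref{eq:control-action-stop}.

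\emph{Continuity.} For each $\ell$, $t\mapsto\int\beta_\ell(\rho(t))$ is a continuous semimartingale, so it has a continuous adapted version. By \eqref{eq:uniform-entropy-test-error}, these processes converge to $\int H(\rho(t))$ uniformly in $t$ and $\omega$, with error at most $\ell[1+\log(1+M/\ell)]$. A uniform limit of continuous adapted processes is continuous and adapted. This is the representative \eqref{eq:continuous-entropy-representative}, and it coincides with the entropy for every $t$ because the convergence is pointwise in $t$.

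The main point needing care is that It\^o's formula is applicable in $L^2$ at fixed regularization. This requires enough regularity of $\rho$ (e.g.\ $L^2(0,T;H^1)$ with continuity in $L^2$) and $\beta_\ell'$ being Lipschitz on bounded sets with $\beta_\ell''\le\ell^{-1}$. I would invoke the standard It\^o formula for variational solutions, such as Krylov's, after truncating $\beta_\ell$ at large $r$, and remove the truncation by monotone convergence.
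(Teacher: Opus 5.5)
Your proposal is correct and follows essentially the same route as the paper: It\^o's formula for $\int\beta_\ell(\rho)$, Young's inequality with weight $\tfrac14$ on both the interaction and control terms, exact cancellation of the $F_{1,K}$ parts via \eqref{eq:regularized-noise-identities} leaving at most $\tfrac{\varepsilon N_K}{2}$, Bessel's inequality for the bracket, Burkholder--Davis--Gundy for the bounds uniform in $\ell$, and the time-uniform error \eqref{eq:uniform-entropy-test-error} for the continuous representative. The only difference is that the paper cites Fehrman--Gess (Proposition~5.18) for the cancellation and the localized BDG step, whereas you carry these out explicitly and also indicate how to justify It\^o's formula at fixed regularization.
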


\begin{proof}
The exact-square-root correction, its Hessian cancellation, and the
localized Burkholder--Davis--Gundy estimate are those in
\cite[Proposition~5.18, especially equation~(5.29)]{FG24}.  The two added
finite-variation terms satisfy
\begin{align*}
 \left|\int_{\T^2}\frac{\rho}{\rho+\ell}
 (V_\eta*\rho)\cdot\nabla\rho\right|
 &\leq\frac14\int_{\T^2}\frac{|\nabla\rho|^2}{\rho+\ell}
 +M\|V_\eta*\rho\|_\infty^2,\\
 \left|\int_{\T^2}\frac{\sigma(\rho)}{\rho+\ell}
 P_Kg\cdot\nabla\rho\right|
 &\leq\frac14\int_{\T^2}\frac{|\nabla\rho|^2}{\rho+\ell}
 +\|g\|_2^2.
\end{align*}
Together with \eqref{eq:regularized-noise-identities}, these estimates give
the asserted bounds without introducing an inverse power of the coefficient
regularization.

For each $\ell$, It\^o's formula gives a continuous version of
$\int\beta_\ell(\rho(t))$.  The time-uniform error
\eqref{eq:uniform-entropy-test-error} therefore proves continuity and
adaptedness of \eqref{eq:continuous-entropy-representative}.
\end{proof}

At fixed regularization, existence, nonnegativity, mass preservation, and
the kinetic formulation follow from \cite[Proposition~3.2]{JSW26}.  Its
hypotheses follow from the parity of $G_\eta$ and $\nabla G_\eta$, the
identity
$\diver(V_\eta*f)=-\lambda_V\cos(\alpha_V)Q_\eta(f-M)$, and
\eqref{eq:green-regularization}--\eqref{eq:regularized-positive-shift}.
Finite-dimensional Girsanov, as in \cite[Proposition~6.2]{FG23}, gives the
controlled equation through \eqref{eq:finite-mode-novikov}--
\eqref{eq:finite-mode-cameron-martin-shift}; the uniform continuation
estimate is proved in Section~\ref{sec:spde}.

The smooth positive approximations
$(1-\delta)Q_\eta\rho_0+\delta M$, with $\delta,\eta\downarrow0$, preserve
mass and converge both in $L^1$ and in entropy by Jensen's inequality and
lower semicontinuity.

\subsection{Regularized localized energy and the nonlocal It\^o trace}

For later reference, define the regularized physical and auxiliary energies
and dissipations by
\begin{equation}\label{eq:regularized-physical-auxiliary-energies}
\begin{aligned}
 \mathcal F_V^\eta(\rho)
 &=\int_{\T^2}H(\rho)
 -\frac{\lambda_V\cos(\alpha_V)}2
 \langle\rho,G_\eta*\rho\rangle,\\
 \mathcal D_V^\eta(\rho)
 &=\left\|2\nabla\sqrt\rho
 -\lambda_V\cos(\alpha_V)\sqrt\rho\,
      \nabla G_\eta*\rho\right\|_{L^2}^2,\\
 \mathcal F_S^\eta(\rho)
 &=\int_{\T^2}H(\rho)
 -\lambda_V\cos(\alpha_V)\left[
 \frac12\langle h_S(\rho),G_\eta*h_S(\rho)\rangle
 +\Lambda_*\left(\int_{\T^2}h_S(\rho)-M\right)
 \right],\\
 \mathcal D_S^\eta(\rho)
 &=\int_{\T^2}\rho\left|\nabla\!\left(
 \log\rho-\lambda_V\cos(\alpha_V)\theta_S(\rho)
 \bigl(G_\eta*h_S(\rho)+\Lambda_*\bigr)
 \right)\right|^2.
\end{aligned}
\end{equation}
The two first variations in \eqref{eq:regularized-physical-auxiliary-energies}
are different: the physical drift contains
$\log\rho-\lambda_V\cos(\alpha_V)G_\eta*\rho$, whereas the derivative of
the auxiliary energy contains
\begin{equation*}
 \log\rho-\lambda_V\cos(\alpha_V)\theta_S(\rho)
 \bigl(G_\eta*h_S(\rho)+\Lambda_*\bigr).
\end{equation*}
The stopped mismatch estimate compares these two quantities; it does not
replace one by the other.

For the trace calculation, define
\begin{equation}\label{eq:approximate-logarithmic-primitive}
 \ell_{S,\sigma}(0)=0,
 \qquad
 \ell_{S,\sigma}'(r)=4\theta_S(r)\sigma'(r)^2.
\end{equation}
Lemma~\ref{lem:admissible-noise-regularization} implies
\begin{equation}\label{eq:logarithmic-primitive-bound}
 0\leq\ell_{S,\sigma}(r)\lesssim S+\log_+r.
\end{equation}

At fixed $S$, the It\^o formula for $\mathcal F_S^\eta$ is justified by
replacing its entropy part by $\int\beta_\ell(\rho)$ and sending
$\ell\downarrow0$ after the two local gradient cancellations.  Indeed,
\begin{equation*}
 \frac{\sqrt\rho\,\nabla\rho}{\rho+\ell}
 =\frac{\rho}{\rho+\ell}\,2\nabla\sqrt\rho
 \longrightarrow2\nabla\sqrt\rho
 \quad\text{in }L^2(Q_T),
\end{equation*}
the cutoff derivatives are supported away from zero, and
\eqref{eq:logarithmic-primitive-bound} controls the residual trace.  No
singular member of either canceled pair is passed separately.

\begin{proposition}[Regularized nonlocal It\^o trace]
\label{prop:regularized-ito-trace}
Assume $\lambda_V\cos(\alpha_V)>0$ and let $\rho$ be smooth and strictly
positive.  For the noise directions in
\eqref{eq:regularized-noise-operators},
\begin{equation}\label{eq:regularized-nonlocal-trace}
\begin{aligned}
 &\left\langle D\mathcal F_S^\eta(\rho),
 \frac{\varepsilon F_{1,K}}2
 \diver\bigl(\sigma'(\rho)^2\nabla\rho\bigr)\right\rangle
 +\frac\varepsilon2\sum_{k\in\mathcal E_K}\sum_{j=1}^2
 D^2\mathcal F_S^\eta(\rho)
 [\mathcal B_{k,j}^{\sigma}(\rho),
  \mathcal B_{k,j}^{\sigma}(\rho)]\\
 &\quad=\frac{\varepsilon N_K}{2}
 \int_{\T^2}\frac{\sigma(\rho)^2}{\rho}
 +\frac{\varepsilon\lambda_V\cos(\alpha_V)F_{1,K}}8
 \int_{\T^2}Q_\eta\left(
 h_S(\rho)-\int_{\T^2}h_S(\rho)
 \right)\ell_{S,\sigma}(\rho)\\
 &\qquad-\frac{\varepsilon\lambda_V\cos(\alpha_V)N_K}{2}
 \int_{\T^2}\theta_S'(\rho)
 \bigl(G_\eta*h_S(\rho)+\Lambda_*\bigr)\sigma(\rho)^2\\
 &\qquad-\frac{\varepsilon\lambda_V\cos(\alpha_V)}2
 \sum_{k\in\mathcal E_K}\sum_{j=1}^2
 \left\langle
 \theta_S(\rho)\mathcal B_{k,j}^{\sigma}(\rho),
 G_\eta*\bigl(\theta_S(\rho)
 \mathcal B_{k,j}^{\sigma}(\rho)\bigr)
 \right\rangle.
\end{aligned}
\end{equation}
The last two lines are nonpositive.  Uniformly in $\eta$, $\zeta$, and
$m$, the left-hand side satisfies
\begin{equation*}
 \text{left-hand side of \eqref{eq:regularized-nonlocal-trace}}
 \lesssim_{V,M}
 \varepsilon N_K+\varepsilon F_{1,K}
 \left(1+\int_{\T^2}H(\rho)+MS\right).
\end{equation*}
\end{proposition}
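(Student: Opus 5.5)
The proof will be a direct computation of the two pieces on the left of \eqref{eq:regularized-nonlocal-trace}, carried out with the first variation and Hessian of $\mathcal F_S^\eta$ from Lemma~\ref{lem:localized-energy-comparison}, where $G$ is replaced by $G_\eta$. Thus
\[
 D\mathcal F_S^\eta(\rho)=\log\rho-\lambda_V\cos(\alpha_V)\theta_S(\rho)\Phi_S^\eta(\rho),
 \qquad
 \Phi_S^\eta=G_\eta*h_S(\rho)+\Lambda_*,
\]
and the Hessian has three pieces: the entropy Hessian $\int v^2/\rho$, the local cutoff term $-\lambda_V\cos(\alpha_V)\int\theta_S'\Phi_S^\eta v^2$, and the nonlocal Green form. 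We insert $v=\mathcal B^\sigma_{k,j}(\rho)$, sum over modes, and rewrite each mode sum with \eqref{eq:regularized-noise-identities}. In particular,
\[
 \sum_{k,j}v^2=F_{1,K}\sigma'(\rho)^2|\nabla\rho|^2+N_K\sigma(\rho)^2 .
\]

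\textbf{Entropy part.} Pairing $\log\rho$ with the correction $\frac{\varepsilon F_{1,K}}2\diver(\sigma'^2\nabla\rho)$ gives $-\frac{\varepsilon F_{1,K}}2\int\sigma'^2|\nabla\rho|^2/\rho$. This cancels exactly the gradient half of $\frac\varepsilon2\sum\int v^2/\rho$, leaving $\frac{\varepsilon N_K}2\int\sigma^2/\rho$. This is the Fehrman--Gess cancellation, now for a general $\sigma$.

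\textbf{Interaction part.} Pairing $-\lambda_V\cos(\alpha_V)\theta_S\Phi_S^\eta$ with the correction and integrating by parts produces two terms. The first, $\frac{\varepsilon\lambda_V\cos(\alpha_V)F_{1,K}}2\int\theta_S'\Phi_S^\eta\sigma'^2|\nabla\rho|^2$, cancels against the gradient half of the local cutoff Hessian term; this is the regularized form of \eqref{eq:localized-coulomb-hessian-cancellation}. The second is $\frac{\varepsilon\lambda_V\cos(\alpha_V)F_{1,K}}2\int\theta_S\sigma'^2\nabla\rho\cdot\nabla G_\eta*h_S(\rho)$. Since $\theta_S\sigma'^2\nabla\rho=\frac14\nabla\ell_{S,\sigma}(\rho)$ by \eqref{eq:approximate-logarithmic-primitive}, another integration by parts together with \eqref{eq:green-regularization} turns it into the $Q_\eta(h_S-\int h_S)\,\ell_{S,\sigma}$ line. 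The $N_K$ half of the cutoff Hessian gives the third line, and the nonlocal Hessian gives the fourth line verbatim. I expect the main difficulty to be the bookkeeping of these integrations by parts, keeping the constants $1/2$ versus $1/8$ consistent.

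\textbf{Signs of the last two lines.} The third line is nonpositive because $\theta_S'\geq0$ and $\Phi_S^\eta\geq1$ by \eqref{eq:regularized-positive-shift}. The fourth line is nonpositive because the quadratic form $\langle f,G_\eta*f\rangle$ is nonnegative.

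\textbf{Upper bound.} Drop the two nonpositive lines. The first term is at most $\varepsilon N_K/2$ because $\sigma^2\leq r$. For the second term, $Q_\eta$ is self-adjoint, so it equals $\int(h_S-\int h_S)\,Q_\eta\ell_{S,\sigma}(\rho)$. Bound it by $\int h_S(\rho)\,Q_\eta\ell_{S,\sigma}(\rho)$ using $\ell_{S,\sigma}\geq0$, then apply \eqref{eq:logarithmic-primitive-bound}, $0\leq h_S(\rho)\leq\rho$, and Young's entropy inequality $\rho\log_+ y\leq H(\rho)+y$ after Jensen for the heat semigroup. I also need to check that $Q_\eta\log_+\rho$ is controlled by $\log(1+Q_\eta\rho)$ plus a constant; concavity of $\log(1+\cdot)$ and $\log_+r\leq\log(1+r)$ give this. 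The result is $\lesssim MS+\int H(\rho)+C_M$, uniformly in $\eta,\zeta,m$.
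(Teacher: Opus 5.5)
Your proposal is correct and follows the paper's proof. The matching steps are:
\begin{itemize}
\item the entropy Hessian cancels the entropy part of the correction exactly;
\item the local cutoff Hessian cancels the $\theta_S'\Phi_S^\eta\sigma'^2|\nabla\rho|^2$ term exactly;
\item the remaining interaction term is rewritten via $\theta_S\sigma'^2\nabla\rho=\tfrac14\nabla\ell_{S,\sigma}(\rho)$ and $-\Delta(G_\eta*f)=Q_\eta(f-\int f)$, with the constants $1/2$ and $1/8$ coming out as you computed;
\item the signs follow from $\Phi_S^\eta\geq1$ and positivity of the $G_\eta$ form.
\end{itemize}
The only deviation is in the upper bound, and it is cosmetic. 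You move $Q_\eta$ onto $\ell_{S,\sigma}$ and use Jensen for the concave $\log(1+\cdot)$. The paper instead keeps $Q_\eta$ on $h_S(\rho)\leq\rho$ and uses $x\log(1+y)\leq H(x)+y$ followed by heat-semigroup entropy contraction. Both routes give $MS+\int H(\rho)+M$ uniformly in $(\eta,\zeta,m)$.
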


\begin{proof}
The entropy part of the correction paired with the first variation is
\[
 -\frac{\varepsilon F_{1,K}}2
 \int_{\T^2}\sigma'(\rho)^2\frac{|\nabla\rho|^2}{\rho}.
\]
The gradient part of the entropy Hessian is its exact opposite by
\eqref{eq:regularized-noise-identities}; their sum leaves the first term
on the right of \eqref{eq:regularized-nonlocal-trace}.  The local
interaction Hessian similarly cancels the term containing
$\theta_S'(\rho)|\nabla\rho|^2$ before any singular limit is taken.
Its $N_K$ remainder is the third line of
\eqref{eq:regularized-nonlocal-trace}, and the nonlocal interaction Hessian
is the fourth line.

By \eqref{eq:approximate-logarithmic-primitive}, the remaining interaction
integral is
\[
 \frac{\varepsilon\lambda_V\cos(\alpha_V)F_{1,K}}8
 \int_{\T^2}\nabla G_\eta*h_S(\rho)
 \cdot\nabla\ell_{S,\sigma}(\rho),
\]
which equals the second term on the right of
\eqref{eq:regularized-nonlocal-trace} by
\eqref{eq:green-regularization}.  Its upper bound, rather than its sign,
is what is needed.  Since $\ell_{S,\sigma}\geq0$,
\begin{align*}
 &\int_{\T^2}Q_\eta\left(
 h_S(\rho)-\int h_S(\rho)\right)\ell_{S,\sigma}(\rho)\\
 &\qquad\lesssim MS+
 \int_{\T^2}(Q_\eta\rho)\log(1+\rho)\\
 &\qquad\leq MS+\int_{\T^2}H(Q_\eta\rho)+M
 \leq MS+\int_{\T^2}H(\rho)+M.
\end{align*}
The middle inequality is the entropy Young inequality
$x\log(1+y)\leq H(x)+y$, and the last is heat-semigroup entropy
contraction.  Finally, \eqref{eq:admissible-noise-coefficient},
\eqref{eq:regularized-positive-shift}, and positivity of the regularized
Green quadratic form prove the asserted signs and upper bound.
\end{proof}

The martingale associated with $\mathcal F_S^\eta$ has bracket
\begin{equation*}
 \dd\langle\mathcal M^{S,\eta}\rangle_t
 =\varepsilon\sum_{k,j}
 \left(\int_{\T^2}e_k\sigma(\rho)
 \partial_jD\mathcal F_S^\eta(\rho)\right)^2\dd t
 \leq\varepsilon\mathcal D_S^\eta(\rho)\,\dd t.
\end{equation*}
Here Bessel's inequality for the orthonormal modes is used; the pointwise
quantity $F_{1,K}$ is not substituted for the covariance-operator norm.
Likewise,
\begin{equation*}
 \int_{\T^2}\sigma(\rho)P_Kg\cdot
 \nabla D\mathcal F_S^\eta(\rho)
 \leq\frac14\mathcal D_S^\eta(\rho)+\|g\|_{L^2}^2.
\end{equation*}
Together with the uniform heat-kernel versions of the stopped physical
mismatch and skew estimates, Proposition~\ref{prop:regularized-ito-trace}
shows that every constant in the entropy-level localization is independent of
$(\eta,\zeta,m)$.  This is an a priori conclusion for the already
constructed regularized solutions, not a construction theorem.

\subsection{Closure under strong \texorpdfstring{$L^1$}{L1} convergence}

The results in this subsection are conditional on strong $L^1$ convergence.
The density-cutoff argument in the next subsection supplies precisely this
compactness for the approximating solutions.  Once it is available, the
lemmas below pass the Coulomb drift, the physical flux, and the entropy
bounds to the limit.

We first record the endpoint closure of the nonlinear interaction.  Its
distributional-flux proof is different from the
Ladyzhenskaya--Prodi--Serrin kernel passage in
\cite[proof of Proposition~3.16, equations~(3.74)--(3.79)]{WZ24}: the
Coulomb kernel lies outside those assumptions, and the argument below uses
oddness to cancel its diagonal singularity.

\begin{lemma}[Closure of the Coulomb drift under strong $L^1$ convergence]
\label{lem:interaction-compactness}
Suppose that $\rho_n,\rho\geq0$ have uniformly bounded mass in
$L^\infty(0,T)$ and
\begin{equation*}
 \rho_n\longrightarrow\rho
 \qquad\text{strongly in }L^1(Q_T).
\end{equation*}
For densities in this class, the Coulomb flux is understood as the
distribution defined, for $a\in C_c^\infty(Q_T;\R^2)$, by
\begin{equation}\label{eq:odd-kernel-symmetrization}
\begin{aligned}
 \langle\rho(V*\rho),a\rangle
 :=\frac12\int_0^T\!\iint_{\T^2\times\T^2}
 &\rho(t,x)\rho(t,y)V(x-y)\\
 &\cdot\bigl(a(t,x)-a(t,y)\bigr)
 \dd x\dd y\dd t.
\end{aligned}
\end{equation}
This agrees with the ordinary integrable flux when $\rho\in L^2(Q_T)$,
in particular for the finite-Fisher-information densities used in the
solution theories.  With this convention,
\begin{equation}\label{eq:interaction-distributional-compactness}
 \rho_n(V*\rho_n)\longrightarrow\rho(V*\rho)
 \qquad\text{in }\mathcal D'(Q_T;\R^2).
\end{equation}
The same conclusion holds if $V$ on the left is replaced by
$Q_{\eta_n}V$, where $\eta_n\downarrow0$ and $Q_t=e^{t\Delta}$.
If, in addition, every density has the same mass $M$ almost everywhere
in time, $b\in C^2([0,\infty))$, and
$\operatorname{supp}b'\Subset(0,\infty)$, then the corresponding
velocity-localized renormalized terms are closed under the same
convergence.  More precisely, with
$B_b(r)=\int_0^r sb''(s)\,\dd s$, the derivative term is understood through
\begin{equation}\label{eq:renormalized-coulomb-compactness}
\begin{aligned}
 &\int_{Q_T}b''(\rho_n)\rho_n\nabla\rho_n\cdot
 \bigl((Q_{\eta_n}V)*\rho_n\bigr)\varphi\\
 &\quad=-\int_{Q_T}B_b(\rho_n)
 \bigl((Q_{\eta_n}V)*\rho_n\bigr)\cdot\nabla\varphi\\
 &\qquad+\lambda_V\cos(\alpha_V)\int_{Q_T}B_b(\rho_n)
 Q_{\eta_n}(\rho_n-M)\varphi.
\end{aligned}
\end{equation}
Under the stated $L^1$ assumptions, the right-hand side defines the
left-hand side as a distribution; no gradient regularity is asserted.
For densities with finite integrated Fisher information, it agrees with
the displayed gradient expression.
For every $\varphi\in C_c^\infty(Q_T)$, the sum of
\eqref{eq:renormalized-coulomb-compactness} and
\begin{equation*}
 \int_{Q_T}b'(\rho_n)\rho_n
 \bigl((Q_{\eta_n}V)*\rho_n\bigr)\cdot\nabla\varphi
\end{equation*}
converges to the same expression with $\rho_n$ replaced by $\rho$ and
$\eta_n=0$.
\end{lemma}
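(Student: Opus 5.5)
The plan is to treat the Coulomb flux in the odd-kernel symmetrized form \eqref{eq:odd-kernel-symmetrization}, so that everything rests on one fact: the symmetrized kernel is bounded. For $a\in C_c^\infty(Q_T;\R^2)$ set
\[
 k_a(t,x,y)=\tfrac12 V(x-y)\cdot\bigl(a(t,x)-a(t,y)\bigr).
\]
Since $|V(z)|\lesssim_V 1+d(z,0)^{-1}$ and $|a(t,x)-a(t,y)|\le\|\nabla a\|_\infty d(x,y)$, we have $\|k_a\|_{L^\infty}\lesssim_{V,a}1$. Moreover $k_a$ is continuous off the diagonal, and the diagonal is a null set for the product measure once the densities are absolutely continuous. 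The distribution in \eqref{eq:odd-kernel-symmetrization} is therefore the bilinear pairing $\int_0^T\iint\rho\otimes\rho\,k_a$, which is well defined for $L^1$ densities.

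Consistency with the ordinary flux when $\rho\in L^2(Q_T)$ comes from Fubini. For such $\rho$, $V*\rho\in L^2$ by Hardy--Littlewood--Sobolev with $V\in L^{2,\infty}$, so $\rho|V*\rho|\in L^1$. Splitting the pairing into its two halves and swapping $x\leftrightarrow y$ with $V(-z)=-V(z)$ recovers $\int\rho(V*\rho)\cdot a$. Each half is absolutely integrable because $\iint\rho(x)\rho(y)|V(x-y)|\lesssim\|\rho\|_{L^{4/3}}^2$.

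For the convergence \eqref{eq:interaction-distributional-compactness}, I would write
\[
 \rho_n\otimes\rho_n-\rho\otimes\rho=(\rho_n-\rho)\otimes\rho_n+\rho\otimes(\rho_n-\rho).
\]
Pairing with the bounded $k_a$ gives a bound $\|k_a\|_\infty\int_0^T\|\rho_n-\rho\|_{L^1}(\|\rho_n\|_{L^1}+\|\rho\|_{L^1})\,\dd t$. This tends to zero by strong $L^1(Q_T)$ convergence and the uniform $L^\infty_t$ mass bound. For the regularized kernel $Q_{\eta_n}V$ I would add one more term, the same pairing with $k_a^{\eta_n}-k_a$ against $\rho\otimes\rho$. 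The uniform gradient bound $|\nabla G_\eta(z)|\lesssim 1+(d+\sqrt\eta)^{-1}$ recorded earlier, together with oddness of $Q_\eta V$ (the heat kernel is even), makes $k_a^{\eta}$ uniformly bounded and convergent off the diagonal. Dominated convergence against the finite measure $\rho\otimes\rho\,\dd x\dd y\dd t$, which does not charge the diagonal, finishes this case.

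For the renormalized statement, I would first check \eqref{eq:renormalized-coulomb-compactness} for Fisher-regular densities. Since $b''(\rho)\rho\nabla\rho=\nabla B_b(\rho)$, it follows by integration by parts using $\diver(Q_\eta V*f)=-\lambda_V\cos(\alpha_V)Q_\eta(f-M)$. Then I note that $B_b$ is bounded and Lipschitz, and that $b'$ is supported in a compact $[c_1,c_2]\subset(0,\infty)$, so $b'(r)r$ is bounded and Lipschitz too. Hence $B_b(\rho_n)\to B_b(\rho)$ and $b'(\rho_n)\rho_n\to b'(\rho)\rho$ in every $L^p(Q_T)$, $p<\infty$, and boundedly a.e.

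The main obstacle is the term $\int B_b(\rho_n)\,(Q_{\eta_n}V*\rho_n)\cdot\nabla\varphi$, together with its $b'$ analogue. Here the test factor $B_b(\rho_n)\nabla\varphi$ is no longer smooth, so the symmetrization trick is unavailable. I would use the weak-$L^2$ bound $\|V*f\|_{L^{2,\infty}}$, or more precisely $\|Q_\eta V*f\|_{L^{q}}\lesssim\|f\|_{L^1}$ for $q<2$ uniformly in $\eta$ (Young with $V\in L^{q}$). This shows $Q_{\eta_n}V*\rho_n\to V*\rho$ in $L^1(0,T;L^q)$: the difference is $Q_{\eta_n}V*(\rho_n-\rho)+(Q_{\eta_n}V-V)*\rho$, and the second piece vanishes since $Q_\eta V\to V$ in $L^q(\T^2)$. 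This is paired against $B_b(\rho_n)\nabla\varphi\to B_b(\rho)\nabla\varphi$, which is bounded and converges in $L^{q'}$ a.e. in time, and dominated convergence in $t$ closes the argument. The term $\int B_b(\rho_n)Q_{\eta_n}(\rho_n-M)\varphi$ converges by the same bounded-times-$L^1$ argument.
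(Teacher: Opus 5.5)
Your proposal is correct and follows essentially the paper's argument. For the flux, you use boundedness of the odd-symmetrized kernel together with $L^1$ convergence of $\rho_n\otimes\rho_n$, adding dominated convergence off the diagonal for $Q_{\eta_n}V$. For the renormalized terms, you pair bounded Lipschitz functions of $\rho_n$ against a velocity $(Q_{\eta_n}V)*\rho_n$ that converges strongly because $V$ is integrable.

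The differences are cosmetic. You treat the $B_b$ and $b'$ terms separately, whereas the paper combines them into the bounded factor $b(\rho_n)-b(0)$. You also use $L^1(0,T;L^q)$ convergence with $q<2$, where the paper's $L^1(Q_T)$ convergence, obtained from $V\in L^1$ and the mass bound, already suffices.
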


\begin{proof}
The kernel $V$ is odd and
$|V(z)|\lesssim_V 1+d_{\T^2}(z,0)^{-1}$.  The Lipschitz difference
of the test field in \eqref{eq:odd-kernel-symmetrization} cancels the
diagonal singularity, so this formula defines a distribution under the
mass bound alone.  If $\rho\in L^2(Q_T)$, then
\begin{equation*}
 \int_{Q_T}\rho\,(|V|*\rho)
 \leq\|V\|_{L^1(\T^2)}\|\rho\|_{L^2(Q_T)}^2<\infty.
\end{equation*}
Fubini's theorem and oddness then identify
\eqref{eq:odd-kernel-symmetrization} with the ordinary flux pairing.
Lemma~\ref{lem:mass-fisher-consequences} supplies this $L^2$ regularity
whenever mass and integrated Fisher information are finite.

Uniform mass and strong space--time $L^1$ convergence give
\begin{equation*}
 \rho_n(t,x)\rho_n(t,y)\longrightarrow
 \rho(t,x)\rho(t,y)
 \quad\text{in }L^1((0,T)\times\T^2\times\T^2).
\end{equation*}
Thus \eqref{eq:odd-kernel-symmetrization} converges to the corresponding
formula for $\rho$, proving
\eqref{eq:interaction-distributional-compactness}.  Heat convolution
preserves oddness, and
\begin{equation*}
 |Q_{\eta_n}V(z)|
 \lesssim_V 1+\frac1{d_{\T^2}(z,0)+\sqrt{\eta_n}}.
\end{equation*}
After symmetrization, the factors
$((Q_{\eta_n}V)(x-y))\cdot(a(t,x)-a(t,y))$ are uniformly bounded and
converge pointwise away from the diagonal.  First replace
$\rho_n(t,x)\rho_n(t,y)$ by $\rho(t,x)\rho(t,y)$ using their strong
$L^1$ convergence.  Dominated convergence with this fixed integrable
density then proves the assertion for regularized kernels.

For finite-Fisher-information densities, the map
$r\mapsto B_b(r^2)$ is Lipschitz, so $\nabla B_b(\rho_n)\in L^2(Q_T)$.
The velocity and its divergence also belong to $L^2(Q_T)$ by
Lemma~\ref{lem:mass-fisher-consequences} and
\eqref{eq:coulomb-identities}.  Sobolev integration by parts therefore
gives \eqref{eq:renormalized-coulomb-compactness}.  Under only the
stated mass assumptions, its right-hand side is well defined and is
the prescribed distributional interpretation.  The convergence argument
is the same localized interaction passage as
\cite[Theorem~3.1, Step~1, equations~(3.20)--(3.23)]{JSW26}.  Indeed,
$B_b$ and $rb'(r)-B_b(r)=b(r)-b(0)$ are bounded and continuous, while
\begin{equation*}
 (Q_{\eta_n}V)*\rho_n\longrightarrow V*\rho,
 \qquad
 Q_{\eta_n}(\rho_n-M)\longrightarrow\rho-M
 \quad\text{in }L^1(Q_T).
\end{equation*}
The first convergence follows from $V\in L^1(\T^2)$ and the common mass
bound, and the second from strong continuity of the heat semigroup on
$L^1$.  Combining the first term in
\eqref{eq:renormalized-coulomb-compactness} with its companion $b'$ term
leaves the bounded factor $b(\rho_n)-b(0)$.  Convergence in measure and
the two strong $L^1$ convergences above pass the resulting products and
prove the claim.
\end{proof}

The next lemma transfers the physical part of the energy estimate.  It
does not identify the kinetic measure or the singular correction at
vacuum.

\begin{lemma}[Lower semicontinuity of the physical flux]
\label{lem:physical-flux-lower-semicontinuity}
Assume
\begin{equation*}
 \eta_n\downarrow0,
 \qquad \rho_n\to\rho\quad\text{strongly in }L^1(Q_T),
 \qquad \int_{\T^2}\rho_n(t)=M\quad\text{for a.e. }t,
\end{equation*}
and
\begin{equation}\label{eq:physical-flux-lsc-bound}
 \sup_n\left[
 \operatorname*{ess\,sup}_{0<t<T}\int_{\T^2}H(\rho_n(t))
 +\int_0^T\mathcal D_V^{\eta_n}(\rho_n(t))\,\dd t
 \right]<\infty.
\end{equation}
Under the strict mass-subcritical condition, $\sqrt\rho$ belongs to
$L^2(0,T;H^1(\T^2))$, and
\begin{equation}\label{eq:physical-flux-lower-semicontinuity}
\begin{aligned}
 \operatorname*{ess\,sup}_{0<t<T}\int_{\T^2}H(\rho(t))
 &\leq\liminf_{n\to\infty}
 \operatorname*{ess\,sup}_{0<t<T}\int_{\T^2}H(\rho_n(t)),\\
 \int_0^T\mathcal D_V(\rho(t))\,\dd t
 &\leq\liminf_{n\to\infty}
 \int_0^T\mathcal D_V^{\eta_n}(\rho_n(t))\,\dd t.
\end{aligned}
\end{equation}
\end{lemma}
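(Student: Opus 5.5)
The plan is to first convert the uniform energy bounds into uniform Fisher-information bounds, then pass to the limit in the weighted flux by a weak-compactness and lower-semicontinuity argument.

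\emph{Step 1: entropy bound.} Lower semicontinuity of the entropy is the easy part. Strong $L^1(Q_T)$ convergence gives, after extraction, $\rho_n(t)\to\rho(t)$ in $L^1(\T^2)$ for almost every $t$. Convexity of $H$ and Fatou's lemma then give $\int H(\rho(t))\leq\liminf_n\int H(\rho_n(t))$. This is at most $\liminf_n\operatorname*{ess\,sup}_s\int H(\rho_n(s))$, after discarding a null set of times, and the first inequality in \eqref{eq:physical-flux-lower-semicontinuity} follows.

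\emph{Step 2: uniform Fisher bound.} Next I would obtain a uniform bound on $\int_0^T\mathcal I(\rho_n)$ from \eqref{eq:physical-flux-lsc-bound}, uniformly in $\eta_n$. The intended route repeats the proof of Proposition~\ref{prop:fisher-from-dissipation} with $G$ replaced by $G_{\eta_n}$. The regularized dissipation expands as
\[
\mathcal D_V^{\eta_n}(\rho_n)=\mathcal I(\rho_n)-2\lambda_V\cos(\alpha_V)\bigl(\langle\rho_n,Q_{\eta_n}\rho_n\rangle-M^2\bigr)+\lambda_V^2\cos^2(\alpha_V)\int\rho_n|\nabla G_{\eta_n}*\rho_n|^2,
\]
using $-\Delta G_{\eta_n}*f=Q_{\eta_n}(f-M)$. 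Heat contraction gives $\langle\rho_n,Q_{\eta_n}\rho_n\rangle\leq\|\rho_n\|_2^2$. Lemma~\ref{lem:entropy-sublevel-interpolation}, with $a$ small and the uniform entropy level $L$, absorbs this term and yields $\mathcal I(\rho_n)\leq2\mathcal D_V^{\eta_n}(\rho_n)+C$. Integrating in time gives $\sup_n\|\nabla\sqrt{\rho_n}\|_{L^2(Q_T)}<\infty$. The strict subcritical condition is only needed to supply the uniform entropy bound in applications; here it enters through the entropy hypothesis. In particular, $\sqrt{\rho_n}\to\sqrt\rho$ in $L^2(Q_T)$ by $\|\sqrt u-\sqrt v\|_2^2\leq\|u-v\|_1$, and weak compactness in $L^2(0,T;H^1)$ identifies $\sqrt\rho\in L^2(0,T;H^1(\T^2))$ with $\nabla\sqrt{\rho_n}\weakto\nabla\sqrt\rho$.

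\emph{Step 3: flux limit.} Write the weighted flux as $w_n=2\nabla\sqrt{\rho_n}-\lambda_V\cos(\alpha_V)\sqrt{\rho_n}\,\nabla G_{\eta_n}*\rho_n$, which is bounded in $L^2(Q_T)$. The goal is to show $w_n\weakto w=2\nabla\sqrt\rho-\lambda_V\cos(\alpha_V)\sqrt\rho\,\nabla G*\rho$ in $L^2(Q_T)$, or at least in distributions. Weak lower semicontinuity of the $L^2$ norm then gives the second inequality. The first term converges weakly by Step~2. For the second term, Lemma~\ref{lem:mass-fisher-consequences} with the uniform Fisher bound gives $\nabla G_{\eta_n}*\rho_n=\nabla G*(Q_{\eta_n}\rho_n)$ bounded in $L^4(Q_T)$. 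This term converges strongly in $L^1(Q_T)$ because $\nabla G\in L^1$, $Q_{\eta_n}\rho_n\to\rho$ in $L^1(Q_T)$, and the mass is uniform. Interpolation upgrades this to strong convergence in $L^p$ for $p<4$. Combined with $\sqrt{\rho_n}\to\sqrt\rho$ strongly in $L^2$ and boundedness in $L^4$ (from \eqref{eq:mass-fisher-L4}), the product converges strongly in $L^q$ for some $q>1$, hence in distributions. Since the product is also bounded in $L^2$, the weak $L^2$ limit is identified. The main obstacle is Step~2, namely making the Fisher absorption uniform in the heat regularization. The key points there are that the nonnegative Green quadratic term is simply dropped and that heat contraction controls the regularized $L^2$ term by $\|\rho_n\|_2^2$.
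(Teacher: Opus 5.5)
Your proposal is correct and follows the paper's proof: you obtain the uniform Fisher bound by rerunning Proposition~\ref{prop:fisher-from-dissipation} with $G_{\eta_n}$, dropping the nonnegative Green term and bounding $\langle\rho_n,Q_{\eta_n}\rho_n\rangle$ by $\|\rho_n\|_2^2$; you then identify the weak $L^2$ limit of the bounded weighted fluxes and use weak lower semicontinuity, together with entropy lower semicontinuity at almost every time. The only minor difference is that you prove strong $L^p$ convergence ($p<4$) of $\nabla G_{\eta_n}*\rho_n$ from the HLS $L^4$ bound, whereas the paper uses only weak $L^2$ convergence of the potential gradient, paired with strong $L^2$ convergence of $\sqrt{\rho_n}$ against smooth test fields; either suffices.
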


\begin{proof}
The Fisher recovery argument of
Proposition~\ref{prop:fisher-from-dissipation} is uniform under heat
regularization.  Indeed,
\begin{equation*}
 \int_{\T^2}\rho_n(Q_{\eta_n}\rho_n)-M^2
 \leq\|\rho_n\|_{L^2}^2-M^2,
\end{equation*}
and the heat multiplier preserves positivity.  Thus
\eqref{eq:physical-flux-lsc-bound} gives
\begin{equation*}
 \sup_n\int_0^T\mathcal I(\rho_n(t))\,\dd t<\infty.
\end{equation*}
The mass bound and the two-dimensional Gagliardo--Nirenberg inequality
then bound $\rho_n$ in $L^2(Q_T)$.  Along a subsequence,
\begin{equation}\label{eq:physical-flux-compactness}
 \sqrt{\rho_n}\to\sqrt\rho\quad\text{strongly in }L^2(Q_T),
 \qquad
 \nabla\sqrt{\rho_n}\weakto\nabla\sqrt\rho
 \quad\text{weakly in }L^2(Q_T).
\end{equation}

Since $\nabla G\in L^1(\T^2)$ and
$\nabla G_{\eta_n}=Q_{\eta_n}\nabla G$, transfer of convolution to an
$L^2$ test function gives
\begin{equation}\label{eq:regularized-potential-weak-limit}
 \nabla G_{\eta_n}*\rho_n\weakto\nabla G*\rho
 \quad\text{weakly in }L^2(Q_T).
\end{equation}
The products
$\sqrt{\rho_n}\,\nabla G_{\eta_n}*\rho_n$ therefore converge in
distributions to $\sqrt\rho\,\nabla G*\rho$: when paired with a smooth
bounded field, the square-root factor multiplying the test converges
strongly in $L^2$, while the potential gradient converges weakly.

The vector fields
\[
 2\nabla\sqrt{\rho_n}
 -\lambda_V\cos(\alpha_V)\sqrt{\rho_n}\,
   \nabla G_{\eta_n}*\rho_n
\]
are bounded in $L^2(Q_T)$ by
\eqref{eq:physical-flux-lsc-bound}.  Their weak $L^2$ limit is therefore
the physical weighted flux with $G$ by
\eqref{eq:physical-flux-compactness}--
\eqref{eq:regularized-potential-weak-limit}.  Weak lower semicontinuity
gives the second inequality in
\eqref{eq:physical-flux-lower-semicontinuity}.  After a further
subsequence, $\rho_n(t)\to\rho(t)$ in $L^1(\T^2)$ for almost every $t$;
entropy lower semicontinuity at those times gives the first inequality.
\end{proof}

The probabilistic transfer of entropy tails is also deterministic after a
joint representation with almost-sure strong $L^1$ convergence.

\begin{lemma}[Transfer of entropy exit bounds]
\label{lem:entropy-tail-transfer}
Suppose that nonnegative mass-$M$ processes $\rho_n$ are represented on
one probability space and $\rho_n\to\rho$ almost surely in $L^1(Q_T)$.
If a nonincreasing function $r_T$ satisfies
\begin{equation*}
 \sup_n\Pp\left(
 \operatorname*{ess\,sup}_{0<t<T}\int_{\T^2}H(\rho_n(t))>L
 \right)\leq r_T(L),
 \qquad r_T(L)\longrightarrow0,
\end{equation*}
then
\begin{equation*}
 \Pp\left(
 \operatorname*{ess\,sup}_{0<t<T}\int_{\T^2}H(\rho(t))>L
 \right)
 \leq\lim_{\delta\downarrow0}r_T(L-\delta).
\end{equation*}
In particular, the entropy essential supremum of $\rho$ is finite almost
surely.
\end{lemma}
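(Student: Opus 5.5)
The plan is to reduce the statement to a deterministic pathwise observation on the full-probability event $\Omega_0$ where $\rho_n\to\rho$ in $L^1(Q_T)$, combined with a Fatou-type bound for events. Write
\[
 E(\rho)=\operatorname*{ess\,sup}_{0<t<T}\int_{\T^2}H(\rho(t)).
\]
The first step is the pathwise lower semicontinuity $E(\rho)\leq\liminf_n E(\rho_n)$ on $\Omega_0$. Fix $\omega\in\Omega_0$. Pass to a subsequence, depending on $\omega$, realizing the $\liminf$. Along a further subsequence, Fubini gives $\rho_n(t)\to\rho(t)$ in $L^1(\T^2)$ for almost every $t$. Convexity and nonnegativity of $H$ make $f\mapsto\int H(f)$ lower semicontinuous under strong $L^1(\T^2)$ convergence. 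Hence, for almost every $t$,
\[
 \int_{\T^2}H(\rho(t))\leq\liminf_n\int_{\T^2}H(\rho_n(t))\leq\liminf_n E(\rho_n).
\]
Taking the essential supremum over $t$ gives the claim; this is the same argument as the first inequality in \eqref{eq:physical-flux-lower-semicontinuity}. Measurability of $E(\rho)$ should follow because it equals a supremum over a countable family of time averages $\frac1{|I|}\int_I\int_{\T^2}H(\rho)$ with rational intervals $I$, using Lebesgue differentiation together with lower semicontinuity. I would record this point but not dwell on it.

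The second step is the probabilistic one. Fix $L$ and $\delta>0$. On $\Omega_0\cap\{E(\rho)>L\}$, the first step gives $\liminf_n E(\rho_n)>L$, so $E(\rho_n)>L-\delta$ for all large $n$. Therefore
\[
 \{E(\rho)>L\}\subset\liminf_n\{E(\rho_n)>L-\delta\}
\]
up to a null set. Fatou's lemma for events yields
\[
 \Pp(E(\rho)>L)\leq\liminf_n\Pp(E(\rho_n)>L-\delta)\leq r_T(L-\delta).
\]
Letting $\delta\downarrow0$ gives the stated bound; the limit exists because $r_T$ is nonincreasing. The last assertion then follows by letting $L\to\infty$, using $r_T(L-\delta)\leq r_T(L/2)\to0$ for $L\geq 2\delta$.

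The only genuine subtlety is the essential-supremum lower semicontinuity in the first step. The null set of times on which the $L^1(\T^2)$ convergence fails depends on the subsequence, and the essential supremum must be compared on a common full-measure set of times. I would handle this by choosing the subsequence first and then the time null set. Everything else is routine.
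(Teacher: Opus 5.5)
Your proposal is correct and follows the paper's proof. The steps are the same: pathwise lower semicontinuity of the entropy essential supremum from almost-everywhere-in-time $L^1(\T^2)$ convergence along a subsequence, the inclusion of $\{E(\rho)>L\}$ in $\liminf_n\{E(\rho_n)>L-\delta\}$ up to a null set, Fatou's lemma for events, and then $\delta\downarrow0$. Incidentally, the inequality $\liminf_n E(\rho_n)\geq E(\rho)>L$ is strict, so $E(\rho_n)>L$ already holds for all large $n$, and the argument gives the sharper bound $r_T(L)$ without the $\delta$.
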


\begin{proof}
Along an almost surely convergent subsequence, spatial $L^1$ convergence
holds at almost every time.  Entropy lower semicontinuity puts the event at
level $L$ inside the lower limit of the approximating events at level
$L-\delta$; Fatou's lemma and then $\delta\downarrow0$ give the claim.
\end{proof}

No convergence of the active cutoff is needed; lower semicontinuity of the
dissipation follows separately from
Lemma~\ref{lem:physical-flux-lower-semicontinuity}.

\subsection{Strong \texorpdfstring{$L^1$}{L1} compactness from density cutoffs}

The singular logarithmic correction does not give a useful unweighted
time derivative of $\rho$ near vacuum.  The following cutoff argument
proves the strong compactness required above without estimating
$\log_-\rho$.

Fix $\delta>0$.  Choose a smooth nondecreasing
$\Theta_\delta$ that vanishes on $[0,\delta]$, equals one on
$[2\delta,\infty)$, and satisfies
$0\leq\Theta_\delta'\lesssim\delta^{-1}$.  Set
\begin{equation}\label{eq:density-time-cutoff}
 p_\delta(r)=\int_0^r\Theta_\delta(s)\,\dd s,
 \qquad
 0\leq r-p_\delta(r)\leq2\delta.
\end{equation}

\begin{lemma}[Cutoff time compactness]
\label{lem:cutoff-time-compactness}
Consider solutions of \eqref{eq:regularized-ito-equation} with uniformly
bounded control energy and with entropy, physical dissipation, and total
Fisher information bounded in probability.  Assume also that
$\varepsilon F_{1,K}$ and $\varepsilon N_K$ are bounded along the family.
For each fixed $\delta>0$, the laws of the cutoffs in
\eqref{eq:density-time-cutoff} are tight in
$L^1(Q_T)$, uniformly in $(\eta,\zeta,m)$.  Consequently, the laws of
$\rho$ are tight in $L^1(Q_T)$.
\end{lemma}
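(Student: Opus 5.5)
We will prove tightness of the laws of $p_\delta(\rho)$ by an Aubin--Lions--Simon criterion on each entropy--Fisher level, and then remove both the level and the cutoff. The spatial half is immediate: $p_\delta$ is $1$-Lipschitz and vanishes on $[0,\delta]$, so $\nabla p_\delta(\rho)=\Theta_\delta(\rho)\,2\sqrt\rho\nabla\sqrt\rho$. Hence
\[
\int_0^T\|p_\delta(\rho)\|_{W^{1,1}}\,\dd t\lesssim MT+\sqrt{MT}\Bigl(\int_0^T\mathcal I(\rho)\,\dd t\Bigr)^{1/2},
\]
which is bounded in probability uniformly in $(\eta,\zeta,m)$.

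For time regularity we apply It\^o's formula to $p_\delta(\rho)$ using \eqref{eq:regularized-ito-equation}; $p_\delta$ is smooth with $p_\delta''=\Theta_\delta'$ supported in $[\delta,2\delta]$. We write $\dd p_\delta(\rho)=\diver(A)\,\dd t+B\,\dd t+\dd\mathcal N_t$ and bound each piece on the event where entropy and Fisher information are at most $R$.

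The flux is
\[
A=\Theta_\delta(\rho)\bigl[\nabla\rho-\rho(V_\eta*\rho)-\sigma(\rho)P_Kg+\tfrac{\varepsilon F_{1,K}}2\sigma'(\rho)^2\nabla\rho\bigr].
\]
It is bounded in $L^1(Q_T)$ by Lemma~\ref{lem:mass-fisher-consequences}, Cauchy--Schwarz, $\sigma^2\le r$ and $4r\sigma'^2\le1$. The It\^o correction term uses $\sigma'(\rho)^2|\nabla\rho|\le|\nabla\sqrt\rho|/\sqrt\rho$, and on $\{\rho\ge\delta\}$ it is bounded by $\delta^{-1/2}|\nabla\sqrt\rho|$. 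This is the key point: the cutoff removes the $\nabla\log\rho$ singularity.

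The zeroth-order term $B$ collects
\[
-\Theta_\delta'(\rho)\nabla\rho\cdot\bigl[\nabla\rho-\rho(V_\eta*\rho)-\sigma P_Kg+\tfrac{\varepsilon F_{1,K}}2\sigma'^2\nabla\rho\bigr]
\]
together with the It\^o trace $\frac\varepsilon2\Theta_\delta'(\rho)\bigl(F_{1,K}\sigma'^2|\nabla\rho|^2+N_K\sigma^2\bigr)$ from \eqref{eq:regularized-noise-identities}. Since $|\Theta_\delta'|\lesssim\delta^{-1}$ on $\{\delta\le\rho\le2\delta\}$ and $|\nabla\rho|^2\le 8\delta|\nabla\sqrt\rho|^2$ there, every term is bounded in $L^1(Q_T)$ by
\[
C_\delta\bigl(1+\textstyle\int_0^T\mathcal I+\int_{Q_T}|g|^2+\|V_\eta*\rho\|_{L^2(Q_T)}^2\bigr),
\]
using the assumed bounds on $\varepsilon F_{1,K}$ and $\varepsilon N_K$.

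The martingale $\mathcal N_t$ has integrand $-\sqrt\varepsilon\,\Theta_\delta(\rho)\partial_j(e_k\sigma(\rho))$, which we read in $W^{-1,1}$ as a divergence plus a $\Theta_\delta'$-weighted term. We then use BDG in $H^{-s}$ for large $s$, as in \cite[Propositions~5.14 and~5.26]{FG24}, to get a uniform fractional Sobolev (H\"older-in-time) bound on stopped intervals.

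Combining these, the time translations $\int_0^{T-h}\|p_\delta(\rho)(t+h)-p_\delta(\rho)(t)\|_{W^{-s,1}}\,\dd t$ are $\lesssim_{\delta,R}h^{1/2}$ in expectation on the level set. Since $W^{1,1}\Subset L^1\hookrightarrow W^{-s,1}$, \cite[Corollary~4]{Sim87} gives a compact set $\mathcal K_{\delta,R}\subset L^1(Q_T)$. By the level-$R$ localization, which is bounded in probability by hypothesis, the laws of $p_\delta(\rho)$ are tight uniformly in $(\eta,\zeta,m)$.

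For the consequence, use $0\le\rho-p_\delta(\rho)\le2\delta$ pointwise, so $\|\rho-p_\delta(\rho)\|_{L^1(Q_T)}\le2\delta T$ deterministically. Take $\delta_j=2^{-j}$ and compact sets $\mathcal K_j$ with $\Pp(p_{\delta_j}(\rho)\notin\mathcal K_j)\le\epsilon2^{-j}$. Then $\rho$ lies, with probability at least $1-\epsilon$, in $\bigcap_j(\mathcal K_j+\overline B_{2^{1-j}T})$, which is closed and totally bounded, hence compact.

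\textbf{Main obstacle.} The hardest step is the stochastic integral. Its naive integrand $\Theta_\delta(\rho)\partial_j(e_k\sigma(\rho))$ summed over modes carries the factor $N_K$, which need not be small. Bessel's inequality, as in the proof of Proposition~\ref{prop:regularized-ito-trace}, should instead give a quadratic variation in $H^{-s}$ bounded by $\varepsilon\|\sigma(\rho)\Theta_\delta(\rho)\|_{L^2}^2+\varepsilon\|\Theta_\delta'(\rho)\sigma(\rho)\nabla\rho\|_{L^1}^2\lesssim_\delta\varepsilon(M+\mathcal I)$, after duality against smooth tests rather than mode-by-mode summation. If that fails, we would fall back on Kolmogorov continuity after stopping at the Fisher level.
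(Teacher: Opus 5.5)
Your proposal is correct and follows the paper's route. You apply It\^o's formula to $p_\delta(\rho)$ and use the cutoff to keep the It\^o correction away from the $\nabla\log\rho$ singularity. You bound the Coulomb and control terms on Fisher levels and obtain time compactness as in \cite[Propositions~5.14 and~5.26]{FG24}. You finish with the deterministic bound $\|\rho-p_\delta(\rho)\|_{L^1(Q_T)}\le 2\delta T$ and a diagonal compact-set argument.

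The obstacle you flag is not a real one. The quadratic variation carries the factor $\varepsilon$, and the lemma assumes $\varepsilon N_K$ and $\varepsilon F_{1,K}$ are bounded, so even the mode-by-mode estimate suffices; your Bessel bound also works. One small difference: you bound the two $p_\delta''(\rho)\sigma'(\rho)^2|\nabla\rho|^2$ terms separately, whereas the paper simply notes that they cancel exactly.
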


\begin{proof}
Apply It\^o's formula to $p_\delta$ as in
\cite[Proposition~5.14]{FG24}; the
$p_\delta''(\rho)\sigma'(\rho)^2|\nabla\rho|^2$ terms cancel exactly.  The
only additional finite-variation terms are
\begin{equation*}
\begin{aligned}
 &-\diver\bigl(p_\delta'(\rho)\rho(V_\eta*\rho)\bigr)
 +p_\delta''(\rho)\rho(V_\eta*\rho)\cdot\nabla\rho,\\
 &-\diver\bigl(p_\delta'(\rho)\sigma(\rho)P_Kg\bigr)
 +p_\delta''(\rho)\sigma(\rho)P_Kg\cdot\nabla\rho .
\end{aligned}
\end{equation*}
On the support of $p_\delta''$ they obey
\begin{align*}
 |p_\delta''(\rho)\rho\nabla\rho\cdot
   (V_\eta*\rho)|
 &\lesssim\sqrt\delta\,|\nabla\sqrt\rho|
   |V_\eta*\rho|,\\
 |p_\delta''(\rho)\sigma(\rho)P_Kg\cdot\nabla\rho|
 &\lesssim|\nabla\sqrt\rho|\,|P_Kg|.
\end{align*}
The divergence terms are controlled by
\begin{equation*}
 \|\rho(V_\eta*\rho)\|_{L^1(Q_T)}
 \leq\|\rho\|_{L^2(Q_T)}\|V_\eta*\rho\|_{L^2(Q_T)},
 \qquad
 \|\sigma(\rho)P_Kg\|_{L^1(Q_T)}
 \leq\sqrt{TM}\,\|g\|_{L^2(Q_T)}.
\end{equation*}
Indeed, $\|V_\eta*\rho\|_2\leq\|V\|_1\|\rho\|_2$, and the Fisher bound
controls $\rho$ in $L^2(Q_T)$.  The coefficient bounds
\eqref{eq:admissible-noise-coefficient} and the identities
\eqref{eq:regularized-noise-identities} verify, after stopping at a Fisher
threshold, the remaining spatial, finite-variation, and martingale
estimates in \cite[Proposition~5.14, equations~(5.21)--(5.25)]{FG24}.
Consequently, \cite[Proposition~5.26]{FG24}, with the interaction-flux
extension in \cite[Lemma~3.4 and Proposition~3.5]{JSW26}, makes
$p_\delta(\rho)$ tight in $L^1(Q_T)$.  The displays above contain the
only additional Coulomb and control estimates.

Finally,
\begin{equation*}
 \|\rho-p_\delta(\rho)\|_{L^1(Q_T)}\leq2T\delta
\end{equation*}
deterministically.  Sending $\delta\downarrow0$ by the standard diagonal
compact-set argument proves tightness of the original densities.
\end{proof}

\subsection{Singular-limit order and kinetic identification}

The approximation parameters have different roles and are removed in the
following order.  At fixed smooth kernel and coefficient, first remove the
entropy-test parameter $\ell\downarrow0$.  The vacuum scale $S$ is then
held fixed between consecutive entropy hitting times, with $L$ a
sufficiently large fixed constant; the terminal entropy threshold is removed at this
regularized level.  Only afterward take
\begin{equation}\label{eq:approximation-limit-order}
 m\to\infty,
 \qquad \zeta\downarrow0,
 \qquad \eta\downarrow0.
\end{equation}
The parameter $L$ is a sufficiently large fixed constant.  The same choice
gives the uniform small-noise bounds because
$\varepsilon F_{1,K(\varepsilon)}L\to0$; the limit
$\varepsilon\downarrow0$ is taken only after
\eqref{eq:approximation-limit-order}.  There is no global It\^o formula
with $S$ treated as a state-dependent parameter.

For fixed $\varepsilon>0$ and $K<\infty$, we first carry out this
construction with $g=0$.  By Lemma~\ref{lem:cutoff-time-compactness} and
\cite[Propositions~5.26--5.27 and the proof of Theorem~5.29]{FG24}, a joint
representation of the densities, kinetic measures, Brownian paths, and
compact-velocity test martingales, together with Fisher localization, gives
\begin{equation*}
\begin{aligned}
 \rho_n&\to\rho &&\text{strongly in }L^1(Q_T),\\
 \sqrt{\rho_n}&\to\sqrt\rho &&\text{strongly in }L^2(Q_T),\\
 \nabla\sqrt{\rho_n}&\weakto\nabla\sqrt\rho
 &&\text{weakly in }L^2(Q_T).
\end{aligned}
\end{equation*}
Here the first convergence implies the second, while
Lemmas~\ref{lem:physical-flux-lower-semicontinuity} and
\ref{lem:entropy-tail-transfer} pass the physical estimates.  The Coulomb
flux and its localized renormalizations pass by
the preceding Lemma~\ref{lem:interaction-compactness} and
\eqref{eq:renormalized-coulomb-compactness}.  We pass directly in the
stochastic kinetic identity on compact velocity sets.  Local
$C^1$ convergence of the coefficients allows us to pass to the limit in
the localized It\^o correction, which is retained at fixed noise strength.  The
kinetic-measure and initial-trace identifications, and the identification
of the martingales through their quadratic variations and covariations
with the Brownian motions, follow
\cite[the proof of Theorem~5.29]{FG24} and
\cite[Proposition~3.5 and the proof of Theorem~3.1]{JSW26}.
No classical weak formulation or deterministic weak--kinetic equivalence
is used in this fixed-noise construction; in particular, no unlocalized
distribution $(\varepsilon F_{1,K}/8)\Delta\log\rho$ is asserted at vacuum.

After unforced pathwise uniqueness and measurable factorization,
Section~\ref{sec:comparison} constructs solutions for bounded predictable
controls by Girsanov.  Weakly converging controls are treated separately
in Section~\ref{sec:full-ldp}: there the noise and its correction vanish
before the classical weak formulation and deterministic weak--kinetic
equivalence identify the skeleton limit.

\subsubsection{Velocity-tail identification}

We finish by verifying the velocity-tail conditions for the selected
kinetic measure.  The first part of
Lemma~\ref{lem:canonical-energy-localization} gives the stopped
Fisher-information and flux bounds in
\eqref{eq:stopped-flux-integrability} without using the kinetic measure or
its velocity tails.  These bounds imply the high-velocity condition in
Proposition~\ref{prop:approximation-high-velocity-tail}.  The low-velocity
condition then follows from the second part of
Lemma~\ref{lem:canonical-energy-localization}.

\begin{lemma}[Canonical energy localization]
\label{lem:canonical-energy-localization}
Let $g$ be a predictable control with almost surely finite $L^2(Q_T)$
action.  Let $\rho$ be a nonnegative, almost surely continuous
$L^1(\T^2)$-valued predictable process which conserves the mass $M$ and
satisfies \eqref{eq:stochastic-finite-energy}, and define $\sigma_m$ by
\eqref{eq:kinetic-energy-stops}.  Then $\sigma_m\uparrow T$ almost surely and
\begin{equation}\label{eq:pathwise-fisher-from-energy}
 \int_0^T\mathcal I(\rho(t))\,\dd t<\infty
 \qquad\Pp\text{-a.s.}
\end{equation}
\begin{equation}\label{eq:localizing-fisher-bound}
 \int_0^{\sigma_m}\mathcal I(\rho(t))\,\dd t
 \lesssim_{m,T,V,M}1
 \qquad\Pp\text{-a.s.}
\end{equation}
Consequently, for every $\ell\geq1$,
\begin{equation}\label{eq:stopped-compact-velocity-sobolev}
 \mathbf1_{[0,\sigma_m]}\bigl[(\rho\wedge\ell)\vee\ell^{-1}\bigr]
 \in L^2\bigl(\Omega\times(0,T);H^1(\T^2)\bigr),
\end{equation}
and
\begin{equation}\label{eq:stopped-flux-integrability}
 \E\int_0^{\sigma_m}\!\int_{\T^2}
 \bigl(|\rho(V*\rho)|+|\sqrt\rho\,P_Kg|\bigr)\,\dd x\,\dd t<\infty.
\end{equation}
Suppose, in addition, that $q$ satisfies the kinetic-measure clauses of
Definition~\ref{def:stochastic-kinetic-solution}, namely the parabolic lower bound
\eqref{eq:stochastic-parabolic-lower-bound}, the compact-velocity identity
\eqref{eq:controlled-stochastic-kinetic-identity}, and the stopped
high-velocity condition \eqref{eq:stopped-high-velocity-tail}.  Then the
low-velocity tail follows on the same stops:
\begin{equation}\label{eq:stopped-low-velocity-tail}
 \lim_{\beta\downarrow0}\E\left[
  \beta^{-1}q\bigl(\T^2\times[\beta/2,\beta]
  \times[0,\sigma_m]\bigr)\right]=0.
\end{equation}
In particular, the corresponding almost-sure $\liminf$ is zero.  If the
high-velocity condition and all the preceding integrability statements hold
on $[0,T]$ in expectation, then \eqref{eq:stopped-low-velocity-tail} also
holds with $\sigma_m$ replaced by $T$.
The conclusions through \eqref{eq:stopped-flux-integrability} do not use a
kinetic measure.  In particular, they apply to a candidate approximation
limit before its high-velocity admissibility has been verified.
\end{lemma}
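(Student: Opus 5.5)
The first block of assertions will be proved pathwise, with no kinetic measure involved. Since $\rho$ is continuous in $L^1(\T^2)$ and $\int H(\rho(t))$ is lower semicontinuous along it, the running supremum in \eqref{eq:kinetic-energy-stops} is adapted and nondecreasing, so $\sigma_m$ is a stopping time. By \eqref{eq:stochastic-finite-energy} and the almost sure finiteness of the action of $g$, on almost every path the quantity inside \eqref{eq:kinetic-energy-stops} is bounded by a finite random number; for $m$ larger than it, $\sigma_m=T$. Hence $\sigma_m\uparrow T$, indeed $\sigma_m=T$ eventually. For \eqref{eq:localizing-fisher-bound} I would integrate \eqref{eq:full-fisher-physical-dissipation} of Proposition~\ref{prop:fisher-from-dissipation} over $(0,\sigma_m)$. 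On $[0,\sigma_m)$ the entropy is at most $m$ and $\int_0^{\sigma_m}\mathcal D_V\le m$, which gives
\begin{equation*}
 \int_0^{\sigma_m}\mathcal I(\rho)\le 2m+TC_{V,M}e^{C_{V,M}(1+m)}.
\end{equation*}
The endpoint $\sigma_m$ itself carries no mass, and lower semicontinuity handles the case where the level is reached only in the limit. Letting $m\to\infty$ yields \eqref{eq:pathwise-fisher-from-energy}.

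The integrability statements then follow from deterministic bounds. For \eqref{eq:stopped-compact-velocity-sobolev}, note that $|\nabla[(\rho\wedge\ell)\vee\ell^{-1}]|^2\le \ell|\nabla\rho|^2\rho^{-1}\mathbf 1_{\{\rho\le\ell\}}$, which is dominated by $\ell\,\mathcal I(\rho)$, and the truncation itself is bounded by $\ell$. Both are therefore bounded uniformly in $\omega$ on $[0,\sigma_m]$. For \eqref{eq:stopped-flux-integrability} I would use Lemma~\ref{lem:mass-fisher-consequences}. Concretely, $\|\rho(V*\rho)\|_{L^1}\le\|\rho\|_{4/3}\|V*\rho\|_4\lesssim\|\rho\|_{4/3}^2$, and $\|\rho\|_{4/3}^4\lesssim_M1+\mathcal I(\rho)$. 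Since $\|\sqrt\rho P_Kg\|_{L^1}\le\sqrt M\|g\|_{L^2}$, the time integrals up to $\sigma_m$ are bounded by constants depending on $m$, and taking expectations is trivial.

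For the low-velocity tail I would follow \cite[Proposition~4.6]{FG24} on the stopped interval. Test the kinetic identity \eqref{eq:controlled-stochastic-kinetic-identity} with $\psi(\xi)=\phi_\beta(\xi)$, where $\phi_\beta$ is the spatially constant function equal to $0$ for $\xi\le\beta/2$, linear up to $\beta$ with slope $2/\beta$, and equal to $1$ until a high cutoff $R$. I would approximate it smoothly and remove the upper cutoff using \eqref{eq:stopped-high-velocity-tail}. Because $\nabla_x\psi=0$, only the $\partial_\xi\psi$ terms survive. Evaluated at $\sigma_m$ and after taking expectations (the stopped martingale has bounded bracket by Bessel's inequality and the Fisher bound), this gives
\begin{equation*}
 \frac2\beta\E q\bigl(\T^2\times[\beta/2,\beta]\times[0,\sigma_m]\bigr)
\end{equation*}
as the left-hand side. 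The right-hand side contains the change of $\E\int(\rho-\beta)_+\wedge\cdots$, which is $O(\beta)$ in the sense that it differs from $\E\int\rho$ terms, plus the mass conservation contributions, plus the following terms, each restricted to $\{\beta/2<\rho<\beta\}$:
\begin{equation*}
 \frac{\varepsilon N_K}{\beta}\E\int\rho\,\mathbf 1,\qquad \frac1\beta\E\int|\rho(V*\rho)+\sqrt\rho P_Kg||\nabla\rho|\mathbf 1.
\end{equation*}
Writing $\nabla\rho=2\sqrt\rho\nabla\sqrt\rho$ and using $\rho\le\beta$ on the set bounds these by $\E\int\mathbf 1_{\{\beta/2<\rho<\beta\}}\bigl(\varepsilon N_K+|\nabla\sqrt\rho|(\sqrt\rho|V*\rho|+|g|)\bigr)$ up to $\sigma_m$. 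This tends to zero by dominated convergence, since the integrand is integrable up to $\sigma_m$ by the bounds above together with $\sqrt\rho(V*\rho)\in L^2$ from \eqref{eq:coulomb-flux-integrability}, and the indicator vanishes pointwise. The time-boundary terms reduce, by mass conservation, to $\beta^{-1}\E\int\min(\rho,\beta)$-type differences, and I expect these to cancel exactly at leading order, as in FG24.

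The main obstacle is this last bookkeeping: making sure the boundary terms and the parabolic defect combine with the correct sign, so that the quantity left is precisely the vanishing set integral rather than an $O(1)$ remainder. The Itô correction proportional to $\varepsilon F_{1,K}\nabla\log\rho\cdot\nabla_x\psi$ vanishes because $\psi$ is spatially constant, which is what makes this choice of test function work. Finally, the almost sure $\liminf$ statement follows from the $L^1$ convergence via Fatou's lemma, and the unstopped version is the same argument with $\sigma_m$ replaced by $T$ under the global hypotheses.
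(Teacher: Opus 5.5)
You have a genuine gap: the upper velocity cutoff is never fully removed.

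\textbf{Parts that are correct.} Everything through \eqref{eq:stopped-flux-integrability} matches the paper's proof. On $\{\beta/2<\rho<\beta\}$, your dominated-convergence bounds for the Coulomb and control drifts are correct and slightly simpler than the paper's route. The paper instead integrates the Coulomb term by parts to get an explicit $O(\beta)$ bound, and absorbs the control term into $q$ through the parabolic lower bound. Your bounds work because $\int_0^{\sigma_m}\mathcal I$ and the stopped action are deterministically bounded.

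The step you flag as the main obstacle is harmless. Your $\psi$ is of order one; the factor $2/\beta$ appears only in $\partial_\xi\psi$. Hence $\Psi(r)=\int_0^r\psi$ satisfies $0\le r-\Psi(r)\le 3\beta/4$ for $r\le R$. Mass conservation then bounds the endpoint contribution by $3\beta/4+\int(\rho(\sigma_m)-R)_+$. There is no $\beta^{-1}\int\min(\rho,\beta)$ term, and no cancellation against the defect is needed, since the defect is exactly your left-hand side.

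\textbf{The gap.} Because $\psi$ must have compact support in $(0,\infty)$, you have $\partial_\xi\psi=-1$ on $[R,R+1]$. Besides the shell mass of $q$, which \eqref{eq:stopped-high-velocity-tail} handles, this produces the $\beta$-independent drift terms
\[
 -\E\int_0^{\sigma_m}\!\int_{\T^2}\mathbf 1_{\{R<\rho<R+1\}}
 \bigl(\rho(V*\rho)+\sqrt\rho\,P_Kg\bigr)\cdot\nabla\rho\,\dd x\,\dd t .
\]
You never estimate these. They must vanish as $R\to\infty$, but they carry a factor of order $R$ beyond the available integrability. Neither $\rho^{3/2}|\nabla\sqrt\rho|\,|V*\rho|$ nor $\rho|P_Kg|\,|\nabla\sqrt\rho|$ is known to be integrable on $[0,\sigma_m]$ from a bound on $\int_0^{\sigma_m}\mathcal I$. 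So dominated convergence and the high-velocity tail alone do not close the argument.

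\textbf{Coulomb term.} The missing idea is to use the divergence structure. Set $A_R(r)=\int_0^r s\mathbf 1_{(R,R+1)}(s)\,\dd s$, so that $0\le A_R(r)\le(R+1)\mathbf 1_{\{r>R\}}$. Then \eqref{eq:coulomb-identities} gives
\[
 \int_{\T^2}\mathbf 1_{\{R<\rho<R+1\}}\rho\nabla\rho\cdot(V*\rho)
 =\lambda_V\cos(\alpha_V)\int_{\T^2}A_R(\rho)(\rho-M),
\]
and the rotational part drops out. The right-hand side is bounded by $\int(\rho^2+M\rho)\mathbf 1_{\{\rho>R\}}$. This tends to zero in expectation by \eqref{eq:localizing-fisher-bound} and \eqref{eq:mass-fisher-L2}.

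\textbf{Control term.} Use Young's inequality together with \eqref{eq:stochastic-parabolic-lower-bound} to absorb $\rho|\nabla\sqrt\rho|^2$ on the shell into the shell mass of $q$. The remainder $\int\rho|P_Kg|^2\mathbf 1_{\{\rho>R\}}$ is then disposed of by three ingredients:
\begin{itemize}
\item the finite-mode bound $\|P_Kg\|_\infty\lesssim_K\|g\|_2$;
\item the stopped control action;
\item the entropy tail $(R+1)|\{\rho>R\}|\lesssim(1+\int H(\rho))/\log R$.
\end{itemize}
This is exactly how the paper removes the upper cutoff.
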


\begin{proof}
The physical-energy condition \eqref{eq:stochastic-finite-energy} and the
almost-sure finiteness of the control action give
$\sigma_m\uparrow T$ almost surely.  Integrating
\eqref{eq:full-fisher-physical-dissipation} gives
\eqref{eq:pathwise-fisher-from-energy}.  Before $\sigma_m$, the entropy and
the accumulated physical dissipation are bounded by $m$;
\eqref{eq:full-fisher-physical-dissipation} therefore gives
\eqref{eq:localizing-fisher-bound}.  A possible value at
the single stopping time does not affect the space--time integral.  Since
\begin{equation*}
 \left|\nabla\bigl[(\rho\wedge\ell)\vee\ell^{-1}\bigr]\right|^2
 \leq\ell\,\frac{|\nabla\rho|^2}{\rho},
\end{equation*}
the stopped Fisher bound proves
\eqref{eq:stopped-compact-velocity-sobolev}.  The same bound,
Lemma~\ref{lem:mass-fisher-consequences}, the $L^2$ contraction of $P_K$,
and the control part of \eqref{eq:kinetic-energy-stops} prove
\eqref{eq:stopped-flux-integrability} and the local integrability of every
compact-velocity term in
\eqref{eq:controlled-stochastic-kinetic-identity}.

For the low-velocity estimate, adapt the proof of
\cite[Proposition~4.6]{FG24}, using $\varphi_\beta$ and $\zeta_R$ from
\cite[Definition~4.5 and equations~(4.12)--(4.14)]{FG24}.  The additional
Coulomb and control terms are
\begin{equation}\label{eq:low-velocity-added-drifts}
 \begin{aligned}
 &\frac2\beta\int_0^{\sigma_m}\!\int_{\T^2}
  \mathbf1_{\{\beta/2<\rho<\beta\}}
  \rho(V*\rho)\cdot\nabla\rho\,\dd x\,\dd t,\\
 &\frac2\beta\int_0^{\sigma_m}\!\int_{\T^2}
  \mathbf1_{\{\beta/2<\rho<\beta\}}
  \sqrt\rho\,P_Kg\cdot\nabla\rho\,\dd x\,\dd t.
 \end{aligned}
\end{equation}
Since
$0\leq\int_0^r s\mathbf1_{\{\beta/2<s<\beta\}}\,\dd s
\leq3\beta^2/8$, the first line of
\eqref{eq:low-velocity-added-drifts} equals
\begin{equation*}
 \frac{2\lambda_V\cos(\alpha_V)}\beta
 \int_0^{\sigma_m}\!\int_{\T^2}
 \left(\int_0^\rho
 s\mathbf1_{\{\beta/2<s<\beta\}}\,\dd s\right)
 (\rho-M)\,\dd x\,\dd t,
\end{equation*}
because its rotational part again vanishes.  Its absolute value is
$\lesssim_{V,M,T}\beta$.  The parabolic lower bound
\eqref{eq:stochastic-parabolic-lower-bound} and Young's inequality
give for the second line
\begin{equation*}
 \begin{aligned}
 &\left|\frac2\beta\int_0^{\sigma_m}\!\int_{\T^2}
  \mathbf1_{\{\beta/2<\rho<\beta\}}
  \sqrt\rho\,P_Kg\cdot\nabla\rho\,\dd x\,\dd t\right|\\
 &\quad\leq\frac1{2\beta}
 q\bigl(\T^2\times[\beta/2,\beta]\times[0,\sigma_m]\bigr)
 +\frac2\beta\int_0^{\sigma_m}\!\int_{\T^2}
 \mathbf1_{\{\beta/2<\rho<\beta\}}\rho|P_Kg|^2\,\dd x\,\dd t.
 \end{aligned}
\end{equation*}
The last integral tends to zero in expectation by dominated convergence
and the stopped control-action bound.  After absorption, the endpoint
and exact-square-root terms vanish as in
\cite[equations~(4.13)--(4.14)]{FG24}.  It remains only to remove the upper
cutoff.  The kinetic-measure contribution vanishes by
\eqref{eq:stopped-high-velocity-tail}.  The radial Coulomb contribution is
bounded by a high-density tail of $\rho^2+M\rho$, which converges to zero in
expectation by \eqref{eq:localizing-fisher-bound} and
\eqref{eq:mass-fisher-L2}; its rotational part vanishes.  Young's inequality,
the finite-mode estimate $\|P_Kg(t)\|_\infty^2\lesssim_K\|g(t)\|_2^2$,
the stopped control action, and
\begin{equation*}
 (R+2)|\{\rho(t)>R-1\}|
 \lesssim\frac{1}{\log R}
 \left(1+\int_{\T^2}H(\rho(t))\right)
\end{equation*}
dispose of the control term.  The trace and endpoint terms vanish by the
same entropy tail.  This proves \eqref{eq:stopped-low-velocity-tail}.
With global versions of the hypotheses, the identical argument applies on
$[0,T]$.
\end{proof}

\begin{proposition}[High-velocity admissibility of approximation limits]
\label{prop:approximation-high-velocity-tail}
Let $(\rho,q)$ be a limit pair produced by the compatible approximation
scheme of Appendix~\ref{app:approximation} for a predictable control with
almost surely finite action, and suppose that it satisfies
the mass, physical-energy, parabolic lower-bound, and compact-velocity
identity clauses of Definition~\ref{def:stochastic-kinetic-solution}.
Then it also satisfies \eqref{eq:stopped-high-velocity-tail}, and hence is
a stochastic kinetic solution.  If, in addition,
\eqref{eq:bounded-control-action} holds,
\begin{equation*}
 \E\left[\sup_{t\leq T}\int_{\T^2}H(\rho(t))
 +\int_0^T\mathcal D_V(\rho(t))\,\dd t\right]<\infty,
\end{equation*}
and either
$\lambda_V\cos(\alpha_V)\leq0$ or \eqref{eq:expected-full-fisher} holds,
then
\begin{equation}\label{eq:global-high-velocity-tail}
 \lim_{R\to\infty}
 \E q\bigl(\T^2\times[R,R+1]\times[0,T]\bigr)=0.
\end{equation}
This is the high-velocity condition in
\cite[Definition~3.4, equation~(3.7)]{FG24}.
\end{proposition}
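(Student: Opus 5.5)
We derive the high-velocity tail from a fixed-shell energy identity: test the kinetic equation, on the canonical stop, with a velocity function that isolates the shell $[R,R+1]$, and control every resulting term by quantities that are uniformly integrable on $[0,\sigma_m]$.

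\emph{Shell identity at the regularized level.} Let $\phi_R(\xi)=(\xi-R)_+\wedge1$ and $\Psi_R(r)=\int_0^r\phi_R$, so that $\Psi_R''=\mathbf 1_{[R,R+1]}$. For the smooth compatible approximations of \eqref{eq:regularized-ito-equation} we apply It\^o's formula to $\int_{\T^2}\Psi_R(\rho_n)$ up to the stop $\sigma_m^n$. The parabolic dissipation $\int\mathbf 1_{\{R\le\rho_n\le R+1\}}|\nabla\rho_n|^2$ is exactly the kinetic mass on the shell, and it appears with a favorable sign. The remaining terms are handled as follows.
\begin{itemize}
\item The It\^o correction $\frac{\varepsilon F_{1,K}}2\diver(\sigma'(\rho)^2\nabla\rho)$ cancels against the gradient part of the Hessian by \eqref{eq:regularized-noise-identities}. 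What is left is $\frac{\varepsilon N_K}2\int\Psi_R''\sigma^2\le\varepsilon N_K(R+1)|\{\rho_n\ge R\}|$.
\item The Coulomb drift becomes, after integrating by parts with \eqref{eq:coulomb-identities}, the term $\lambda_V\cos(\alpha_V)\int(\int_0^{\rho_n}s\Psi_R''(s)\,\dd s)\,Q_{\eta}(\rho_n-M)$. The rotational part vanishes. This term is bounded by $C\int_{\{\rho_n\ge R\}}(\rho_n^2+M\rho_n)$.
\item The control term is bounded by $\frac12$ of the shell dissipation plus $C\int\mathbf 1_{\{R\le\rho_n\le R+1\}}\rho_n|P_Kg|^2$.
\item The martingale has mean zero on the stop.
\item The boundary terms are at most $\E\int\Psi_R(\rho_0)$ and $\int_{\{\rho\ge R\}}\rho$.
\end{itemize}

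\emph{Uniform vanishing of the tails.} On $[0,\sigma_m]$ the entropy and the Fisher integral are deterministically bounded by \eqref{eq:localizing-fisher-bound}, with constants uniform in the approximation parameters by Proposition~\ref{prop:fisher-from-dissipation} and its regularized version. The tails then vanish as follows.
\begin{itemize}
\item For the Coulomb tail, \eqref{eq:mass-fisher-L2} combined with Lemma~\ref{lem:entropy-sublevel-interpolation} gives uniform integrability of $\rho^2$ in space--time on this level set. Hence $\int_{\{\rho\ge R\}}\rho^2\to0$ uniformly.
\item The entropy tail bounds $(R+1)|\{\rho\ge R\}|\lesssim m/\log R$.
\item For the control tail, the finite-mode bound $\|P_Kg\|_\infty^2\lesssim_K\|g\|_2^2$ together with the same entropy tail gives vanishing.
\end{itemize}

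\emph{Passage to the limit pair.} The shell kinetic masses converge locally vaguely to $q$. Lower semicontinuity on a slightly enlarged open shell, followed by Fatou, gives
\[
\E q(\T^2\times[R,R+1]\times[0,\sigma_m])\le\varepsilon(R)\to0,
\]
which is \eqref{eq:stopped-high-velocity-tail}. Here we must check that the approximation stops $\sigma_m^n$ can be compared with $\sigma_m$ of the limit. The plan is to use lower semicontinuity of the energy functional, so that $\liminf\sigma_m^n\ge\sigma_{m'}$ for $m'<m$. Together with $\sigma_m\uparrow T$ from Lemma~\ref{lem:canonical-energy-localization}, this is enough.

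\emph{Global version.} Under the additional hypotheses we rerun the argument unstopped. In the nonattractive case the Coulomb term is nonpositive, since $\int_0^\rho s\Psi_R''\ge0$ and $\rho-M$ is paired with the favorable sign; alternatively, the entropy bound controls $\rho^2$ tails through expected Fisher information. In the attractive case, \eqref{eq:expected-full-fisher} gives $\E\int\rho^2<\infty$, hence uniform integrability. The bounded action \eqref{eq:bounded-control-action} handles the control term. This yields \eqref{eq:global-high-velocity-tail}.

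\emph{Expected obstacle.} The main difficulty is the comparison between approximation stops and limit stops, and getting uniform integrability of $\rho^2$ uniformly over approximations rather than merely boundedness. I would first try to obtain the latter from Lemma~\ref{lem:entropy-sublevel-interpolation} with small $a$.
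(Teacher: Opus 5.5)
\textbf{There is a genuine gap, and it is at the step you flagged yourself: passing from the approximations to the limit pair.}

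You prove the shell bound on the approximation stops $\sigma_m^n$. You then propose to transfer it via lower semicontinuity, claiming $\liminf_n\sigma_m^n\ge\sigma_{m'}$ for $m'<m$. Lower semicontinuity gives the opposite inequality. Let $E_n(t)$ and $E(t)$ be the running energies in \eqref{eq:kinetic-energy-stops} for the approximation and for the limit. Then $E(t)\le\liminf_nE_n(t)$, so $E(t)>m$ forces $E_n(t)>m$ eventually. That is, $\limsup_n\sigma_m^n\le\sigma_m$.

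Your claim would instead need $\limsup_nE_n(t)\le E(t)+(m-m')$. This is upper semicontinuity, and it fails in general. The entropy supremum and the regularized dissipations $\int_0^t\mathcal D_V^{\eta_n}(\rho_n)$ are only lower semicontinuous (Lemma~\ref{lem:physical-flux-lower-semicontinuity}) and may retain a positive excess in the limit. Consequently $[0,\sigma_m^n]$ need not eventually cover $[0,\sigma_{m'}]$. Your bound then controls $q$ only up to limits of approximation stops, whereas \eqref{eq:stopped-high-velocity-tail} is required on the canonical stops $\sigma_m$ of the limit itself. Returning from $\liminf_n\sigma_m^n$ to $\sigma_m$ would require uniform integrability in $R$ of $q(\T^2\times[R,R+1]\times[0,\sigma_m])$, which is exactly what is being proved.

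Working at the approximation level also breaks your global attractive case. Hypothesis \eqref{eq:expected-full-fisher} concerns only the limit $\rho$. It gives no uniform-in-$n$ moment of the approximating Fisher integrals, and none is available at fixed noise strength. So rerunning the approximation-level estimate unstopped has nothing to feed it.

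\textbf{The missing idea is to transfer the identity, not the inequality.}

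At fixed $R$, take a smooth convex $S_R$ with $0\le S_R''\le1$, $S_R''=1$ on $[R,R+1]$ and $\operatorname{supp}S_R''\Subset(R-1,R+2)$. Pass the \emph{unstopped} shell identity from the approximations to $(\rho,q)$, term by term:
\begin{enumerate}
\item the endpoint term, by Lipschitz continuity of $S_R$ and strong $L^1$ convergence;
\item the $q$-term, by local vague convergence, which needs $S_R''$ continuous and compactly supported in velocity rather than an indicator;
\item the Coulomb and control terms, through the bounded primitive $A_R(r)=\int_0^r sS_R''(s)\,\dd s$ and localized gradient convergence.
\end{enumerate}

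Only then localize, at the limit's own stop $\sigma_m$. There, entropy, dissipation and control action are at most $m$ by definition, and \eqref{eq:localizing-fisher-bound} holds without reference to $q$. An auxiliary stop on the martingale bracket and on the $q$-mass of $\operatorname{supp}S_R''$ makes optional sampling legitimate. It is removed afterwards by monotone convergence and Fatou.

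From there your term-by-term estimates go through essentially unchanged. No uniformity over approximations is needed: on the limit, the radial Coulomb term vanishes by plain dominated convergence, since $\E\int_0^{\sigma_m}\|\rho\|_2^2<\infty$. The global statement follows from the same limit identity on $[0,T]$, where \eqref{eq:expected-full-fisher} can now be used directly.

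One small correction in the nonattractive case. The sign of the radial term comes from the symmetrization
$\int A_R(\rho)(\rho-M)=\frac12\iint\bigl(A_R(\rho(x))-A_R(\rho(y))\bigr)\bigl(\rho(x)-\rho(y)\bigr)\ge0$,
that is, from monotonicity of $A_R$. Nonnegativity of $A_R$ alone is not enough, because $\rho-M$ changes sign.
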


\begin{proof}
For $R>2$, let $S_R$ be a smooth convex approximation of the shell
renormalization used in
\cite[the proof of Proposition~5.9, equation~(5.15)]{FG24}, chosen so that
$S_R(0)=S_R'(0)=0$, $0\leq S_R''\leq1$, $S_R''=1$ on $[R,R+1]$, and
$\operatorname{supp}S_R''\Subset(R-1,R+2)$, and write locally
\begin{equation*}
 A_R(r)=\int_0^r sS_R''(s)\,\dd s,
 \qquad
 0\leq A_R(r)\lesssim(R+1)\mathbf1_{\{r>R-1\}}.
\end{equation*}
At fixed $R$, first pass the $S_R$-identity from the compatible smooth
approximations to $(\rho,q)$, before introducing the limiting stop.  The
endpoint passes because $S_R$ is Lipschitz and the densities converge
strongly in $L^1$; the kinetic-measure term passes by local vague
convergence because $S_R''$ is smooth and compactly supported in velocity.
The bounded primitive $A_R$ and localized gradient convergence pass the
Coulomb and control terms.  This gives the fixed-$R$ shell identity for the
selected limiting measure $q$.  Before taking expectations, we stop further
when either the bracket of the fixed-$R$ martingale or the $q$-mass on
$\operatorname{supp}S_R''$ reaches a finite level.  The Fisher and control
bounds on $[0,\sigma_m]$ make the stopped martingale square integrable;
monotone convergence and Fatou's lemma then remove this auxiliary stop.
Thus optional sampling at $\sigma_m$ is legitimate without assuming in
advance that the target shell has finite expectation.

The diffusion and noise terms, including the exact-square-root gradient
cancellation, are treated as in the cited proof.  The additional Coulomb
terms are
\begin{equation}\label{eq:high-velocity-coulomb-polarization}
 \begin{aligned}
 &\lambda_V\cos(\alpha_V)
 \int_0^{\sigma_m}\!\int_{\T^2}
 \nabla A_R(\rho)\cdot\nabla(\Green*\rho)\,\dd x\,\dd t
 =\lambda_V\cos(\alpha_V)
 \int_0^{\sigma_m}\!\int_{\T^2}A_R(\rho)(\rho-M)\,\dd x\,\dd t,\\
 &\lambda_V\sin(\alpha_V)
 \int_0^{\sigma_m}\!\int_{\T^2}
 \nabla A_R(\rho)\cdot\J\nabla(\Green*\rho)\,\dd x\,\dd t=0.
 \end{aligned}
\end{equation}
The first part of Lemma~\ref{lem:canonical-energy-localization}, which does
not use the high-velocity condition, and \eqref{eq:mass-fisher-L2} give
\begin{equation*}
 \E\int_0^{\sigma_m}\|\rho(t)\|_2^2\,\dd t
 \lesssim_{m,T,V,M}1.
\end{equation*}
Since
\begin{equation*}
 A_R(r)|r-M|\lesssim(r^2+Mr)\mathbf1_{\{r>R-1\}},
\end{equation*}
the radial term in \eqref{eq:high-velocity-coulomb-polarization} tends to
zero in expectation.  The control contribution satisfies
\begin{equation}\label{eq:high-velocity-control}
 \begin{aligned}
 &\left|\int_0^{\sigma_m}\!\int_{\T^2}
 S_R''(\rho)\sqrt\rho\,P_Kg\cdot\nabla\rho\,\dd x\,\dd t\right|\\
 &\quad\leq\frac14
 \int_{\T^2\times(0,\infty)\times[0,\sigma_m]}
 S_R''(\xi)\,\dd q
 +\frac{C_{K}m(1+m)}{\log R},
 \end{aligned}
\end{equation}
where we used the parabolic lower bound, Young's inequality, the
finite-mode $L^\infty$ estimate, and the entropy tail displayed in the
proof of Lemma~\ref{lem:canonical-energy-localization}.  The remaining
noise trace tends to zero by the same entropy tail.  Dropping the
nonnegative terminal value and absorbing the first term on the right of
\eqref{eq:high-velocity-control} therefore yields
\begin{equation*}
 \begin{aligned}
 \frac12\E\int_{\T^2\times(0,\infty)\times[0,\sigma_m]}
 S_R''(\xi)\,\dd q
 \lesssim{}&\int_{\T^2}(\rho_0-(R-1))_+\,\dd x
 +\frac{C_{m,\varepsilon,K}}{\log R}\\
 &+|\lambda_V\cos(\alpha_V)|
 \E\int_0^{\sigma_m}\!\int_{\T^2}
 A_R(\rho)|\rho-M|\,\dd x\,\dd t.
 \end{aligned}
\end{equation*}
Every term tends to zero, and $S_R''=1$ on the closed target slab.  This
proves \eqref{eq:stopped-high-velocity-tail} without assuming that $q$ has
no atoms in velocity.

For the global assertion, apply the same identity on $[0,T]$.  If
$\lambda_V\cos(\alpha_V)\leq0$, the radial contribution is nonpositive,
because $A_R$ is nondecreasing and
\begin{equation*}
 \int_{\T^2}A_R(\rho)(\rho-M)
 =\frac12\iint_{\T^2\times\T^2}
 \bigl(A_R(\rho(x))-A_R(\rho(y))\bigr)
 \bigl(\rho(x)-\rho(y)\bigr)\,\dd x\,\dd y\geq0.
\end{equation*}
In this case $\mathcal D_V(\rho)\geq\mathcal I(\rho)$, so the assumed
physical-energy estimate also supplies the global local-Sobolev and flux
integrability used in the shell identity.
If \eqref{eq:expected-full-fisher} holds, its absolute value instead tends
to zero by \eqref{eq:mass-fisher-L2} and dominated convergence.  The
global entropy and control bounds treat all remaining terms, proving
\eqref{eq:global-high-velocity-tail}.
\end{proof}

\section{Proof of Osgood's lemma}
\label{app:osgood}

\begin{proof}
The function
$r\mapsto r\log(e(1+2M)/r)$ is increasing on $[0,2M]$ and
\begin{equation*}
 \int_{0^+}\frac{\dd r}{r\log(e(1+2M)/r)}=\infty.
\end{equation*}
Apply the chain rule to the absolutely continuous right-hand side of
\eqref{eq:abstract-osgood-inequality}, stopped when it reaches $2M$.
For $\delta(0)>0$, integration of
\begin{equation*}
 \frac{1}{r\log(e(1+2M)/r)}
\end{equation*}
gives \eqref{eq:osgood-continuous-dependence}; after the stopping time its
right-hand side only decreases, while $\delta\leq2M$.  For $\delta(0)=0$,
first replace $\delta(0)$ on the right-hand side of
\eqref{eq:abstract-osgood-inequality} by an arbitrary positive number and
then let it decrease to zero.  The divergent integral forces $\delta=0$.
\end{proof}

\section*{Use of AI tools}
The author used GPT to assist with the writing and revision of this
paper.  GPT identified the need for a stopping-time argument in the
stochastic free-energy estimates.  The resulting proof was developed
through discussions with GPT.  The author takes full
responsibility for the mathematical arguments, references, and final text.

\begingroup
\hfuzz=2pt
\hbadness=5000
\renewcommand{\bibliofont}{\fontsize{9}{11.4}\selectfont}
\bibliographystyle{abbrvurl}
\bibliography{dk_coulomb}
\endgroup

\end{document}